\documentclass{amsart}
\usepackage[utf8]{inputenc}

\usepackage[english]{babel}
\usepackage[T1]{fontenc}
\usepackage{lmodern}
\usepackage{amsmath}
\usepackage{amsthm}
\usepackage{amsfonts}
\usepackage{amssymb}
\usepackage{dsfont}
\usepackage{stmaryrd}
\usepackage[a4paper, scale=0.75,centering]{geometry}
\usepackage{csquotes}

\usepackage{hyperref}
\usepackage{xcolor}

\usepackage{enumitem} 
\setlist[1]{itemsep=1pt,topsep=2pt}
\setlist[enumerate]{label={(\roman*)}}

\usepackage[backend=biber,
style=alphabetic,
sorting=nyt,maxcitenames=99,
maxbibnames=99]{biblatex}
\numberwithin{equation}{section}

\theoremstyle{plain}

\newtheorem{theorem}{Theorem}[section]
\newtheorem{proposition}[theorem]{Proposition}
\newtheorem{lemma}[theorem]{Lemma}
\newtheorem{corollary}[theorem]{Corollary}
\newtheorem{remark}[theorem]{Remark}

\theoremstyle{definition}

\newtheorem{example}{Example}

\DeclareMathSymbol{\leqslant}{\mathalpha}{AMSa}{"36} 
\DeclareMathSymbol{\geqslant}{\mathalpha}{AMSa}{"3E} 
\DeclareMathSymbol{\eset}{\mathalpha}{AMSb}{"3F}     
\renewcommand{\le}{\, \leqslant\,} 
\renewcommand{\ge}{\, \geqslant\,}

\newcommand{\cA}{{\ensuremath{\mathcal A}} }
\newcommand{\cB}{{\ensuremath{\mathcal B}} }
\newcommand{\cC}{{\ensuremath{\mathcal C}} }
\newcommand{\cD}{{\ensuremath{\mathcal D}} }

\newcommand{\cF}{{\ensuremath{\mathcal F}} }

\newcommand{\cK}{{\ensuremath{\mathcal K}} }
\newcommand{\cL}{{\ensuremath{\mathcal L}} }
\newcommand{\cM}{{\ensuremath{\mathcal M}} }

\newcommand{\cT}{{\ensuremath{\mathcal T}} }

\newcommand{\cZ}{{\ensuremath{\mathcal Z}}}

\newcommand{\bA}{{\ensuremath{\mathbf A}} }

\newcommand{\bE}{{\ensuremath{\mathbf E}} }

\newcommand{\bL}{{\ensuremath{\mathbf L}} }

\newcommand{\bP}{{\ensuremath{\mathbf P}} }
\newcommand{\bQ}{{\ensuremath{\mathbf Q}} }

\newcommand{\bX}{{\ensuremath{\mathbf X}} }

\newcommand{\bbE}{{\ensuremath{\mathbb E}} }

\newcommand{\bbN}{{\ensuremath{\mathbb N}} }

\newcommand{\bbP}{{\ensuremath{\mathbb P}} }

\newcommand{\bbR}{{\ensuremath{\mathbb R}} }

\newcommand{\bbZ}{{\ensuremath{\mathbb Z}} }

\newcommand{\bpsi}{\boldsymbol\psi}
\newcommand{\bphi}{\boldsymbol\phi}
\newcommand{\bzeta}{\boldsymbol\zeta}
\newcommand{\bPhi}{\boldsymbol\Phi}

\newcommand{\deff}{d_{\mathrm{eff}}}
\newcommand{\bff}{\ensuremath{\mathbf{f}}}

\newcommand{\Es}{\mathds{E}}
\newcommand{\Pro}{\mathds{P}}
\newcommand{\Ind}{\mathds{1}}

\newcommand{\E}{\mathrm{E}}
\renewcommand{\L}{\mathrm{L}}
\renewcommand{\P}{\mathrm{P}}

\newcommand{\expo}{\mathrm{e}}
\newcommand{\norme}[1]{\left\Vert #1\right\Vert}
\newcommand{\floor}[1]{\lfloor #1 \rfloor}

\newcommand{\sumtwo}[2]{\sum_{\substack{#1 \\ #2}}} 

\newcommand{\intg}[2]{\llbracket #1,#2 \rrbracket}

\newcommand{\tomega}{\tilde{\omega}}
\newcommand{\homega}{\hat{\omega}}
\newcommand{\bomega}{\check{\omega}}
\newcommand{\hbeta}{\hat{\beta}}
\newcommand{\hh}{\hat{h}}

\newcommand{\dd}{\mathrm{d}}
\newcommand{\R}{\mathbb{R}}

\newcommand{\gep}{\varepsilon}

\title[Disordered systems and polynomial chaos with heavy-tail disorder]{Disordered systems and (subcritical) polynomial chaos\\ with heavy-tail disorder}

\author{Gaspard Gomez}
\address{DMA, École normale supérieure, Université PSL, CNRS, 75005 Paris, France}
\email{gaspard.gomez@ens.psl.eu}

\begin{document}

\begin{abstract}
	We study discrete statistical mechanics systems perturbed by a random environment without a finite second moment.
	Specifically, we consider a random environment whose tail distribution satisfies $\Pro[\omega > x] \sim x^{-\gamma}$ as $x \to +\infty$ for some $\gamma \in (1,2)$.
	Inspired by the seminal work of Caravenna, Sun and Zygouras \cite{csz_2016}, we adopt a general framework that encompasses as key examples both the disordered pinning model and the long-range directed polymer model.
	We provide some \textit{subcriticality} condition under which we prove that the discrete disordered system possesses a non-trivial scaling limit.
	We also interpret the subcriticality condition in terms of a generalized Harris criterion without second moment, which gives a prediction for \textit{disorder relevance} depending on the parameters of the system.
	Our analysis relies on the study of multilinear polynomials of independent heavy-tailed random variables known as \textit{polynomial chaos} and their continuous analogue, given by multiple integrals with respect to a $\gamma$-stable Lévy white noise.
	We develop precise and flexible moments estimates adapted to the heavy-tailed setting.
\end{abstract}

\maketitle

\setcounter{tocdepth}{1}
\tableofcontents

\section{Introduction and presentation of the results}{\label{section: introduction}}

In this article we study statistical mechanics models defined on a lattice which are perturbed by an external random field. 
Our work focuses on the study of the scaling limit of such disordered system, in the so-called intermediate disorder regime.
The approach we follow is in great part inspired by the work of Caravenna, Sun and Zygouras in \cite{csz_2016}.
Their work focuses on the case where the perturbation lies in the bassin of attraction of a Gaussian white noise and their techniques strongly rely on $\mathds{L}^2$ computations.

We will instead focus on an environment which lies in the domain of attraction of a (\(\gamma\)-stable) Lévy white noise. 
In that case, $\mathds{L}^2$ computations are not available and we need to develop  flexible~$\mathds{L}^p$ estimates for $p \in (1,2)$.  
We obtain a general criterion for \textit{subcriticality} of disordered systems, and we prove that such systems admit a non-trivial scaling limit. 
This is a generalization of the results presented in \cite{berger_lacoin_2021,berger_lacoin_2022,berger_chong_lacoin_2023} which focused on a specific model known as the directed polymer in a random environment. 
In the spirit of~\cite[Section~1.3]{csz_2016}, it can also be seen as a criterion for \textit{disorder relevance} in the case of a heavy-tail disorder, a question considered for instance in \cite{lacoin_sohier_2017} in the case of the disordered pinning model.
We then apply our general results to several statistical mechanics models such as the disordered pinning model or the long-range directed polymer model.

\subsection{Setting and first notations}

Our work deals with disordered systems, which are made of two ingredients: a (homogeneous) \textit{reference model}, perturbed by a random environment, also called \textit{disorder}.
We now introduce the general framework we work with; we give a few examples that will serve as a common thread.

\subsubsection{About the reference model}

Let us first introduce the homogeneous model which will be perturbed latter on. One should think about it as the law of a rescaled object that is very well understood.

Let $\Omega \subset \mathbb{R}^D$ be a \textit{bounded} open set, with \(D\) the dimension of the ambient space.
Consider for every $\delta > 0$ a discretization $\Omega_{\delta}$ of $\Omega$.
Let $\deff$ be the effective dimension of this discretization, that is the unique real number $\deff$ such that for any test function $f$ on $\Omega$, 
\begin{equation}
	\label{eq: Omega delta}
	\delta^{\deff} \sum\limits_{x \in \Omega_{\delta}} f(x) \xrightarrow{\delta \downarrow 0} \int_{\Omega} f(x) \dd x \,.
\end{equation}
Note that $\deff$ could be different from the ambient dimension $D$ (see the examples below in Section~\ref{section: statistical mechanics}). 
Suppose that a reference probability measure $\P_{\delta}^{\rm{ref}}$ is given on $\{0,1\}^{\Omega_{\delta}}$, which describes a field $(\sigma_x)_{x \in \Omega_{\delta}}$.
The law $\P_{\delta}^{\rm{ref}}$ is referred to as the \textit{homogeneous model}.

\begin{example}[\bf Renewal processes and pinning model]
	\label{ex:pinning}
	Let $\tau = (\tau_k)_{k \ge 0}$ be a renewal process on $\bbN$ with inter-arrival law $\P(\tau_1 = n) \sim c_0 n^{-(1+\alpha)}$ as \(n\to\infty\), for some \(\alpha\in (0,1)\). 
	In other words \(\tau_0=0\) and the \((\tau_i-\tau_{i-1})_{i\geq 1}\) are i.i.d.\ \(\bbN\)-valued random variables; with a small abuse of notation, we also interpret the renewal process \(\tau\) as a subset of \(\bbN\).
	Then, the dimension is \(D=1\) and we take $\Omega = (0,1)$. 
	We let $\Omega_{\delta} = (\delta \bbZ) \cap \Omega$ where we have set $\delta = \frac{1}{N}$, with \(N\in \mathbb{N}\) the length of the system.
	Here, the effective dimension of the discretization is $\deff = 1=D$. 

	For \(x \in \Omega_{\delta}\), we can define \(\sigma_x = \Ind_{\{ xN \in \tau\}}\) and consider $\P_{\delta}^{\rm ref} $ the law of the random field $(\Ind_{\tau_{\delta}}(x))_{x \in \Omega_{\delta}}$ where $\tau_{\delta} = (\delta \tau) \cap \Omega$.
	Then, the law of $\P_{\delta}^{\rm ref}$ admits a scaling limit as \(\delta\downarrow 0\), which is the law of the $\alpha$-stable regenerative set (see e.g.\ \cite[App.~A.5.4]{giacomin_2007} and Section \ref{section: pinning} below).
\end{example}

Although we assume that the domain $\Omega$ is bounded (this assumption plays an important role, see the comments after Theorem \ref{thm: continuous polynomial chaos}), our results can be extended to several models defined on an unbounded domain such as random walks. 

\begin{example}[\bf Long-range random walk]
\label{ex:longrange}
Let $(S_n)_{n \ge 1}$ be a random walk on $\bbZ^{d}$, \(d\geq 1\), with i.i.d.\ increments in the (strict) domain of attraction of an $\alpha$-stable law with $\alpha\in (0,2]$; in other words, assume that \((n^{-1/\alpha} S_n)_{n\geq 0}\) converges in distribution towards some \(d\)-dimensional \(\alpha\)-stable law.
In that case, we take \(D=1+d\) and $\Omega = (0,1) \times \bbR^{d}$. 
Then, for \(N\in \mathbb{N}\) large, let the mesh size be $\delta = \frac{1}{N}$ and consider the discretization $\Omega_{\delta} = \left((\delta \bbZ) \times (\delta^{1/\alpha} \bbZ^{d})\right) \cap \Omega$. 
Here the effective dimension of the discretization is $\deff = 1 + \frac{d}{\alpha}$ and differs from \(D=1+d\). 

Then, for \(x = (t,u) \in \Omega_{\delta}\), define \(\sigma_x= \Ind_{\{S_{tN}= u N^{1/\alpha}\}}\) so that the random walk trajectory in time-space up to time~\(N\) can be interpreted through its occupation field in \(\{0,1\}^{\Omega_{\delta}}\).
Thus, we define $\P_{\delta}^{\rm ref}$ as the law of the random field $(\Ind_{\cA_{\delta}}(x))_{x \in \Omega_{\delta}}$ where $\cA_{\delta} = \{ (\frac{n}{N}, \frac{S_n}{N^{1/\alpha}})\}_{1 \le n \le N}$. 
The law $\P_{\delta}^{\rm ref}$ admits a scaling limit as \(\delta\downarrow0\), which is the law of the random field $(\Ind_{\bA}(x))_{x \in \Omega}$ where $\bA = \{ (t,X_t) \}_{t \ge 0}$ with $(X_t)_{t \ge 0}$ an $\alpha$-stable Lévy process (a Brownian motion when $\alpha =2$).
\end{example}

\subsubsection{Introducing disorder}

Let $\omega = (\omega_x)_{x \in \Omega_{\delta}} $ be i.i.d.\ \textit{centered} random variables such that~$\omega_x > -1$\  almost surely (the reason for this choice will be clear a few lines below).
Their law will be denoted $\Pro$ and expectation with respect to $\omega$ will be written $\Es$.
Depending on the context, $\omega$ will either stand for a generic random variable which has the same law as $\omega_x$ or the full environment $(\omega_x)_{x \in \Omega_{\delta}}$, and will be referred to as \textit{disorder}. 
We insist on the fact that the randomness of $\omega$ is not the same as the randomness of the homogeneous model~$\sigma$, and in fact we assume that \(\omega\) and \(\sigma\) are independent.

Given $\beta \in  [0,1]$ (the disorder intensity) and a  \emph{quenched} realization of $\omega$, we define the disordered probability measure $\P_{\Omega_{\delta}}^{\omega,\beta}$ for the field $\sigma = (\sigma_x)_{x \in \Omega_{\delta}}$ as a Gibbs modification of \(\P_{\delta}^{\rm{ref}}\) as follows:
\begin{equation}{\label{eq: disordered model}}
	\P_{\Omega_{\delta}}^{\omega,\beta}(\dd\sigma) := \frac{1}{Z_{\Omega_{\delta}}^{\omega,\beta}} \prod\limits_{x \in \Omega_{\delta}} (1+\beta \omega_x \sigma_x) 
	 \ \P_{\delta}^{\rm{ref}}(\dd\sigma) .
\end{equation}
The normalizing constant $Z_{\Omega_{\delta}}^{\omega,\beta}$ that makes $\P_{\Omega_{\delta}}^{\omega,\beta}(\dd\sigma)$ a probability measure is called the partition function, and is defined by 
\begin{equation}
	\label{def: partition function}
	Z_{\Omega_{\delta}}^{\omega,\beta} := \E_{\delta}^{\rm ref}\bigg[\prod\limits_{x \in \Omega_{\delta}}(1+\beta \omega_x \sigma_x)\bigg] 
	.
\end{equation}
We will refer to $\P_{\Omega_{\delta}}^{\omega,\beta}$ as the \textit{disordered system}.
The only reason we require $\beta \le 1$ is to guarantee that the density of $\P_{\Omega_{\delta}}^{\omega,\beta}$ with respect to $\P_{\delta}^{\rm{ref}}$ is positive (recall we assume that \(\omega > -1\) a.s.). 

\begin{example}[\bf Disordered pinning model]
	\label{ex:pinning2}
	Coming back to Example~\ref{ex:pinning}, the disordered version of the model is known as the \textit{disordered pinning model on a defect line}.
	We come back to this example more in depth in Section~\ref{section: pinning} below; see \cite{giacomin_2007,giacomin_2011} for comprehensive references.
\end{example}

\begin{example}[\bf Long-range directed polymer]
	\label{ex:longrange2}
	As far as Example~\ref{ex:longrange} is concerned, the disordered system based on (long-range) random walks is known as the (long-range) directed polymer model.
	We come back to this example more in depth in Section~\ref{section: directed polymer} below; we refer to~\cite{comets_2017} for a comprehensive overview of the directed polymer model.
\end{example}

\begin{remark}[Product vs.\ exponential weights]
	Readers familiar with disordered systems might be slightly surprised by our definition of the disordered probability measure.
	Usually the Gibbs weight is indeed written in the exponential form $\exp(\beta  \sum_{x \in \Omega_{\delta}} \tilde{\omega}_x \sigma_x)$.
	First of all, let us stress that, for fixed $\beta$, this is the same up to the change of variable $\omega = \frac{1}{\beta}(\expo^{\beta \tilde{\omega}} -1)$. 	
	There are several reasons for which we make this choice.
	The most obvious one is certainly that the product form allows for a more direct polynomial chaos expansion\footnote{Note that this product form is originally used in~\cite{huse_henley_1985} who introduced the directed polymer model, and it also appears explicitly in the seminal work~\cite{akq_2014}, see Equation~(4), as an intermediary step when considering the intermediate disorder scaling limit of the directed polymer model.}, 
	as presented in Section~\ref{section: L2 case} below: in particular, the relevant variables in the expansion are then the~\(\omega\)'s and \textit{not} the \(\tilde{\omega}\)'s.  
	Another reason to consider the product form is that, when considering heavy-tailed random variables, the exponential form gives an overwhelming importance to the high values of \(\tilde \omega\) and the tail of \(\expo^{\beta \tilde{\omega}}\) changes with \(\beta\); in particular, after rescaling, \(\omega\) possesses a scaling limit whereas \(\expo^{\beta \tilde{\omega}}\) never does.
	This is why \cite{lacoin_sohier_2017} and then \cite{berger_lacoin_2021} considered a product form random perturbation (see in particular Remark~1.2 in \cite{berger_lacoin_2021} for more motivations of this choice), which is in fact most natural when investigating the relation between disordered systems and stochastic PDEs, see Sections~\ref{subsec: remark SPDE pinning}-\ref{subsec: remark SPDE polymer} below.
\end{remark}

The question we address in this article is that of the scaling limit of the disordered system in the limit $\delta\downarrow0$, choosing $\beta=\beta_{\delta} \downarrow 0$ in some appropriate scaling window, known as the intermediate disorder regime.

\subsection{Intermediate disorder in the \texorpdfstring{$\mathds{L}^2$}{L2} case: a quick overview}{\label{section: L2 case}}

For pedagogical purposes, before introducing our main results on heavy-tail random variables and Lévy white noises, let us briefly present and comment the case where the random variables~$\omega$ have a finite $\mathds{L}^2$ moment. 
This section reproduces heuristics from Section~1.2. of \cite{csz_2016} and allows us to introduce some important notation and ideas, in particular the notion of \textit{subcritical polynomial chaos}.
Without loss of generality we assume in this section that $\Es[\omega^2] = 1$. 

The starting point of the discussion is the polynomial chaos expansion of the partition function. 
It is helpful for heuristically identifying what we mean by \textit{subcritical regime}, what the assumptions on the homogeneous system are, and what the scaling limit of the disordered system should be. 
Let us focus for now on the partition function \(Z_{\Omega_{\delta}}^{\omega,\beta}\) in~\eqref{def: partition function}.
Expanding the product $\prod_{x \in \Omega_{\delta}}(1+\beta \omega_x \sigma_x)$ and taking the expectation yields
\begin{equation}{\label{eq: first chaos expansion of the partition function}}
	  Z_{\Omega_{\delta}}^{\omega,\beta} = 1 + \sum_{k \ge 1} \beta^k \sum\limits_{\{x_1,\dots,x_k\} \subset \Omega_{\delta}} \E_{\delta}^{\rm ref}\bigg[\prod\limits_{j=1}^k \sigma_{x_j}\bigg] \prod\limits_{j=1}^k \omega_{x_j}  \,.
\end{equation}
In this expansion, note that the $k$-point correlation functions of the homogeneous system appear.
Since the homogeneous system has a scaling limit, it is natural (and satisfied for the systems presented in the above examples) to assume that the correlation functions themselves have a scaling limit. 
More precisely, define for \(x_1,\dots,x_k \in \Omega_{\delta}\) and $\lambda \in (0,+\infty)$,
\begin{equation}{\label{eq: rescaled correlation function}}
	\psi_{\Omega_{\delta}}^{(k)}(x_1,\dots,x_k) := (\delta^{-\lambda})^k \; \E_{\delta}^{\rm ref}\bigg[\prod\limits_{j=1}^k \sigma_{x_j}\bigg],
\end{equation}
with by convention \(\psi_{\Omega_{\delta}}^{(k)}(x_1,\dots,x_k) =0\) on the diagonals, \textit{i.e.}\ if the \(x_i\) are not distinct.
Then, we assume that the scaling exponent \(\lambda\) is such that, for every $k \ge 1$,
\begin{equation}{\label{eq: convergence of the rescaled correlation function}}
	\psi_{\Omega_{\delta}}^{(k)}(x_1,\dots,x_k) \xrightarrow{\ \delta \downarrow 0\ } \bpsi_{\Omega}^{(k)}(x_1,\dots,x_k) \,,
\end{equation}
for some non-trivial function \(\bpsi_{\Omega}^{(k)}\), interpreted as the \(k\)-point correlation function of the scaling limit of the reference model; we also set \(\bpsi_{\Omega}^{(k)}=0\) on the diagonals.
For the time being, we are voluntarily remaining evasive about the precise meaning of the convergence~\eqref{eq: convergence of the rescaled correlation function}. 

\begin{example}[\bf Pinning model]
	For the pinning model of Example~\ref{ex:pinning}, we have that \(\psi_{\Omega_{\delta}}^{(k)}(x_1,\dots,x_k) = \prod_{i=1}^k \P( (x_i-x_{i-1})N \in \tau)\) for any \(0<x_1<\cdots <x_k <1\), with by convention \(x_0=0\).
	Then, we have that \eqref{eq: convergence of the rescaled correlation function} holds with \(\lambda = 1-\alpha\) thanks to \cite{doney_1997}, and \(\bpsi_{\Omega}^{(k)}(x_1,\dots,x_k) = \prod_{i=1}^k(x_i-x_{i-1})^{\alpha-1}\).
\end{example}

\begin{example}[\bf Long-range directed polymers]
	For the long-range directed polymer of Example~\ref{ex:longrange}, we have that \(\psi_{\Omega_{\delta}}^{(k)}( (t_1,u_1),\dots,(t_k,u_k)) = \prod_{i=1}^k \P( S_{ (t_i-t_{i-1})N} = (u_i-u_{i-1}) N^{1/\gamma})\) for any \(0<t_1<\cdots <t_k <1\), with by convention \(t_0=0\), \(x_0=0\).
	Then, by the local limit theorem, we have that  \eqref{eq: convergence of the rescaled correlation function} holds with \(\lambda = \frac{d}{\alpha}\), and \(\bpsi_{\Omega}^{(k)}(x_1,\dots,x_k) = \prod_{i=1}^k \rho_{t_i-t_{i-1}}(u_i-u_{i-1})\) where \(\rho_t(u)\) is the transition density of the limiting \(\alpha\)-stable Lévy process \((X_t)_{t\geq 0}\).
\end{example}

With the definition~\eqref{eq: rescaled correlation function} and using the symmetry of the \(k\)-point correlation functions, this allows us to rewrite the partition function as 
\begin{equation}{\label{eq: chaos expansion for L2 computation}}
	Z_{\Omega_{\delta}}^{\omega,\beta} 
	= 1 + \sum\limits_{k \ge 1}  \frac{1}{k!} (\beta \delta^{\lambda})^{k} \sum\limits_{x_1,\dots,x_k \in \Omega_{\delta}} \psi_{\Omega_{\delta}}^{(k)}(x_1,\dots,x_k) \prod\limits_{j=1}^k \omega_{x_j}.
\end{equation}
We can now compute the $\mathds{L}^2$ moment of $Z_{\Omega_{\delta}}^{\omega,\beta}$. 
Observe that in \ref{eq: chaos expansion for L2 computation}, the terms $\prod_{i=1}^k \omega_{x_i}$ and $\prod_{j=1}^l \omega_{y_j}$ are orthogonal in $\mathds{L}^2$ except if the sets $\{x_1,\ldots,x_k\}$ and $\{y_1,\dots,y_l\}$ coincide.
Therefore, 
\begin{equation*}
	\Es\big[(Z_{\Omega_{\delta}}^{\omega,\beta})^2\big] 
	= 1 + \sum\limits_{k \ge 1}\frac{1}{k!} (\beta \delta^{\lambda})^{2k}  \sum\limits_{x_1,\dots,x_k \in \Omega_{\delta}} \psi_{\Omega_{\delta}}^{(k)}(x_1,\dots,x_k)^2 \,.
\end{equation*}
Then, at least heuristically, for every $k \ge 0$,
\begin{equation*}
		\sum\limits_{x_1,\dots,x_k \in \Omega_{\delta}} \psi_{\Omega_{\delta}}^{(k)}(x_1,\dots,x_k)^2 \approx \delta^{-k\deff} \int_{\Omega^k} \bpsi_{\Omega}^{(k)}(x_1,\dots,x_k)^2 \dd x_1 \dots \dd x_k \,,
\end{equation*}
recalling that \(\deff\) is the effective dimension of the discretization \(\Omega_{\delta}\), see~\eqref{eq: Omega delta}.
This yields
\begin{equation}{\label{eq: identification of Harris criterion}}
	\Es\big[(Z_{\Omega_{\delta}}^{\omega,\beta})^2\big] \approx 1 + \sum\limits_{k \ge 1} \frac{1}{k!} \left(\beta \,\delta^{\lambda-\deff/2}\right)^{2k} \int_{\Omega^k} \bpsi_{\Omega}^{(k)}(x_1,\dots,x_k)^2 \dd x_1 \dots \dd x_k.
\end{equation}

\noindent
Now, there are two cases to distinguish:
\begin{enumerate}
	\item \textit{If $\deff/2 - \lambda  < 0$}.
	For $\beta$ small enough, one may show that $Z_{\Omega_{\delta}}^{\omega,\beta}$ has a uniformly bounded~$\mathds{L}^2$ moment. 
	In that case, any weak disorder limit (\(\beta=\beta_{\delta} \downarrow0\)) is trivial and \textit{disorder is irrelevant} (in the sense of \cite[\S1.3]{csz_2016}).
	We also dub this case \textit{supercritical}, since the terms in the chaos expansion~\eqref{eq: identification of Harris criterion} grow with \(k\); it also corresponds to a case where the integrals \(\int_{\Omega^k} \bpsi_{\Omega}^{(k)}(x_1,\dots,x_k)^2 \dd x_1 \dots \dd x_k\) are divergent.
	
	\item \textit{If $ \deff/2 - \lambda  > 0$.}
	In particular, for a fixed $\beta > 0$, the $\mathds{L}^2$ moment of $Z_{\Omega_{\delta}}^{\omega,\beta}$ diverges. 
	One may hope to scale \(\beta=\beta_{\delta} \downarrow 0\) appropriately to obtain a non-trivial scaling limit and \textit{disorder is relevant} (in the sense of \cite[\S1.3]{csz_2016}). 
	We also dub this case \textit{subcritical}, since the terms in the chaos expansion~\eqref{eq: identification of Harris criterion} vanish with \(k\); it also corresponds to a case where the integrals \(\int_{\Omega^k} \bpsi_{\Omega}^{(k)}(x_1,\dots,x_k)^2 \dd x_1 \dots \dd x_k\) are convergent.
\end{enumerate}
This disjunction of cases is linked to the \textit{Harris criterion} for disorder relevance; we will come back to this in Section~\ref{section: Harris} below.
We do not mention the marginal case where $\lambda = \frac{\deff}{2}$ because this is not the focus of the present article. 
Instead, we refer to \cite{csz_2017} for an extensive discussion, or~\cite{zygouras_2024,CSZ_review25} for an overview of the recent progress in the marginally relevant case.

\smallskip
Assume that we are in the subcritical (disorder relevant) case, \textit{i.e.} $\lambda < \deff/2$. 
From \eqref{eq: identification of Harris criterion}, we may identify the correct \textit{intermediate disorder regime}.
Indeed, if one scales \(\beta=\beta_{\delta} \downarrow 0\) as \(\delta\downarrow0\), in such a way that
\begin{equation}{\label{eq: intermediate window L2}}
	\lim_{\delta\downarrow0} \,\beta_{\delta} \, \delta^{\deff/2-\lambda}  = \hbeta \in [0,\infty) \,,
\end{equation}
then we obtain that
\(
\Es[(Z_{\Omega_{\delta}}^{\omega,\beta})^2] \approx 1 + \sum_{k \ge 1} \frac{\hbeta^{2k}}{k!} \int_{\Omega^k} \bpsi_{\Omega}^{(k)}(x_1,\dots,x_k)^2 \dd x_1 \dots \dd x_k \,. 
\)
Hence, a very natural assumption on the \(k\)-point correlation functions is that
\begin{equation}
	\sum\limits_{k \ge 1} \frac{\hbeta^{2k}}{k!} \int_{\Omega^k} \bpsi_{\Omega}^{(k)}(x_1,\dots,x_k)^2 \dd x_1 \dots \dd x_k < +\infty
	.
\end{equation}
This is the main assumption (written in a slightly less general set up) of \cite[Theorem~2.3]{csz_2016}, whose conclusion is that $Z_{\Omega_{\delta}}^{\omega,\beta}$ converges in distribution as $\delta \to 0$. 
We postpone the precise description of the limit to Section~\ref{section: main results}, where we will compare this with the results we obtain.

\subsection{The case of heavy-tail disorder: subcriticality and disorder relevance}

From now on, we focus on the case of an environment $\omega$ whose tail distribution has a power-law decay. 
More precisely, assume that \(\Es[\omega]=0\) and that there exist  $\gamma \in (1,2)$ and a constant $C_0$ such that 
\begin{equation}
	\label{def: omega tail}
	\Pro[\omega > t] \sim C_0 t^{-\gamma}\,, \quad \text{ as } t \to +\infty\,.
\end{equation}
By the so-called layer-cake representation, this implies that $\omega$ has a finite expectation and an infinite variance. 
We could consider a slightly more general class of environments, allowing for slowly-varying functions in the tail of $\omega$.
The general setting will be presented in Section \ref{section: convergence polynomial chaos}(see \eqref{eq: definition of a general heavy-tailed random variable}).
In this introduction, for pedagogical purposes, we restrict ourselves to the pure-power tail \eqref{def: omega tail}.

Let us now identify the subcritical regime and the intermediate disorder window as we did in the $\mathds{L}^2$ case. 
We keep the notation of the rescaled correlation function introduced in~\eqref{eq: rescaled correlation function}.
We can no longer calculate the $\mathds{L}^2$ moment of the partition, instead we will use the scaling limit of the environment. 
Let us define the discrete noise
\begin{equation}{\label{eq: definition of the discrete noise}}
\zeta_{\delta} := \delta^{\deff/\gamma} \sum\limits_{x \in \Omega_{\delta}} \omega_x \delta_x.
\end{equation}
Under assumption~\eqref{def: omega tail}, we have that, as $\delta \downarrow 0$, $\zeta_\delta$ converges (in some specific Sobolev space) to a \(\gamma\)-stable Lévy white noise $\bzeta$, which is a random distribution which is the equivalent of Gaussian white noise in a non-$\mathds{L}^2$ setting. 
We will give a more precise description of it below, see Section~\ref{section: subcritical polynomial chaos};  we also refer to Appendix~\ref{thm: functional convergence for the noise} for details. 

With the above notation, we can rewrite our partition function as follows (recall~\eqref{eq: chaos expansion for L2 computation}):
\begin{equation}{\label{eq: chaos expansion of the partition function}}
	\begin{split}
		Z_{\Omega_{\delta}}^{\omega,\beta} &= 1 + \sum\limits_{k \ge 1} \frac{1}{k!}\left(\beta \delta^{\lambda-\deff/\gamma}\right)^k \sum\limits_{x_1,\dots,x_k \in \Omega_{\delta}} \psi_{\Omega_{\delta}}^{(k)}(x_1,\dots,x_k) \prod\limits_{j=1}^k \big(\delta^{\deff/\gamma} \omega_{x_j} \big) \\
		&= 1 + \sum\limits_{k \ge 1} \frac{1}{k!}\left(\beta \delta^{\lambda-\deff/\gamma} \right)^k \int_{\Omega^k} \overline{\psi}_{\Omega_{\delta}}^{(k)}(x_1,\dots,x_k) \prod\limits_{j=1}^k \zeta_{\delta}(\dd x_j),
	\end{split}
\end{equation}
where $\overline{\psi}_{\Omega_{\delta}}^{(k)}$ denotes the piecewise constant extension of $\psi_{\Omega_{\delta}}^{(k)}$ to $\Omega^k$.
Under this form, it is now clear that \textit{subcriticality} or \textit{supercriticality} (that can also be interpreted as disorder \textit{relevance} or \textit{irrelevance}) depends on whether $\lambda > \deff/\gamma$ or \(\lambda < \deff/\gamma\).
Furthermore, in analogy with~\eqref{eq: intermediate window L2} in the \(\mathds{L}^2\) case, the intermediate disorder regime now corresponds to
\begin{equation}{\label{eq: intermediate disorder window}}
	\lim_{\delta\downarrow 0} \, \beta_{\delta}\, \delta^{\deff/\gamma-\lambda} = \hbeta \in [0,\infty) \,.
\end{equation} 
Overall, the limit of the rescaled partition function clearly should be the expression~\eqref{eq: chaos expansion of the partition function} with \(\psi_{\Omega_{\delta}}^{(k)}\) and \(\zeta_{\delta}\) replaced by their respective limits \(\bpsi_{\Omega}^{(k)}\) and \(\bzeta\). 
However, without even mentioning summability issues, the meaning of the iterated integrals appearing in the sum is not clear: indeed $\bzeta$ lives in some negative Sobolev space and the correlation functions could be too singular.

\subsection{Main results in a nutshell}

Let us now briefly present our main results to give an overview of the scope of the paper.
We purposely remain evasive on some definitions since they would require the introduction of further notation: for the full statements of our results, we refer to Section~\ref{section: main results} below.

First of all, we show that, under appropriate (subcritical) integrability condition on the continuum \(k\)-point correlation functions, the candidate scaling limit is well defined.

\begin{theorem}[Well-posedness of the continuous partition function]
\label{thm: continuum Z introduction}
Let \(\bzeta\) be a \(\gamma\)-stable Lévy white noise with \(\gamma \in (1,2)\).
Assume that there is some \(q\in (\gamma,2]\) such that for every $C \ge 0$,
\begin{equation}
	\label{eq: integrability phi}
	\sum_{k\geq 1} C^k \|\bpsi^{(k)}\|_{q} <+\infty \,,
\end{equation}
where \(\|\cdot\|_{q}\) denotes the symmetric \(L^q \)-norm (see its definition \eqref{eq: def of symmetric norms} below).
Then, the continuum partition function 
\begin{equation}
	\label{eq: limiting chaos expansion}
	\cZ_{\Omega}^{\bzeta,\hbeta} = 1 + \sum\limits_{k \ge 1} \frac{\hbeta^k}{k!} \int_{\Omega^k} \bpsi^{(k)}(x_1,\dots,x_k) \prod\limits_{j=1}^k \bzeta(\dd x_j)
\end{equation}
is a well-defined (and non-degenerate) \(\bzeta\)-measurable random variable in \(\mathbb{L}^p\) for every $ p \in [0,\gamma) $.
\end{theorem}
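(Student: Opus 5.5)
The plan is to define each multiple stochastic integral $I_k(f)=\int_{\Omega^k} f\,\prod_j\bzeta(\dd x_j)$ for symmetric $f$ vanishing on diagonals. We first define it for simple functions and then extend by density. The key is an $\mathbb{L}^p$ bound of the form
\[
\Es\big[|I_k(f)|^p\big]^{1/p}\le C_{p,q}^{\,k}\,\|f\|_q \,, \qquad p<\gamma<q\le 2,
\]
valid at least along a suitable decomposition of the noise. The symmetric norm is handled through a decoupling/truncation argument.

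First we split the Lévy white noise. Write $\bzeta=\bzeta^{\le}+\bzeta^{>}$, where $\bzeta^{\le}$ collects the jumps of size at most $1$ (compensated) and $\bzeta^{>}$ the large jumps. The small-jump part has finite moments of every order up to $2$, and indeed of order $q>\gamma$, because its Lévy measure $|z|^{-1-\gamma}\Ind_{|z|\le1}$ has finite $q$-th moment when $q>\gamma$. For integrals driven only by $\bzeta^{\le}$, a martingale (Burkholder--Davis--Gundy/von Bahr--Esseen type) iteration in the $q$-th moment, ordering the points through an arbitrary filtration built on a partition of $\Omega$, gives $\Es|I_k^{\le}(f)|^q\le C^k\|f\|_q^q$. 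The constant is geometric in $k$, since each step of the iteration uses the inequality $\Es|\sum M_i|^q\le 2\sum\Es|M_i|^q$ for $q\in[1,2]$, applied to a martingale difference sequence. The large-jump part $\bzeta^{>}$ is a compound Poisson measure. Because $\Omega$ is bounded, it has almost surely finitely many atoms $N$, with $N$ Poisson distributed and jump sizes of tail $z^{-\gamma}$. Its integrals are finite sums, and we control their $p$-th moments for $p<\gamma$ using independence and the finiteness of $\Es|z|^p$.

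Next we expand $\prod_j(\bzeta^{\le}+\bzeta^{>})(\dd x_j)$ into $2^k$ mixed terms. For each mixed term we condition on $\bzeta^{>}$ and apply the $q$-moment bound in the small-jump variables. This requires controlling the partially evaluated kernel $f(y_1,\dots,y_m,\cdot)$ at the Poisson atoms, in $L^q$ of the remaining variables. Taking the expectation over the atoms with weight $|z|^p$, and using Jensen to go from $p$ to $q$ (here $p<\gamma<q$ is used through $\Es[\,\cdot\,]^{p/q}$), turns the atom locations into Lebesgue integrals, since the intensity of the atoms is $\dd x$ times a finite mass. The result is a bound by $\|f\|_q$ on the bounded domain; this step is where Hölder on bounded $\Omega$ enters. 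We expect this mixed-term estimate to be the main obstacle. The intensity integral produces $\int|f|^{p}$ in the atom variables rather than $|f|^q$, and one must interpolate correctly without losing more than a factor $C^k$, combined with the $k!$ from symmetrization. We would first try doing the bound for $p$ close to $1$, using subadditivity $|a+b|^p\le|a|^p+|b|^p$ when $p\le1$ is not available. Failing that, we would prove tightness/convergence in probability for $p$-norms with $p\in[0,1)$ via the $\mathbb{L}^0$ metric, and obtain $\mathbb{L}^p$ for $p<\gamma$ afterwards by a truncation of the number of big atoms.

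With the estimate in hand, the integral extends by density from simple symmetric functions to $L^q$-symmetric kernels. Linearity and $\bzeta$-measurability pass to the limit. The series \eqref{eq: limiting chaos expansion} then converges absolutely in $\mathbb{L}^p$, because its $k$-th term is bounded by $\frac{\hbeta^k}{k!}C^k k!^{\,\cdot}\|\bpsi^{(k)}\|_q$, which is summable by hypothesis \eqref{eq: integrability phi} applied with $C$ replaced by $C_{p,q}\hbeta$. For $p<1$ we use the quasi-norm triangle inequality instead.

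Non-degeneracy is obtained separately. The expectation of each chaos vanishes for $p\ge1$, so $\Es\,\cZ=1$. Moreover the first chaos $\hbeta\int\bpsi^{(1)}\dd\bzeta$ is a non-trivial $\gamma$-stable variable, and it is not cancelled by the higher chaoses. To see this, check that the law is not a Dirac mass, for instance via the projection $\Es[\cZ\mid \bzeta(A)]$ for small sets $A$, or by scaling in $\hbeta$.
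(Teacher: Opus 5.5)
There is a genuine gap, and it sits exactly at the step you flag as the main obstacle. It is not a side issue: the statement asks for $\mathbb{L}^p$ with $p\in(1,\gamma)$, and nothing in the proposal gives, for such $p$, a bound on the mixed big/small-jump terms with a constant geometric in $k$.

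In your order of conditioning you freeze $\bzeta^{>}$, apply the $q$-bound to the small jumps, and then use conditional Jensen. This leaves you with $\Es\big[\|g\|_{L^q}^p\big]$, where $g$ is the kernel integrated against the big jumps. You then need $\Es\|g\|_{L^q}^p\le C^m\int\|f(y,\cdot)\|_{L^q}^p\,\dd y$. For $p\le 1$ this holds pathwise, by the triangle inequality in $L^q$, subadditivity of $t\mapsto t^p$ and the multivariate Mecke formula. H\"older in $y$ on the bounded $\Omega$ then gives $\|f\|_q$ at cost $|\Omega|^{m(1/p-1/q)}$. So your route does close for $p\in(0,1]$, which already yields well-definedness, $\bzeta$-measurability and $\Es\,\cZ=1$. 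The ``$\int|f|^p$ versus $|f|^q$'' interpolation you worry about is just that harmless H\"older step.

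For $p>1$, however, finite sums, independence and $\Es|z|^p<\infty$ are not enough. Subadditivity fails, and for example H\"older over the Poisson number of atom tuples costs $\Es\big[((N+1)\cdots(N+m))^{p-1}\big]\ge (m!)^{p-1}$. The hypothesis on $\|\bpsi^{(k)}\|_q$ does not absorb such a factor. You need the big jumps compensated and a martingale inequality. In your order this would have to be a martingale-type-$p$ inequality for $L^q$-valued martingales, which you do not invoke.

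The simpler fix, which is what the paper does (Corollary~\ref{cor: less naive Lp estimate} inside Proposition~\ref{prop: Lp estimates for simple functions}), is to condition the other way round:
\begin{enumerate}
\item On a partition $(A_i)$, write the mixed term as $\sum_{i_1<\dots<i_k}\alpha_{i_1\dots i_k}\prod_{j\in J}\homega_{i_j}\prod_{j\notin J}\bomega_{i_j}$, with both families compensated.
\item Freeze the small-jump variables and run your own von Bahr--Esseen/BDG iteration in $\mathbb{L}^p$ over the big-jump indices. The coefficients $g\big((i_j)_{j\in J}\big)$ are then small-jump chaoses, and you get $\sum_{(i_j)_{j\in J}}\prod_{j\in J}\|\homega_{i_j}\|_p^p\,\Es|g|^p$.
\item Use $\Es|g|^p\le(\Es|g|^q)^{p/q}$, then the $\mathbb{L}^q$ iteration for $g$, then Jensen over the big-jump indices on the bounded $\Omega$. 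This gives $C^k\|f\|_q$ for every $p\in(1,\gamma)$. Since big and small jumps are independent, no decoupling is needed here.
\end{enumerate}

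Your fallback cannot replace this. Convergence in $\mathbb{L}^0$, or in $\mathbb{L}^p$ with $p<1$, says nothing about moments of order $p>1$. Truncating the number of big atoms leaves $\Es[|\cdot|^p;\,N>K]$ uncontrolled.

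Smaller points:
\begin{itemize}
\item $\bzeta^{>}$ must carry the compensator $-\kappa(1)\dd x$ for $\bzeta=\bzeta^{\le}+\bzeta^{>}$ to hold.
\item With $I_k$ defined without the $1/k!$, the correct bound is $\|I_k(f)\|_p\le k!\,C^k\|f\|_q$. This is precisely what cancels the $1/k!$ in the series.
\item Conditioning on $\bzeta(A)$ does not annihilate higher chaoses, not even in the Gaussian case. Your non-degeneracy sketch therefore needs another argument, though the paper does not detail this point either.
\end{itemize}
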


Our second result is that, under an appropriate \textit{subcritical} condition, the limit of the partition function in the intermediate disorder regime~\eqref{eq: intermediate disorder window} is the random variable constructed in Theorem~\ref{thm: continuum Z introduction}. 

\begin{theorem}[Intermediate disorder limit]
	\label{thm: convergence introduction}
	Assume that the noise \(\omega\) is centered and has a tail distribution given by~\eqref{def: omega tail} with \(\gamma \in (1,2)\). 
	Assume also that there exists some \(q\in (\gamma,2]\) such that the correlation functions \((\psi_{\delta}^{(k)})_{k\geq 1}\) satisfy the following:
	\begin{enumerate}
		\item For every \(k\in \mathbb{N}\) there exists some continuous \(k\)-point correlation function \(\bpsi^{(k)}\) such that 
		\[
		\lim_{\delta\downarrow 0} \big\| \overline{\psi}_{\delta}^{(k)} - \bpsi^{(k)} \big\|_q=0 \,;
		\]

		\item The following uniform integrability condition is verified: for every \(C>0\),
		\[
		\limsup_{\delta \downarrow 0} \sum_{k \ge 0} C^k \| \overline{\psi}_{\delta}^{(k)} \|_{q} < +\infty
		\quad \text{ and } \quad
		\lim\limits_{M \to +\infty} \displaystyle\displaystyle\limsup_{\delta \downarrow 0} \sum\limits_{k > M} C^k \| \overline{\psi}_{\delta}^{(k)} \|_{q} = 0.
		\]
	\end{enumerate}
	Then, under the \emph{subcritical} condition \(\lambda < \frac{\deff}{\gamma}\), in the intermediate disorder regime of~\eqref{eq: intermediate disorder window}, \textit{i.e.}\ if \(\lim_{\delta\downarrow 0}   \beta_{\delta} \delta^{\deff/\gamma - \lambda} = \hbeta \in [0,\infty)\), we have the following convergence in distribution:
	\[
	Z_{\Omega_{\delta}}^{\omega,\beta_{\delta}} \xrightarrow{\ (d)\ }  \cZ_{\Omega}^{\bzeta,\hbeta} \,,
	\]
	where \(\cZ_{\Omega}^{\bzeta,\hbeta}\) is defined in~\eqref{eq: limiting chaos expansion}, with \(\bzeta = \lim_{\delta \downarrow0} \zeta_{\delta}\) a \(\gamma\)-stable Lévy white noise. 
\end{theorem}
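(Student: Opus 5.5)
The plan is a truncation and approximation argument that decouples the two limits: the noise $\zeta_\delta \to \bzeta$ and the kernels $\overline{\psi}_\delta^{(k)} \to \bpsi^{(k)}$. Write $\hbeta_\delta := \beta_\delta \delta^{\lambda - \deff/\gamma}$, so that $\hbeta_\delta \to \hbeta$. By \eqref{eq: chaos expansion of the partition function}, $Z_{\Omega_\delta}^{\omega,\beta_\delta} = 1 + \sum_{k\ge1} \frac{\hbeta_\delta^k}{k!} \zeta_\delta^{\otimes k}(\overline{\psi}_\delta^{(k)})$. It suffices to show that for every bounded Lipschitz test function $F$ we have $\Es[F(Z_\delta)] \to \Es[F(\cZ)]$. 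We reduce this by a standard triangle-inequality scheme, using three approximations, each controlled in $\mathbb{L}^p$ for some fixed $p \in (1,\gamma)$, uniformly in $\delta$.

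\textbf{Step 1: a uniform moment bound for discrete chaos.} The key input is a hypercontractivity-type estimate. For $p<\gamma<q\le 2$ and any symmetric kernel $f$ on $\Omega_\delta^k$ vanishing on diagonals, we want
\[
\Es\big[|\zeta_\delta^{\otimes k}(f)|^p\big]^{1/p} \le C_{p,q}^k \,\|\overline f\|_q ,
\]
with $C_{p,q}$ independent of $\delta$ and $k$. I would take this from the paper's moment estimates (this is the analogue of what underlies Theorem~\ref{thm: continuum Z introduction}). If it had to be proved, I would proceed in three moves. First, split each $\omega_x$ into a truncated part $\omega_x\Ind_{|\omega_x|\le \delta^{-\deff/\gamma}}$ (recentred) and a large part. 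Second, bound the small part via an $\mathbb{L}^q$ computation with $\Es|\delta^{\deff/\gamma}\omega|^q\Ind \lesssim \delta^{\deff}$. Third, bound the large part via an $\mathbb{L}^p$ bound using $\Es|\cdot|^p \lesssim \delta^{\deff}$. Multilinearity and a decoupling/martingale (Burkholder) argument coordinate by coordinate then give geometric constants.

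Combining this bound with hypothesis (ii) makes the tail $\sum_{k>M}$ small in $\mathbb{L}^p$ uniformly in $\delta$, since $\sum_{k>M}\frac{\hbeta_\delta^k}{k!}C^k\|\overline{\psi}_\delta^{(k)}\|_q \to 0$. The same holds for the continuum object, by Theorem~\ref{thm: continuum Z introduction}. This reduces the problem to finitely many chaoses $k\le M$.

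\textbf{Step 2: replace kernels by nice ones.} Fix $k\le M$ and $\varepsilon>0$. Choose a continuous compactly supported symmetric function $g_k$ on $\Omega^k$, vanishing near the diagonals, with $\|\bpsi^{(k)}-g_k\|_q<\varepsilon$. By hypothesis (i), $\|\overline{\psi}_\delta^{(k)} - g_k\|_q < 2\varepsilon$ for $\delta$ small. Replace $g_k$ on the discrete side by its discretization $g_{k,\delta}$, which is still $q$-close. The Step~1 bound and its continuum analogue then control both errors by $C^k\varepsilon$ in $\mathbb{L}^p$.

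\textbf{Step 3: convergence for nice kernels.} It remains to show the joint convergence in law of $\big(\zeta_\delta^{\otimes k}(g_{k,\delta})\big)_{k\le M}$ to $\big(\bzeta^{\otimes k}(g_k)\big)_{k\le M}$. Approximate each $g_k$ further, in $\sup$ norm on a compact set away from diagonals, by finite linear combinations of indicators of products of disjoint boxes $A_1\times\dots\times A_k$. For such an indicator, $\zeta_\delta^{\otimes k}(\Ind_{A_1\times\cdots\times A_k}) = \prod_j \zeta_\delta(A_j)$. This holds because the boxes are disjoint, so the diagonal convention is harmless. The vector $(\zeta_\delta(A))_A$ over finitely many disjoint boxes converges jointly to independent $\gamma$-stable variables $(\bzeta(A))_A$ by the generalized CLT (the functional convergence of the noise in the appendix). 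The continuous mapping theorem then concludes. The sup-norm approximation error is controlled in $\mathbb{L}^p$ by Step~1, since the domain is bounded and so sup-norm controls the $L^q$ norm; this is where boundedness of $\Omega$ is used.

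\textbf{Main obstacle.} I expect Step~1 to be the main obstacle. It needs an $\mathbb{L}^p$ bound for multilinear forms in heavy-tailed variables, with constants that are only exponential in $k$ and uniform in the discretization, and without second moments. The delicate point is handling the mixed terms between the truncated part (controlled in $L^q$) and the heavy part (controlled in $L^p$) for general, non-product kernels.
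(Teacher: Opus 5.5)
Your proposal is correct. Step~1 is the paper's Theorem~\ref{thm: Lp estimates for discrete chaos}, together with its continuum counterpart (Theorem~\ref{thm: Lp estimates for continuous chaos} and Proposition~\ref{prop: martingale property of multiple integrals}). The mixed terms you flag as the obstacle are handled there in three moves:
\begin{itemize}
\item the big-jump and small-jump sequences are first decoupled into independent copies, at a cost of only $(C_1/(p-1))^k$ (Theorem~\ref{thm: Lp decoupling inequality}, via BDG, Hitczenko's comparison of tangent sequences with exponent $p/2\le 1$, and Doob);
\item conditioning on the small-jump copies then allows BDG in $\mathds{L}^p$ on the big-jump positions;
\item finally Jensen passes from $\mathds{L}^p$ to $\mathds{L}^q$.
\end{itemize}
Your truncation at order $M$ and your reduction to smooth kernels supported away from the diagonals also coincide with the paper.

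The genuine difference is Step~3. The paper truncates the \emph{noise}. It replaces $\omega_x$ by $\omega_x\Ind_{|\omega_x|>aV_\delta}-\kappa_\delta(a)$ and shows uniform $\mathds{L}^1$-closeness (Lemma~\ref{lemma: uniform approximability}, which requires choosing $p$ close to $\gamma$ depending on $k$). It then gets convergence of the truncated chaos from convergence of the truncated noise in $H^{-s}_{\rm loc}$ plus continuity of the multilinear map $(\theta_1,\dots,\theta_k)\mapsto\int\bff\prod_j\theta_j(\dd x_j)$. Finally it removes the truncation with the reversed martingale as $a\downarrow 0$.

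You instead truncate the \emph{kernel}. Approximating uniformly by step functions on products of distinct small cubes turns all chaoses of order $\le M$ into a single polynomial in finitely many independent block sums $\zeta_\delta(Q_i)$. The multivariate stable CLT and continuous mapping then finish the argument; this is Itô's classical construction. It is more elementary, avoids the $k$-dependent tuning of $p$, and treats all orders jointly for free.

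It needs three small points you did not spell out:
\begin{itemize}
\item the identification $\bX(\Ind_{Q_1\times\cdots\times Q_k})=\prod_j\bzeta(Q_j)$ for disjoint cubes, which is immediate from the truncated definition since $\bzeta^{(a)}(Q)\to\bzeta(Q)$ in $\mathds{L}^p$ and the factors are independent;
\item $v_\delta\,\#(\Omega_\delta\cap Q)\to|Q|$, which follows from \eqref{eq: Omega delta};
\item a direct justification of the block-sum convergence by the stable CLT for independent sums, not by the $H^{-s}_{\rm loc}$ convergence of the appendix, because $\Ind_Q\notin H^{s}$ for $s>D/2$.
\end{itemize}
The paper's route, conversely, directly yields joint convergence with the whole field $\zeta_\delta$ in $H^{-s}_{\rm loc}$, which it later uses for the Gibbs measures. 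In your framework that would require an extra tightness plus finite-dimensional argument.

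Two bookkeeping remarks. With symmetric norms the correct bound is $\big\|\tfrac{1}{k!}\zeta_\delta^{\otimes k}(f)\big\|_p\le C_0^k\|\overline f\|_q$. Your Step~1 display is therefore too strong by a factor $k!$, and the tail to control is $\sum_{k>M}\hbeta_\delta^k C_0^k\|\overline\psi_\delta^{(k)}\|_q$; this is harmless under (ii). Also, boundedness of $\Omega$ is already essential in Step~1 itself: the Jensen step produces the factor $|\Omega|^{|J|(\frac1p-\frac1q)}$ in $C_0$, so its use is not confined to the sup-norm versus $L^q$ comparison of Step~3.
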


In particular, these results can be applied to the disordered pinning model and the directed polymer model.
We develop in detail these two examples in Section~\ref{section: statistical mechanics}, but let us quickly summarize the results we obtain.
In particular, in the subcritical regime \(\lambda<\frac{\deff}{\gamma}\), additionally to the convergence of the disordered partition function, we establish the convergence of the disordered Gibbs measures towards their continuum counterparts.
They are the analogue of the results of \cite{berger_lacoin_2021,berger_lacoin_2022} for the directed polymer model.

\subsubsection{For the pinning model}

For the disordered pinning model of Example~\ref{ex:pinning} and~\ref{ex:pinning2}, we prove the following. 
We refer to Theorem~\ref{thm: convergence of the partition function of the pinning model}-\ref{thm: convergence of the  pinning measure} for precise (and slightly more general) statements.

\begin{theorem}
	\label{thm: pinning-1}
	Assume that \( \alpha > 1-\frac{1}{\gamma} \) in Example~\ref{ex:pinning} and that \(\beta_{\delta} \downarrow 0\) with \(\lim_{\delta \downarrow 0} \beta_{\delta} \delta^{\alpha-1+\frac{1}{\gamma}} = \hat \beta \in (0,\infty)\).
	Then, we have the following convergence in distribution for the disordered partition function
	\[
	Z_{\Omega_{\delta}}^{\omega,\beta_{\delta}} \xrightarrow[\ \delta\downarrow 0\ ]{ (d) } \cZ_{\Omega}^{\bzeta,\hbeta} \,,
	\]
	where \(\cZ_{\Omega}^{\bzeta,\hbeta}\) a \(\bzeta\) -measurable non-degenerate random variable expressed as a polynomial chaos~\eqref{eq: limiting chaos expansion}, with \(\bzeta = \lim_{\delta \downarrow 0} \zeta_{\delta}\) a \(\gamma\)-stable Lévy white noise on \(\Omega=(0,1)\).

	Additionally, there exists a (disordered) continuum measure $\bQ_0^{\bzeta,\hbeta}$ on the space \(\mathcal{C}_{\infty}\) of closed subsets of \((0,1)\) (with the Matheron topology) such that we have the following convergence in distribution\footnote{on the set \(\mathcal{M}_1(\mathcal{C}_{\infty})\) of probability distributions on \(\mathcal{C}_{\infty}\) endowed with the weak topology.}:
	\[
	\P_{\Omega_{\delta}}^{\omega,\beta} \xrightarrow[\ \delta\downarrow 0\ ]{ (d) } \bQ_0^{\bzeta,\hbeta} \,.
	\]
\end{theorem}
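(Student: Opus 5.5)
We deduce the partition function statement from Theorem~\ref{thm: convergence introduction}, and then obtain the convergence of the Gibbs measures from finite-dimensional quantities built from (restricted) partition functions. In Example~\ref{ex:pinning} we have \(\deff=1\) and \(\lambda=1-\alpha\). So the subcriticality condition \(\lambda<\deff/\gamma\) reads \(1-\alpha<1/\gamma\), which is exactly \(\alpha>1-\frac1\gamma\). The scaling \(\beta_\delta\delta^{\alpha-1+1/\gamma}\to\hbeta\) is exactly~\eqref{eq: intermediate disorder window}. It remains to find \(q\in(\gamma,2]\) for which hypotheses (i)--(ii) hold. Since \(1/(1-\alpha)>\gamma\), we fix any \(q\in(\gamma,\min(2,1/(1-\alpha)))\), so that \(a:=1-q(1-\alpha)>0\).

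First we bound the continuum kernels. On the ordered simplex \(0<x_1<\dots<x_k<1\) we have \(\bpsi^{(k)}=\prod_i(x_i-x_{i-1})^{\alpha-1}\). Integrating the \(q\)-th power via the Dirichlet integral gives
\[
\int_{0<x_1<\dots<x_k<1}\prod_{i=1}^k (x_i-x_{i-1})^{-q(1-\alpha)}\,\dd x=\frac{\Gamma(a)^k}{\Gamma(ka+1)} .
\]
Whatever the precise normalisation of the symmetric norm~\eqref{eq: def of symmetric norms} (it involves at most \(k!\)-type factors), this yields \(\|\bpsi^{(k)}\|_q\le C_1^k\,\Gamma(ka+1)^{-1/q}\). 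This decays superexponentially, so \(\sum_k C^k\|\bpsi^{(k)}\|_q<\infty\) for every \(C\).

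Next we treat the discrete kernels. By the renewal theorem of \cite{doney_1997}, \(\P(n\in\tau)\le c\,n^{\alpha-1}\) for all \(n\ge1\), and \(N^{1-\alpha}\P(\lfloor tN\rfloor\in\tau)\to t^{\alpha-1}\) locally uniformly in \(t\in(0,1]\). For the piecewise constant extension, a point \(x\) in the cell of a grid point \(n/N\) corresponds to a grid increment \(n_i-n_{i-1}\ge1\) whenever the points are distinct. Hence \((n_i-n_{i-1})/N\ge c'|x_i-x_{i-1}|\), which gives the domination \(\overline\psi^{(k)}_\delta\le C_2^k\,\bpsi^{(k)}\) on the ordered simplex, with the unordered case handled by symmetry. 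Two consequences follow:
\begin{itemize}
\item This domination gives both conditions in (ii) uniformly in \(\delta\): the tails \(\sum_{k>M}(CC_2)^k\|\bpsi^{(k)}\|_q\) vanish as \(M\to\infty\).
\item Pointwise convergence off the diagonals, together with dominated convergence in \(L^q\) (the dominating function is in \(L^q\)), gives (i).
\end{itemize}
Theorem~\ref{thm: convergence introduction} then yields the convergence of \(Z_{\Omega_\delta}^{\omega,\beta_\delta}\), and Theorem~\ref{thm: continuum Z introduction} gives non-degeneracy.

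For the Gibbs measures, the space \(\mathcal C_\infty\) of closed subsets with the Matheron topology is compact, so \(\mathcal M_1(\mathcal C_\infty)\) is compact and tightness is automatic. It therefore suffices to prove convergence in distribution of a convergence-determining family of functionals, for instance avoidance probabilities \(\P^{\omega,\beta}_{\Omega_\delta}(\tau_\delta\cap\bigcup_j[a_j,b_j]=\emptyset)\) for finite unions of intervals. Each of these is a ratio \(Z^{\rm restricted}_\delta/Z_{\Omega_\delta}^{\omega,\beta_\delta}\). Here the numerator is the chaos expansion with kernels \(\psi^{(k)}_\delta\mathbf 1_{\{x_i\notin\cup_j[a_j,b_j]\}}\), and these restricted kernels satisfy the same domination as above.

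The step we expect to be the main obstacle is the joint convergence in distribution of finitely many such restricted chaoses, all driven by the same noise \(\zeta_\delta\). Our plan is to rerun the proof of Theorem~\ref{thm: convergence introduction} for linear combinations: approximate by finitely many chaos levels, then by continuous kernels, using the uniform \(\mathbb L^p\) bounds. Combined with \(\cZ_\Omega^{\bzeta,\hbeta}>0\) almost surely, this makes the ratios converge jointly. We would obtain strict positivity by adapting the argument for positivity of multiple stable integrals, or through a comparison with a lower bound on a sub-interval. Finally, \(\bQ_0^{\bzeta,\hbeta}\) is defined as the random measure determined by these limiting avoidance functionals. Their consistency and monotonicity pass to the limit, and the compactness of \(\mathcal C_\infty\) identifies the limit uniquely.
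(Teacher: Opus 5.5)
Your partition-function argument is correct and essentially follows the paper: you verify hypotheses (i)--(ii) of the general theorem with $q\in(\gamma,\min(2,\frac1{1-\alpha}))$ and the Dirichlet integral. The pointwise bound $\overline\psi^{(k)}_\delta\le C_2^k\bpsi^{(k)}$ is a genuine simplification. It works because Example~\ref{ex:pinning} has a pure power law and $h=0$. The paper allows a slowly varying $\ell$ and $\hh\neq 0$, so it needs Potter bounds (Proposition~\ref{prop: technical estimate on slow varying functions}, with an $\gep$-loss) and Lemma~\ref{lemma: scaling limit of the homogeneous partition functions}. Using compactness of $\mathcal{C}_\infty$ to get tightness for free is also a legitimate shortcut compared with Lemma~\ref{lemma: how to prove convergence of disordered measures}.

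The Gibbs-measure half has two genuine gaps.

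\textbf{Wrong kernels.} For $K=\bigcup_j[a_j,b_j]$, the numerator $\E^{\rm ref}_\delta\big[\mathbf{1}\{\tau_\delta\cap K=\emptyset\}\prod_x(1+\beta\omega_x\sigma_x)\big]$ has $k$-th kernel $N^{(1-\alpha)k}\,\P\big(\{\lfloor Nx_1\rfloor,\dots,\lfloor Nx_k\rfloor\}\subset\tau,\ \tau\cap NK=\emptyset\big)$. It is not $\psi^{(k)}_\delta\mathbf{1}\{x_i\notin K\ \forall i\}$. That kernel is the expansion of $\E^{\rm ref}_\delta[\prod_{x\notin K}(1+\beta\omega_x\sigma_x)]$, i.e.\ disorder switched off on $K$; already its $k=0$ term is $1$ instead of $\P(\tau_\delta\cap K=\emptyset)$.

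With the correct kernels your domination still holds, but their pointwise convergence is now the real content. You need $\P(\tau\cap NK=\emptyset\mid \lfloor Nx_1\rfloor,\dots,\lfloor Nx_k\rfloor\in\tau)\to\mathbf{P}(\mathcal{A}_\alpha\cap K=\emptyset\mid x_1,\dots,x_k\in\mathcal{A}_\alpha)$. This is convergence of renewal bridges to regenerative-set bridges; the paper uses \cite[Prop.~A.8]{csz_2016_continuum} to check condition (b) of Theorem~\ref{thm: convergence of the disordered probability}. You also need the invariance principle for the $k=0$ term. Finally, avoidance indicators are only semicontinuous in the Matheron topology, so you must show the limit a.s.\ does not touch $\partial K$ without entering its interior. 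Testing against $G\in\mathcal{C}_b$, as the paper does, sidesteps this last point.

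\textbf{Positivity.} Strict positivity of $\cZ_\Omega^{\bzeta,\hbeta}$ is not established. Each chaos is centred, so no positivity property of multiple stable integrals transfers to the infinite sum. A sub-interval comparison would at least require a zero--one law for $\{\cZ_\Omega^{\bzeta,\hbeta}=0\}$, which you do not supply. The paper flags this as the delicate input and imports it from \cite{faugere_lacoin}. Without it the limiting ratios are not well defined, and $\bQ_0^{\bzeta,\hbeta}$ cannot be constructed this way.
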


The disordered measure \(\bQ_0^{\bzeta,\hbeta}\) is referred to as the \emph{continuum directed pinning model with \(\gamma\)-stable Lévy noise}.
In particular, Theorem~\ref{thm: pinning-1} establishes the conjecture of \cite[\S 2.5.3-(B)]{berger_lacoin_2022} also mentioned in \cite[\S 2.2]{berger_lacoin_2021} (note that the roles of \(\alpha\) and \(\gamma\) are reversed in \cite{berger_lacoin_2021,berger_lacoin_2022}). 
Our proofs build on recent results from~\cite{faugere_lacoin}, where the random measure \(\bQ_0^{\bzeta,\hbeta}\) is also studied.

\subsubsection{For the (long-range) directed polymer model}

For the long-range directed polymer model of Example~\ref{ex:longrange} and~\ref{ex:longrange2}, we prove the following. 
We refer to Theorems~\ref{thm: convergence of the polymer partition function}-\ref{thm: convergence of the polymer measure} for precise statements.

\begin{theorem}
	\label{thm: polymer-1}
	Assume that \( \frac{\alpha}{d} >  \gamma-1\) in Example~\ref{ex:longrange} and that \(\beta_{\delta} \downarrow 0\) with \(\lim_{\delta \downarrow 0} \beta_{\delta} \delta^{\frac{1}{\alpha \gamma}(\alpha - d (\gamma-1))} = \hat \beta \in (0,\infty)\).
	Then, we have the following convergence in distribution for the disordered partition function
	\[
	Z_{\Omega_{\delta}}^{\omega,\beta_{\delta}} \xrightarrow[\delta\downarrow 0]{ (d) } \cZ_{\Omega}^{\bzeta,\hbeta} \,,
	\]
	where \(\cZ_{\Omega}^{\bzeta,\hbeta}\) a \(\bzeta\) -measurable non-degenerate random variable expressed as a polynomial chaos~\eqref{eq: limiting chaos expansion}, with \(\bzeta = \lim_{\delta \downarrow 0} \zeta_{\delta}\) a \(\gamma\)-stable Lévy white noise on \(\Omega=(0,1) \times \mathbb{R}^d\).

	Additionally, there exists a (disordered) continuum measure $\bQ^{\bzeta,\hbeta}$ on the space \(\mathcal{D}([0,1],\mathbb{R}^{d})\) of càdlàg functions with values in \(\mathbb{R}^{d}\) (with the Skorokhod topology) such that we have the following convergence in distribution\footnote{on the set \(\mathcal{M}_1(\mathcal{D}([0,1],\mathbb{R}^{d}))\) of probability distributions on \(\mathcal{D}([0,1],\mathbb{R}^{d})\) endowed with the weak topology.}:
	\[
	\P_{\Omega_{\delta}}^{\omega,\beta} \xrightarrow[\delta\downarrow 0]{ (d) } \bQ^{\bzeta,\hbeta} \,.
	\]
\end{theorem}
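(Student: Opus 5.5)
The plan is to obtain Theorem~\ref{thm: polymer-1} as an application of Theorem~\ref{thm: convergence introduction} and Theorem~\ref{thm: continuum Z introduction}, plus a separate argument for the Gibbs measures. Here $\deff = 1+\frac d\alpha$ and $\lambda=\frac d\alpha$. The subcritical condition $\lambda<\deff/\gamma$ then reads $\gamma d<\alpha+d$, that is $\frac{\alpha}{d}>\gamma-1$. Moreover $\deff/\gamma-\lambda=\frac{\alpha-d(\gamma-1)}{\alpha\gamma}$, which matches the scaling of $\beta_\delta$ in the statement.

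One technical point is that $\Omega=(0,1)\times\bbR^d$ is unbounded. I would handle it by truncation. Restrict the spatial variable to a box $[-R,R]^d$, or equivalently work on the event that the walk stays in that box up to time $N$. The polymer partition function on the full space differs from the truncated one by a term whose $\mathds{L}^p$ norm, for $p\in(1,\gamma)$, is controlled by $\P(\max_{n\le N}|S_n|>RN^{1/\alpha})$. This is uniformly small in $N$ as $R\to\infty$. The same bound passes to the continuum by lower semicontinuity.

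\textbf{Step 1: hypotheses (i)--(ii) of Theorem~\ref{thm: convergence introduction}.} I would pick $q\in(\gamma,2]$ with $q$ close to $\gamma$, so that $\frac{d}{\alpha}(q-1)<1$. This is possible exactly because $\frac d\alpha(\gamma-1)<1$. Then $\rho_t\in L^q(\bbR^d)$ with
\[
\|\rho_t\|_{L^q}^q=c\,t^{-\frac d\alpha(q-1)},
\]
and this exponent is integrable at $0$. The product structure of $\bpsi^{(k)}$ over ordered times gives
\[
\|\bpsi^{(k)}\|_q^q\le \frac{1}{k!}\int_{0<t_1<\dots<t_k<1}\prod_i c\,(t_i-t_{i-1})^{-\theta}\,\dd t,\qquad \theta=\tfrac d\alpha(q-1)<1,
\]
up to the normalisation of the symmetric norm. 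The right-hand side is a Dirichlet integral equal to $c^k\Gamma(1-\theta)^k/\Gamma(k(1-\theta)+1)$, so it decays superexponentially. Hence $\sum_k C^k\|\bpsi^{(k)}\|_q<\infty$ for every $C$. For the discrete counterparts, the local limit theorem and a uniform local bound $\P(S_n=x)\le Cn^{-d/\alpha}$ give the same estimate with $\sum_{x}\P(S_n=x)^q\le C n^{-\frac d\alpha(q-1)}$. This yields the uniform tail bound in (ii). The $L^q$ convergence in (i) then follows by dominated convergence, once the small time increments are controlled through the same integrable bound. With these, the partition function convergence follows, including the truncation argument above.

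\textbf{Step 2: convergence of the measures.} I would show convergence of finite-dimensional marginals jointly with the partition function. For $0<s_1<\dots<s_m<1$ and bounded continuous $f$, write $\E^{\omega}_{\delta}[f(S_{s_iN}N^{-1/\alpha})]$ as the ratio of a point-to-point chaos to $Z$. The point-to-point partition functions $Z_{(s,x),(t,y)}$ are again chaoses, with kernels given by bridge-type densities. The same $L^q$ estimates apply to them, in the spirit of \cite{berger_lacoin_2021,berger_lacoin_2022}. By a joint version of Theorem~\ref{thm: convergence introduction}, the $Z_{(s,x),(t,y)}$ converge jointly, integrated against test functions, to continuum fields. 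This defines the finite-dimensional laws of $\bQ^{\bzeta,\hbeta}$, and the positivity $\cZ>0$ a.s. lets us pass to the ratio. Tightness in the Skorokhod space should follow from a first-moment (annealed) argument: $\Es[Z\,\P^{\omega}(A)]=\P(A)$, and the reference walk is tight. Combined with $Z$ being bounded away from $0$ in probability, this gives quenched tightness in probability.

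\textbf{Main obstacle.} I expect Step 2 to be the hardest part. The difficulty is to prove that the limit $\cZ$ is a.s. positive, and to obtain joint convergence of the point-to-point fields, which must be kept uniform near $s=t$ where the kernels are singular. For positivity I would try a zero-one argument using the multiplicative (Chapman--Kolmogorov) structure. Failing that, I would rely on the positivity results available from \cite{faugere_lacoin}.
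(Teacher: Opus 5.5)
\textbf{Partition function.} Your Step~1 is essentially the paper's argument. The paper also truncates to the bounded strip $\Omega_A=(0,1)\times\{|x|<2A\}$, uses $\int g_t^q=c\,t^{-\frac d\alpha(q-1)}$, bounds the discrete kernels via $\P(S_n=x)\le C a_n^{-d}$ and $\sum_x\P(S_n=x)=1$, and closes with Dirichlet integrals. Two corrections:
\begin{itemize}
\item The truncation error is controlled in $\mathds{L}^1$, not $\mathds{L}^p$. Since $\omega>-1$ and $\beta\le 1$, all weights are nonnegative, so $\Es|Z_N-Z_{N,A}|=\E[1-f_A(S_N^*)]\le\P(S_N^*\ge A)$. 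No $\mathds{L}^p$ bound follows from the paper's estimates, because the constant $C_0$ grows with $|\Omega|$. But $\mathds{L}^1$ is all you need. In the continuum, monotonicity in $A$ (or your lower-semicontinuity argument) makes $(\cZ_A)_A$ Cauchy in $\mathds{L}^1$.
\item The paper truncates with the smooth cutoff $f_A(S_N^*)$ rather than the indicator of a box. Then convergence of the kernels $\psi^{(k)}_{N,A}$ follows directly from the local limit theorem and the invariance principle for bridges \cite{liggett_1970}. With a sharp indicator you would also need the exit boundary to be null for the bridges.
\end{itemize}

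\textbf{Gibbs measures: a genuine gap.} Your tightness argument (annealed first moment plus $Z$ bounded away from $0$) is exactly the paper's. The identification step, however, does not work as written. If you express $\E\big[f(S_{sN}/a_N)\prod_n(1+\beta_N\omega_{n,S_n})\big]$ through point-to-point partition functions, the Markov property gives $\sum_y f(y/a_N)\,Z_{0,(s,y)}Z_{(s,y),\mathrm{free}}$. This is a spatial integral of a product of two fields evaluated at the same point. Convergence of each field ``integrated against test functions'' says nothing about such products, which is why you end up facing a uniformity problem you cannot resolve.

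The missing observation is the following. For every $G\in\cC_b(\cD)$, the numerator $Z_N(G):=\E\big[G(S^{(N)})\prod_n(1+\beta_N\omega_{n,S_n})\big]$ is itself a \emph{single} polynomial chaos. Its kernels $\psi^{(k)}_N(\cdot\,;G)$ satisfy $|\psi^{(k)}_N(\cdot\,;G)|\le\|G\|_\infty\,\psi^{(k)}_N$ and converge pointwise by the local limit theorem and the bridge invariance principle. So Step~1 applies verbatim, with the truncation $f_A(S_N^*)G$ and the $\mathds{L}^1$ error $\|G\|_\infty\P(S_N^*\ge A)$. By linearity this gives joint convergence with $\zeta_N$ and $Z_N$ for finitely many $G$. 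Together with tightness and $\cZ>0$, this is the paper's Lemma~\ref{lemma: how to prove convergence of disordered measures}. No point-to-point fields are needed.

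Working with all of $\cC_b(\cD)$ also removes an issue you skipped. Fixed-time projections are not continuous on $\cD$, so with cylinder functionals you would additionally have to show that the limit measure almost surely gives no mass to paths jumping at the chosen times.

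\textbf{Positivity.} The paper does not prove $\cZ^{\bzeta,\hbeta}>0$ either. It invokes an adaptation of the $\alpha=2$ argument of \cite[Section~4.7]{berger_lacoin_2022}, a zero--one argument in the spirit you propose. Your fallback reference \cite{faugere_lacoin} concerns the pinning model and does not cover the polymer.
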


The disordered measure \(\bQ^{\bzeta,\hbeta}\) is referred to as the \emph{continuum long-range polymer model with \(\gamma\)-stable Lévy noise}.
In particular, Theorem~\ref{thm: polymer-1} establishes the conjecture of \cite[\S 2.5.3-(A)]{berger_lacoin_2022} also mentioned in \cite[\S 2.2]{berger_lacoin_2021} (again, the roles of \(\alpha\) and \(\gamma\) are reversed in \cite{berger_lacoin_2021,berger_lacoin_2022}).

\subsection{Disorder relevance and Harris criterion without second moment}
\label{section: Harris}

As mentioned above, the disjunctions of cases we have obtained are linked to the Harris criterion introduced in \cite{harris_1974}. 
Disorder is said to be \textit{irrelevant} if for small enough disorder intensity \(\beta\) the disordered and homogeneous system share the same features; disorder is said to be \emph{relevant} if the disordered system drastically differs from the homogeneous one, for any disorder intensity \(\beta>0\).
Harris criterion~\cite{harris_1974} states that, for random environments with a finite second moment, disorder relevance/irrelevance can predicted through the \textit{correlation length exponent} \(\nu\) of the homogeneous system; let us stress that the \(k\)-point correlation exponent \(\lambda\) should be related to \(\nu\) by the relation \(\nu = \frac{1}{\deff-\lambda}\) (see \cite[Section~1.3]{csz_2016} for a discussion). 

More precisely, Harris' predictions state that, in the \(\mathds{L}^2\) case, disorder should be irrelevant if \( \deff\nu  > 2\) (\textit{i.e.}\ if \(\lambda < \frac{\deff}{2}\)) and relevant if \( \deff\nu < 2\) (\textit{i.e.}\ if \(\lambda < \frac{\deff}{2}\)); the remaining case \(\nu \deff =2\) is dubbed marginal and disorder relevance should depend on finer properties of the model.
This corresponds exactly to the dichotomy of supercriticality/subcriticality described above in the~\(\mathds{L}^2\) case: in particular, disorder is relevant if the disordered system admits a non-trivial, \textit{i.e.}\ disordered, scaling limit.

Harris criterion has been devised for i.i.d.\ disorder with a finite variance and it does not hold anymore in the heavy-tailed case~\eqref{def: omega tail}.
In that case, we recall that subcriticality corresponds to \(\lambda < \frac{\deff}{\gamma}\) and supercriticality to \(\lambda > \frac{\deff}{\gamma}\) (see~\eqref{eq: chaos expansion of the partition function} and the discussion that follows).
We therefore arrive at the following criterion:
\begin{equation}
	\label{eq: Harris heavy-tail}
	\text{disorder is } 
	\begin{cases}
	\, \text{\textit{relevant}}& \text{ if }\ {\displaystyle  \deff \nu  < \frac{\gamma}{\gamma-1} }  \qquad ( \lambda < \frac{\deff}{\gamma})\,,\\[4pt]
	\, \text{\textit{irrelevant}}& \text{ if }\  {\displaystyle \deff\nu > \frac{\gamma}{\gamma-1}}  \qquad ( \lambda > \frac{\deff}{\gamma})\,.		
	\end{cases}
\end{equation}
This criterion has been established in the case of the disordered pinning model of Example~\ref{ex:pinning2} in~\cite{lacoin_sohier_2017} (see also \cite{lacoin_2017} for a marginal case) or for the directed polymer model of Example~\ref{ex:longrange2} (with the simple random walk, so in particular \(\alpha=2\)) in \cite{viveros_2021}.
As far as the construction of a disordered scaling limit is concerned, only the case of the directed polymer has been treated (again with the simple random walk) in~\cite{berger_lacoin_2021}, under the subcritical condition \(\lambda < 1+ \frac{d}{2}\).


Our main result put the criterion~\eqref{eq: Harris heavy-tail} to rigorous ground in a very broad context.
We refer to Section~\ref{section: statistical mechanics} below for a more thorough discussion on the different examples.

\section{Main results: polynomial chaos with heavy-tail disorder}
\label{section: main results}

In this section, we properly state our main results, introducing all the notations we need.
Notice that once stated as in~\eqref{eq: chaos expansion of the partition function}-\eqref{eq: limiting chaos expansion}, the problem we deal with is more general and addresses the question of the scaling limit of polynomial chaos with asymptotically Lévy noise under some \textit{subcriticality} condition. 
We therefore start by defining in Section~\ref{section: subcritical polynomial chaos} continuum polynomial chaos with \(\gamma\)-stable Lévy white noise under a subcriticality condition.
We then state in Section~\ref{section: convergence polynomial chaos} the convergence of discrete polynomial chaos with heavy-tailed random variables to their continuous counterpart.
Then, in Section~\ref{section: convergence statistical mechanics}, we come back to our original motivation and state our results in the context of scaling limit of disordered statistical mechanics systems: we give conditions for the convergence of the partition function and of the disordered Gibbs measure, and we also discuss further properties.

\subsection{Continuum polynomial chaos with Lévy white noise}
{\label{section: subcritical polynomial chaos}}

\subsubsection{Lévy noise and truncated approximation of continuum polynomial chaos}

First of all, let us give a precise definition of a Lévy white noise $\bzeta$ on \(\Omega\). 
Let~$\Lambda$ be a Poisson point process on $\Omega \times \mathbb{R}$, of intensity $\mu(\dd x \dd z) = \dd x \lambda(\dd z)$ where $\lambda (\dd z)$ is the so-called Lévy measure.
Even if the construction is more general, we focus on the $\gamma$-stable case, which corresponds to taking $\lambda(\dd z) = (c_+ \Ind_{z > 0} + c_- \Ind_{z < 0} ) \gamma |z|^{-1-\gamma} \dd z$ where the constants $c_+,c_-$ tune the asymmetry of the noise and verify $c_+,c_-\ge 0$ and $c_+ + c_- =1$.

For $a > 0$, define the truncated noise
\begin{equation}{\label{eq: definition of the general truncated noise}}
	\bzeta^{(a)} := \sum\limits_{ (x,z) \in \Lambda} z \Ind_{|z| > a} \delta_{x} -\kappa(a) \dd x,
\end{equation}
with $\kappa(a) = \int_{\mathbb{R}} z \Ind_{|z| > a} \lambda(dz)$.
When $a \downarrow 0$, $\bzeta^{(a)}$ converges almost surely in any negative local Sobolev space $H^{-s}_{\rm loc}(\Omega)$ with $s >\frac{D}{2}$ to a random distribution denoted $\bzeta$ called $\gamma$-stable Lévy white noise.
We refer to Theorem~\ref{thm: construction of the continuous noise} in appendix for a statement.

\smallskip
Consider a general family of functions $(\bpsi^{(k)})_{k \ge 0}$ such that $\bpsi^{(k)}$ is defined on $\Omega^k$, is symmetric and vanishes on the diagonals.
By convention, $\bpsi^{(0)}$ is a constant.  
The goal is to properly define the following \textit{Lévy chaos expansion} (in analogy with \textit{Wiener chaos expansion} in the Gaussian case), \textit{i.e.} the continuous chaos expansion with respect to a $\gamma$-stable Lévy white noise $\bzeta$:
\begin{equation}{\label{eq: the continuum limit}}
	\bPhi = \sum\limits_{k \ge 0} \frac{1}{k!} \int_{\Omega^k} \bpsi^{(k)}(x_1,\dots,x_k) \prod\limits_{j=1}^k \bzeta(\dd x_j) \,,
\end{equation}

\begin{remark}
Note that we do not require any positivity or monotonicity (on the noise or the functions~$\bpsi$) which were key features of the previous work of Berger and Lacoin \cite{berger_lacoin_2021,berger_lacoin_2022}. 
\end{remark}

The way to define~\eqref{eq: the continuum limit} is via truncation (analogously to what is done in~\cite{berger_lacoin_2022}).
We introduce, for $a > 0$, the truncated chaos
\begin{equation}
	\label{def: Phia}
	\bPhi^{(a)} := \sum\limits_{k \ge 0} \frac{1}{k!} \int_{x_1,\dots,x_k \in \Omega} \bpsi^{(k)}(x_1,\dots,x_k) \prod\limits_{j=1}^k \bzeta^{(a)}(\dd x_j) .
\end{equation}
Let $\cF = (\cF_a)_{a > 0}$ be the filtration, where $\cF_a= \sigma\{\Lambda \cap (\Omega \times \mathbb{R}\setminus (-a,a))\}$. 
We will see below that $(\bPhi^{(a)})_{a > 0}$ is a time-reversed martingale with respect to the filtration $\cF$. 
Therefore, if the martingale is uniformly integrable, it converges almost surely and in $\mathbb{L}^1$. 
Our first main result is to derive conditions for this to hold. 

\subsubsection{Well-posedness of Lévy chaos expansion}

Before we state our result, let us introduce symmetric norms.
Define for $q > 1$, $k \in \bbN$ and $\bff : \Omega^k \to \R$ a symmetric function, the \textit{symmetric} $L^q$ norm:
\begin{equation}{\label{eq: def of symmetric norms}}
	\big\| \bff \|_q = \Bigg(\frac{1}{k!} \int_{\Omega^k} |\bff(x_1,\dots,x_k)|^q \prod\limits_{j=1}^k \dd x_j \Bigg)^{1/q}.
\end{equation}
The space of symmetric functions on $\Omega^k$ with finite symmetric $L^q$ norm is denoted $L^q_s(\Omega^k)$. 

\begin{remark}
It may seem surprising to include a factorial $k!$ in the definition of the $\L^q$ norm but this is a fairly common issue encountered in Malliavin calculus. 
Indeed, consider the case where the white noise $\bzeta$ is Gaussian and $\Omega = (0,1)$. 
Then if $\bff$ is an $L^2$ symmetric function of $(0,1)^k$, the random variable
\begin{equation}
	\label{eq: Xbff}
	\bX(\bff) := \frac{1}{k!} \int_{(0,1)^k} \bff(x_1,\dots,x_k) \prod\limits_{j=1}^k \bzeta(\dd x_j) = \int_{0 < t_1 < \dots < t_k < 1} \bff(t_1,\dots,t_k) \prod\limits_{j=1}^k \bzeta(\dd t_j)
\end{equation}
is well-defined and 
\[
\Es[  \bX(\bff)^2 ] =  \int_{0 < t_1 < \dots < t_k < 1} \bff(t_1,\dots,t_k)^2 \prod\limits_{j=1}^k \dd x_j = \frac{1}{k!} \int_{(0,1)^k} \bff(x_1,\dots,x_k)^2 \prod\limits_{j=1}^k \dd x_j.
\]
Therefore \( \|\bX(\bff) \|_{2} = \|\bff\|_{2}\) in the sense of symmetric norms, and this shows that the condition $\bff \in L^2_s$ is actually necessary for $\bX(\bff)$ to be well-defined. 
\end{remark}

The question of finding a necessary and sufficient condition for the multiple integral $\bX(\bff)$ in~\eqref{eq: Xbff} to be well-defined when $\bzeta$ is a $\gamma$-stable Lévy noise is known to be more delicate than in the Gaussian setting. 
For instance, it is known that the single integral 
\(
\int_{\Omega} \bff(x) \bzeta(\dd x) 
\)
is well-defined if and only if $\int_{\Omega} |\bff(x)|^{\gamma} \dd x < +\infty$ but this does not extend to multiple integrals (see for example \cite{rosinski_woyczynski_1986}).
We will show that a sufficient condition for the $k$-multiple integral $\bX(\bff)$ to be well-defined is $\bff \in L^q_s(\Omega^k)$ for some $q > \gamma$ but this is known to not be optimal (see for example \cite{surgailis_1985}).

\begin{theorem}[Well-posedness of \(\gamma\)-stable Lévy chaos]
	\label{thm: continuous polynomial chaos}
	Let \(\bzeta\) be a \(\gamma\)-stable Lévy white noise with \(\gamma \in (1,2)\).
	Assume that there exist $q\in (\gamma,2]$ such that family of functions $(\bpsi^{(k)})_{k \ge 0}$ verifies the following: for every $C > 0$, 
	\begin{equation}{\label{hyp: infinite radius of convergence for Lq norms}}
		\sum\limits_{k \ge 0} C^k \big\| \bpsi^{(k)} \big\|_{q} < \infty \,.
	\end{equation}
	Then $\Phi^{(a)}$ is well-defined for every $a \in (0,1]$ and $(\bPhi^{(a)})_{a \in (0,1]}$ is a time-reversed martingale with respect to the filtration $\cF =(\cF_a)_{a > 0}$.

	Additionally, for every $p \in (0,\gamma)$ , $(\bPhi^{(a)})_{a \in (0,1]}$ is bounded in $\mathds{L}^p$ and thus admits a limit \(\bPhi := \lim_{a\downarrow 0} \bPhi^{(a)}\) a.s.\ and in \(\mathds{L}^p\).
	Furthermore, for every $p \in (0,\gamma)$, there exist a constant $C_0 = C_0(p,q,\gamma,\Omega)$ such that
	\begin{equation}
		\label{eq: moments constant C0}
		\big\| \bPhi \big\|_p \le \sum\limits_{k \ge 0} (C_0)^k \big\| \bpsi^{(k)} \big\|_{q} \,.
	\end{equation}
\end{theorem}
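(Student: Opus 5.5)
Fix $a\in(0,1]$. Since $\Omega$ is bounded, the truncated noise $\bzeta^{(a)}$ has almost surely finitely many atoms $(x_i,z_i)$ with $|z_i|>a$, plus the deterministic drift $-\kappa(a)\,\dd x$. So each term of $\bPhi^{(a)}$ can be expanded as a finite sum of integrals of $\bpsi^{(k)}$ against products of Dirac masses and Lebesgue measure. Since $\bpsi^{(k)}$ vanishes on diagonals, atoms may be used only once, and $\bpsi^{(k)}$ is integrated against $\dd x$ in the remaining variables. This is where the off-diagonal convention matters.

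\textbf{Well-posedness of $\bPhi^{(a)}$.} I will show $\Es[|\bPhi^{(a)}|]<\infty$ by the Poisson multivariate Mecke formula. Conditionally on the number of atoms $n$ (Poisson with mean $|\Omega|\lambda(|z|>a)$), these atoms are i.i.d. uniform in $\Omega$. Hence the expectation of each $k$-fold term is bounded by a finite combination of $\|\bpsi^{(k)}\|_{L^1}$ norms of $\bpsi^{(k)}$ and of its partial integrals. By Hölder on the bounded $\Omega$, these are controlled by $C(a)^k\|\bpsi^{(k)}\|_q$, and \eqref{hyp: infinite radius of convergence for Lq norms} sums them.

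\textbf{Reversed martingale.} For $b<a$, write $\bzeta^{(b)}=\bzeta^{(a)}+\bzeta^{(b,a)}$, where $\bzeta^{(b,a)}$ is the compensated Poisson integral over $b<|z|\le a$. This part is centered, independent of $\cF_a$, and is a white noise measure. Expanding multilinearly, every term containing at least one factor $\bzeta^{(b,a)}(\dd x_j)$ has zero conditional expectation given $\cF_a$. This follows from the multivariate Mecke formula for compensated Poisson measures, again using that $\bpsi^{(k)}$ vanishes on the diagonals so that no products of the same compensated point occur. Hence $\Es[\bPhi^{(b)}\mid\cF_a]=\bPhi^{(a)}$.

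\textbf{Key step: uniform $\mathds{L}^p$ bound.} I aim to prove, for each fixed $p\in(1,\gamma)$ and uniformly in $a$,
\[
\Big\|\tfrac1{k!}\int\bpsi^{(k)}\prod\bzeta^{(a)}(\dd x_j)\Big\|_p\le C_0^k\|\bpsi^{(k)}\|_q .
\]
Once this holds, the triangle inequality gives uniform $\mathds{L}^p$ boundedness. Reversed-martingale convergence then yields a.s. and $\mathds{L}^1$ convergence, upgraded to $\mathds{L}^p$, with the bound \eqref{eq: moments constant C0} by Fatou; the case $p\le1$ follows by monotonicity of norms.

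To prove the estimate, I split the Lévy measure into large jumps ($|z|>1$) and small jumps ($a<|z|\le1$).
\begin{itemize}
\item \emph{Small jumps.} These have finite moments of every order up to $2$, with $\int_{|z|\le1}|z|^q\lambda(\dd z)<\infty$ since $q>\gamma$.
\item \emph{Large jumps.} These have $\int|z|^p\lambda<\infty$ since $p<\gamma$.
\end{itemize}
Expanding the product by choosing which variables carry which part gives $2^k$ terms. Each is a multilinear integral, ordered in the spirit of \eqref{eq: Xbff}, so it becomes an iterated martingale transform. I then apply the Burkholder--Davis--Gundy inequality in $\mathds{L}^p$ together with the subadditivity $(\sum x_i^2)^{p/2}\le\sum x_i^p$ (valid since $p\le2$), one variable at a time. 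This is a decoupling or hypercontractivity-type induction on $k$: each integration costs a factor $C\,(\int|z|^{r}\lambda)^{1/r}$ for an appropriate $r\in\{p,q\}$, and produces the $L^q$ norm of $\bpsi^{(k)}$ in that variable after Hölder on bounded $\Omega$.

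\textbf{Main obstacle.} The two integrability exponents clash. Large jumps only allow $L^p$ with $p<\gamma$, while small jumps require $L^q$ with $q>\gamma$, and the integrand must be measured in a single norm. I would handle this with the mixed norm and Hölder: use $\|\cdot\|_{L^p(\dd x)}\le|\Omega|^{1/p-1/q}\|\cdot\|_{L^q(\dd x)}$ in large-jump variables, and keep $L^q$ in small-jump variables. Making the iterated BDG argument close, with conditional moments of mixed orders and constants only geometric in $k$, is the delicate part. A fallback is to work with the truncation symmetrized via decoupling, which gives independent copies of the noise for each coordinate.
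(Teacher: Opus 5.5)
Your pathwise definition of $\bPhi^{(a)}$ for fixed $a$ (Mecke plus Hölder, giving $\Es|\cdot|\le C(a)^k\|\bpsi^{(k)}\|_q$), the martingale property, and the final passage to the limit are fine. Your norm bookkeeping is also right: Hölder to pass from $L^p$ to $L^q$ in the large-jump variables matches the paper's final Jensen step. The gap is the key uniform estimate. You leave it open, and the iteration you sketch does not close as described.

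Suppose you strip off variables one at a time, using BDG and $(\sum_i x_i^2)^{p/2}\le\sum_i |x_i|^p$ inside an $\mathds{L}^p$ norm. Then each small-jump variable is charged its $\mathds{L}^p$ moment. On the fine partitions needed to reach a general $\bpsi^{(k)}\in L^q_s$, one has $\Es|\bomega_i|^p\sim |A_i|\, m_p(a,1)$. But $m_p(a,1)=\int_{a<|z|\le 1}|z|^p\lambda(\dd z)\asymp a^{p-\gamma}/(\gamma-p)$ diverges as $a\downarrow 0$. Hölder in the space variables cannot repair this, because the loss sits in the jump-size moment. To get $m_q(a,1)\le m_q(0,1)<\infty$ instead, you must estimate the small-jump variables in $\mathds{L}^q$ while the overall norm is $\mathds{L}^p$. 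Your proposal has no mechanism for that.

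The missing mechanism is conditioning in the right order, as in Corollary~\ref{cor: less naive Lp estimate}. For each of the $2^k$ terms:
\begin{enumerate}
\item condition on the small-jump variables;
\item iterate BDG in $\mathds{L}^p$ over the large-jump indices only, treating the coefficient $g$ as deterministic;
\item bound $\Es|g|^p\le(\Es|g|^q)^{p/q}$;
\item iterate BDG in $\mathds{L}^q$ over the small-jump indices;
\item apply Jensen to the outer sum, which produces the factor $|\Omega|^{|J|(1/p-1/q)}$.
\end{enumerate}

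This conditioning requires the two groups of variables to be independent. The paper gets this from the decoupling inequality of Theorem~\ref{thm: Lp decoupling inequality}. Its essential feature is a constant $(C_1/(p-1))^k$, geometric in $k$, proved via BDG, Hitczenko's comparison of tangent sequences at exponent $p/2\le 1$, and Doob. Your decoupling fallback is not enough as stated. Generic decoupling inequalities for $U$-statistics have constants growing like $C^{k^2}$. The hypothesis only gives faster-than-exponential decay of $\|\bpsi^{(k)}\|_q$, which cannot absorb that.

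In the Poisson setting of this theorem you could even bypass decoupling. On a partition $(A_i)$, $\homega_i$ and $\bomega_i$ are functions of $\Lambda$ on disjoint regions, hence independent. Within a single type, the ordered off-diagonal structure already makes each BDG step a martingale transform whose top variable is independent of its coefficient.

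Finally, a general $\Omega\subset\bbR^D$ has no canonical time order, so the martingale structure must come from elsewhere. Proceed as in Proposition~\ref{prop: Lp estimates for simple functions}: prove the bound for simple symmetric functions on a partition, with cells ordered arbitrarily. Then extend to $L^q_s(\Omega^k)$ by density, using your $\mathds{L}^1$ bound to identify the limit with your pathwise definition.
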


Let us stress that it is crucial here to have some room between $\gamma$ and the borders of the interval~$[1,2]$: indeed, both limit cases $\gamma =1$ and $\gamma =2$ would require logarithmic corrections. 
Additionally, the bounds on the moments of $\bPhi$ strongly rely on the fact that the domain $\Omega$ is bounded. 
A convincing way to understand why, is the fact that the $L^q$ norm does not dominate the $L^p$ norm if $\Omega$ is unbounded.  

\begin{remark}
	\label{rem: constant C0}
	It may be useful in some application to know how the constant~$C_0=C_0(p,q,\gamma,\Omega)$ appearing in Theorem~\ref{thm: continuous polynomial chaos}-\eqref{eq: moments constant C0} depends on $p$,$q$,$\gamma$ and $\Omega$. 
	One can deduce from the proof that, for $p \in (1,\gamma)$,
	\begin{equation}{\label{eq: the constant C}}
	C_0(p,q,\gamma,\Omega) = \frac{C_1}{(p-1)} \max\{(\gamma-p)^{-1/p},(q-\gamma)^{-1/q}\}\, 
	(|\Omega|\vee 1)^{\frac1p - \frac1q}\,,
	\end{equation}
	where the constant $C_1$ is universal. 
\end{remark}

\subsection{Scaling limit of discrete polynomial chaos}
\label{section: convergence polynomial chaos}

Let us now go back to our initial problem of the scaling limit of the discrete polynomial chaos expansion of the partition function~\eqref{eq: chaos expansion of the partition function}. 
We will now allow for a slightly more general setting.

\subsubsection{Discretization of \(\Omega\) and of \(L^q_s(\Omega)\)}

First, let us give a precise definition of what we mean when we say that $\Omega_{\delta}$ is a discretization of $\Omega$.
For every $\delta \in (0,1)$, $\Omega_{\delta}$ is a finite subset of $\Omega$.
Suppose that to each $\Omega_{\delta}$ is associated a \textit{tesselation} $\cC_{\delta}$ of $\Omega$, that is a map $\cC_{\delta}: \Omega_{\delta} \to \cB(\Omega)$ (with $\cB(\Omega)$ the Borel subsets of $\Omega$) that verifies the following conditions:
\begin{enumerate}{\label{def of a tesselation}}
	\item the ``cells'' $(\cC_{\delta}(x))_{x \in \Omega_{\delta}}$ form a partition of $\Omega$;
	\item for every $x \in \Omega_{\delta}$, $x \in \cC_{\delta}(x)$;
	\item the volume of the cells $\cC_{\delta}(x)$ does not depend on the point $x \in \Omega_{\delta}$ and is denoted $v_{\delta}$. 
\end{enumerate}
We assume that $v_{\delta} \to 0$ as $\delta \downarrow 0$, meaning that the discrete sets $\Omega_{\delta}$ asymptotically approximate $\Omega$.
Note that in Section \ref{section: introduction}, we had assumed $v_{\delta}$ had the specific form $\delta^{\deff}$.
This is actually not necessary and, in particular, relaxing it will allow us to consider more general random walks for the (long-range) directed polymer model (see Section~\ref{section: directed polymer} below). 

Then, for every $\delta \in (0,1)$, any symmetric function $f : \Omega_{\delta}^k \to \bbR$ which vanishes on the diagonals can be extended into a piecewise constant function $\overline{f} : \Omega^k \to \bbR$ by assigning value 
\begin{equation}{\label{eq: piecewise extension of f}}
	\overline{f}(y_1,\dots,y_k) = f(x_1,\dots,x_k),
\end{equation}
for every $(y_1,\dots,y_k) \in \cC_{\delta}(x_1) \times \dots \times \cC_{\delta}(x_k)$. 
Note that $\overline{f}$ is symmetric and vanishes on the diagonals and that its symmetric \(L^q\) norm is given by
\begin{equation}{\label{eq: norm of the piecewise extension of f}}
	\big\| \overline{f} \big\|_q = \bigg(\frac{(v_{\delta})^k}{k!} \sum\limits_{x_1,\dots,x_k \in \Omega_{\delta}} |f(x_1,\dots,x_k)|^q\bigg)^{1/q}. 
\end{equation}

For every $\delta \in (0,1)$, let $(\omega_x)_{x \in \Omega_{\delta}}$ be i.i.d.\ \textit{centered} random variables in the domain of attraction of a $\gamma$-stable law with $\gamma\in (1,2)$. 
In particular (see \cite[XVII.5]{feller_1966}), there exists a slowly-varying function~\(\varphi\)\footnote{A function \(\varphi\) is said to be slowly-varying if \(\lim_{t\to\infty} \frac{\varphi(at)}{\varphi(t)} =1\) for any \(a>0\), see \cite{bingham_goldie_teugels_1987}.} such that for every $t > 0$, 
\begin{equation}{\label{eq: definition of a general heavy-tailed random variable}}
	\Pro[|\omega| > t] = \frac{\varphi(t)}{t^{\gamma}},
\end{equation}
and the following limit exists $c_+ = \lim_{t \to +\infty} \frac{\Pro[\omega \ge t]}{\Pro[|\omega| \ge t]}$ and $c_- = 1 -c_+$.
Note that this is slightly more general than the pure power-law tail introduced in \eqref{def: omega tail}, and also removes the condition that \(\omega>-1\) a.s.\ (we do not require any positivity assumption in the section).

Let us define $V_{\delta} \to +\infty$ up to asymptotic equivalence by
\begin{equation}{\label{eq: general scale of the noise}}
	\Pro[|\omega| > V_{\delta}] \sim v_{\delta} \quad \text{ as } \delta \downarrow 0\,.
\end{equation} 
One can actually show that there is a slowly-varying function $\hat{\varphi}$ such that $V_{\delta} = v_{\delta}^{-1/\gamma} \hat{\varphi}(v_{\delta})$, see \cite[\S1.5.7]{bingham_goldie_teugels_1987}.
(Note that in Section~\ref{section: introduction} we had $V_{\delta} \sim C_0^{-1/\gamma} \delta^{-\deff/\gamma}$.)
The definition~\eqref{eq: definition of the discrete noise} of the discrete noise~$\zeta_{\delta}$ has to be adapted and becomes
\begin{equation}{\label{eq: general definition of the discrete noise}}
	\zeta_{\delta} := \frac{1}{V_{\delta}} \sum\limits_{x \in \Omega_{\delta}} \omega_x \delta_x \,.
\end{equation}

\smallskip
For any \(\delta\in (0,1)\), we let $(\psi_{\delta}^{(k)})_{k \in \bbN}$ be functions defined on $\Omega_{\delta}^k$, symmetric and vanishing on the diagonals.
We then consider the following \textit{discrete} polynomial chaos associated with $(\psi_{\delta}^{(k)})_{k \in \bbN}$, with disorder \(\omega\):
\begin{equation}
	\Phi_{\delta} = \sum\limits_{k \in \bbN} \frac{1}{k!} \sum\limits_{x_1,\dots,x_k \in \Omega_{\delta}} \psi_{\delta}^{(k)}(x_1,\dots,x_k) \prod\limits_{j=1}^k \frac{\omega_{x_j}}{V_{\delta}} = \sum\limits_{k \in \bbN} \frac{1}{k!} \int_{\Omega^k} \overline{\psi}_{\delta}^{(k)}(x_1,\dots,x_k) \prod\limits_{j=1}^k \zeta_{\delta}(\dd x_j) \,,
\end{equation}
where we recall that \(\overline{\psi}_{\delta}\) is the piecewise constant extension of \(\psi_{\delta}\).

Our second main theorem shows that, under relevant conditions on the functions $(\psi_{\delta}^{(k)})_{k \in \bbN}$, the discrete polynomial chaos $\Phi_{\delta}$ converges in law as $\delta \to 0$ to its continuous counterpart.

\begin{theorem}{\label{thm: scaling limit of discrete polynomial chaos}}
Assume that \eqref{eq: definition of a general heavy-tailed random variable} holds with \(\gamma \in (1,2)\) and let \(V_{\delta}\) be defined by \eqref{eq: general scale of the noise}.
Assume that there exist $q\in (\gamma,2]$ such that:
\begin{enumerate}[label={(\roman*)}]
	\item \label{hyp: convergence of correlation functions} For every $k \in \bbN$, there exists some $\bpsi^{(k)}\in L_s^q(\Omega^k)$ such that $\lim\limits_{\delta\downarrow 0} \| \overline{\psi}_{\delta}^{(k)}- \bpsi^{(k)}\|_q=0$;
	
	\item \label{hyp: truncation} 
	The following summability conditions are satisfied: for every $C \ge 0$,
	\begin{equation*}
		\limsup_{\delta \downarrow 0} \sum_{k \ge 0} C^k \| \overline{\psi}_{\delta}^{(k)} \|_{q} < +\infty
		\quad \text{ and } \quad
		\lim\limits_{M \to +\infty} \displaystyle\displaystyle\limsup_{\delta \downarrow 0} \sum\limits_{k > M} C^k \| \overline{\psi}_{\delta}^{(k)} \|_{q} = 0.
	\end{equation*}	
\end{enumerate}
Then we have the following joint convergence in distribution in $H_{\rm loc}^{-s}(\Omega) \times \mathbb{R}$, $s > \frac{D}{2}$
\begin{equation*}
	\left(\zeta_{\delta}, \Phi_{\delta}\right) \xrightarrow{(d)} (\bzeta,\bPhi) \quad \text{ as } \delta\downarrow 0 \,.
\end{equation*}
Furthermore, for every $1 < p < \gamma$ and $\delta \in (0,1)$, for the same constant $C_0 = C_0(p,q,\gamma,\Omega)$ appearing in Theorem \ref{thm: continuous polynomial chaos}-\eqref{eq: moments constant C0}, we have
\(
	\| \Phi_{\delta}\|_p \le \sum\limits_{k \in \bbN} (C_0)^k \|\overline{\psi}_{\delta}^{(k)}\|_q
\)
and $\|\Phi_{\delta}\|_p \to \|\bPhi\|_p$ as $\delta \downarrow 0$.
\end{theorem}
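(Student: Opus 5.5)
The plan is a truncation-and-approximation argument that reuses the continuum theory of Theorem~\ref{thm: continuous polynomial chaos}. The first step is the uniform moment bound $\|\Phi_{\delta}\|_p \le \sum_k (C_0)^k \|\overline{\psi}_{\delta}^{(k)}\|_q$. To prove it, I would establish the discrete analogue of the key multilinear estimate behind \eqref{eq: moments constant C0}: for a single homogeneous chaos of degree $k$ with symmetric kernel $f$ on $\Omega_\delta^k$,
\[
\Big\| \frac{1}{k!}\sum_{x_1,\dots,x_k} f(x_1,\dots,x_k)\prod_{j} \frac{\omega_{x_j}}{V_\delta}\Big\|_p \le C_0^k \|\overline f\|_q .
\]
I would prove this by induction on $k$, ordering the points of $\Omega_\delta$ so that the chaos becomes a martingale transform. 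Burkholder's inequality in $\mathds{L}^p$ ($1<p<2$) reduces the problem to the square function. The subadditivity $(\sum a_i^2)^{p/2}\le \sum a_i^p$ then bounds the $p$-th moment by $\sum_x \Es|\omega_x/V_\delta|^p\, \Es|\text{degree-}(k-1)\text{ chaos}|^p$. The delicate point is that $\Es|\omega/V_\delta|^p \asymp v_\delta$ is not the correct scale on its own. I would therefore split $\omega_x$ at level $V_\delta$, exactly as the continuum proof splits $\bzeta$ into small and big jumps. For the small part I use $\Es[|\omega|^q\Ind_{|\omega|\le V_\delta}]\lesssim (q-\gamma)^{-1}V_\delta^q v_\delta$, which is controlled in $\mathds{L}^q$ by $\|\overline f\|_q$ via $q\le 2$ orthogonality-type bounds. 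For the big part I use $\Es[|\omega|^p\Ind_{|\omega|>V_\delta}]\lesssim (\gamma-p)^{-1}V_\delta^p v_\delta$, then Hölder together with boundedness of $\Omega$ to pass from $p$ to $q$, which produces the factor $|\Omega|^{1/p-1/q}$. Karamata's theorem for regularly varying tails gives these truncated moments uniformly in small $\delta$. Since the same constants appear as in the continuum case, this yields the stated $C_0$.

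The second step proves convergence by truncation in degree and by kernel approximation. Fix $M$ and $\varepsilon>0$. By hypothesis~\ref{hyp: truncation} and the first step, $\|\Phi_\delta - \Phi_\delta^{\le M}\|_p$ is small uniformly in small $\delta$, and the same holds for $\bPhi$ by \eqref{eq: moments constant C0}. For each fixed $k\le M$, I choose a kernel $g_k$ that is a finite linear combination of symmetrized indicators of products of disjoint boxes (off-diagonal) with $\|\bpsi^{(k)}-g_k\|_q<\varepsilon$. By hypothesis~\ref{hyp: convergence of correlation functions}, $\|\overline{\psi}_\delta^{(k)}-\overline{g}_{k,\delta}\|_q<2\varepsilon$ for small $\delta$, where $\overline{g}_{k,\delta}$ is the cell-average discretization. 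The linear moment bound from the first step then controls the error in $\mathds{L}^p$ on both the discrete and continuum sides. For kernels of this elementary form, the chaos equals a polynomial in the finitely many box masses $\zeta_\delta(B_i)$, up to diagonal corrections that vanish because the boxes are disjoint and boundary cells are negligible. These box masses converge jointly in law to $\bzeta(B_i)$ by the generalized CLT, which is the finite-dimensional part of the functional convergence of Appendix~\ref{thm: functional convergence for the noise}. Combined with tightness of $\zeta_\delta$ in $H^{-s}_{\rm loc}$, the continuous mapping theorem gives joint convergence of $(\zeta_\delta, \Phi_\delta^{g})$. A standard approximation lemma (Billingsley, Theorem~3.2) then yields $(\zeta_\delta,\Phi_\delta)\to(\bZeta,\bPhi)$. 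Here one needs to identify that the continuum chaos of an elementary kernel equals the same polynomial in $\bzeta(B_i)$; this follows from the truncation definition \eqref{def: Phia} and the $a\downarrow0$ limit.

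Finally, $\|\Phi_\delta\|_p\to\|\bPhi\|_p$ follows from convergence in law together with uniform integrability of $|\Phi_\delta|^p$. The latter is obtained by applying the first step with an exponent $p'\in(p,\gamma)$, which is allowed because the summability in hypothesis~\ref{hyp: truncation} holds for every $C$ and hence with $C_0(p',q,\gamma,\Omega)$.

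I expect the main obstacle to be the first step, namely a sharp multilinear $\mathds{L}^p$ bound uniform in $\delta$ with geometric constants $C_0^k$. The mixed use of $q$-moments for small values and $p$-moments for large values must propagate through the induction without losing factorials. To handle this, I would state the induction for the mixed quantity $\sup$ over splittings, bounding by the $\|\cdot\|_q$ symmetric norm at each layer.
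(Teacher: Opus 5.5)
Your proof has a genuine gap in the first step, exactly at the mixed terms you flag as the main obstacle. Your second and third steps both rely on that step.

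\textbf{Why the recursion fails.} After splitting $\omega=\homega+\bomega$ at level $V_\delta$ and expanding, the problematic terms are those where a small-jump factor sits at a later position than a big-jump factor. A typical example is $\sum_{n_1<n_2} f(n_1,n_2)\homega_{n_1}\bomega_{n_2}$.
\begin{itemize}
\item The recursion you describe (BDG on the last index, then $(\sum a_i^2)^{p/2}\le\sum a_i^p$ and independence) puts every factor in $\mathds{L}^p$ and produces $\sum|f|^p$.
\item We have $\Es|\bomega/V_\delta|^p\asymp V_\delta^{-p}$. Only the truncated moment $\Es[|\omega/V_\delta|^p\Ind_{|\omega|>V_\delta}]$ is of order $v_\delta$, so your claim $\Es|\omega/V_\delta|^p\asymp v_\delta$ is wrong.
\item Consequently, for a flat kernel your bound is of order $v_\delta^{p/\gamma-1}\to\infty$ relative to $\|\overline f\|_q^p$, up to slowly varying factors.
\end{itemize}
To get the right scale, the $\bomega$ index must be handled by a genuine $\mathds{L}^q$ bound. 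But the predictable coefficient $F(n_2)=\sum_{n_1<n_2}f(n_1,n_2)\homega_{n_1}$ is not in $\mathds{L}^q$. Nor can you condition on the $\bomega$'s and treat the $\homega$-chaos separately, because $\homega_x$ and $\bomega_x$ are functions of the same $\omega_x$. A scalar $\mathds{L}^p$ induction hypothesis on degree-$(k-1)$ chaoses, whatever ``mixed quantity'' it is phrased for, does not overcome this.

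\textbf{The missing idea: decoupling.} The paper proves Theorem~\ref{thm: Lp decoupling inequality} with a geometric constant $(C_1/(p-1))^k$, using BDG, Hitczenko's comparison of tangent sequences for the concave power $p/2$, and Doob. This lets you replace $(\homega,\bomega)$ by independent copies. One then:
\begin{itemize}
\item conditions on the $\bomega$-sequence and applies $\mathds{L}^p$ BDG to the $\homega$-chaos with random coefficients $g$;
\item uses $\|g\|_p\le\|g\|_q$;
\item applies $\mathds{L}^q$ BDG to $g$ (Corollary~\ref{cor: less naive Lp estimate}, Proposition~\ref{prop: general Lp-Lq inequality}).
\end{itemize}
H\"older is then used only over the $|J|$ big-jump indices. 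Its cost $N^{|J|(1/p-1/q)}$ is exactly absorbed by the factors $v_\delta^{1/p}$ versus $v_\delta^{1/q}$. A decoupling-free repair would instead need a vector-valued hypothesis controlling $\|(\sum_n|F(n)|^q)^{1/q}\|_p$, together with BDG in $\mathds{L}^p(\ell^q)$ and a Lenglart-type domination for the power $p/q<1$. Nothing of this kind is in your sketch.

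\textbf{Consequences for your other steps.} Your second step uses the first to control both the degree truncation and the kernel approximation errors, and your uniform-integrability argument uses it with $p'\in(p,\gamma)$. The whole statement therefore inherits the gap.

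If you take Theorem~\ref{thm: Lp estimates for discrete chaos} as given, your remaining steps do form a workable alternative to the paper's route:
\begin{itemize}
\item You use elementary kernels on separated boxes plus a CLT for box masses.
\item The paper instead uses smooth kernels, truncation of $\omega$ at $aV_\delta$ (Lemma~\ref{lemma: uniform approximability}, which again needs the mixed estimate), and continuity of the multilinear map on $H^{-s}_{\rm loc}$.
\item Box indicators are not test functions, so joint convergence with $\zeta_\delta$ must come from tightness plus a joint CLT for $(\langle\zeta_\delta,\phi\rangle,\zeta_\delta(B_i))_i$, not from continuous mapping alone.
\item Your deduction of $\|\Phi_\delta\|_p\to\|\bPhi\|_p$ via uniform integrability is correct; the paper leaves this implicit.
\end{itemize}
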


\subsection{Going back to disordered statistical mechanics models}
\label{section: convergence statistical mechanics}

Let us note that the setting of the introduction, \textit{i.e.}\ partition functions of statistical mechanics disordered systems, fits that of Section~\ref{section: convergence polynomial chaos} above: the only specificity is the fact that $c_-=0$ and $c_+=1$ (recall \(\omega >-1\)) and also that the functions \(\psi_{\delta}^{(k)}\) are non-negative.

We have also slightly generalized the setting of Section~\ref{section: introduction} since we can now consider disorder satisfying \eqref{eq: definition of a general heavy-tailed random variable} instead of the pure power-law \eqref{def: omega tail}.
In addition, we can consider a more general rescaling of the correlations functions: let \(J_{\delta}\) be a normalization factor and, for every $k \ge 1$ and $x_1,\dots,x_k \in \Omega_{\delta}$, define
\begin{equation}{\label{eq: general rescaled correlation function}}
	\psi_{\Omega_{\delta}}^{(k)}(x_1,\dots,x_k) := (J_{\delta})^k \,\E_{\delta}^{\mathrm{ref}}\Big[\prod\limits_{j=1}^k \sigma_{x_j}\Big]\,. 
\end{equation} 
From \eqref{eq: chaos expansion for L2 computation}, we can now rewrite the partition function as a polynomial chaos:
\begin{equation}
	Z_{\Omega_{\delta}}^{\omega,\beta} = 1 + \sum\limits_{k \ge 1} \Big( \frac{\beta V_{\delta}}{J_{\delta}} \Big)^k \sum\limits_{x_1,\dots,x_k \in \Omega_{\delta}} \psi_{\Omega_{\delta}}^{(k)}(x_1,\dots,x_k) \prod\limits_{j=1}^k \frac{\omega_{x_j}}{V_{\delta}}\,.
\end{equation}
Then, the condition for \textit{subcriticality} of polynomial chaos can be translated as having $\lim_{\delta \downarrow 0} \frac{V_{\delta}}{J_{\delta}} = +\infty$; the criterion \eqref{eq: Harris heavy-tail} becomes, in this case, that disorder should be \textit{relevant} when $\lim_{\delta \downarrow 0} \frac{V_{\delta}}{J_{\delta}} = +\infty$. 
We now present several results under this subcriticality condition.

\subsubsection{Intermediate disorder for the partition function}

An immediate consequence of Theorems~\ref{thm: continuous polynomial chaos} and~\ref{thm: scaling limit of discrete polynomial chaos} above is that under adequate conditions on the rescaled correlation functions \eqref{eq: general rescaled correlation function}, the partition function~$Z_{\Omega_{\delta}}^{\omega,\beta}$ converges in law as $\delta \to 0$, in the intermediate disorder scaling limit $\beta_{\delta} V_{\delta}/J_{\delta} \to \hbeta \in (0,\infty)$.

\begin{theorem}[Scaling limit of the partition function]
	{\label{thm: convergence of the partition function}}
	Assume that \eqref{eq: definition of a general heavy-tailed random variable} holds with \(\gamma \in (1,2)\) and let \(V_{\delta}\) be defined by \eqref{eq: general scale of the noise}.
	Assume that there exist $q \in (\gamma,2]$ such that the correlations functions~$\psi_{\Omega_{\delta}}^{(k)}$ in~\eqref{eq: general rescaled correlation function} satisfy the hypotheses of Theorem~\ref{thm: scaling limit of discrete polynomial chaos}.
	Then, in the intermediate disorder regime $\lim_{\delta\downarrow 0} \beta_{\delta}\, \frac{V_{\delta}}{J_{\delta}}  = \hbeta\in [0,\infty) $, we have the following joint convergence in distribution:
	\[
	(\zeta_{\delta},Z_{\Omega_{\delta}}^{\omega,\beta_{\delta}}) \xrightarrow{\delta\to 0} (\bzeta,\cZ_{\Omega}^{\bzeta,\hbeta}) \,.
	\]
	Here, $\cZ_{\Omega}^{\bzeta,\hbeta}$ is the \emph{continuum partition function} defined in \eqref{eq: limiting chaos expansion}, which is well-defined thanks to Theorem~\ref{thm: continuous polynomial chaos}.
\end{theorem}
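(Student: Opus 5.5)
The plan is to apply Theorem~\ref{thm: scaling limit of discrete polynomial chaos} to a rescaled family of kernels that absorbs the coupling constant. Set $\hat\beta_\delta := \beta_\delta V_\delta / J_\delta \to \hbeta$. Define
\[
\tilde\psi_\delta^{(k)} := \hat\beta_\delta^{\,k}\,\psi_{\Omega_\delta}^{(k)} \quad (k\ge1),
\qquad
\tilde\psi_\delta^{(0)} := 1,
\]
and set $\tilde\bpsi^{(k)} := \hbeta^k \bpsi^{(k)}$. By the symmetrized expansion \eqref{eq: chaos expansion for L2 computation}, rewritten with the normalizations $J_\delta$ and $V_\delta$, we have exactly
\[
Z_{\Omega_\delta}^{\omega,\beta_\delta} = \sum_{k\ge0}\frac{1}{k!}\sum_{x_1,\dots,x_k}\tilde\psi_\delta^{(k)}(x_1,\dots,x_k)\prod_{j}\frac{\omega_{x_j}}{V_\delta}.
\]
This is the discrete chaos $\Phi_\delta$ built from $(\tilde\psi_\delta^{(k)})$. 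The $k!$ comes from passing from sums over subsets to sums over ordered $k$-tuples, using that $\psi$ is symmetric and vanishes on diagonals. Likewise, $\cZ_\Omega^{\bzeta,\hbeta}$ in \eqref{eq: limiting chaos expansion} is the continuum chaos $\bPhi$ built from $(\tilde\bpsi^{(k)})$.

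Next I check the hypotheses of Theorem~\ref{thm: scaling limit of discrete polynomial chaos} for the tilde family. For (i), fix $k$. Then
\[
\|\overline{\tilde\psi}_\delta^{(k)}-\tilde\bpsi^{(k)}\|_q \le |\hat\beta_\delta^k-\hbeta^k|\,\|\overline\psi_\delta^{(k)}\|_q + \hbeta^k\,\|\overline\psi_\delta^{(k)}-\bpsi^{(k)}\|_q \to 0.
\]
Here $\|\overline\psi_\delta^{(k)}\|_q$ is bounded for small $\delta$, because it converges. For (ii), since $\hat\beta_\delta\to\hbeta$, we have $\hat\beta_\delta\le B:=\hbeta+1$ for $\delta$ small. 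Hence, for every $C$,
\[
\sum_{k>M} C^k\|\overline{\tilde\psi}_\delta^{(k)}\|_q \le \sum_{k>M}(CB)^k\|\overline\psi_\delta^{(k)}\|_q.
\]
The assumed summability for the untilded family at the constant $CB$ then gives both conditions in (ii). Since only small $\delta$ matter for the $\limsup$, this suffices.

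Theorem~\ref{thm: scaling limit of discrete polynomial chaos} then yields $(\zeta_\delta,\Phi_\delta)\to(\bzeta,\bPhi)$ jointly in distribution. The tilde family satisfies \eqref{hyp: infinite radius of convergence for Lq norms}: by Fatou and the convergence in (i), $\|\bpsi^{(k)}\|_q=\lim_\delta\|\overline\psi^{(k)}_\delta\|_q$, so $\sum_k C^k\|\bpsi^{(k)}\|_q\le\limsup_\delta\sum_k C^k\|\overline\psi_\delta^{(k)}\|_q<\infty$ for every $C$. Theorem~\ref{thm: continuous polynomial chaos} therefore shows that $\bPhi=\cZ_\Omega^{\bzeta,\hbeta}$ is well defined, and the joint convergence is exactly the claim.

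The only delicate point is bookkeeping rather than analysis. One must ensure that the chaos expansion of $Z$ matches the $1/k!$-normalized form with exactly the convention \eqref{eq: general rescaled correlation function}, and that the $\delta$-dependent coupling $\hat\beta_\delta$ is absorbed uniformly; the latter is handled by the bound $\hat\beta_\delta\le B$. The case $\hbeta=0$ is included and gives the trivial limit $1$.
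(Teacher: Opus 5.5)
Your proof is correct and follows the paper's route. The paper regards this theorem as an immediate consequence of Theorems~\ref{thm: continuous polynomial chaos} and~\ref{thm: scaling limit of discrete polynomial chaos}: it writes $Z_{\Omega_\delta}^{\omega,\beta_\delta}$ as the discrete chaos with kernels $(\beta_\delta V_\delta/J_\delta)^k\psi_{\Omega_\delta}^{(k)}$ and applies the discrete scaling-limit theorem. Your absorption of the $\delta$-dependent coupling $\hat\beta_\delta$ into the kernels, with the bound $\hat\beta_\delta\le\hbeta+1$ used to transfer hypotheses (i)--(ii), is exactly the bookkeeping the paper leaves implicit. You also correctly keep the factor $1/k!$, which is missing from the paper's displayed expansion in Section~\ref{section: convergence statistical mechanics}.
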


\subsubsection{Intermediate disorder for the disordered Gibbs measure}

Another very natural result is the convergence of the disordered probability \(\P_{\Omega_{\delta}}^{\omega,\beta}\) defined in \eqref{eq: disordered model}. 
For such a result to hold, it makes sense to require that the homogeneous model itself converges in a stronger sense than only through correlation functions. 

Let $E$ be a Polish space such that for every $\delta > 0$, finite fields $\{\sigma_x\}_{x \in \Omega_{\delta}}$ can be viewed as elements of~$E$.
(The pinning model and long-range polymers are such examples, we refer to Section~\ref{section: statistical mechanics} for further details.)
Denote $\cM_1(E)$ the topological space of probability measures on $E$ endowed with the topology of convergence in distribution. 
For any function $G$ defined on $E$, $k \ge 1$ and $x_1,\dots,x_k \in \Omega_{\delta}$, denote
\begin{equation}
	\label{def: generalized correlation functions}
	\psi_{\Omega_{\delta}}^{(k)}(x_1,\dots,x_k;G) := (J_{\delta})^k \: \E_{\delta}^{\mathrm{ref}}\Big[G(\sigma) \prod\limits_{j=1}^k \sigma_{x_j} \Big] \,.
\end{equation}
We denote by \(\mathcal{C}_b(E)\) the set of bounded, continuous functions \(G : E \to \mathbb{R}\).
The following result shows the convergence of the disordered model in the intermediate disorder regime at the level of measures, and in particular construct a \textit{continuum disordered model}.

\begin{theorem}[Scaling limit of the Gibbs measure]
	{\label{thm: convergence of the disordered probability}}
Assume that the conditions of Theorem~\ref{thm: convergence of the partition function} hold.
If furthermore,
\begin{enumerate}[label={(\alph*)}]
	\item \label{cond: invariance principle}
	$\P_{\Omega_{\delta}}^{\mathrm{ref}}$ converges in $\cM_1(E)$ to a probability distribution $\bP_{\Omega}$;
	\item \label{cond: convergence general correlations}
	for every function $G \in \cC_b(E)$ and every $k \in \bbN^*$, $\overline{\psi}_{\Omega_{\delta}}^{(k)}( \cdot,G)$ converges in $L^q_s(\Omega^k)$;
	\item \label{cond: positivite}
	for any $\hbeta \ge 0$, the continuum partition function $\cZ_{\Omega}^{\bzeta,\hbeta}$ is almost surely strictly positive.
\end{enumerate}
Then we have the following joint convergence in distribution in $H_{\rm loc}^{-s}(\Omega) \times \cM_1(E)$, $s > \frac{D}{2}$,
\begin{equation*}
	(\zeta_{\delta}, \P_{\Omega_{\delta}}^{\omega,\beta_{\delta}}) \xrightarrow{(d)} (\bzeta, \bP_{\Omega}^{\bzeta,\hbeta}) \quad\text{ as } \delta\downarrow 0 \,,
\end{equation*} 
where \(\bP_{\Omega}^{\bzeta,\hbeta}\) is the \emph{disordered} version of \(\bP_{\Omega}\); part of the statement is that the measure \(\bP_{\Omega}^{\bzeta,\hbeta}\) is well-defined. 
\end{theorem}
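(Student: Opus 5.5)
The strategy is to reduce the convergence of the random measure to the joint convergence of finitely many polynomial chaoses, each handled by Theorem~\ref{thm: scaling limit of discrete polynomial chaos}.

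\textbf{Expansion for a test function.} For $G\in\cC_b(E)$, expanding the product exactly as in \eqref{eq: chaos expansion for L2 computation} gives
\[
\E_{\Omega_{\delta}}^{\omega,\beta_\delta}[G] = \frac{Z_{\Omega_{\delta}}^{\omega,\beta_\delta}(G)}{Z_{\Omega_{\delta}}^{\omega,\beta_\delta}},
\]
where the numerator is
\[
Z_{\Omega_{\delta}}^{\omega,\beta_\delta}(G) := \E_\delta^{\rm ref}\Big[G(\sigma)\prod_x(1+\beta_\delta\omega_x\sigma_x)\Big] = \sum_{k\ge0}\frac{1}{k!}\Big(\frac{\beta_\delta V_\delta}{J_\delta}\Big)^k \int_{\Omega^k}\overline{\psi}^{(k)}_{\Omega_\delta}(\cdot;G)\prod_j\zeta_\delta(\dd x_j).
\]
This is a discrete polynomial chaos with kernels $\psi^{(k)}_{\Omega_\delta}(\cdot;G)$.

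\textbf{Hypotheses of Theorem~\ref{thm: scaling limit of discrete polynomial chaos} for these kernels.} Since $\sigma_x\ge0$, we have the pointwise bound $|\psi^{(k)}_{\Omega_\delta}(\cdot;G)|\le\|G\|_\infty\psi^{(k)}_{\Omega_\delta}$. The summability and tail conditions (ii) therefore carry over from those for $\psi^{(k)}_{\Omega_\delta}$, with the factor $(\beta_\delta V_\delta/J_\delta)^k\le(\hbeta+1)^k$ absorbed into $C^k$. Hypothesis (i) is condition \ref{cond: convergence general correlations}. Because the noise is shared, the joint version follows: apply the theorem to the kernel family $\sum_i t_i\psi^{(k)}(\cdot;G_i)$ and use linearity and the Cramér--Wold device. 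This gives joint convergence of
\[
\big(\zeta_\delta,\; Z_{\Omega_\delta}^{\omega,\beta_\delta},\; Z_{\Omega_\delta}^{\omega,\beta_\delta}(G_1),\dots,Z_{\Omega_\delta}^{\omega,\beta_\delta}(G_m)\big)
\]
to the limit $\big(\bzeta,\cZ,\cZ(G_1),\dots,\cZ(G_m)\big)$, with the last entries defined through Theorem~\ref{thm: continuous polynomial chaos}. One point needs care: the proof of Theorem~\ref{thm: scaling limit of discrete polynomial chaos} must hold uniformly for $\beta_\delta$ inside the kernels. I will handle this by replacing $\beta_\delta V_\delta/J_\delta$ with $\hbeta$, at a cost controlled by the $L^q$ bounds, which vanishes in $\mathds{L}^p$.

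\textbf{Definition of the limit measure.} Set $\bP^{\bzeta,\hbeta}_\Omega(G):=\cZ(G)/\cZ$, which makes sense since $\cZ>0$ a.s.\ by \ref{cond: positivite}. I must show this defines a.s.\ a probability measure on $E$. The map $G\mapsto\cZ(G)$ is linear and a.s.\ positive on any countable family, as a limit of positive functionals, and $\cZ(1)=\cZ$. To obtain $\sigma$-additivity, I will construct a random measure $\mathbf{M}$ as the a.s.\ limit of the reversed martingale of truncated measures $\mathbf{M}^{(a)}(\dd\sigma)$. Here I expect that at truncation level $a$ the measure is a genuine measure absolutely continuous w.r.t.\ $\bP_\Omega$ (the density being the finite product over Poisson atoms hitting the limiting set). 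Alternatively, I will use tightness: the expectation $\Es[\cZ(G)]=\bE_\Omega[G]$, so the mean measure is $\bP_\Omega$. Tightness of $\bP_\Omega$ then gives compacts $K_n$ with $\Es[\cZ(\mathbf{1}_{K_n^c})]$ small, which yields a.s.\ tightness of the functional and hence a Radon measure by a Daniell/Riesz argument on a countable convergence-determining class.

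\textbf{Convergence in $\cM_1(E)$ and the main obstacle.} Finally, convergence in $\cM_1(E)$ is metrized by a countable family $(G_i)$ of bounded continuous functions, using the distance
\[
\sum_i 2^{-i}\big|\mu(G_i)-\nu(G_i)\big|\wedge 1
\]
restricted to tight sets. Joint convergence of the finite-dimensional ratios follows by the continuous mapping theorem, since the denominator limit is positive. Tightness of the laws of $\P^{\omega,\beta_\delta}_{\Omega_\delta}$ in $\cM_1(E)$ also follows from mean-measure tightness: $\Es[Z(\mathbf 1_{K^c})]=\P^{\rm ref}(K^c)$, combined with the lower bound on $Z$ from convergence to the a.s.\ positive $\cZ$. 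I expect the main obstacle to be this measure-theoretic step, namely establishing that the limit functional is a.s.\ a tight probability measure while handling the fact that $\mathbf 1_{K^c}$ is not continuous. I will address the latter by approximating $\mathbf 1_{K^c}$ with continuous functions, which is permitted by condition \ref{cond: invariance principle}.
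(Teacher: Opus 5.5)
Your argument is essentially the paper's proof, which runs through Lemma~\ref{lemma: how to prove convergence of disordered measures}:
\begin{itemize}
\item expand $Z_{\Omega_\delta}^{\omega,\beta_\delta}(G)$ as a chaos with kernels $\psi^{(k)}_{\Omega_\delta}(\cdot\,;G)$;
\item inherit hypothesis~(ii) of Theorem~\ref{thm: scaling limit of discrete polynomial chaos} from the domination $|\psi^{(k)}_{\Omega_\delta}(\cdot\,;G)|\le\|G\|_\infty\psi^{(k)}_{\Omega_\delta}$;
\item get hypothesis~(i) from condition~(b), except that the $k=0$ term $\E^{\mathrm{ref}}_\delta[G(\sigma)]$ needs condition~(a), which you should say;
\item prove tightness in $\cM_1(E)$ exactly as in your last paragraph. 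Choose $\gep_m\downarrow0$ with $\Pro[Z_{\Omega_\delta}^{\omega,\beta_\delta}\le\gep_m]\le2^{-m}$ and compacts $K_m$ with $\P^{\mathrm{ref}}_\delta(K_m^c)\le\gep_m4^{-m}$. Then bound $\Pro[\P_{\Omega_\delta}^{\omega,\beta_\delta}(K_m^c)>2^{-m}]$ by Markov's inequality, using $\Es[Z_{\Omega_\delta}^{\omega,\beta_\delta}(\Ind_{K_m^c})]=\P^{\mathrm{ref}}_\delta(K_m^c)$.
\end{itemize}
Two of your steps are more careful than the paper. Your Cram\'er--Wold step also yields joint convergence with $G\equiv1$, which is needed to pass to the ratio; the paper only goes from one-dimensional to finite-dimensional marginals by a linearity remark. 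Your replacement of $\beta_\delta V_\delta/J_\delta$ by $\hbeta$ is also justified, whereas the paper writes $\hbeta^k$ directly in the expansion.

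You should drop the separate construction of the limit measure, and with it the ``main obstacle'' you attach to it. As sketched, neither route is justified.
\begin{itemize}
\item Truncation route: nothing in the abstract setting makes the truncated functional, which contains the compensator $-\kappa(a)\,\dd x$, a positive measure. Still less does it have a density with respect to $\bP_\Omega$ given by atoms hitting the limiting object. In the pinning model, for instance, the regenerative set is Lebesgue-null, so a.s.\ no atom of $\Lambda$ lies on it.
\item Daniell route: $\cZ(G)$ is only defined for $G\in\cC_b(E)$, each up to a $G$-dependent null set. So $\cZ(\Ind_{K^c})$ has no meaning, and a Daniell-type extension needs more than control of the mean measure.
\end{itemize}

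No a priori construction is needed. The indicator is harmless at the discrete level, where $Z_{\Omega_\delta}^{\omega,\beta_\delta}(G)\ge0$ is defined for every bounded measurable $G$, with $\Es[Z_{\Omega_\delta}^{\omega,\beta_\delta}(G)]=\E^{\mathrm{ref}}_\delta[G(\sigma)]$. The argument then goes as follows.
\begin{itemize}
\item Prokhorov gives, along subsequences, limits $(\bzeta,\mu)$ with $\mu$ a random element of $\cM_1(E)$.
\item By continuous mapping, $(\bzeta,\mu(G_1),\dots,\mu(G_m))$ has the law of $(\bzeta,\cZ(G_1)/\cZ,\dots,\cZ(G_m)/\cZ)$.
\item This determines the law of $(\bzeta,\mu)$, because the maps $\mu\mapsto\mu(G)$, $G\in\cC_b(E)$, generate the Borel $\sigma$-field of $\cM_1(E)$.
\item Since $\cZ(G)/\cZ$ is $\bzeta$-measurable, $\mu(G)=\cZ(G)/\cZ$ a.s.\ for each $G$.
\end{itemize}
This is how the paper obtains $\bP^{\bzeta,\hbeta}_\Omega$: its well-definedness is a by-product of tightness, and no continuous approximation of $\Ind_{K^c}$ is required.
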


Assumptions~\ref{cond: invariance principle} and~\ref{cond: convergence general correlations} are very natural. 
One could hope that Assumption~\ref{cond: positivite} is actually a consequence of the convergence of $Z_{\Omega_{\delta}}^{\omega,\beta_{\delta}}$ to $\cZ_{\Omega}^{\bzeta,\hbeta}$. 
Unfortunately, we did not find any way to prove this in a general set up.
What one should try to prove is that the event $\{\cZ_{\Omega}^{\bzeta,\hbeta}=0\}$ is actually trivial, \textit{i.e.}\ it has probability $0$ or $1$, so that since $\Es[\cZ_{\Omega}^{\zeta,\hbeta}]=1$ it necessarily has probability $1$. 
This is what is done for instance in~\cite[Proposition~4.4]{berger_lacoin_2022}.
We comment further on this issue in the context of the disordered pinning model and the long-range directed polymer model, in Section~\ref{section: statistical mechanics}.

\begin{remark}[Universality of the scaling limit]{\label{rem: universality}}
	It is remarkable that the limiting object constructed in the above theorems depends only on a few parameters. 
	For instance, $\bP_{\Omega}^{\zeta,\hbeta}$ does not depend on the specific discretization $\Omega_{\delta}$ of $\Omega$ nor on the slowly-varying function~$\varphi$ that appears in the tail~\eqref{eq: definition of a general heavy-tailed random variable} of the discrete environment. 
	In particular, the rescaling $\beta_{\delta}$ depends on these microscopic details but the scaling limit is universal. 
\end{remark}

\subsubsection{About disorder relevance}
\label{subsec: disorder relevance}
One could wonder if it was truly necessary to rescale $\beta$ to have a scaling limit, or if the intermediate disorder scale $\beta_{\delta} \sim \hat \beta J_{\delta}/V_{\delta}$ is truly the relevant scale. 
First of all, let us give the following result, which is a direct corollary of Theorem~\ref{thm: convergence of the partition function} and shows that $\lim_{\delta\downarrow 0} \beta_{\delta} \, \frac{V_{\delta}}{J_{\delta}} = 0$ lies in the so-called \emph{weak disorder} regime.

\begin{corollary}[Weak disorder]
	Assume that the conditions of Theorem~\ref{thm: convergence of the partition function} hold.
	If $\lim_{\delta\downarrow 0} \beta_{\delta} \, \frac{V_{\delta}}{J_{\delta}} = 0$, then $Z_{\Omega_{\delta}}^{\omega,\beta_{\delta}}$ converges to $\cZ_{\Omega}^{\zeta,\hbeta=0} =1$ in \(\mathds{L}^p\) for every $p \in (0,\gamma)$.
\end{corollary}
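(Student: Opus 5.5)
The plan is to get \(\mathds{L}^p\) convergence from the moment bound of Theorem~\ref{thm: scaling limit of discrete polynomial chaos} applied to the centered part of the chaos, rather than to pass through convergence in distribution. Write \(\hbeta_\delta := \beta_{\delta} V_{\delta}/J_{\delta} \to 0\). Then
\[
Z_{\Omega_{\delta}}^{\omega,\beta_{\delta}} - 1 = \sum_{k\ge 1} \frac{1}{k!}\sum_{x_1,\dots,x_k\in\Omega_\delta} \hbeta_\delta^{\,k}\,\psi_{\Omega_{\delta}}^{(k)}(x_1,\dots,x_k)\prod_{j=1}^k \frac{\omega_{x_j}}{V_\delta}\,.
\]
This is a discrete polynomial chaos \(\Phi_\delta\) with kernels \(\hbeta_\delta^k \psi_{\Omega_{\delta}}^{(k)}\) for \(k\ge1\) and zero kernel for \(k=0\). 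The \(1/k!\) comes from the symmetrization already used in \eqref{eq: chaos expansion for L2 computation}; if one prefers the unsymmetrized form, the constant is simply absorbed into the normalization of \(\psi\), exactly as in Theorem~\ref{thm: convergence of the partition function}.

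Fix first \(p\in(1,\gamma)\). The moment estimate at the end of Theorem~\ref{thm: scaling limit of discrete polynomial chaos} holds for every \(\delta\in(0,1)\) and any family of kernels with finite right-hand side. It gives
\[
\big\|Z_{\Omega_{\delta}}^{\omega,\beta_{\delta}}-1\big\|_p \le \sum_{k\ge 1} (C_0\hbeta_\delta)^k\,\big\|\overline{\psi}_{\Omega_\delta}^{(k)}\big\|_q\,.
\]
Once \(\hbeta_\delta\le 1\), bound \((C_0\hbeta_\delta)^k \le \hbeta_\delta\, C_0^k\) for \(k\ge1\). The right-hand side is then at most \(\hbeta_\delta \sum_{k\ge1} C_0^k \|\overline{\psi}_{\Omega_\delta}^{(k)}\|_q\). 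By hypothesis~\ref{hyp: truncation} with \(C=C_0\), the \(\limsup_{\delta\downarrow0}\) of that sum is finite, so the bound tends to \(0\). This proves convergence to \(1=\cZ_\Omega^{\bzeta,0}\) in \(\mathds{L}^p\) for every \(p\in(1,\gamma)\).

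For \(p\in(0,1]\), use monotonicity of \(\mathds{L}^p\) norms on a probability space: \(\|X\|_p\le\|X\|_{p'}\) for any \(p'\in(1,\gamma)\). (For \(p<1\) this is read as \(\Es|X|^p\le (\Es|X|^{p'})^{p/p'}\).) This covers all \(p\in(0,\gamma)\).

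The only point requiring care is checking that the moment bound of Theorem~\ref{thm: scaling limit of discrete polynomial chaos} is genuinely a deterministic inequality valid for any kernel family at fixed \(\delta\), with the same constant \(C_0\). That is how it is stated, and it is what lets us plug in the \(\delta\)-dependent kernels \(\hbeta_\delta^k\psi^{(k)}\). Everything else is immediate.
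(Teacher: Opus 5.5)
Your proof is correct, and it takes a different route from the paper. The paper gives no separate argument: it calls the statement a direct corollary of Theorem~\ref{thm: convergence of the partition function} taken at $\hbeta=0$. There, the limit $\cZ_{\Omega}^{\bzeta,0}=1$ is deterministic, so convergence in distribution is convergence in probability. The uniform bound $\|Z_{\Omega_{\delta}}^{\omega,\beta_{\delta}}\|_{p'}\le\sum_k C_0^k\|\overline{\psi}_{\Omega_\delta}^{(k)}\|_q$ for some $p'\in(p,\gamma)$ then makes $|Z_{\Omega_{\delta}}^{\omega,\beta_{\delta}}-1|^p$ uniformly integrable, which upgrades this to $\mathds{L}^p$ convergence.

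You instead apply the moment inequality directly to the centered chaos $Z_{\Omega_{\delta}}^{\omega,\beta_{\delta}}-1$, with kernels $\hbeta_\delta^k\psi^{(k)}_{\Omega_\delta}$ for $k\ge1$. As you note, that inequality is really Theorem~\ref{thm: Lp estimates for discrete chaos} plus Minkowski, valid for arbitrary kernels at fixed $\delta$. The expansion is a finite sum at fixed $\delta$, so no convergence issue arises there.

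Your route buys two things:
\begin{itemize}
\item \emph{An explicit rate.} You get $\|Z_{\Omega_{\delta}}^{\omega,\beta_{\delta}}-1\|_p=O(\hbeta_\delta)$.
\item \emph{Weaker assumptions.} You use only the first half of hypothesis~\ref{hyp: truncation}, with $C=C_0$. You need neither the convergence of the correlation functions in hypothesis~\ref{hyp: convergence of correlation functions} nor any of the convergence-in-law machinery: the construction of the Lévy noise, the convergence of $\zeta_\delta$, or the convergence of discrete multiple integrals.
\end{itemize}
The paper's route is shorter once the main theorem is available, and it presents weak disorder as the $\hbeta=0$ endpoint of the intermediate disorder regime. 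It gives no quantitative information, however.
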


On the other hand, one would like to show that if $\beta_{\delta} V_{\delta} /J_{\delta}  \to \infty$, then the partition function $Z_{\Omega_{\delta}}^{\omega,\beta_{\delta}}$ converges to \(0\) in probability. 
This property is a simplified instance of disorder relevance. 
As in the case of the strict positivity of the continuum partition function, it actually depends on finer properties of the homogeneous system than simply the scaling form of correlation functions and it is delicate to answer positively with such generality to this question.  
We have decided to include a sufficient condition (verified in practice in many models, including the pinning and directed polymer model, see Section~\ref{section: convergence statistical mechanics} below), to illustrate that for subcritical systems, this property is generic. 

\begin{theorem}[Strong disorder]
	{\label{thm: disorder relevance}}
Assume that $J_{\delta} v_{\delta} \sum_{x \in \Omega_{\delta}} \sigma_x$ converges in distribution under \(\P_{\delta}^{\mathrm{ref}}\) as~\(\delta\downarrow0\), to a strictly positive random variable. 
Then, if $\lim_{\delta\downarrow 0} \frac{\beta_{\delta}V_{\delta}}{J_{\delta}}  = +\infty$, the partition function~$Z_{\Omega_{\delta}}^{\omega,\beta_{\delta}}$ goes to $0$ in probability.
\end{theorem}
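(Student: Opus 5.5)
We will prove the strong disorder statement via a fractional moment argument combined with a change of measure, the standard route for disorder relevance. Since $Z_{\Omega_{\delta}}^{\omega,\beta_\delta}\ge 0$ and $\Es[Z]=1$, it suffices to show that $\Es\big[(Z_{\Omega_{\delta}}^{\omega,\beta_\delta})^{\theta}\big]\to 0$ for some $\theta\in(0,1)$. Markov's inequality then gives convergence to $0$ in probability.

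To bound the fractional moment, we tilt the environment. For a threshold $A_\delta\to\infty$ (to be chosen), we consider the event that the environment has no large values. More precisely, we define the modified environment $\tilde\omega_x=\omega_x\Ind_{\{\omega_x\le A_\delta V_\delta\}}$, or equivalently we work on the event
\[
G_\delta=\{\omega_x\le A_\delta V_\delta\ \forall x\in\Omega_\delta\},
\]
whose probability tends to $1$ as soon as $A_\delta\to\infty$, since $|\Omega_\delta|\,\Pro[\omega>A_\delta V_\delta]\approx A_\delta^{-\gamma}$ by \eqref{eq: general scale of the noise}. On $G_\delta$ we have $1+\beta\omega_x\sigma_x = 1+\beta\tilde\omega_x\sigma_x$. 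Now $\Es[\tilde\omega]=-\Es[\omega\Ind_{\omega>A_\delta V_\delta}]\sim -c\,(A_\delta V_\delta)^{1-\gamma}\varphi$. By \eqref{eq: general scale of the noise} this is of order $-c A_\delta^{1-\gamma} V_\delta v_\delta$. Hence
\[
\Es\big[Z\Ind_{G_\delta}\big]\le \E_{\delta}^{\mathrm{ref}}\Big[\prod_x\big(1+\beta_\delta\Es[\tilde\omega]\sigma_x\big)\Big]\le \E_{\delta}^{\mathrm{ref}}\Big[\exp\Big(-c\,\beta_\delta A_\delta^{1-\gamma}V_\delta v_\delta\sum_x\sigma_x\Big)\Big].
\]
The first inequality uses the independence of the variables $\tilde\omega_x$. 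A little care is needed for the bound $1+\beta_\delta\tilde\omega\ge0$, which holds because $\tilde\omega\ge\omega>-1$ on the lower tail.

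Writing $\beta_\delta V_\delta v_\delta\sum_x\sigma_x = (\beta_\delta V_\delta/J_\delta)\cdot J_\delta v_\delta\sum_x\sigma_x$, the exponent is
\[
-c\,A_\delta^{1-\gamma}\,\frac{\beta_\delta V_\delta}{J_\delta}\cdot\big(J_\delta v_\delta\textstyle\sum_x\sigma_x\big).
\]
We choose $A_\delta\to\infty$ slowly enough that $A_\delta^{1-\gamma}\beta_\delta V_\delta/J_\delta\to\infty$. This is possible since $\beta_\delta V_\delta/J_\delta\to\infty$. By hypothesis, $J_\delta v_\delta\sum_x\sigma_x$ converges in law to a strictly positive variable, so for each $\eta>0$ it exceeds some $\epsilon>0$ with $\P^{\mathrm{ref}}_\delta$-probability at least $1-\eta$ for small $\delta$. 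The expectation is therefore at most $\eta+e^{-c\epsilon\cdot(\text{large})}$, which tends to $0$.

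Thus $\Es[Z\Ind_{G_\delta}]\to0$ and $\Pro[G_\delta^c]\to0$. Consequently, for any $\epsilon>0$,
\[
\Pro[Z>\epsilon]\le \Pro[G_\delta^c]+\epsilon^{-1}\Es[Z\Ind_{G_\delta}]\to 0.
\]
This is the conclusion, and no fractional moment is in fact needed.

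The main point to check carefully is the truncated mean estimate $-\Es[\tilde\omega]\asymp (A_\delta V_\delta)\Pro[\omega>A_\delta V_\delta]$. This follows from Karamata's theorem with $\gamma>1$ and the regular variation of the tail, using $\Es[\omega]=0$. One also needs $c_+>0$ for this estimate. If instead $c_+=0$ while still satisfying \eqref{eq: definition of a general heavy-tailed random variable}, the tail of $\omega$ would be one-sided negative, which is excluded by $\omega>-1$. So in the statistical mechanics setting $c_+=1$ and the argument goes through.
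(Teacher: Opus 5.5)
Your argument is correct and is essentially the paper's proof. You restrict to the event that no $\omega_x$ exceeds $A_\delta V_\delta$ with $A_\delta\to\infty$ slowly, and compute $\Es[Z\Ind_{G_\delta}]$ by independence. The paper phrases this via the size-biased law with density $1+\beta_\delta\omega_x$ at the sites where $\sigma_x=1$; it arrives at the same product $\prod_x\big(1-\beta_\delta\Es[\omega\Ind_{\omega>A_\delta V_\delta}]\sigma_x\big)$, up to the harmless extra term $\Pro(|\omega|>A_\delta V_\delta)$. Both then conclude with $1-y\le e^{-y}$ and the positivity of the limit of $J_\delta v_\delta\sum_x\sigma_x$.

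The one step to tighten is the slowly varying ratio $\varphi(A_\delta V_\delta)/\varphi(V_\delta)$, which need not stay bounded below when $A_\delta\to\infty$. So ``of order $A_\delta^{1-\gamma}V_\delta v_\delta$'' should become a Potter lower bound $\ge c_\varepsilon A_\delta^{1-\gamma-\varepsilon}V_\delta v_\delta$, and similarly $\Pro[G_\delta^c]\le C_\varepsilon A_\delta^{-\gamma+\varepsilon}$. Your freedom to choose $A_\delta$ growing slowly absorbs both losses.
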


Note that one can check that
\[
\E_{\delta}^{\mathrm{ref}}\Big[ J_{\delta} v_{\delta} \sum\limits_{x \in \Omega_{\delta}} \sigma_x \Big ] =  v_{\delta} \sum_{x \in \Omega_{\delta}} \psi_{\Omega_{\delta}}^{(1)}(x) \xrightarrow{\delta\downarrow 0} \int_{\Omega} \bpsi_{\Omega}^{(1)}(x) \dd x \,.
\]
This at least indicates that $J_{\delta} v_{\delta}$ is the correct normalization for the random variable $ \sum_{x \in \Omega_{\delta}} \sigma_x$.

\subsection{Further comments}
\label{section: comments}

\subsubsection*{About the infinite mean case}

In constrast with the approach of \cite{berger_lacoin_2021, berger_lacoin_2022} for the directed polymer model, our approach does not cover the case of a $\gamma$-stable noise with \( \gamma \in (0,1] \).
This is due to the fact that polynomial chaos with respect to such noises would not have any finite $\mathds{L}^p$ moments for $p \ge 1$.  
We leave open the question of finding a general and necessary condition for the analogues of Theorems~\ref{thm: continuous polynomial chaos} and~\ref{thm: scaling limit of discrete polynomial chaos} in this case.

\subsubsection*{The case of a general Lévy noise}
	  
Another restriction compared to \cite{berger_lacoin_2021, berger_lacoin_2022} is that we only consider stable noises instead of general Lévy noises. 
Given a measure $\lambda$ on $\bbR$ satisfying \(\int_{\bbR} (x^2 \wedge 1) \lambda(\dd x) < +\infty\), one could define the Lévy white noise $\bzeta$ on $\Omega$ with Lévy measure $\lambda$ as the almost sure limit as $a \downarrow 0$ of the truncated noise 
\[
\bzeta^{(a)} = \sum\limits_{(x,z) \in \Lambda} z \Ind_{|z| > a} \delta - \kappa(a) \dd x\,,
\]
where $\Lambda$ is the Poisson point process on $\Omega \times \bbR$ of intensity $\dd x \lambda(\dd z)$, and $\kappa(a) = \int_{\bbR} z \Ind_{|z| > a} \lambda(\dd z)$.  
In this case, relevant quantities are the truncated moments of the Lévy measure given by 
\[ m_p(a,b) = \int_{\bbR} |z|^p \Ind_{a < |z| \le b} \lambda(\dd z)\,,\]
and our proofs provide the following generalization of Theorem~\ref{thm: continuous polynomial chaos} (with the same notations):
	
\begin{theorem}{\label{thm: continuous polynomial chaos for general noise}}
	Let $\bzeta$ be a Lévy white noise with Lévy measure $\lambda$ such that there exist $1 < p < q < 2$ such that $m_p(1,+\infty) < +\infty$ and $m_q(0,1) < +\infty$.
	Suppose the family of functions $(\bpsi^{(k)})_{k \ge 0}$ verifies the following: for every constant $C \ge 0$, 
	\begin{equation}
		\sum\limits_{k \ge 0} C^k \big\|\bpsi^{(k)}\big\|_q < +\infty\,.
	\end{equation}
	Then $\Phi^{(a)}$ is well-defined for every $a \in (0,1]$ and $(\bPhi^{(a)})_{a \in (0,1]}$ is a time-reversed martingale with respect to the filtration $\cF =(\cF_a)_{a > 0}$.
		
	Additionally, $(\bPhi^{(a)})_{a \in (0,1]}$ is bounded in $\mathds{L}^{p}$ and thus admits a limit \(\bPhi := \lim_{a\downarrow 0} \bPhi^{(a)}\) a.s.\ and in \(\mathds{L}^p\).
	Furthermore, there exist a universal constant $C_0'$ such that
	\begin{equation}
		\big\| \bPhi \big\|_p \le \sum\limits_{k \ge 0} \Big(\frac{C_0'}{p-1}(m_p(1,+\infty)+m_q(0,1))\Big)^k \big\| \bpsi^{(k)} \big\|_{q} \,.
	\end{equation}
\end{theorem}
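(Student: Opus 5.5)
The plan is to follow the proof of Theorem~\ref{thm: continuous polynomial chaos}, with the stable truncated moments replaced by the general quantities $m_p(1,\infty)$ and $m_q(0,1)$.

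\emph{Martingale structure.} Since $\Omega$ is bounded and $\lambda(\{|z|>a\})<\infty$ for $a>0$ (because $m_q(0,1)<\infty$), $\bzeta^{(a)}$ is a finite sum of atoms minus a Lebesgue drift. Hence each $k$-fold integral in $\bPhi^{(a)}$ is a finite multilinear expression in the jump sizes, integrated against the drift. Convergence of the series over $k$ will follow from the moment bound below.

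For $a<b$, write $\bzeta^{(a)}=\bzeta^{(b)}+\bzeta^{(a,b)}$, where $\bzeta^{(a,b)}$ collects the jumps with $a<|z|\le b$, compensated. This piece is a centered independent random measure, independent of $\cF_b$. Expanding the product multilinearly, every term containing at least one $\bzeta^{(a,b)}(\dd x_j)$ has zero conditional expectation given $\cF_b$. The symmetry of $\bpsi^{(k)}$ and its vanishing on diagonals ensure the off-diagonal multilinear expansion is legitimate. This yields $\Es[\bPhi^{(a)}\mid\cF_b]=\bPhi^{(b)}$.

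\emph{Moment bound (main step).} I would bound $\|\bPhi^{(a)}\|_p$ uniformly in $a$, working chaos by chaos. For a fixed $k$, I will use a decoupling/martingale-difference argument: order the contributions by scale (or by a filtration in the $x$-variables) and apply the Burkholder--Davis--Gundy inequality. The BDG constant is of order $(p-1)^{-1}$ for $p$ near $1$. Then I use $(\sum|d_i|^2)^{p/2}\le\sum|d_i|^p$, valid since $p\le2$. Iterating over the $k$ variables reduces the problem to mixed moments of one-dimensional Poisson integrals of the form $\int f(x)\bzeta^{(a)}(\dd x)$ with $f$ random but independent of the remaining noise.

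For such a single integral I split the noise into large jumps $|z|>1$ and small jumps $|z|\le1$:
\begin{itemize}
\item Large jumps are controlled in $\mathbb{L}^p$ by $m_p(1,\infty)\int|f|^p$.
\item Small jumps are controlled in $\mathbb{L}^q$, hence in $\mathbb{L}^p$, by $m_q(0,1)\int|f|^q$.
\end{itemize}
Since $\Omega$ is bounded, $\|f\|_{L^p}\le|\Omega|^{1/p-1/q}\|f\|_{L^q}$, so both contributions are bounded by $(m_p(1,\infty)+m_q(0,1))$ times an $L^q$ norm. The $|\Omega|$ factor is absorbed into $C_0'$ or into the norm. Each of the $k$ variables therefore contributes one factor $\frac{C_0'}{p-1}(m_p(1,\infty)+m_q(0,1))$. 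The $1/k!$ combines with the ordered integration to produce the symmetric norm $\|\bpsi^{(k)}\|_q$. Summing over $k$ with the infinite-radius hypothesis gives the stated bound, uniformly in $a$.

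The delicate point is that iterating mixes $L^p$ and $L^q$ norms in different variables. I would handle this by always pushing the outer norm to $L^q$ via H\"older on the bounded domain, and by using Minkowski's integral inequality to interchange the $\Es$-norm and the $\dd x$-norms, which is valid since $p\le q$. That interchange is where I expect the main difficulty.

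\emph{Conclusion.} Once $(\bPhi^{(a)})_{a\in(0,1]}$ is bounded in $\mathbb{L}^p$ with $p>1$, it is a uniformly integrable reversed martingale. Reversed martingale convergence then gives the almost sure and $\mathbb{L}^p$ limit $\bPhi$, and Fatou's lemma transfers the bound to $\bPhi$.
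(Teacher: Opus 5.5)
Your outer frame agrees with the paper: the reversed martingale property via the multilinear expansion and independence of the new jumps, reversed martingale convergence once $\mathds{L}^p$-boundedness is known, splitting the jumps at $|z|=1$, $\mathds{L}^p$ for big jumps, $\mathds{L}^q$ for small jumps, and Hölder on the bounded domain at the end. However, the step you flag as the main difficulty is a genuine gap, and the tool you propose for it goes the wrong way.

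Write $\bzeta^{>}$ and $\bzeta^{\le}$ for the compensated big-jump ($|z|>1$) and small-jump ($a<|z|\le 1$) parts. If you peel off the variables one at a time in an $x$-ordering and split each one-dimensional integral, you meet terms $\int F(x)\,\bzeta^{\le}(\dd x)$ whose integrand $F$ is a chaos of order $k-1$ that still contains big jumps.
\begin{itemize}
\item Such an $F$ has no $q$-th moment in general: $m_q(1,\infty)$ is not assumed finite, and it is infinite for stable noise. So the $\mathds{L}^q$ bound is empty.
\item Estimating this term in $\mathds{L}^p$ instead brings in $m_p(a,1)$. This is not bounded uniformly in $a$ under your hypotheses; it diverges as $a\downarrow 0$ for stable noise.
\item Even if you decouple so that you may condition on $F$, you get at best $\Es\bigl[\|F\|_{L^q(\dd x)}^p\bigr]^{1/p}$. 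To continue the induction you would need to bound this by $\bigl\|\,\|F(x)\|_{\mathds{L}^p}\bigr\|_{L^q(\dd x)}$.
\end{itemize}
Minkowski's integral inequality with $p\le q$ gives exactly the opposite inequality, and the one you need is false. Take stable noise on $\Omega=(0,1)$ and $F(x)=\bzeta^{>}((x-\gep,x))$. Then $\|F(x)\|_{\mathds{L}^p}\le C\gep^{1/p}$ for every $x$. On the other hand, on the $\gep$-independent positive-probability event that there is exactly one big jump, located in $(0,\tfrac12)$, one has $\|F\|_{L^q(\dd x)}\ge c\,\gep^{1/q}$, which is much larger than $\gep^{1/p}$. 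The true size of such terms is a mixed norm with the big-jump variables \emph{outside} (in $L^p$) and the small-jump variables inside (in $L^q$).

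That is exactly how the paper organizes the estimate (Proposition~\ref{prop: Lp estimates for simple functions}, via Corollary~\ref{cor: less naive Lp estimate}).
\begin{enumerate}
\item It reduces to simple functions, so the chaos is a tetrahedral polynomial in the independent variables $\bzeta^{(a)}(A_i)$.
\item It splits every factor into big- and small-jump parts and expands the product \emph{before} estimating.
\item For each of the $2^k$ terms, it conditions on all the small-jump variables and applies BDG in $\mathds{L}^p$ to the big-jump variables, wherever they sit in the ordered tuple $i_1<\dots<i_k$.
\item The resulting random coefficients $g$ are chaos in small jumps only, so Jensen's inequality $\Es|g|^p\le(\Es|g|^q)^{p/q}$ is harmless, and BDG in $\mathds{L}^q$ finishes.
\end{enumerate}
This gives the $L^p$ norm in the big-jump variables of the $L^q$ norm in the others. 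Only then does Hölder in the outer variables produce the factor $|\Omega|^{|J|(1/p-1/q)}$ and $\|\bff\|_q$.

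Conditioning on variables interleaved with the others requires the two families to be independent as whole sequences. The paper obtains this from its decoupling inequality, Theorem~\ref{thm: Lp decoupling inequality}. Through Doob's inequality, this is also where the factor $(p-1)^{-1}$ comes from; the upper BDG constants are bounded on $[1,2]$. In the Poisson setting, independence also follows directly from the independence of $\Lambda$ on $\{|z|>1\}$ and on $\{a<|z|\le 1\}$. Your remarks about ``decoupling'' and an integrand ``independent of the remaining noise'' point in this direction, but a variable-by-variable iteration in the $x$-order does not implement it.

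A minor point: the one-variable bounds give the factor $m_p(1,\infty)^{1/p}+m_q(0,1)^{1/q}$ per variable, together with $(|\Omega|\vee 1)^{1/p-1/q}$. No universal constant absorbs the $\Omega$-dependence; the paper's proposition has the same form.
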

We refer to Proposition~\ref{prop: Lp estimates for simple functions} for further details. 
	
A general Lévy noise $\bzeta$ does not satisfy any scale invariance property, and consequently a polynomial chaos with respect to $\bzeta$ cannot arise as the scaling limit of a discrete polynomial chaos with fixed disorder distribution. 
This obstruction can be circumvented by allowing the law of the discrete disorder random variables $\omega$ to depend on $\delta$. 
For instance, for every $\delta > 0$ and every $x \in \Omega_{\delta}$, define 
\[\omega_x^{(\delta)} = V_{\delta} \int_{\cC_{\delta}(x)} \zeta(\dd y)\,,\] 
where the constant $V_{\delta}$ is chosen so that $\Pro[|\omega_x^{(\delta)}| > 1 ] = 1/2 $ (the particular choice of the constants $1$ and $1/2$ is immaterial). 
For every fixed $\delta$, the random variables $(\omega_x^{(\delta)})$ are i.i.d.\ and centered.
It is therefore natural to conjecture that an analogue of Theorem~\ref{thm: scaling limit of discrete polynomial chaos} holds in this setting for Lévy noises satisfying the assumptions of Theorem~\ref{thm: continuous polynomial chaos for general noise}.

\subsubsection*{Relation to Stochastic PDEs}
An important motivation behind the study of continuum partition functions is the construction of solutions to stochastic partial differential equations.
Suppose the homogeneous system posseses a time direction (with finite horizon) and is Markovian i.e.\ $\Omega = (0,T) \times \bbR^d$ for some $T > 0$ (when $d = 0$, $\Omega = (0,T)$) and there is a function $\bpsi:[0,T]\times \bbR^d \times \bbR^d \to \bbR_+$ (interpreted as the density of the transition kernel) such that the continuum correlation functions are given by
\begin{equation*}
	\bpsi^{(k)}\big((t_1,x_1),\dots,(t_k,x_k) \big) = \bpsi(t_1,0,x_1) \bpsi(t_2-t_1,x_1,x_2) \dots \bpsi(t_k-t_{k-1},x_{k-1},x_k)\,,
\end{equation*} 
for every $0 < t_1 < \dots < t_k < T$ and $x_1,\dots,x_k \in \bbR^d$.
Then the point-to-point partition functions defined by the chaos expansions
\begin{equation*}
	\begin{split}
		&\cZ_{st}^{\hbeta}(x,y) = \bpsi(t-s,y-x) \sum\limits_{k \ge 1} \hbeta^k \int_{s < t_1 < \dots < t_k < t} \int_{ x_1,\dots,x_k \in \bbR^d} \bpsi(t_1-s,x,x_1) \bpsi(t_2-t_1,x_1,x_2) \\ 
		& \qquad \qquad \qquad \dots \bpsi(t_k-t_{k-1},x_{k-1},x_k) \bpsi(t-t_k,x_k,y) \prod\limits_{j=1}^k \bzeta(\dd t_j \dd x_j)\,,
		\end{split}
\end{equation*}
formally satisfy the flow equation
\begin{equation}
	{\label{eq: general flow spde}}
	\cZ_{st}^{\hbeta}(x,y) = \bpsi(t-s,x,y) + \hbeta \int_s^t \int_{\bbR^d} \cZ_{su}^{\hbeta}(x,z) \bpsi(t-u,z,y) \bzeta(\dd u \dd z)\,.
\end{equation}
If $\bpsi$ is the Green function of a (time-translation invariant) differential operator $\mathcal{L}$, then formally this is the flow equation associated to 
\begin{equation}{\label{eq: general spde with differential operator}}
	\mathcal{L} u(t,x) = \hbeta \, u(t,x) \cdot \bzeta(t,x)\,, \quad t \ge 0,x \in \bbR^d\,.
\end{equation}
Note that this equation is linear in $u$, but involves multiplicative noise, which may lead to singuliar behavior and ill-posedness.
	
Previous works (\cite{saintloube_1998, balan_2014,balan_ndongo_2017,balan_2023,balan_jimenez_2024}) have addressed equations of the form~\eqref{eq: general spde with differential operator}, primarly in the context of two classical operators: the heat operator $\partial_t - \Delta_x$ and the wave operator $\partial_{tt} - \Delta_x$.
Of particular relevance to the present work is the recent study~\cite{balan_jimenez_2024},
which focuses on symmetric noises ($c_+ = c_-$ in~\eqref{eq: definition of the general truncated noise}) and relies on the Lepage representation. 
Assumptions $1.2$ and $1.3$ in that work are closely related in spirit to the condition~\eqref{hyp: infinite radius of convergence for Lq norms} of Theorem~\ref{thm: continuous polynomial chaos}, as both impose integrability requirements slightly above $L^{\gamma}$.

\subsection{Organisation of the rest of the article}

The remainder of the article is organized as follows. 
In Section~\ref{section: statistical mechanics}, we apply our general results to the two specific systems already introduced in Examples~\ref{ex:pinning2}-\ref{ex:longrange2}. 
Section~\ref{section: Lp estimates} is devoted to the proof of the Theorems~\ref{thm: Lp estimates for discrete chaos} and~\ref{thm: Lp estimates for continuous chaos}, which provide general $\mathds{L}^p$ estimates for discrete and continuous chaos.
These results are then applied in Section~\ref{section: general polynomial chaos} to the proof of our main results (Theorems~\ref{thm: continuous polynomial chaos} and~\ref{thm: scaling limit of discrete polynomial chaos}). 
Section~\ref{section: statistical mechanics proofs} is devoted to the proof of the results stated in Section~\ref{section: convergence statistical mechanics}.
The proofs of the results of Section~\ref{section: statistical mechanics} are presented in Sections~\ref{section: proof pinning}-\ref{section: proofs polymer}. 
Finally, the Appendix collects some further technical results. 
In Appendix~\ref{appendix: noises and Poisson convergence}, we collect results on the discrete noise $\zeta_{\delta}$ and its convergence towards the continuum noise $\bzeta$. 
In Appendix~\ref{appendix: homogeneous partition functions}, we prove a useful inequality that compares a multivariate series with its continuous integral version.

\bigskip

{\noindent \bf Acknowledgements:} The author is deeply grateful to Quentin Berger for suggesting this problem, for his constant support, and for the time he spent reading numerous versions of this manuscript. 
The author also thanks Pierre Faugère and Hubert Lacoin for discussions related to Theorem~\ref{thm: convergence of the pinning measure}.

\section{Applications to specific statistical mechanics models}{\label{section: statistical mechanics}}

Let us now provide more details on how our results of Section~\ref{section: subcritical polynomial chaos} apply in our two guideline examples: the disordered pinning model (from Example~\ref{ex:pinning} and \ref{ex:pinning2}) and the long-range directed polymer model (from Example~\ref{ex:longrange} and \ref{ex:longrange2}).

\subsection{The (disordered) pinning model}
{\label{section: pinning}}

Before we introduce our results fo the pinning model, let us review the definition of the pinning model, which generalizes the setting of Example~\ref{ex:pinning}.

\subsubsection{The homogeneous pinning model and its scaling limit}
\label{subsec: homogeneous pinning}

Let $\tau = (\tau_k)_{k \ge 1}$ be a recurrent and aperiodic renewal process on $\mathbb{N}$ with inter-arrival law given by
\begin{equation}{\label{def: inter-arrival law}}
	\P[\tau_1 = n] := \frac{\ell(n)}{n^{1+\alpha}},
\end{equation}
with $\alpha \in (0,1)$ and $\ell(\cdot)$ a slowly-varying function\footnote{A function \(\ell\) is said to be slowly-varying if \(\lim_{t\to\infty} \frac{\ell(at)}{\ell(t)} =1\) for any \(a>0\), see \cite{bingham_goldie_teugels_1987}.}. 
We may also interpret $\tau$ as a random subset of~$\bbN$. 
Let us now introduce the renewal mass function $u(N) := \P[N \in \tau]$, which plays a crucial role.
In \cite[Theorem B]{doney_1997}, it is proved that 
\begin{equation}{\label{eq: renewal theorem}}
	u(N) := \P[N \in \tau] \underset{N \to +\infty}{\sim} \frac{C_{\alpha}}{\ell(N) N^{1-\alpha}}, \qquad \text{ with } \quad C_{\alpha} = \frac{\alpha \sin(\pi \alpha)}{\pi} \,.
\end{equation}

Let us stress that in \cite[Section A.5.4]{giacomin_2007}, it is shown that the rescaled subset $\frac{1}{N} \tau \subset \bbR_+$ converges in distribution as $N \to +\infty$ to~$\cA_{\alpha}$, the regenerative set of index $\alpha$.
The convergence takes place in the space $\cC_{\infty}$ of closed subsets of $\bbR$, endowed with the Matheron topology, which makes it a Polish space.
The regenerative set $\cA_{\alpha}$ can be defined as the range of the $\alpha$-stable subordinator $(\sigma_t^{(\alpha)})_{t \ge 0}$, characterized by its Laplace transform 
\[
\bE[\expo^{-\lambda \sigma_t^{(\alpha)}}] = \expo^{-t |\lambda|^{\alpha}}, \quad \text{for } \lambda \ge 0,\, t \ge 0 \,,
\]  
that is $\cA_{\alpha} := \overline{\{ \sigma_t^{(\alpha)}, t \ge 0\}}$.

\smallskip
The \textit{homogeneous pinning model} is the probability measure on subsets of $\bbN$ defined for every $h \in \mathbb{R}$ (the pinning parameter) and $N \ge 1$ by
\begin{equation}
	\frac{\dd \P_{N,h}}{\dd \P}(\tau) := \frac{1}{Z_{N,h}} \exp \bigg(h \sum\limits_{n=1}^N \Ind_{n \in \tau} \bigg)\quad \text{ with } \quad Z_{N,h} := \E\bigg[\exp\bigg(h \sum\limits_{n=1}^N \Ind_{n \in \tau}\bigg)\bigg] \,.
\end{equation}
Intuitively, the parameter \(h\) gives a reward (if \(h>0\)) or a penalty (if \(h<0\)) to renewal points in \(\tau\).
A classical reference for this model is \cite[Chapter 2]{giacomin_2007}.

As far as the scaling limit of the homogeneous model is concerned, it is shown in~\cite{sohier_2009} that one needs to rescale the pinning parameter \(h\) as 
\begin{equation}{\label{eq: the scale for h}}
h_N := \frac{\hh}{N u(N)}, \quad \hh \in \bbR \,.
\end{equation}
Then, \cite[Thm 3.1]{sohier_2009} states that when the closed subset $\frac{1}{N} \tau \cap \intg{1}{N}$ is sampled according to $\P_{N,h_N}$ with $h_N$ as in \eqref{eq: the scale for h}, it converges in distribution as $N \to +\infty$ (for the Matheron topology) to a random closed subset $\cB_{\alpha,\hh} \subset [0,1]$.
The law \(\bP_{\alpha,\hh}\) of $\cB_{\alpha,\hh}$ is absolutely continuous with respect to the law of~$\cA_{\alpha}$, with Radon-Nikodym derivative equal to 
$\expo^{\hh L_1^{(\alpha)}}/\bE[\expo^{\hh L_t^{(\alpha)}}]$
where $L_t^{(\alpha)} = \inf \{ s \ge 0, \sigma_s^{(\alpha)} \ge t \}$ is the local time of the subordinator $\sigma^{(\alpha)}$.
In particular, this shows that Assumption~\ref{cond: invariance principle} in Theorem~\ref{thm: convergence of the disordered probability} holds for the homogeneous reference measure $\P_{N,h_N}$.

\smallskip
In order to define the limiting correlation function \(\bpsi_{\hh}^{(k)}\) of the continuum homogeneous pinning model, let us define the \textit{continuum homogeneous partition function} and its \textit{conditioned} analogue. 
For every $t \ge 0$, let
\begin{equation}{\label{eq: def of the continuum homogeneous partition function}}
	\cZ_{t,\hh} := \bE\Big[\expo^{\hh L_t^{(\alpha)}} \Big] \qquad \text{ and } \qquad \cZ_{t,\hh}^c := \alpha\, t^{\alpha -1} \bE\Big[\expo^{\hh L_t^{(\alpha)}} \;\Big|\; t \in \cA_{\alpha} \Big].
\end{equation}
Let us stress that the second definition is only formal since $\bP[t \in \cA_{\alpha}] = 0$.
A consequence of \cite{sohier_2009} is that for every $t \ge 0$, $(Z_{Nt,h_N})_{N \ge 1}$ converges as $N \to +\infty$  to $\cZ_{t,\hh}$.
On the other hand, if one considers the discrete conditioned partition function 
$
Z_{N,h}^c := \E\big[\exp\big(h \sum_{n=1}^N \Ind_{n \in \tau}\big) \Ind_{N \in \tau} \big],
$
then~$\cZ_{t,\hh}^c$ is rigorously defined as the limit of $\P[N \in \tau]^{-1} Z_{Nt,h_N}^{c}$ as $N \to +\infty$. 
We provide a proof of the existence of this limit in Lemma~\ref{lemma: scaling limit of the homogeneous partition functions}, as well as \textit{explicit} expressions of the conditioned and unconditioned continuum homogeneous partition functions in terms of Mittag-Leffler functions.

With the definition \eqref{eq: def of the continuum homogeneous partition function} at hand, the \textit{continuum \(k\)-point correlation function} (for \(k\geq 1\)) is the symmetric function $\bpsi_{\hh}^{(k)} : (0,1)^k \to \bbR $ defined for $t_0 := 0 < t_1 < \dots < t_k < 1$ by 
\begin{equation}{\label{eq: continuum correlation functions of the pinning model}}
	\bpsi_{\hh}^{(k)}(t_1,\dots,t_k) = \frac{1}{\cZ_{1,\hh}} \prod\limits_{j=1}^k \big( \cZ_{t_j-t_{j-1},\hh}^{c} \big) \cZ_{1-t_k,\hh} \,.
\end{equation}

\subsubsection{The disordered pinning model and its scaling limit}

Let $(\omega_n)_{n \ge 1}$ be i.i.d.\ random variables such that \(\omega>-1\) a.s.
Then, for $N \in \bbN$, $\beta \in [0,1]$ and $h \in \bbR$, let us define the disordered pinning measure as follow:
\begin{equation}
	\frac{\dd \P_{N,h}^{\omega,\beta}}{\dd \P_{N,h}}(\tau) := \frac{1}{Z_{N,h}^{\omega,\beta}} \prod\limits_{n=1}^N (1+\beta \omega_n \Ind_{n \in \tau}),
	\quad \text{ with } \quad Z_{N,h}^{\omega,\beta} := \E_{N,h}\Big[\prod\limits_{n=1}^N (1+\beta \omega_n \Ind_{n \in \tau})\Big] \,.
\end{equation}

We can then apply Theorem~\ref{thm: convergence of the partition function} to the partition function \(Z_{N,h}^{\omega,\beta}\) of the pinning model to obtain the following.
We assume that \(\bbE[\omega]=0\) and that \(\Pro[\omega >t] \sim \varphi(t) t^{-\gamma}\) as \(t\to\infty\) for some \(\gamma\in (1,2)\), as in \eqref{eq: definition of a general heavy-tailed random variable}.
Also, as in~\eqref{eq: general scale of the noise}, let $V_N$ be such that 
\begin{equation}{\label{eq: scale of the noise for the pinning}}
	\Pro[\omega > V_N] \sim N^{-1} \quad \text{ as } \quad N \to +\infty ,
\end{equation}
and note that \(V_N\) is regularly varying with index \(1/\gamma\).

\begin{theorem}[Intermediate disorder for the partition function]
	{\label{thm: convergence of the partition function of the pinning model}}
	Let the renewal process $\tau$ satisfy \eqref{def: inter-arrival law} and assume that $1-\alpha <\frac1\gamma$. Note in particular that this guarantees that \(u(N)V_N \to \infty\) as \(N\to\infty\).
	Assume also the following scaling for the parameters \(\beta_N,h_N\): 
	\begin{equation}
		\label{eq: scaling pinning}
		\lim_{N\to\infty} \beta_N \, u(N) V_N = \hbeta  \in (0,+\infty)\,, \qquad  \lim_{N\to\infty} h_N \, u(N) N = \hh \in \bbR \,.
	\end{equation}
	Then we have the following convergence in distribution:
	\begin{equation}{\label{eq: the pinning c.p.f.}}
	Z_{N,h_N}^{\omega,\beta_N} \xrightarrow[\ N\to\infty \ ]{(d)}\cZ_{\hh}^{\bzeta,\hbeta} = 1 + \sum\limits_{k \ge 1} \frac{\hbeta^k}{k!} \int_{(0,1)^k} \bpsi_{\hh}^{(k)}(t_1,\dots,t_k) \prod\limits_{j=1}^k \bzeta(\dd t_j),
	\end{equation}
	where $\bzeta$ is a $\gamma$-stable noise on $(0,1)$ and for every $k \ge 1$, and $\bpsi_{\hh}^{(k)}$ is defined in \eqref{eq: continuum correlation functions of the pinning model}. 
	This convergence holds jointly with the convergence of the discrete noise to $\bzeta$.  
	Furthermore, for every $p \in [1,\gamma)$, $\cZ_{\hh}^{\bzeta,\hbeta}$ has a finite moment of order~$p$, and 
	$\lim_{N \to +\infty} \Es[(Z_{N,h_N}^{\omega,\beta_N})^p] = \Es[(\cZ_{\hh}^{\bzeta,\hbeta})^p]$.
\end{theorem}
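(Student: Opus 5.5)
The plan is to apply Theorem~\ref{thm: convergence of the partition function}, the $\mathds{L}^p$ bound and the convergence of moments of Theorem~\ref{thm: scaling limit of discrete polynomial chaos}, with $\Omega=(0,1)$, $\Omega_\delta=\{1/N,\dots,(N-1)/N\}$, $\delta=1/N$, cells $\cC_\delta(n/N)=[n/N,(n+1)/N)$, $v_\delta=1/N$, and $J_N=1/u(N)$. With these choices $\beta_N V_N/J_N=\beta_N u(N)V_N\to\hbeta$.

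The reference measure is $\P_{N,h_N}$. The renewal property gives, for $0<n_1<\dots<n_k\le N$,
\[
\psi_N^{(k)}(n_1/N,\dots,n_k/N)=\frac{1}{Z_{N,h_N}}\prod_{j=1}^k \frac{Z^c_{n_j-n_{j-1},h_N}}{u(N)}\; Z_{N-n_k,h_N}.
\]
By Lemma~\ref{lemma: scaling limit of the homogeneous partition functions} and \cite{sohier_2009}, $Z^c_{Nt,h_N}/u(N)\to\cZ^c_{t,\hh}$ and $Z_{Nt,h_N}\to\cZ_{t,\hh}$ pointwise. Hence $\overline\psi_N^{(k)}\to\bpsi_{\hh}^{(k)}$ almost everywhere on the simplex. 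The claim $u(N)V_N\to\infty$ is immediate: $u(N)V_N$ is regularly varying with index $\alpha-1+1/\gamma>0$.

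Next I pick $q\in(\gamma,2]$ with $q(1-\alpha)<1$, which is possible since $1-\alpha<1/\gamma$. This choice makes $t^{(\alpha-1)q}$ integrable near $0$. For domination I would prove a uniform bound: for $1\le m\le N$,
\[
\frac{Z^c_{m,h_N}}{u(N)}\le C\,(m/N)^{\alpha-1-\epsilon}\,e^{c|\hh| (m/N)^{\alpha-\epsilon}}.
\]
Here $\epsilon>0$ is small enough that $q(1-\alpha+\epsilon)<1$. The bound follows from Potter's bounds on $u$ together with the convolution structure $Z^c_{m,h}=\sum_k (e^h-1)^k u^{*k}$-type expansion. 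The factors $Z_{N-n_k,h_N}$ and $1/Z_{N,h_N}$ are bounded above and below uniformly. The resulting dominating function is a product over the increments $(t_j-t_{j-1})$ on the discrete simplex.

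Dominated convergence, or rather a Riemann-sum comparison (the Appendix~\ref{appendix: homogeneous partition functions} inequality comparing multivariate series with integrals), then gives $\|\overline\psi_N^{(k)}-\bpsi^{(k)}_{\hh}\|_q\to0$ for each $k$. The same comparison gives a bound of Dirichlet type,
\[
\|\overline\psi_N^{(k)}\|_q^q\le \frac{C^k}{k!}\int_{0<t_1<\dots<t_k<1}\prod_{j} (t_j-t_{j-1})^{-(1-\alpha+\epsilon)q}\,\dd t
= \frac{C^k\,\Gamma(1-(1-\alpha+\epsilon)q)^k}{k!\,\Gamma(1+k(1-(1-\alpha+\epsilon)q))}.
\]
This decays superexponentially, like $(k!)^{-(1+\theta)/q}$ with $\theta>0$. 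So $\sum_k C^k\|\overline\psi^{(k)}_N\|_q$ is bounded uniformly in $N$ with uniformly vanishing tails, which is hypothesis~\ref{hyp: truncation}.

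The joint convergence with the noise and the convergence of $p$-th moments then follow directly from Theorem~\ref{thm: scaling limit of discrete polynomial chaos}. For $p=1$ this holds trivially since the expectation equals $1$; for $p\in(1,\gamma)$ it uses $\|\Phi_\delta\|_p\to\|\bPhi\|_p$. Positivity of $Z$ allows dropping absolute values.

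The main obstacle is the uniform domination of the discrete conditioned partition functions $Z^c_{m,h_N}/u(N)$ by the Mittag-Leffler-type profile, uniformly in $m\le N$ including small $m$ and with $h_N\neq0$. I would handle it by expanding in powers of $(e^{h_N}-1)$ and bounding the $j$-fold renewal convolutions by $C^j\Gamma(\alpha)^j/\Gamma(j\alpha)$ times the power profile, again via Potter bounds.
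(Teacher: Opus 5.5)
Your proposal is correct and follows essentially the same route as the paper. It uses the renewal factorization of $\overline\psi_N^{(k)}$ and the uniform bound $Z^c_{M,h_N}\le C\,u(M)$ from Lemma~\ref{lemma: scaling limit of the homogeneous partition functions}, combines them with Potter bounds losing an $\epsilon$ in the exponent (as the paper does via Proposition~\ref{prop: technical estimate on slow varying functions}), derives a Dirichlet-type Gamma estimate, and then applies Theorem~\ref{thm: scaling limit of discrete polynomial chaos}. One harmless slip: the symmetric norm already contains the $1/k!$, so your bound should read $\|\overline\psi_N^{(k)}\|_q^q\le C^k\Gamma(\xi)^k/\Gamma(1+k\xi)$ with $\xi=1-(1-\alpha+\epsilon)q$ and no extra $1/k!$, which still decays superexponentially in $k$.
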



Note that the multiple integrals appearing in \eqref{eq: the pinning c.p.f.} can be expressed as iterated stochastic integrals as follows.
Define the (spectrally positive) $\gamma$-stable Lévy process $(\bL_t)_{t \ge 0}$ by setting \(\bL_t = \int_{0}^t \bzeta(\dd t)\) for every $t \ge 0$. 
Then, we can write 
\begin{equation*}
	\cZ_{\hh}^{\bzeta,\hbeta} = 1 + \sum\limits_{k \ge 1} \hbeta^k \int_{0 < t_1 < \dots < t_k < 1} \bpsi_{\hh}^{(k)}(t_1,\dots,t_k) \prod\limits_{j=1}^k \dd \bL_{t_j} \,,
\end{equation*}
where the integrals over the simplex $\{0 < t_1 < \dots < t_k < 1 \}$ can be viewed as iterated Itô integrals with respect to the process $\bL$. 

\begin{remark}
Let us stress that there is an alternative representation of the continuum partition function, closer to the limit obtained in \cite[Thm 3.1]{csz_2016}, given by: 
\begin{equation}{\label{eq: alternative representation of the pinning c.p.f.}}
\cZ_{\hh}^{\bzeta,\hbeta} = (\cZ_{1,\hh})^{-1} \bigg(1 + \sum\limits_{k \ge 1} \frac{1}{k!} \int_{(0,1)^k} \prod\limits_{j =1}^k \bphi^{(k)}(t_1,\dots,t_k) \prod\limits_{j =1}^k (\hbeta \bzeta(\dd t_j) + \hh \dd t_j)\bigg), 
\end{equation}
where for every $k \ge 1$, $\bphi^{(k)}$ is a symmetric function (corresponding to the \(hh=0\) continuum homogeneous pinning model, \textit{i.e.}\ $\bphi^{(k)}= \bpsi_{\hh=0}^{(k)}(t_1,\dots,t_k)$), defined for $0=: t_0 < t_1 < \dots < t_k < 1$ by 
\begin{equation*}
\bphi^{(k)}(t_1,\dots,t_k) := \prod_{i=1}^{k} (t_i-t_{i-1})^{-(1-\alpha)} \,. 
\end{equation*}
The idea is simply to expand the product $\prod_{j=1}^k (\hbeta \bzeta(\dd t_j) + \hh \dd t_j)$ in \eqref{eq: alternative representation of the pinning c.p.f.} and regroup the integration variables with respect to $\dd t_j$.
Using the explicit expression of the continuum homogeneous partition functions (see Lemma \ref{lemma: scaling limit of the homogeneous partition functions}), one reconstructs the correlation functions \eqref{eq: continuum correlation functions of the pinning model} and recovers \eqref{eq: the pinning c.p.f.}. 
We refer to \cite[\S5.1]{berger_legrand_2024}, where this procedure is explained in detail in a very similar context.
\end{remark}

We interpret here the discrete pinning measure \(\P_{N,h}^{\omega,\beta}\) as a measure on the space $\cC_{\infty}$ of closed subsets of $\bbR$, endowed with the Matheron topology, by considering the closed subset $\frac{1}{N} \tau$.
Let us denote \(\mathcal{M}_1(\cC_{\infty})\) the set of probability measures on \(\cC_{\infty}\), endowed with the topology of the convergence in distribution.

\begin{theorem}[Intermediate disorder for the disordered pinning]
	{\label{thm: convergence of the pinning measure}}
	Under the same assumptions as Theorem \ref{thm: convergence of the partition function of the pinning model}, for every $\hbeta \in [0,\infty)$ and \(\hh \in \bbR\), there exists a random probability measure $\bQ_{\hh}^{\bzeta,\hbeta}$ on $\cC_{\infty}$ such that the following convergence in distribution holds in $\cM_1(\cC_{\infty})$,
	\begin{equation*}
		\P_{N,h_N}^{\beta_N,\omega} \xrightarrow[N \to +\infty]{(d)} \bQ_{\hh}^{\bzeta,\hbeta}\,.
	\end{equation*}
	This convergence holds jointly with the convergence of the discrete noise to $\bzeta$. 
	We refer to the measure~$\bQ_{\hh}^{\bzeta,\hbeta}$ as the \emph{continuum disordered pinning model with \(\gamma\)-stable Lévy noise}.
\end{theorem}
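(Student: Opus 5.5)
The plan is to derive Theorem~\ref{thm: convergence of the pinning measure} from the general Theorem~\ref{thm: convergence of the disordered probability}, with $E=\cC_\infty$ and reference measure $\P_{N,h_N}$. This requires checking its three assumptions and the hypotheses of Theorem~\ref{thm: convergence of the partition function}. The latter are exactly what is used for Theorem~\ref{thm: convergence of the partition function of the pinning model}, so we take them as established there. We choose $J_N=u(N)^{-1}$, so that the intermediate regime $\beta_N u(N)V_N\to\hbeta$ matches $\beta_N V_N/J_N\to\hbeta$.

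\textbf{Assumption~\ref{cond: invariance principle}.} This is Sohier's result \cite[Thm 3.1]{sohier_2009}, recalled above: $\frac1N\tau\cap\intg1N$ under $\P_{N,h_N}$ converges in the Matheron topology to $\cB_{\alpha,\hh}$.

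\textbf{Assumption~\ref{cond: convergence general correlations}.} For $G\in\cC_b(\cC_\infty)$ and ordered points $t_1<\dots<t_k$, the renewal property gives
\[
\psi^{(k)}_N(t;G)=\frac{u(N)^{-k}}{Z_{N,h_N}}\,\E\Big[G\big(\tfrac1N\tau\big)\,e^{h_N\sum_{n\le N}\Ind_{n\in\tau}}\prod_j\Ind_{Nt_j\in\tau}\Big].
\]
Conditioned on $\{Nt_j\in\tau\ \forall j\}$, the set $\frac1N\tau$ is a concatenation of independent pinned excursions, each with a tilted bridge law, and a free last piece. Using bridge invariance principles (regenerative bridges) together with Lemma~\ref{lemma: scaling limit of the homogeneous partition functions}, this yields pointwise convergence to
\[
\bpsi^{(k)}_{\hh}(t)\,\bE\big[G \mid t_1,\dots,t_k\in\cB_{\alpha,\hh}\big].
\]
Since $|G|\le\|G\|_\infty$, the function $\psi^{(k)}_N(\cdot;G)$ is dominated by $\|G\|_\infty\,\psi^{(k)}_N$. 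The latter converges in $L^q_s$, which is part of the partition-function hypotheses, so a generalized dominated convergence (Vitali) argument gives $L^q_s$ convergence. The piecewise-constant extension errors near cell boundaries are handled the same way.

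\textbf{Assumption~\ref{cond: positivite}.} This is the main obstacle: showing that $\cZ^{\bzeta,\hbeta}_{\hh}>0$ almost surely. Two facts are needed.
\begin{itemize}
\item[(a)] $\{\cZ=0\}$ is a zero-one event.
\item[(b)] $\Es[\cZ]=1$, so $\Pro[\cZ=0]<1$.
\end{itemize}
For (a), the first attempt will use the renewal decomposition of the continuum partition function, following \cite[Prop.~4.4]{berger_lacoin_2022} and the results of \cite{faugere_lacoin}. Writing $\cZ$ through the first contact point after time $\epsilon$, or through a decomposition over the last contact point before time $\epsilon$, one expresses $\cZ$ as a positive combination of quantities that are measurable with respect to the noise on $(\epsilon,1)$, weighted by positive integrands. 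This should show that $\{\cZ=0\}$ lies, up to null sets, in the tail $\sigma$-field of $\bzeta$ restricted to $(0,\epsilon)$, hence is trivial by Kolmogorov's zero-one law. Positivity of the weights requires knowing that the analogous point-to-point continuum partition functions are nonnegative. This follows as limits of nonnegative discrete quantities, because $\omega>-1$ and $c_-=0$, so the limit noise is spectrally positive.

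Given (a) and (b), Theorem~\ref{thm: convergence of the disordered probability} applies and produces $\bQ^{\bzeta,\hbeta}_{\hh}$, jointly with the convergence of the noise.
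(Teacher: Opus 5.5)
Your treatment of assumptions (a) and (b) of Theorem~\ref{thm: convergence of the disordered probability} matches the paper's. The paper also bounds $\psi_N^{(k)}(\cdot\,;G)$ by $\|G\|_\infty\psi_N^{(k)}$, uses pointwise convergence of conditioned regenerative sets \cite[Prop.~A.8]{csz_2016_continuum}, and concludes by dominated convergence; your Vitali variant is equivalent, and citing Sohier for (a) is the right reference when $\hh\neq0$. For (c), the paper gives no argument: it imports the a.s.\ strict positivity of $\cZ_{\hh}^{\bzeta,\hbeta}$ from \cite{faugere_lacoin}. If you simply cite that result, your proof is the paper's.

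The self-contained zero-one argument you sketch does not close as written. You only establish that the pieces of your decomposition are nonnegative, and that does not place $\{\cZ=0\}$ in $\sigma(\bzeta|_{(0,\epsilon)})$. $\cZ$ could vanish because the factor built from the noise on $(\epsilon,1)$ vanishes. A.s.\ strict positivity of continuum point-to-line partition functions on $(\epsilon,1)$ is exactly the kind of statement under proof, so the argument is circular at that step.

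What rescues it is a term you do not mention. Decompose over the last contact $g<\epsilon$. The factor multiplying the point-to-point function $\cZ^c_{[0,g]}$ is then $R(g)=\bar K(1-g)+\int_\epsilon^1 K(s-g)\,\cZ_{[s,1]}\,\dd s$. Here $K$ is the inter-arrival density of the limiting regenerative set, $\bar K$ its tail, and $\cZ_{[s,1]}\ge0$ the point-to-line function from a contact at $s$; for $\hh\neq0$ one argues the same way with tilted kernels. The deterministic term $\bar K(1-g)>0$, which comes from the event of no contact in $(\epsilon,1)$, forces $R(g)>0$. So, up to a positive constant, $\cZ=\int_0^\epsilon \cZ^c_{[0,g]}R(g)\,\dd g$ vanishes if and only if $\cZ^c_{[0,g]}=0$ for a.e.\ $g<\epsilon$. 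That event is $\sigma(\bzeta|_{(0,\epsilon)})$-measurable for every $\epsilon$, hence lies in the trivial germ $\sigma$-field at $0$, and $\Es[\cZ]=1$ finishes.

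Even then, two things remain. You must construct $g\mapsto\cZ^c_{[0,g]}$ as a jointly measurable nonnegative field, and you must justify the decomposition identity for L\'evy multiple integrals (a stochastic Fubini argument on the chaos expansion). This is the delicate part the paper deliberately outsources to \cite{faugere_lacoin}.
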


The proof of this result follows essentially from Theorem~\ref{thm: convergence of the partition function of the pinning model} and existing literature on the pinning model.
The only additional ingredient is the strict positivity of the continuum partition function \(\cZ_{\hh}^{\bzeta,\hbeta}\), which is necessary to apply our general Theorem~\ref{thm: convergence of the disordered probability}. 
This delicate problem is solved in~\cite{faugere_lacoin}. 

Let us conclude by mentioning the following results in the cases where in \eqref{eq: scaling pinning} one has \(\hat{\beta}=0\) (weak disorder) or \(\hbeta=+\infty\) (strong disorder).

\begin{theorem}{\label{thm: disorder relevance for the pinning model}}
	Let the aperiodic and recurrent renewal process $\tau$ satisfy \eqref{def: inter-arrival law} for some $\alpha \in (\frac{\gamma-1}{\gamma},1)$.
	Assume that $\lim_{N\to\infty} h_N u(N)N = \hh \in \bbR$, as in~\eqref{eq: scaling pinning}.
	\begin{enumerate}
		\item If $\lim_{N \to +\infty} \beta_N u(N) V_N = 0$, then $Z_{N,h_N}^{\omega,\beta_N}$ goes to $1$ in probability (and in $\mathds{L}^p$ for every $1 \le p < \gamma$).
		
		\item If $\lim_{N \to +\infty} \beta_N u(N) V_N = +\infty$, then $Z_{N,h_N}^{\omega,\beta_N}$ goes to $0$ in probability.
	\end{enumerate}
\end{theorem}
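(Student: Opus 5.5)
The plan is to deduce both items from the general statements: part (i) from the Weak disorder Corollary (equivalently Theorem~\ref{thm: convergence of the partition function} with $\hbeta=0$), and part (ii) from Theorem~\ref{thm: disorder relevance}. The framework is set up with $\delta=1/N$, $\Omega=(0,1)$, $\Omega_\delta=\frac1N\bbZ\cap(0,1)$, cells of volume $v_\delta=1/N$, and $V_\delta=V_N$ from \eqref{eq: scale of the noise for the pinning}. The reference measure is $\P_{\delta}^{\rm ref}=\P_{N,h_N}$, and the normalization is $J_\delta:=1/u(N)$, so that $\beta_\delta V_\delta/J_\delta=\beta_N u(N)V_N$. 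The condition $\alpha>\frac{\gamma-1}{\gamma}$ is exactly $1-\alpha<1/\gamma$. Since $u(N)$ is regularly varying with index $\alpha-1$ and $V_N$ with index $1/\gamma$, this gives $u(N)V_N\to\infty$, which is the subcriticality condition $V_\delta/J_\delta\to\infty$.

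For part (i), I will check the hypotheses of Theorem~\ref{thm: scaling limit of discrete polynomial chaos} for $\psi^{(k)}_N(t_1,\dots,t_k)=u(N)^{-k}\E_{N,h_N}[\prod_j \Ind_{Nt_j\in\tau}]$, with some $q\in(\gamma,2]$. By the renewal property, $\psi_N^{(k)}$ factorizes for $t_1<\dots<t_k$ as
\[
Z_{N,h_N}^{-1}\,\prod_j \frac{e^{h_N}Z^{c}_{N(t_j-t_{j-1}),h_N}}{u(N)}\; Z_{N(1-t_k),h_N}.
\]
Lemma~\ref{lemma: scaling limit of the homogeneous partition functions} then gives pointwise convergence to $\bpsi_{\hh}^{(k)}$. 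The upgrade to $L^q_s$ convergence, and the uniform summability in hypothesis~\ref{hyp: truncation}, both rest on a domination bound
\[
\psi_N^{(k)}(t)\le c^k\prod_j (t_j-t_{j-1})^{-(1-\alpha)-\epsilon},
\]
obtained from Potter's bounds on $u$ together with $Z^c_{n,h_N}\le e^{|\hh|c}u(n)$-type estimates (the homogeneous partition functions are uniformly bounded above and below for bounded $\hh$). Then $q(1-\alpha+\epsilon)<1$ holds for $q$ close to $\gamma$, since $\gamma(1-\alpha)<1$. The symmetric $L^q$ norm of the dominating product is a Dirichlet-type integral $\Gamma(1-q(1-\alpha+\epsilon))^k/\Gamma(\cdots k)$. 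This decays superexponentially, so $\sum_k C^k\|\cdot\|_q<\infty$ uniformly in $N$. Dominated convergence yields~\ref{hyp: convergence of correlation functions}. Replacing the discrete sums by integrals (the piecewise constant extension) I will handle with the comparison inequality of Appendix~\ref{appendix: homogeneous partition functions}. With these hypotheses in hand, the Weak disorder Corollary gives $Z\to1$ in $\mathds{L}^p$ for $p<\gamma$, hence also in probability. The case $\hh\neq0$ only changes constants.

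For part (ii), Theorem~\ref{thm: disorder relevance} requires that $J_\delta v_\delta\sum_x\sigma_x=\frac{1}{Nu(N)}\,|\tau\cap[1,N]|$ converge in law under $\P_{N,h_N}$ to a strictly positive variable. Under $\P$ this is the classical local-time limit: $|\tau\cap[1,N]|/(Nu(N))$ converges in law to a constant multiple of $L_1^{(\alpha)}$, which is a Mittag-Leffler variable and is a.s.\ positive. Under $\P_{N,h_N}$ the Radon--Nikodym density is $\exp(h_N|\tau\cap[1,N]|)/Z_{N,h_N}$, which is a continuous function of this same variable with $h_N Nu(N)\to\hh$. Convergence therefore transfers to the tilted law $e^{\hh L_1}/\bE[e^{\hh L_1}]$, provided uniform integrability of the density holds. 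This is standard from exponential moments of Mittag-Leffler (see \cite{sohier_2009}). The tilted limit is still a.s.\ positive, and part (ii) follows.

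I expect the main obstacle to be the uniform domination of the correlation functions, namely controlling $Z^c_{n,h_N}/u(N)$ uniformly in $n\le N$ including small $n$, where the slowly varying $\ell$ matters. I would first try Potter bounds, $u(n)/u(N)\le c\,(n/N)^{\alpha-1-\epsilon}$ for $1\le n\le N$. For the tilt, the crude bound $\E[e^{h_N|\tau\cap[1,n]|}\Ind_{n\in\tau}]\le u(n)\,\sup_N\E[e^{|h_N||\tau\cap[1,N]|}]$ does not quite follow by itself, because the conditioning is on $n\in\tau$. I would fix this by splitting at $n/2$ and using the renewal property.
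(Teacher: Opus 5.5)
Your proposal is correct and follows the paper's route. Item (i) comes from the weak-disorder corollary once the chaos hypotheses are checked as in the proof of Theorem~\ref{thm: convergence of the partition function of the pinning model}. Item (ii) comes from Theorem~\ref{thm: disorder relevance}, where the paper identifies the limit of $D_N=\frac{1}{Nu(N)}|\tau\cap[1,N]|$ more directly: it writes the Laplace transform as $Z_{N,h_N+\lambda_N}/Z_{N,h_N}$ with $\lambda_N=\lambda/(Nu(N))$ and invokes Lemma~\ref{lemma: scaling limit of the homogeneous partition functions}, instead of transferring the Mittag-Leffler limit through the tilt by uniform integrability.

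The obstacle you flag is already settled. The uniform bound $Z^c_{M,h_N}\le C\,u(M)$ for $1\le M\le N$ is part of that same lemma (see \eqref{eq: uniform bound on the conditioned partition function}), so no splitting at $n/2$ is needed.
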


\noindent
In particular, the above shows that the disorder relevance properties discussed in Section \ref{subsec: disorder relevance} hold for the disordered pinning model.

\subsubsection{Further remarks and perspectives}
\label{subsec: remark SPDE pinning}

The point-to-point disordered partition function is defined for every $ 1 \le M \le N$ by
\begin{equation*}
	Z_{M,N,h}^{\omega,\beta,c} := \E_{N,h}\Big[\prod\limits_{n=M+1}^N (1+\beta \omega_n \Ind_{n \in \tau}) \Ind_{ N \in \tau} \mid M \in \tau \Big] \,.
\end{equation*}
Our main result also applies to this case, and we can show that if  $\lim_{N \to +\infty} \beta_N u(N) V_N = \hbeta \in \bbR_+$ and $\lim_{N \to +\infty} h_N u(N) N = \hh \in \bbR$, then for every $0 \le s \le t \le 1$, \(u(N)^{-1} Z_{Ns,N t,h_N}^{\omega,\beta_N,c}\) converges in distribution to a random variable denoted \(\cZ_{s,t,\hh}^{\hbeta}\).
In fact, one can show that 
\[ \big(u(N)^{-1} Z_{\floor{Ns},\floor{Nt},h_N}^{\omega,\beta_N,c}\big)_{0 \le s \le t \le 1} \xrightarrow[N \to +\infty]{(d)} \big(\cZ_{s,t,\hh}^{\hbeta}\big)_{0 \le s \le t \le 1} \,,\]
in the sense of finite-dimensional distributions.
It would be interesting to strengthen this convergence to convergence in distribution in a suitable functional space, as in \cite{csz_2016_continuum}. 
Since the limit is expressed as a polynomial chaos with respect to a Lévy white noise, a natural candidate is the space $\mathcal{D}([0,1]^{2,\le},\mathbb{R}_+)$, consisting of functions defined on the simplex $\{ 0 \le s \le t \le 1 \}$ that are càdlàg in both variables.

This question is motivated by the fact that when $\hh = 0$, the limiting random process is a candidate solution to the following \textit{stochastic Volterra flow equation}: 
\[ 
	\cZ_{s,t}^{\hbeta}= (t-s)^{-(1-\alpha)} + \hbeta \int_s^t (t-u)^{-(1-\alpha)} \cZ_{u,t}^{\hbeta} \bzeta(\dd u),
\]
where $\bzeta$ is the $\gamma$-stable Lévy white noise.
Inspired by the recent work~\cite{csz_2023}, a major breakthrough was achieved for this equation by \cite{wei_yu_2024} in the case where $\bzeta$ is Gaussian and in the critical regime $\alpha = 1/2$. 
We refer to their introduction for further background and references.

\subsection{The long-range directed polymer}
{\label{section: directed polymer}}
Before we introduce our results fo the directed polymer model, let us review our assumptions on the underlying random walk, which slightly generalizes the setting of Example~\ref{ex:longrange}.

\subsubsection{The long-range random walk and its scaling limit}
\label{subsec: longrange RW}

We consider a random walk $S = (S_{n})_{n\geq 0}$ on~\(\bbZ^{d}\), \(d\geq 1\), that belongs to the domain of attraction of an $\alpha$-stable law, with $\alpha \in (0,2]$, without centering.
In other words, we assume that there exists a sequence \((a_n)_{n\geq 1}\) such that $(\frac{1}{a_n} S_n)_{n\geq 0}$ converges in distribution as $n \to +\infty$ to a \textit{non-degenerate} \(d\)-dimensional $\alpha$-stable law~$\bX_1$. (We say that the law of \(\bX_1\) is non-degenerate if it cannot be supported on any proper subspace of \(\mathbb{R}^d\).) 
For simplicity, we assume in the following that the random walk is aperiodic; this excludes for instance the case of the simple random walk (considered in \cite{berger_lacoin_2021}) but makes the exposition much easier.

The characterization of \(d\)-dimensional domains of attractions is very well-known in dimension \(d=1\), see the seminal reference \cite[IX.8]{feller_1966}, and in dimension \(d\geq 2\), this dates back to \cite{dHOR_1984,resnick_greenwood_1979}; we refer to \cite{resnick_2004} for a general overview.
In particular, the scaling sequence \((a_N)_{N\geq 1}\) is necessarily regularly varying with exponent \(1/\alpha\); the case \(\alpha=2\) corresponds to the Gaussian case.
Note that since we consider a domain of attraction without centering, this imposes that \(\E[S_1]=0\) when \(\alpha \in (1,2]\); the case \(\alpha=1\) is more delicate.

Let us stress that we also have a Donsker's invariance principle as well as a local limit theorem for \((S_N)_{N\geq 0}\). 
This will ensure in particular that the conditions \eqref{cond: invariance principle} and \eqref{cond: convergence general correlations} of Theorem~\ref{thm: convergence of the disordered probability} hold.

Let us start with a statement of the local limit theorem, see e.g.\ \cite{doney_1991,griffin_1986}: under the aperiodicity condition, we have 
\begin{equation*}
	\sup\limits_{x \in \bbZ^{d}} \Big| (a_n)^{d}\, \P(S_n=x) - g_1\Big(\frac{x}{a_n}\Big) \Big| \xrightarrow[\ n \to +\infty\ ]{} 0 \,,
\end{equation*} 
where \(g_1(\cdot)\) is the density of \(\bX_1\).

As far as the invariance principle is concerned, let us introduce $\cD:=\cD([0,1],\bbR^{d})$ the space of càdlàg functions with values in $\bbR^{d}$, endowed with the Skorokhod topology (also called the $J_1$ topology).
For every $N \ge 1$ and $t \in [0,1]$, we consider the rescaled random walk \(S_t^{(N)} = \frac{1}{a_N} S_{\floor{N t}}\), where \((S_t^{(N)})_{t\in [0,1]}\) is interpreted as an element of~\(\cD\).
Then, the invariance principle (see e.g.\ \cite[Thm.~7.1]{resnick_2007}) tells that we have the following convergence in distribution in~\(\cD\):
\[
(S_t^{(N)})_{t\in [0,1]} \xrightarrow[\ N\to\infty\ ]{(d)} (\bX_t)_{t\in [0,1]} \,,
\]
where $(\bX_t)_{t \in [0,1]}$ is a \(d\)-dimensional $\alpha$-stable process.
For every $t \in (0,1]$, we denote by \(g_t(\cdot)\) the density of $\bX_t$: it is given by $g_t(x) = t^{-d/\alpha} g_1(x\, t^{-1/\alpha})$.  

In particular, with this at hand, the \textit{continuum \(k\)-point correlation function} (for \(k\geq 1\)) is the symmetric function $\bpsi^{(k)}:\big((0,1)\times \bbR^{d}\big)^k \to \bbR$ defined by
\begin{equation}
	{\label{eq: continuum correlation functions of the polymer model}}
	\bpsi^{(k)}\big((t_1,x_1),\dots,(t_k,x_k)\big) = \prod\limits_{j=1}^k g_{t_j-t_{j-1}}(x_j-x_{j-1})
\end{equation} 
for $t_0 := 0 < t_1 < \dots < t_k < 1 $ and $x_0 := 0, x_1,\dots,x_k \in \bbR^{d}$.

\subsubsection{The long-range directed polymer model and its scaling limit}

We are now ready to introduce the disordered version of the model.
Let $(\omega_{n,x})_{n \in \bbN, x \in \bbZ^{d}}$ be i.i.d.\ and centered heavy-tailed random variables, satisfying \eqref{eq: definition of a general heavy-tailed random variable} for some \(\gamma\in (1,2)\); assume also that \(\omega>-1\) a.s. 
Then, the (long-range) directed polymer model is the probability measure defined as a Gibbs modification of the law of the random walk \((S_n)_{n\geq 1}\) defined above: for every $N \in \mathbb{N}$ and $\beta \in [0,1]$, we let
\begin{equation}
	\frac{\dd \P_N^{\omega,\beta}}{\dd \P}(S) = \frac{1}{Z_N^{\omega,\beta}} \prod\limits_{n=1}^N (1+ \beta \omega_{n,S_n}) \,,
	\quad \text{ with } \quad
	Z_N^{\omega,\beta} = \E \Big[\prod\limits_{n=1}^N (1+\beta \omega_{n,S_n}) \Big] \,.
\end{equation}

Let us define $V_N$ up to asymptotic equivalence by 
\begin{equation}{\label{eq: scale of the noise for polymers}}
	\Pro[|\omega| > V_N] \sim \frac{1}{N\, (a_N)^{d}} \quad \text{ as } N \to +\infty \,,
\end{equation}
and note that \(V_N\) is regularly varying with index \(\frac1\gamma ( 1+ \frac{d}{\alpha})\).

Let us mention that we cannot apply directly Theorems~\ref{thm: convergence of the partition function} and~\ref{thm: convergence of the disordered probability} to obtain results on the directed polymer model, since the ambient space is here \(\Omega = (0,1) \times \mathbb{R}^{d}\), which is not bounded. 
We are however able to prove the following results.

\begin{theorem}[Intermediate disorder for the partition function]
	{\label{thm: convergence of the polymer partition function}}
	Let $S=(S_{n})_{n\geq 0}$ be a random walk on~$\bbZ^{d}$ satisfying the assumptions of Section~\ref{subsec: longrange RW}, and assume that $\gamma < 1+\frac{\alpha}{d}$. 
	Note in particular that it guarantees that \( V_N (a_N)^{-d}\to \infty\) as \(N\to\infty\).
	Assume also the following scaling for the parameter~\(\beta_N\): 
	\begin{equation}
		\label{eq: scaling polymer}
	 \lim_{N\to\infty} \beta_N V_N (a_N)^{-d}= \hbeta \in [0,\infty) \,.
	\end{equation}
	Then, we have the following convergence in distribution:
	\begin{equation}
	Z_N^{\omega,\beta_N} \xrightarrow[\ N\to\infty\ ]{(d)}	
	\cZ^{\bzeta,\hbeta} = 1 + \sum\limits_{k \ge 1} \frac{\hbeta^k}{k!} \int_{(0,1)^k \times (\bbR^{d})^k} \prod\limits_{j=1}^k  \bpsi^{(k)}\big((t_1,x_1),\dots,(t_k,x_k)\big) \prod\limits_{j=1}^k \bzeta(\dd t_j \dd x_j) \,,
	\end{equation}
	where $\bzeta$ is a $\gamma$-stable noise on $(0,1) \times \bbR^{d}$ and \(\bpsi^{(k)}\) is defined in~\eqref{eq: continuum correlation functions of the polymer model}.
	This convergence holds jointly with the convergence of the discrete noise to $\bzeta$.
	Furthermore, for every $p \in [1,\gamma)$, $\cZ^{\bzeta,\hbeta}$ has a finite moment of order~$p$, and 
	$\lim_{N \to +\infty} \Es[(Z_{N}^{\omega,\beta_N})^p] = \Es[(\cZ^{\bzeta,\hbeta})^p]$.
\end{theorem}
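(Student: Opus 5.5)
We cannot apply Theorem~\ref{thm: convergence of the partition function} directly because $\Omega=(0,1)\times\bbR^d$ is unbounded. The plan is therefore a spatial truncation argument. For $R>0$ set $\Omega^R=(0,1)\times(-R,R)^d$, which is bounded, and split the partition function as
\[
Z_N^{\omega,\beta_N}=Z_{N,R}^{\omega,\beta_N}+\big(Z_N^{\omega,\beta_N}-Z_{N,R}^{\omega,\beta_N}\big).
\]
Here $Z_{N,R}$ is the chaos expansion restricted to $x_1,\dots,x_k\in\Omega^R_\delta$. Equivalently, $Z_{N,R}$ replaces $\omega_{n,x}$ by $0$ outside $|x|<Ra_N$, so it is again a polynomial chaos on a bounded domain, with correlation functions $\psi^{(k)}\Ind_{(\Omega^R)^k}$. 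The strategy is a standard ``approximation in law'' argument, i.e.\ a $3\varepsilon$ argument.
\begin{enumerate}
\item For fixed $R$, show that $Z_{N,R}$ converges in law to the corresponding truncated continuum chaos $\cZ_R$.
\item Show that $\cZ_R\to\cZ$ in $\mathds{L}^p$ as $R\to\infty$.
\item Show that $\limsup_N\|Z_N-Z_{N,R}\|_p\to0$ as $R\to\infty$, for some fixed $p\in(1,\gamma)$.
\end{enumerate}
Convergence of $p$-th moments then follows from the $\mathds{L}^p$ bounds in Theorem~\ref{thm: scaling limit of discrete polynomial chaos} together with uniform integrability (bounds in $\mathds{L}^{p'}$ for $p<p'<\gamma$). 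Joint convergence with the noise is inherited from the bounded case, since restrictions of $\zeta_\delta$ to the boxes $\Omega^R$ determine $\zeta_\delta$ in $H^{-s}_{\rm loc}$.

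\emph{Step 1.} Verify the hypotheses of Theorem~\ref{thm: scaling limit of discrete polynomial chaos} on $\Omega^R$. Take $v_\delta=N^{-1}a_N^{-d}$ and $J_\delta=a_N^{d}$, so that $\psi^{(k)}_\delta=\prod_j a_N^d\,\P(S_{n_j-n_{j-1}}=y_j-y_{j-1})$. Convergence in $L^q$ of $\overline\psi^{(k)}_\delta\to\bpsi^{(k)}$ on $(\Omega^R)^k$ follows from the local limit theorem, which gives pointwise convergence away from the diagonals $t_j=t_{j-1}$, combined with a domination argument near them. The summability condition (ii) needs a bound of the form $\|\overline\psi_\delta^{(k)}\|_q\le C^k/\Gamma(\cdot)^{1/q}$ with superexponential decay. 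For this we use $\sup_x a_n^d\P(S_n=x)\le C(a_N/a_n)^d$ and $\sum_x \P(S_n=x)^q\le C a_n^{-d(q-1)}$, so that
\[
\|\overline\psi^{(k)}_\delta\|_q^q\le C^k\sum_{0<n_1<\dots<n_k\le N}N^{-k}\prod_j\big((n_j-n_{j-1})/N\big)^{-d(q-1)/\alpha}\cdot(\text{slowly varying}).
\]
Choose $q\in(\gamma,1+\alpha/d)$, which is possible because $\gamma<1+\alpha/d$. Then the exponent $d(q-1)/\alpha$ is $<1$, and the sum is bounded by a Dirichlet-type simplex integral $C^k/\Gamma(k(1-d(q-1)/\alpha)+1)$ via the comparison of Appendix~\ref{appendix: homogeneous partition functions} (Potter bounds take care of the slowly varying corrections). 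This yields both parts of (ii). Notably, this bound is uniform in $R$, which matters for Step~3.

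\emph{Step 3, the main obstacle.} The difference $Z_N-Z_{N,R}$ is a chaos whose kernels are $\psi^{(k)}\Ind\{\exists j:\,|x_j|\ge R\}$. We cannot simply use the moment bound \eqref{eq: moments constant C0}, because its constant involves $|\Omega|$, which is infinite here. My plan is to exploit the Markov structure. Decompose according to the first $k$-tuple index at which the walk exits the box, or more simply bound the kernel by $\sum_j\psi^{(k)}\Ind\{|x_j|\ge R\}$. Then apply the $\mathds{L}^p$ estimate of Section~\ref{section: Lp estimates} (Theorem~\ref{thm: Lp estimates for discrete chaos}) in a weighted form. The idea is to use $L^p$ as well as $L^q$ norms of the kernel: for $p<\gamma<q$, the $L^p$ norm of $\psi^{(k)}$ over $\bbR^d$ is finite because $g_t\in L^p$ uniformly integrable in time when $d(p-1)/\alpha<1$. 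I expect the estimates of Section~\ref{section: Lp estimates} to actually be proved with a bound of the form $\max(\|f\|_p,\|f\|_q)$ before using boundedness of $\Omega$ to compare the two. If so, both norms of the tail kernel tend to $0$ as $R\to\infty$ uniformly in $N$, by tightness of $S^{(N)}$: $\P(\max_{n\le N}|S_n|>Ra_N)\to0$. Step~2 is the continuum analogue of the same bound, proved identically with $g_t$ in place of the walk kernels.

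If the general estimate does not expose $\|f\|_p$ directly, the fallback is to rerun its proof in the polymer setting, using the Markov property to integrate out the spatial variables one at a time. Doing so gives explicit factors $\int g_t^p$ and $\int g_t^q$, both of which are time-integrable under $\gamma<1+\alpha/d$.
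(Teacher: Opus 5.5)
Your Step~1 is sound and matches the paper's estimates: the same choice $q\in(\gamma,1+\frac{\alpha}{d})$, the same bound $\sum_x\P(S_n=x)^q\le Ca_n^{-d(q-1)}$, and the same simplex integral with $\xi=\frac{d}{\alpha}(1+\frac{\alpha}{d}-q)$. The gap is in Steps~2--3. In your route they carry all the difficulty, and the premise they rest on is false.

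Before its final Jensen step, Corollary~\ref{cor: less naive Lp estimate} does not give $\max(\|f\|_p,\|f\|_q)$. For each pattern $I$ of big-jump positions (all patterns occur after symmetrization), it gives a mixed norm
\[
\Big(\sum_{(n_i)_{i\in I}}\Big(\sum_{(n_i)_{i\notin I}}|f|^q\Big)^{p/q}\Big)^{1/p}
\]
(with moment weights). Passing to $\|\overline f\|_q$ is exactly the Jensen step that costs $N^{|I|(\frac1p-\frac1q)}$, i.e.\ a power of $|\Omega|$, which is infinite here. Such mixed norms are not bounded by $\max(\|f\|_p,\|f\|_q)$: take $f=g\otimes h$ with $g$ spread out in the big-jump variables and $h$ concentrated in the others.

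For the polymer kernel, the clean factors $\int g_t^p$, $\int g_t^q$ of your fallback appear only when every big-jump variable precedes every small-jump one in time. Otherwise, the inner sum, as a function of a later site $x$, is a convolution like $\sum_y\P(S_m=y)^q\P(S_{n-m}=x-y)^q$. You must then sum its $(p/q)$-th power, an exponent $<1$, over all of $\bbZ^d$. That requires uniform-in-$n$ control of the spatial tails of $S_n/a_n$ (e.g.\ $\sup_n\E|S_n/a_n|^{\eta}<\infty$ for suitable $\eta>0$). Neither the local limit theorem nor $\P(S_n=x)\le Ca_n^{-d}$ supplies this. The smallness as $R\to\infty$ would also have to be proved inside these mixed norms, not just from tightness.

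Step~2 has the same issue. In addition, $\cZ$ on the unbounded domain is not provided by Theorem~\ref{thm: continuous polynomial chaos} and must itself be constructed. As written, Steps~2--3 are not proved.

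The missing idea, which is how the paper proceeds, is to truncate the \emph{paths} rather than the noise, so that positivity does the work. Set $Z_{N,A}=\E\big[f_A(S_N^*)\prod_{n\le N}(1+\beta_N\omega_{n,S_n})\big]$, with $f_A$ a smooth cutoff equal to $1$ on $[0,A]$ and $0$ beyond $2A$. Since $\omega>-1$ and $\beta_N\le1$, the weights are nonnegative with mean $1$. Hence $0\le Z_{N,A}\le Z_{N,B}\le Z_N$ for $A\le B$, and
\[
\Es\big|Z_N-Z_{N,A}\big|=\E\big[1-f_A(S_N^*)\big]\le\P(S_N^*\ge A),
\]
which is small uniformly in $N$ by the invariance principle.

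The kernels of $Z_{N,A}$ vanish as soon as some $|x_j|>2A$. So Theorem~\ref{thm: scaling limit of discrete polynomial chaos} applies on the bounded set $(0,1)\times\{|x|<2A\}$, using your Step~1 bounds (since $\psi^{(k)}_{N,A}\le\psi^{(k)}_N$). The limit kernels are $\bpsi^{(k)}$ times a bridge expectation of $f_A(\bX^*)$. The limits inherit $0\le\cZ_A\le\cZ_B$ and $\Es|\cZ_B-\cZ_A|\le\bP(\bX^*\ge A)$, so $\cZ$ is defined as their a.s.\ and $\mathds{L}^1$ limit. No estimate on the unbounded domain is needed at all.

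Your truncation ($\omega\equiv0$ outside the box) loses exactly this structure: $Z_{N,R}$ and $Z_N$ both have mean $1$, but their difference has no sign.

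One caveat: this argument only controls errors in $\mathds{L}^1$. It yields the convergence in law (and the case $p=1$). The $p$-th moment claim for $p\in(1,\gamma)$ additionally needs a uniform $\mathds{L}^{p'}$ bound on $Z_N$ over the unbounded domain. The paper's proof does not spell this out, and that is the one place where a correctly executed version of your Step~3 would genuinely be needed.
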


Note that the polymer measure can be viewed as a probability measure on $\cD:=\cD([0,1],\bbR^{d})$ by setting: for every continuous and bounded function $G:\mathcal{D} \to \bbR$, 
\begin{equation*}
	\E_N^{\omega,\beta}[G] := \frac{1}{Z_N^{\omega,\beta}} \E\Big[ G(S^{(N)}) \prod\limits_{n=1}^N(1+\beta \omega_{n,S_n}) \Big]\,.
\end{equation*}
We then have the following convergence for the directed polymer measure towards a continuum object.
 
\begin{theorem}[Intermediate disorder for the long-range directed polymer]
	{\label{thm: convergence of the polymer measure}}
	Under the same assumptions as Theorem \ref{thm: convergence of the polymer partition function}, for every $\hbeta \in [0,\infty)$, there exists a random probability measure $\bQ^{\bzeta,\hbeta}$ on $\mathcal{D}$ such that the following convergence in distribution holds in $\cM_1(\mathcal{D})$,
	\begin{equation*}
		\P_{N}^{\omega,\beta_N} \xrightarrow[\ N \to +\infty\ ]{(d)} \bQ^{\bzeta,\hbeta}\,.
	\end{equation*}
	This convergence holds jointly with the convergence of the discrete noise to $\bzeta$. 
	We refer to the measure~$\bQ^{\bzeta,\hbeta}$ as the \emph{continuum long-range directed polymer model with \(\gamma\)-stable Lévy noise}.
\end{theorem}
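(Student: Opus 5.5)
The plan is to reduce Theorem~\ref{thm: convergence of the polymer measure} to the general Theorem~\ref{thm: convergence of the disordered probability}, the only issue being that $\Omega=(0,1)\times\bbR^d$ is unbounded. This is handled by a spatial truncation, in the same way as for Theorem~\ref{thm: convergence of the polymer partition function}. The disordered measure is identified through the random linear functionals $G\mapsto \E_N^{\omega,\beta_N}[G]=\E[G(S^{(N)})\prod_n(1+\beta_N\omega_{n,S_n})]/Z_N^{\omega,\beta_N}$. For each $G\in\cC_b(\cD)$ the numerator has a polynomial chaos expansion with kernels $\psi^{(k)}_N(\cdot;G)$ as in \eqref{def: generalized correlation functions}, with $J_N=(a_N)^d$.

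\emph{Step 1 (bounded box).} For $R>0$ let $\Omega_R=(0,1)\times(-R,R)^d$, and consider the restricted numerator $Z_{N,R}(G)=\E[G(S^{(N)})\Ind_{\{\sup_{t}|S_t^{(N)}|<R\}}\prod_n(1+\beta_N\omega_{n,S_n})]$. All of its chaos kernels are supported in $\Omega_R^k$, so Theorem~\ref{thm: scaling limit of discrete polynomial chaos} applies on the bounded domain $\Omega_R$. Hypothesis~\ref{hyp: convergence of correlation functions} on $L^q_s$ convergence of the kernels follows from the local limit theorem together with the invariance principle, which give the convergence of conditioned bridge expectations $\E[G(S^{(N)})\mid S_{n_j}=x_j]$. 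Hypothesis~\ref{hyp: truncation} follows from the bound $|\psi^{(k)}_N(\cdot;G)|\le\|G\|_\infty\psi^{(k)}_N$, combined with the $L^q$ summability already established for Theorem~\ref{thm: convergence of the polymer partition function}. To check this one uses $\|g_t\|_q^q\asymp t^{-d(q-1)/\alpha}$, which is integrable for some $q>\gamma$ precisely because $\gamma<1+\alpha/d$, so the $k$-fold iterated integral decays like $\Gamma$-function ratios, i.e.\ faster than any $C^{-k}$. This gives, jointly in $(\zeta_N,(Z_{N,R}(G_i))_i)$ for finitely many $G_i$, convergence to continuum chaoses $\cZ_R(G_i)$.

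\emph{Step 2 (removing the truncation).} I would show that $\lim_{R\to\infty}\sup_N\Es|Z_N(G)-Z_{N,R}(G)|=0$. The difference is the chaos with kernels $(J_N)^k\E[G\Ind_{\{\sup|S^{(N)}|\ge R\}}\prod\sigma]$. Each such kernel is bounded by the point correlations, restricted either to configurations with some $|x_j|\ge R/2$ or to paths that exit the box between the visited points. A Hölder argument then gives $\|\cdot\|_q\le \|G\|_\infty\,\epsilon_R^{1/q'}\,C^k\|\psi^{(k)}\|_{q'}$, for some $q'$ slightly above $q$, with $\epsilon_R\to0$ uniformly in $N$. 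For the $L^p$ estimate I need a version of Theorem~\ref{thm: scaling limit of discrete polynomial chaos} on unbounded domains. Since the constant in \eqref{eq: the constant C} blows up with $|\Omega|$, I would instead split space into unit boxes and use the $L^p$ estimates of Section~\ref{section: Lp estimates} with a weighted norm, using the fact that the heat-kernel tails of $g_t$ give summable weights. This step is the main obstacle. The plan is to bound the $p$-th moment through a mixed $L^\gamma$/$L^q$ norm of the kernels, which remains finite on $\bbR^d$ thanks to the decay of $g_t$.

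\emph{Step 3 (conclusion).} Combining Steps 1 and 2 yields joint convergence of $(\zeta_N,Z_N^{\omega,\beta_N},(Z_N(G_i))_i)$ to $(\bzeta,\cZ^{\bzeta,\hbeta},(\cZ(G_i))_i)$. The remaining input is the strict positivity of $\cZ^{\bzeta,\hbeta}$, condition~\ref{cond: positivite}. Since all the kernels are nonnegative, I would prove it by a 0–1 argument as in \cite[Prop.~4.4]{berger_lacoin_2022}. Using the Markov property, $\{\cZ=0\}$ is contained in the event that the point-to-plane partition functions from $(t,x)$ vanish for all $(t,x)$ with $t\le\epsilon$. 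This event is asymptotically measurable with respect to the noise on $(0,\epsilon)\times\bbR^d$ as $\epsilon\to0$, so it is trivial, and $\Es\cZ=1$ excludes probability one. Finally, the tightness of the laws $\P_N^{\omega,\beta_N}$ in $\cM_1(\cD)$ follows from the tightness of $S^{(N)}$ together with the uniform integrability of $Z_N$ from Theorem~\ref{thm: convergence of the polymer partition function}. The functional $G\mapsto\cZ(G)/\cZ$ then defines the random measure $\bQ^{\bzeta,\hbeta}$ (it is a measure by countable determination on a separating class), exactly as in the proof of Theorem~\ref{thm: convergence of the disordered probability}.
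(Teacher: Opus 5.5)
Your architecture matches the paper's. You truncate to a bounded strip and apply Theorem~\ref{thm: scaling limit of discrete polynomial chaos} there, with hypothesis~\ref{hyp: truncation} coming from $|\psi^{(k)}_N(\cdot;G)|\le\|G\|_\infty\psi^{(k)}_N$. You then remove the truncation and conclude with Lemma~\ref{lemma: how to prove convergence of disordered measures}, which does not need $\Omega$ bounded.

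\textbf{The gap is Step~2, which you leave open as the main obstacle.} Controlling $Z_N(G)-Z_{N,R}(G)$ chaos by chaos on $(0,1)\times\bbR^d$ requires an $\mathds{L}^p$ estimate on an unbounded domain. The paper's estimates do not provide one, since the constant \eqref{eq: the constant C} grows with $|\Omega|$. Your weighted, unit-box, mixed-norm replacement is only announced, not proved.

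The missing idea is positivity, which makes this step a one-liner.
\begin{itemize}
\item Since $\omega>-1$ and $\beta_N\le1$, and one may assume $G\ge0$ (add a constant), the difference
\[
Z_N(G)-Z_{N,A}(G)=\E\Big[G(S^{(N)})\big(1-f_A(S_N^*)\big)\prod_{n=1}^N(1+\beta_N\omega_{n,S_n})\Big]
\]
is a nonnegative random variable.
\item Its $\mathds{L}^1(\Pro)$ norm is therefore its mean. By Fubini and $\Es[\omega]=0$, this mean equals $\E[G(S^{(N)})(1-f_A(S_N^*))]\le\|G\|_\infty\P(S_N^*\ge A)$, as in \eqref{eq: uniform inequality on approximation of partition functions}. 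This is small uniformly in $N$ by the invariance principle.
\item The same monotonicity gives $Z_{N,A}(G)\le Z_{N,B}(G)$ for $A\le B$, and this ordering survives the joint limit. Combined with convergence of first moments on the bounded strip, it yields $\Es|\cZ_B(G)-\cZ_A(G)|\le\|G\|_\infty\bP(\bX^*\ge A)$.
\item Hence $\lim_A\cZ_A(G)$ exists in $\mathds{L}^1$; this is exactly Section~\ref{sec: extension to functional partition function}. No chaos estimate on the unbounded domain is needed.
\end{itemize}

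\textbf{Smaller points.}
\begin{itemize}
\item With your sharp indicator $\Ind_{\{\sup_t|S^{(N)}_t|<R\}}$, pointwise convergence of the kernels needs the bridge law of $\bX^*$ to have no atom at $R$. The paper avoids this with the continuous cutoff $f_A(S_N^*)$; alternatively, choose $R$ outside a countable set.
\item Your $0$--$1$ sketch for $\cZ^{\bzeta,\hbeta}>0$ is not correct as written. The point-to-plane partition functions started at $(t,x)$ with $t\le\epsilon$ depend on the noise in $(t,1)\times\bbR^d$, not in $(0,\epsilon)\times\bbR^d$. Moreover, deducing from $\cZ=0$ that they all vanish already presupposes positivity of the point-to-point factors in the Chapman--Kolmogorov decomposition. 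The paper does not prove positivity itself; it defers to an adaptation of \cite[Section~4.7]{berger_lacoin_2022}. You should rely on that citation rather than on this argument.
\item For tightness of $(\P_N^{\omega,\beta_N})$, the relevant input is not uniform integrability of $Z_N$. It is the bound $\Pro[Z_N^{\omega,\beta_N}\le\epsilon_m]\le2^{-m}$ uniformly in $N$, which is where positivity of the limit enters in Lemma~\ref{lemma: how to prove convergence of disordered measures}.
\end{itemize}
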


Let us mention that Theorems~\ref{thm: convergence of the polymer partition function}-\ref{thm: convergence of the polymer measure} are the analogous of Theorems~2.1 and~2.4 in \cite{berger_lacoin_2021}, where the results are obtained in the case of the simple random walk (which corresponds to $\alpha = 2$). 
Our results are therefore more general, and answer completely the questions of \cite[Sec.~2.2]{berger_lacoin_2021} (note that the roles of \(\gamma\) and \(\alpha\) are reversed).

Finally, let us conclude by mentioning the following results in the cases where in \eqref{eq: scaling polymer} one has \(\hat{\beta}=0\) (weak disorder) or \(\hbeta=+\infty\) (strong disorder).

\begin{theorem}{\label{thm: disorder relevance for the polymer}}
	Let the random walk \((S_n)_{n\geq 0}\) satisfy the assumptions of Section~\ref{subsec: longrange RW} and assume that \(1+ \frac{\alpha}{d} > \gamma-1\).	
	\begin{enumerate}
		\item \label{res: disorder relevance on the left} 
		If $\lim_{N \to +\infty} \beta_N V_N (a_N)^{-d} = 0$, then $Z_N^{\omega,\beta_N}$ goes to $1$ in probability (and in $L^1$).
		
		\item \label{res: disorder relevance on the right} 
		If $\lim_{N \to +\infty} \beta_N V_N (a_N)^{-d} = +\infty$, then $Z_N^{\omega,\beta_N}$ goes to $0$ in probability. 
	\end{enumerate}
\end{theorem}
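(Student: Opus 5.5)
Part (i) is a truncation argument; part (ii) is a fractional-moment/change-of-measure argument.

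\textbf{Part (i): weak disorder.} I would write $Z_N^{\omega,\beta_N}-1$ as the discrete polynomial chaos $\sum_{k\ge1}(\beta_N V_N a_N^{-d})^k\frac{1}{k!}\int \overline{\psi}_N^{(k)}\prod\zeta_N(\dd x_j)$, with $\psi_N^{(k)}$ the rescaled random-walk transition products. Since $\Omega=(0,1)\times\bbR^d$ is unbounded, Theorem~\ref{thm: scaling limit of discrete polynomial chaos} does not apply directly. I would therefore split space into $\{|S_n|\le Ma_N\ \forall n\le N\}$ and its complement. Writing $\beta'_N:=\beta_N V_N a_N^{-d}$, the expansion of $Z_N$ becomes
\[
Z_N=\E\Big[\prod_n(1+\beta_N\omega_{n,S_n})\Ind_{\max_{n\le N}|S_n|\le Ma_N}\Big]+\E\Big[\prod_n(1+\beta_N\omega_{n,S_n})\Ind_{\max_{n\le N}|S_n|> Ma_N}\Big].
\]
The second term is nonnegative and has $\Es$-mean $\P(\max_{n\le N}|S_n|>Ma_N)$, which is small uniformly in $N$ for large $M$ by the invariance principle. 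Hence it is small in $\mathds{L}^1$.

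The first term is a chaos on the bounded domain $(0,1)\times(-M,M)^d$ with correlation functions $\psi^{(k)}$ restricted accordingly. The $\mathds{L}^p$ bound of Theorem~\ref{thm: scaling limit of discrete polynomial chaos} gives
\[
\|\text{first term}-\P(\cdot)\|_p\le\sum_{k\ge1}(C_0\beta'_N)^k\|\overline{\psi}_N^{(k)}\Ind_{\text{box}}\|_q .
\]
To make this go to $0$ as $\beta'_N\to0$, I need $\sup_N\sum_k C^k\|\overline\psi_N^{(k)}\|_q<\infty$ for a fixed $q\in(\gamma,\min(2,1+\alpha/d))$. The choice $q<1+\alpha/d$ is possible because $\gamma<1+\alpha/d$. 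The bound itself is a local-limit-theorem computation: $\|g_t\|_{L^q}^q\asymp t^{-(q-1)d/\alpha}$ is integrable in $t$, so the $k$-fold simplex integral is bounded by $C^k/\Gamma(ck)$-type factors. Appendix~\ref{appendix: homogeneous partition functions} compares the discrete sums with these integrals. Together the two terms give convergence to $1$ in $\mathds{L}^1$, hence in probability.

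\textbf{Part (ii): strong disorder.} Here I would invoke Theorem~\ref{thm: disorder relevance}, with the box restriction handled as above. Its hypothesis asks that $J_N v_N\sum_x\sigma_x$ converge to a positive random variable. For the polymer, $\sum_x\sigma_x=N$, $v_N=(Na_N^d)^{-1}$ and $J_N=a_N^d$, so $J_Nv_N\sum_x\sigma_x=1$ deterministically and the hypothesis holds trivially. (If Theorem~\ref{thm: disorder relevance} needs a bounded domain, I would re-run its proof directly.)

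The main obstacle is the direct proof of (ii) via fractional moments. For $\theta<1$ one has $\Es[Z^\theta]\le\E[\Es[\prod(1+\beta\omega)]^\theta]$ after a tilt. I would tilt the environment along a typical path: change the law of $\omega$ on the $N$ sites visited, so as to cut off the rare large values that carry the mean. Each visited site then costs a factor $\Es[(1+\beta_N\omega)\Ind_{\beta_N\omega\le 1}]\approx 1-c\,\beta_N^\gamma V$-scale. This needs $N\beta_N^{\gamma}\P(|\omega|>\beta_N^{-1})$-type growth, namely $N\,\P(\omega>1/\beta_N)\to\infty$. That holds exactly when $\beta_NV_Na_N^{-d}\to\infty$ after using $\P(\omega>V_N)\sim 1/(Na_N^d)$ together with the fact that the walk visits only $N$ of the $Na_N^d$ sites. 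Controlling the entropy cost of the tilt on the whole space-time box, rather than on the path, is where I expect the difficulty to concentrate.
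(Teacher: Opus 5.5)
Your proposal is correct and follows the paper's approach.

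For (ii), the paper also bounds the box part by the partition function of the walk conditioned to stay in a box of size $\asymp a_N$, and applies Theorem~\ref{thm: disorder relevance} to it. Boundedness is genuinely needed there, through the union bound on $\Pro[(A_\delta)^c]$, and the box supplies it. As you observe, the hypothesis on $J_Nv_N\sum_x\sigma_x$ is then trivially satisfied.

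For (i), the paper simply quotes Theorem~\ref{thm: convergence of the polymer partition function} at $\hbeta=0$. That theorem's proof uses the same box truncation, Theorem~\ref{thm: Lp estimates for discrete chaos} and the bounds \eqref{eq: bound on the discrete moments}. Your moment-only argument is therefore a leaner version of the same route, and it gives $\mathds{L}^1$ convergence directly. Both arguments use $\gamma<1+\frac{\alpha}{d}$, which is the intended reading of the misprinted hypothesis $1+\frac{\alpha}{d}>\gamma-1$; as printed, that condition is vacuous for $\gamma\in(1,2)$.

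The fractional-moment fallback you call the main obstacle is therefore unnecessary, and as sketched it would not work:
\begin{itemize}
\item Cutting $\omega$ at $1/\beta_N$ costs entropy of order $Na_N^d\,\Pro(\omega>1/\beta_N)$ over the whole box. This swamps the gain of order $N\,\Pro(\omega>1/\beta_N)$ along the path.
\item The condition $N\,\Pro(\omega>1/\beta_N)\to\infty$ is strictly weaker than $\beta_NV_Na_N^{-d}\to\infty$, so it is not the right criterion.
\item The proof of Theorem~\ref{thm: disorder relevance} instead cuts at a slowly growing multiple of $V_N$. There the entropy cost vanishes, and the gain is essentially $N\beta_NV_N^{1-\gamma}\asymp\beta_NV_Na_N^{-d}$.
\end{itemize}
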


\noindent
This shows that the disorder relevance properties discussed in Section \ref{subsec: disorder relevance} hold for the long-range directed polymer model.

\subsubsection{Further remarks and perspectives}
\label{subsec: remark SPDE polymer}

For this model, the point-to-point partition function is defined for every $1 \le M \le N$ and $x,y \in \bbZ^d$ by
\[ Z_{M,N}^{\omega,\beta,c}(x,y) := \E\Big[\prod\limits_{n=M+1}^N (1+\beta \omega_{n,S_n}) \Ind_{ S_N = y} \mid S_M = x \Big] \,.\]
For $x \in \bbR^d$, we denote by $\floor{x}$ a nearest neighbor of $x$ in $\bbZ^d$.
Our techniques adapt to prove that there exist a process $\big(\cZ_{s,t}^{\bzeta,\hbeta}(x,y)\big)_{0 \le s \le t \le 1\,; \, x,y \in \bbR^d}$ such that 
\begin{equation}{\label{eq: process convergence of the flow}}
	(a_N)^{-1} Z_{\floor{Ns},\floor{Nt}}^{\omega,\beta_N,c}\big(\floor{a_N x}, \floor{a_N y}\big) \xrightarrow[N \to +\infty]{(d)} \cZ_{s,t}^{\bzeta,\hbeta}(x,y)\,,
\end{equation} 
in the sense of finite-dimensional distributions.
The limiting process is a candidate solution for the flow associated with the following \textit{stochastic fractional heat equation}  
\begin{equation}{\label{eq: stochastic fractional heat equation}}
	\partial_t u(t,x) = \mathcal{L}_{\alpha} u(t,x) + \hat\beta \, \bzeta(t,x) \cdot u(t,x)\,, \quad t \ge 0, x \in \bbR^d \,.
\end{equation}
Here, $\mathcal{L}_{\alpha}$ denotes the fractional Laplacian of index $\alpha$, and $\bzeta$ is a $\gamma$-stable noise. 
When $\alpha = 2$, $\mathcal{L}_{\alpha}$ reduces to the standard Laplacian  $\Delta = \sum_{j=1}^d \partial_{x_j,x_j}$, and~\eqref{eq: stochastic fractional heat equation} becomes the stochastic heat equation (SHE) with multiplicative noise. 
This equation has been extensively studied when the noise $\bzeta$ is Gaussian.
In this case, the Cole-Hopf transform of the solution of the SHE is expected to solve the KPZ equation, a singular SPDE which modeling the evolution of random growth fronts; recent breakthroughs were achieved in this direction~\cite{csz_2020,csz_2023}.
Another important motivation for the study of the SHE is the phenomenon of \textit{localization}, or \textit{intermittency}~ \cite{intermittency_in_random_media,carmona_molchanov_1994}. 
Informally, this refers to the appearance of sharp peaks at large times in the random field $(u(t,x))_{t \ge 0, x \in \bbR^d}$, where $u$ is a solution of~\eqref{eq: stochastic fractional heat equation}. 

Early results for the SHE driven by Lévy noise were obtained in~\cite{saintloube_1998}, but relied on assumptions on the Lévy measure that excluded $\gamma$-stable noise.
In the $\gamma$-stable case,  existence,uniqueness and intermettency were established in ~\cite{berger_chong_lacoin_2023} (see also \cite{chong_kevei_2023}).

As already mentioned for the stochastic Volterra equation, it would be interesting to strengthen the convergence~\eqref{eq: process convergence of the flow} to a convergence in a suitable functional space. 
We again expect càdlàg regularity in time. 
Spatial regularity is a more delicate and less explored issue, particularly in view of intermittency. 
We refer to~\cite{chong_dalang_humeau_2018} for related results in this direction.

\section{Important \texorpdfstring{$\mathds{L}^p$}{Lp} estimates for chaos}{\label{section: Lp estimates}}

In this section we prove useful and general moment inequalities for discrete and continuous chaos. 
They illustrate in particular why we needed to introduce symmetric norms in Section~\ref{eq: def of symmetric norms}.
All the setting and notations are the same as in Section~\ref{section: main results}.

A function $\bff : \Omega^k \mapsto \mathbb{R}$ is said to vanish on the diagonals if $\bff(x_1,\dots,x_k) = 0$ whenever there are two indices $i \neq j$ such that $x_i = x_j$. 
We now state two results for chaos based on symmetric functions that vanish on the diagonals: one for the discrete chaos and one for the continuous chaos. 
In both cases, the meaning of the inequalities we state is that the left-hand side is well-defined if the right-hand side is finite, and the inequality holds when this is the case. 
We will then apply these inequalities to \(k\)-point correlation functions~\(\frac{1}{k!}\bpsi^{(k)}\).

\begin{theorem}{\label{thm: Lp estimates for discrete chaos}}
	Assume that \(\omega\) is centered and verifies~\eqref{eq: definition of a general heavy-tailed random variable} with \(\gamma \in (1,2)\).  
	For every $p \in (1,\gamma)$ and $q \in (\gamma,2]$, there exists a constant $C_0=C_0(p,q,\gamma,\Omega)$ such that, for every $k \ge 1$,~$\delta \in (0,1)$ and $f: \Omega_{\delta}^k \to \mathbb{R} $ which is symmetric and vanishes on the diagonal:
	\begin{equation}{\label{eq: Lp estimates for discrete chaos}}
		\bigg\| \frac{1}{k!} \sum\limits_{x_1,\dots,x_k \in \Omega_{\delta}} f(x_1,\dots,x_k) \prod\limits_{j=1}^k \frac{\omega_{x_j}}{V_{\delta}}\bigg\|_p \le (C_0)^k \big\| \overline{f} \big\|_q \,.
	\end{equation}
	Here, we recall that \(V_{\delta}\) is defined (up to asymptotic equivalence) by the relation~\eqref{eq: general scale of the noise} and that $\overline{f}$ is the piecewise extension of \(f\) as introduced in \eqref{eq: piecewise extension of f}. 
\end{theorem}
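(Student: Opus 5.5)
We argue by induction on $k$, treating the multilinear chaos as a sum of martingale differences and using a Burkholder/von Bahr--Esseen type inequality for $p\in(1,2)$: for martingale differences $(d_i)$, $\|\sum_i d_i\|_p^p \le 2\sum_i \|d_i\|_p^p$. Order the points of $\Omega_\delta$ as $y_1,\dots,y_n$. By symmetry and vanishing on diagonals, the chaos equals
\[
X(f)=\sum_{i=1}^n \frac{\omega_{y_i}}{V_\delta}\, X_{<i}\big(f(y_i,\cdot)\big),
\]
where $X_{<i}(g)$ denotes the $(k-1)$-chaos built from $g$ using only the variables $\omega_{y_j}$, $j<i$ (with the $1/(k-1)!$ normalisation). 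This is a martingale in $i$ because $\omega$ is centered and independent of the past. Hence $\|X(f)\|_p^p \le 2\, V_\delta^{-p}\Es|\omega|^p \sum_i \|X_{<i}(f(y_i,\cdot))\|_p^p$.

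The difficulty is that iterating this in $L^p$ produces only $\sum |f|^p$, which is the wrong norm: $V_\delta^{-p}\Es|\omega|^p \approx v_\delta^{p/\gamma}$, not $v_\delta$. This mismatch is why $q>\gamma$ is needed, and it is where I expect the main obstacle. To handle it, I would split each $\omega$ at level $V_\delta$ into $\omega=\omega^{\le}+\omega^{>}$, with $\omega^{\le}=\omega\Ind_{|\omega|\le V_\delta}-\Es[\omega\Ind_{|\omega|\le V_\delta}]$ and $\omega^>$ the centred remainder. Karamata's theorem then gives the two moment bounds
\[
\Es|\omega^{\le}/V_\delta|^q\le \frac{C}{q-\gamma}\,v_\delta,
\qquad
\Es|\omega^{>}/V_\delta|^p\le \frac{C}{\gamma-p}\,v_\delta .
\]
Expanding the product multilinearly gives $2^k$ chaoses, each mixing the two kinds of variables, and $2^k$ can be absorbed into $C_0^k$.

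For a single mixed chaos I would prove, by the same martingale induction, a hybrid bound. Large variables are handled by the $L^p$ martingale inequality, giving a factor $v_\delta$ with power $p$. Small variables are handled in $L^q$ via the $q\le2$ martingale inequality, giving a factor $v_\delta$ with power $q$. Concretely, I would establish by induction a mixed-norm estimate of the form
\[
\|X\|_p\le C^k\Big(v_\delta^{k}\sum |f|^q\Big)^{1/q}\cdot(\text{volume factor}),
\]
passing from the $\ell^p$ sum to the $\ell^q$ sum by Hölder over the finitely many cells. Since $v_\delta|\Omega_\delta|=|\Omega|$, Hölder produces $(|\Omega|\vee1)^{1/p-1/q}$ per variable, and $\|\cdot\|_p\le\|\cdot\|_q$ lets the small-variable $L^q$ estimate feed into the $L^p$ estimate. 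Conditioning on the large variables, the small-variable chaos is estimated in $L^q$ conditionally and the large ones afterwards in $L^p$. This requires ordering the martingale so that the conditional $L^q$ norm is a function $g$ of the large variables, which is then itself controlled by the $p$-induction applied to $\|g\|_q$ (Minkowski, since $q>p$).

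Collecting constants gives $C_0=C(p-1)^{-1}\max\{(\gamma-p)^{-1/p},(q-\gamma)^{-1/q}\}(|\Omega|\vee1)^{1/p-1/q}$, where the factor $(p-1)^{-1}$ comes from the Burkholder constant. The $1/k!$ normalisation matches the symmetric norm through the ordered-sum representation, since $\sum_{i_1<\dots<i_k}|f|^q=\frac1{k!}\sum|f|^q$.
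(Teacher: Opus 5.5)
Your overall architecture matches the paper's: truncation at $V_\delta$, the $2^k$ expansion, the two Karamata moment bounds, and using $L^p$ for the large part and $L^q$ for the small part. However, there is a genuine gap in how you handle a mixed chaos.

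\textbf{The conditioning step fails without decoupling.} $\omega^{\le}_x$ and $\omega^{>}_x$ are both functions of the same $\omega_x$, and a given site carries a large variable in some monomials and a small one in others. So you cannot condition on the family $(\omega^{>}_x)_x$ and still have a martingale in the small variables. On $\{|\omega_x|>V_\delta\}$ one has $\omega^{\le}_x=-\Es[\omega\Ind_{|\omega|\le V_\delta}]$, which is in general a non-zero constant, and on the complementary event its conditional mean is also non-zero. Conditioning the other way round runs into the same obstruction.

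Running your ordered martingale induction without any conditioning does not help either. When the peeled-off variable is small you cannot switch to $L^q$, because the predictable coefficient may contain earlier large variables, which have infinite $q$-th moment. If instead you stay in $L^p$ and apply H\"older over all $k$ indices, you lose a factor $|\Omega_\delta|^{(k-|I|)(1/p-1/q)}\to\infty$, where $I$ is the set of large positions.

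The missing idea is the decoupling inequality, Theorem~\ref{thm: Lp decoupling inequality}. Replacing the jointly level-independent sequences (here $\omega^{>}$ and $\omega^{\le}$) by independent copies costs at most $(C_1/(p-1))^k$ in $L^p$. The paper proves this via BDG, Hitczenko's comparison of tangent sequences (valid for the concave exponent $p/2\le 1$), and Doob's inequality. The geometric dependence on $k$ is essential. This is also where the factor $(p-1)^{-1}$ in $C_0$ comes from, not the martingale moment inequality you use, whose constant is bounded uniformly for $p\in(1,2]$.

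\textbf{Even after decoupling, your order of conditioning gives the wrong mixed norm.} Estimating the small variables in $L^q$ conditionally on the large ones leaves you with $\Es\big[(\sum_s c_s|h_s|^q)^{p/q}\big]^{1/p}$, where the $h_s$ are chaoses in the large variables. You then need the interchange $\|\,\|h\|_{\ell^q}\|_{L^p}\le\|\,\|h\|_{L^p}\|_{\ell^q}$, which you attribute to Minkowski. That interchange holds only when $p\ge q$, and for $p<q$ it is false. For example, take $h_s=\Ind_{A_s}$ with $A_1,\dots,A_n$ a partition into events of probability $1/n$: the left side equals $1$ while the right side equals $n^{1/q-1/p}$.

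The paper (Corollary~\ref{cor: less naive Lp estimate}) conditions on the small family instead, and the argument then goes as follows.
\begin{enumerate}
\item The large part is a martingale whose coefficients $g((n_i)_{i\in I})$ are chaoses in the small variables, independent of it.
\item The $L^p$ iteration gives $\sum_{(n_i)_{i\in I}}\prod_{i\in I}\|\omega^{>}\|_p^p\,\Es|g|^p$.
\item One then uses $\Es|g|^p\le(\Es|g|^q)^{p/q}$ and applies the $L^q$ iteration to $g$, which contains only small variables.
\end{enumerate}
This puts $\ell^p$ over the large indices outside and $\ell^q$ over the small indices inside. H\"older is then needed only over the $|I|$ large indices. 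The resulting $|\Omega_\delta|^{|I|(1/p-1/q)}$ is exactly compensated by $v_\delta^{|I|(1/p-1/q)}$, since $v_\delta|\Omega_\delta|=|\Omega|$.
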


We refer to Remark~\ref{rem: constant C0} (see~\eqref{eq: the constant C}) for the way that the constant \(C_0\) depends on \(p,q,\gamma,\Omega\).

\begin{theorem}{\label{thm: Lp estimates for continuous chaos}}
	Assume that \(\zeta\) is a \(\gamma\)-stable Lévy white noise, with \(\gamma\in (1,2)\).
	For every $p \in (1,\gamma)$ and $q \in (\gamma,2]$, the constant $C_0$ of Theorem \ref{thm: Lp estimates for discrete chaos} is such that for every $ k \ge 1$, $a \in (0,1]$ and $\bff: \Omega^k \mapsto \mathbb{R}$ which is symmetric and vanishes on the diagonals, 
	\begin{equation}{\label{eq: Lp estimates for continuous chaos}}
		\bigg\|\frac{1}{k!} \int_{\Omega^k} \bff(x_1,\dots,x_k) \prod\limits_{j=1}^k \bzeta^{(a)}(\dd x_j) \bigg\|_p \le (C_0)^k \|\bff\|_q. 
	\end{equation}
\end{theorem}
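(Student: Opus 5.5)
The plan is to deduce Theorem~\ref{thm: Lp estimates for continuous chaos} from the discrete estimate of Theorem~\ref{thm: Lp estimates for discrete chaos} by approximation. The alternative is to redo the martingale argument directly on the Poisson point process. I prefer the approximation route because the constant $C_0$ is claimed to be \emph{the same}, which strongly suggests passing to the limit in the discrete bound.

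\textbf{Step 1 (reduction to nice $\bff$).} Since $a>0$, only finitely many atoms of $\Lambda$ with $|z|>a$ fall in the bounded set $\Omega$ almost surely. Hence $\bzeta^{(a)}$ is a finite signed measure: a finite sum of Diracs minus $\kappa(a)\,\dd x$. The multiple integral of $\bff$ against the product $\prod_j \bzeta^{(a)}(\dd x_j)$ restricted off the diagonals is then a finite expression, once it is defined for $\bff\in L^q_s$. I would first prove \eqref{eq: Lp estimates for continuous chaos} for $\bff$ continuous with compact support in $\Omega^k\setminus\{\text{diagonals}\}$. For general $\bff$ I would then define the integral by $L^p$-density, using the bound itself, which is linear in $\bff$; this is consistent with the convention that the left-hand side is defined whenever the right-hand side is finite.

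\textbf{Step 2 (discrete approximation of the truncated noise).} Fix a tesselation $\cC_\delta$ and set $\omega_x := V_\delta\,\bzeta^{(a)}(\cC_\delta(x))$ for $x\in\Omega_\delta$. I would normalize so that $\omega_x$ is of the form required by Theorem~\ref{thm: Lp estimates for discrete chaos}. The catch is that $\bzeta^{(a)}(\cC)$ is a compound Poisson variable whose law depends on $\delta$, and it is not a fixed heavy-tailed law. Two ways around this come to mind. The cleaner one is to use the general-noise version (the estimate behind Theorem~\ref{thm: continuous polynomial chaos for general noise}, i.e.\ Proposition~\ref{prop: Lp estimates for simple functions} referred to there). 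That estimate only needs the truncated moments $m_p(1,\infty)$ and $m_q(0,1)$ of the cell variables, and it is uniform in $\delta$. The other is to realize that the discrete proof presumably only uses the bounds $\Es[|\omega|^p\Ind_{|\omega|>V}]\lesssim V^{p}v_\delta$ and $\Es[|\omega|^q\Ind_{|\omega|\le V}]\lesssim V^q v_\delta$. These bounds hold for $\bzeta^{(a)}(\cC_\delta(x))$ with constants depending only on $\gamma,p,q$, since for the truncated stable measure $\int |z|^q\Ind_{a<|z|\le 1}\lambda(\dd z)\le \int|z|^q\Ind_{|z|\le1}\lambda(\dd z)$ and similarly for $p$. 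Either way the cells are independent and centered (the $\kappa(a)$ compensation centers them), and the discrete sum with $\overline{f}$ equal to the cell average of $\bff$ obeys the bound with constant $C_0^k\|\overline f\|_q\le C_0^k\|\bff\|_q$, by Jensen on cells.

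\textbf{Step 3 (passage to the limit $\delta\downarrow0$).} For continuous compactly supported $\bff$ off the diagonals, the discrete sum $\frac1{k!}\sum f(x_1,\dots,x_k)\prod_j\bzeta^{(a)}(\cC_\delta(x_j))$ equals the integral of $\overline{f}$ against $(\bzeta^{(a)})^{\otimes k}$ restricted to products of distinct cells. As $\delta\downarrow0$, almost surely all atoms eventually lie in distinct cells. Moreover $\overline f\to\bff$ uniformly and $|\bzeta^{(a)}|(\Omega)<\infty$, so the sum converges almost surely to the continuous chaos. Fatou's lemma then transfers the $L^p$ bound. Finally, density in $L^q_s$ extends the result to all $\bff$.

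The main obstacle is Step 2: checking that the constant in the discrete estimate depends only on the truncated moment bounds and not on the specific regularly varying law. In particular the constant must be uniform in $a\in(0,1]$, which holds because truncation only removes small jumps and lowers $m_q(0,1)$. If the discrete proof is too tied to a fixed law, I would instead rerun its underlying argument directly on the cell variables. That argument is a Burkholder-type martingale inequality in $L^p$, splitting each variable into a small part handled in $L^q$ and a large part handled in $L^p$.
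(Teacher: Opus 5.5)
Your fallback in Step~2 is essentially the paper's proof: the paper proves Proposition~\ref{prop: Lp estimates for simple functions} for arbitrary simple symmetric functions by writing the chaos in the independent centered cell variables $\bzeta^{(a)}(A_i)$, splitting each by jump size ($|z|>1$ versus $a<|z|\le 1$), applying Corollary~\ref{cor: less naive Lp estimate} with $\mathds{L}^p$ norms on the big-jump parts and $\mathds{L}^q$ norms on the small-jump parts, bounding these by the first-order Poisson moment inequality $\|\homega_i\|_p^p\lesssim |A_i|\, m_p(1,+\infty)$ and $\|\bomega_i\|_q^q\lesssim |A_i|\, m_q(a,1)$, and then concluding by density in $L^q_s$; that Poisson moment inequality is the ingredient your option (b) needs but does not supply, since bounds on the Lévy measure alone do not control the moments of the cell variables. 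Because piecewise constant functions on a partition are already simple functions, your passage through a.s.\ convergence as $\delta\downarrow 0$ and Fatou is an unnecessary detour, which would also require cells of vanishing diameter (not given by the tesselation assumptions), and Theorem~\ref{thm: Lp estimates for discrete chaos} itself ends up never being used.
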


\subsection{Decoupling inequalities and some consequences}{\label{subsec: decoupling inequalities}}

A crucial tool for the proof of Theorems~\ref{thm: Lp estimates for discrete chaos} and \ref{thm: Lp estimates for continuous chaos} is so-called \textit{decoupling inequalities}.
We refer to \cite{delapena_gine_2000} for an overview of these techniques.
For pedagogical reasons, we recall the main definitions and inequalities that we use in our setting.
In this section only, instead of working on a general summation index $\Omega_{\delta}$, we sum over sets of the form $\intg{1}{N} = \{1,\ldots, N\}$, where $N$ is the number of points in $\Omega_{\delta}$.
Also, instead of working with symmetric functions null on the diagonals, we use tetrahedral functions, \textit{i.e.}\ functions defined on \((n_1,\ldots, n_k)\) with \(n_1<\ldots <n_k\).
The main result of this section (Theorem~\ref{thm: Lp decoupling inequality} below) can then easily be transferred to the setting of Theorem~\ref{thm: Lp estimates for discrete chaos}, the estimates being uniform in~$N$ --- this is done in Section~\ref{section: Lp discrete chaos}.
The adaptation to continuous chaos is carried out in Section~\ref{section: Lp continuous chaos}.

\subsubsection{Statement of the result}

Let $k \ge 1$ and for every $1 \le j \le k$, let $\omega^{(j)} = (\omega_n^{(j)})_{n \ge 1}$ be a sequence of independent and centered random variables.
We do not assume that the sequences $\omega^{(j)}$ are independent or identically distributed, but we require that they are jointly \textit{level independent}.
This means that, if for \(n\in \bbN\) one lets $\cL_n = \sigma(\omega_n^{(j)}, 1 \le j \le k)$ be the ``level \(n\)'' \(\sigma\)-algebra, then $(\cL_n)_{n \ge 1}$ forms an independent family. 
For example, let $\omega=(\omega_n)_{n \ge 1}$ be a sequence of independent random variables and define $\omega^{(j)} = \omega$ for every $1 \le j \le k$: then, the sequences \(\omega^{(j)}\) are identical, but level independent nevertheless. 

For any $k \ge 1$ and $N \ge 1$, let $\cT_N^k$ denote the set of functions defined on the tetrahedra 

\[\{ (n_1,\dots,n_k) \in \intg{1}{N}\,, 1 \le n_1 < \dots < n_k \le N\}.
\]
For $f \in \cT_N^k$, define 
\begin{equation*}
X_N(f; \omega^{(1)},\dots,\omega^{(k)}) := \sum\limits_{1 \le n_1 < \dots < n_k \le N} f(n_1,\dots,n_k) \prod\limits_{j=1}^k \omega_{n_j}^{(j)}.
\end{equation*}

Decoupling inequalities compare the law of the random variable $X_N(f;\omega^{(1)},\dots,\omega^{(k)})$ with the law of the same random variable where the random sequences $\omega^{(1)},\dots,\omega^{(k)}$ are replaced by a version where they are now independent.  

\begin{theorem}[\(\mathds{L}^p\) decoupling inequality]
	{\label{thm: Lp decoupling inequality}}
	There exists a constant $C_1$ such that for every $ p \in (1,2]$, for any $k \ge 1$, $N \ge 1$ and any tetrahedral function $f \in \cT_N^k$,
	\begin{equation}
	\norme{X_N(f; \omega^{(1)},\dots,\omega^{(k)})}_p \le \Big( \frac{C_1}{p-1} \Big)^k \norme{X_N(f; \tomega^{(1)},\dots,\tomega^{(k)})}_p,
	\end{equation}
	where the random sequences $\tomega^{(1)},\dots,\tomega^{(k)}$ are independent and for each $1 \le j \le k$, $\tomega^{(j)}$ has the same distribution as $\omega^{(j)}$. 
\end{theorem}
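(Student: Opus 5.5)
Prove the inequality by induction on $k$, decoupling one coordinate at a time with a martingale argument in the level index $n$. The main tool is the Burkholder–Davis–Gundy inequality for $p\in(1,2]$, in the form
\[
\|M\|_p\asymp\big\|\big(\textstyle\sum_n d_n^2\big)^{1/2}\big\|_p,
\]
with constants of order $(p-1)^{-1}$ (Burkholder's sharp constant is $p^*-1=\frac{1}{p-1}$). The other tool is a comparison of square functions under tangent sequences, as in \cite{delapena_gine_2000}.

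First step: write $X=X_N(f;\omega^{(1)},\dots,\omega^{(k)})$ as a martingale in the filtration $\cG_n=\sigma(\cL_1,\dots,\cL_n)$. Its increments are
\[
d_n=\omega^{(k)}_n\,Y_{n-1},\qquad Y_{n-1}=\sum_{n_1<\dots<n_{k-1}<n} f(n_1,\dots,n_{k-1},n)\prod_{j<k}\omega^{(j)}_{n_j}.
\]
These are martingale differences because $Y_{n-1}$ is $\cG_{n-1}$-measurable, and $\omega^{(k)}_n$ is centered and independent of $\cG_{n-1}$ by level independence.

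Second step: introduce an independent copy $\tomega^{(k)}$ of the sequence $\omega^{(k)}$, independent of everything, and set $\tilde d_n=\tomega^{(k)}_nY_{n-1}$. The sequences $(d_n)$ and $(\tilde d_n)$ are tangent with respect to the enlarged filtration: conditional on $\cG_{n-1}$ (and the past of $\tomega^{(k)}$), $d_n$ and $\tilde d_n$ have the same law. Moreover $(\tilde d_n)$ is conditionally independent given $\sigma(\omega^{(1)},\dots,\omega^{(k-1)})$. By BDG applied to both martingales, it suffices to compare $\|(\sum d_n^2)^{1/2}\|_p$ with $\|(\sum\tilde d_n^2)^{1/2}\|_p$. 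For $p/2\le 1$ the function $x\mapsto x^{p/2}$ is concave, so this comparison follows from the standard tangent-sequence inequality for nonnegative adapted sequences $d_n^2,\tilde d_n^2$, namely Lenglart/Hitczenko-type domination, with a universal constant. Combining the two BDG constants gives
\[
\|X\|_p\le \frac{C_1}{p-1}\,\big\|X_N(f;\omega^{(1)},\dots,\omega^{(k-1)},\tomega^{(k)})\big\|_p .
\]
Only one BDG constant $\frac{1}{p-1}$ should be spent per step. The reverse direction of BDG has a constant bounded for $p\in(1,2]$, and I will keep track of this so that the per-level cost is $C_1/(p-1)$.

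Third step (induction): condition on $\tomega^{(k)}$. The variable $X_N(f;\omega^{(1)},\dots,\omega^{(k-1)},\tomega^{(k)})$ equals $\sum_{n_k}\tomega^{(k)}_{n_k}\,X_{n_k-1}(f(\cdot,n_k);\omega^{(1)},\dots,\omega^{(k-1)})$. Conditionally on $\tomega^{(k)}$, this is a $(k-1)$-tetrahedral chaos in the original level-independent sequences, with coefficient function $g(n_1,\dots,n_{k-1})=\sum_{n_k>n_{k-1}}f(n_1,\dots,n_k)\tomega^{(k)}_{n_k}$. Apply the induction hypothesis conditionally, raise to the power $p$, and take expectations over $\tomega^{(k)}$ (Fubini). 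This replaces $\omega^{(1)},\dots,\omega^{(k-1)}$ by independent copies at a cost of $(C_1/(p-1))^{k-1}$, and yields the full statement. The base case $k=1$ is trivial.

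The main obstacle I expect is the second step. The difficulty is getting the square-function comparison between tangent sequences for $p<2$ with a constant that does not degrade beyond $(p-1)^{-1}$, and doing it uniformly in $N$ and in the (possibly heavy-tailed) laws. If the direct tangent-sequence bound is awkward, I will instead use the general decoupling inequality for tangent sequences in $\mathds{L}^p$, $\|\sum d_n\|_p\le C_p\|\sum\tilde d_n\|_p$, which holds for conditionally symmetric or UMD-type settings. On the real line this is again Burkholder's theorem, and it gives $C_p=O(1/(p-1))$.
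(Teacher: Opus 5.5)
Your proposal is correct and is essentially the paper's proof. It uses BDG for the martingale in the level index, Hitczenko's comparison of the nonnegative tangent sequences $d_n^2,\tilde d_n^2$ under the concave power $p/2$, the lower square-function bound for the decoupled martingale, and iteration by conditioning on the already decoupled copies; your induction on $k$ is exactly the paper's reduction to $i=k$ by conditioning on $\tomega^{(i+1)},\dots,\tomega^{(k)}$.

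Two small points to tighten. First, to get the constant you intend, use $\|M_N\|_p\le\|M^*\|_p\le C\|(\sum_n d_n^2)^{1/2}\|_p$ with $C$ bounded on $[1,2]$ (Davis/BDG), so that the only factor $(p-1)^{-1}$ comes from Doob's inequality on the decoupled side. Second, rely on Hitczenko's tangent inequality rather than a Lenglart argument through the compensator $\sum_n Y_{n-1}^2\Es[(\omega^{(k)}_n)^2]$, since that compensator is infinite in the heavy-tailed setting where the theorem is applied.
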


Let us note that the random variable $X_N(f; \omega^{(1)},\dots,\omega^{(k)})$ is an example of U-statistics, for which decoupling inequalities have a long history; we refer to \cite{delapena_gine_2000} for an introduction to this topic. 
Let us stress however that the decoupling inequality of Theorem~\ref{thm: Lp decoupling inequality} has the remarkable feature that the dependence in $k$ is of the form $C_1^k$ with a universal constant $C_1$. 
This is not true for general U-statistics and the method used here to derive this inequality is quite different from the techniques usually employed to obtain decoupling inequalities. 
For instance, in \cite{delapena_montgomery_1995}, the authors obtain more general decoupling inequalities but the dependence in $k$ is of the type $C^{k^2}$.

We postpone the proof of Theorem~\ref{thm: Lp decoupling inequality} to Section~\ref{subsec: proof decoupling} below, and first derive some corollaries.

\subsubsection{Some useful corollaries}

From Theorem~\ref{thm: Lp decoupling inequality} we can deduce the following corollary, which is the most useful result of this section for our purposes. 

\begin{corollary}{\label{cor: Lp decoupling inequality}}
	Assume that $\omega^{(j)} = (\omega_n^{(j)})_{n \in \bbN }$ are sequences of independent and centered random variables.
	There exist a constant $C_2$ such that for every $ p \in (1,2]$, for every $k \ge 1$, $N \ge 1$ and any tetrahedral function $f \in \cT_N^k$, 
	\begin{equation*}
		\big\|X_N(f; \omega^{(1)},\dots,\omega^{(k)})\big\|_p \le \Big( \frac{C_2}{p-1} \Big)^k  \bigg(\sum\limits_{1 \le n_1 <\dots<n_k \le N} |f(n_1,\dots,n_k)|^p \prod\limits_{j=1}^k \|\omega_{n_j}^{(j)}\|_p^p\bigg)^{1/p}. 
	\end{equation*}
\end{corollary}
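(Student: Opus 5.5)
By Theorem~\ref{thm: Lp decoupling inequality} it suffices to treat the decoupled chaos: we replace $\omega^{(1)},\dots,\omega^{(k)}$ by independent copies $\tomega^{(1)},\dots,\tomega^{(k)}$, paying a factor $(C_1/(p-1))^k$. The right-hand side of the corollary depends only on the marginal laws of the $\omega^{(j)}$, so it is unchanged by this substitution. It therefore remains to prove, for independent sequences $\tomega^{(j)}$ of independent centered variables,
\[
\big\|X_N(f;\tomega^{(1)},\dots,\tomega^{(k)})\big\|_p^p \le C^{k}\sum_{n_1<\dots<n_k}|f(n_1,\dots,n_k)|^p\prod_{j=1}^k\|\tomega^{(j)}_{n_j}\|_p^p,
\]
with $C$ universal; taking $p$-th roots then gives the constant $C_2=C_1C^{1/p}$, up to harmless adjustments.

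We prove this by induction on $k$, conditioning on all sequences except one. The base tool is the von Bahr--Esseen inequality for $p\in[1,2]$: if $Y_1,\dots,Y_N$ are independent and centered, then
\[
\Es\Big|\sum_n Y_n\Big|^p\le 2\sum_n\Es|Y_n|^p .
\]
For the induction step, write
\[
X_N=\sum_{n_k}\tomega^{(k)}_{n_k}\,g(n_k),\qquad g(n_k):=\sum_{n_1<\dots<n_{k-1}<n_k} f(n_1,\dots,n_k)\prod_{j<k}\tomega^{(j)}_{n_j}.
\]
The $g(n_k)$ are measurable with respect to $\tomega^{(1)},\dots,\tomega^{(k-1)}$, which are independent of $\tomega^{(k)}$. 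Conditionally on these sequences, $X_N$ is a sum of independent centered terms, so von Bahr--Esseen gives
\[
\Es|X_N|^p\le 2\sum_{n_k}\|\tomega^{(k)}_{n_k}\|_p^p\,\Es|g(n_k)|^p .
\]
Each $g(n_k)$ is a decoupled chaos of order $k-1$ in the tetrahedral function $f(\cdot,n_k)$ restricted to indices below $n_k$. The induction hypothesis bounds $\Es|g(n_k)|^p$ by $2^{k-1}\sum|f|^p\prod_{j<k}\|\tomega^{(j)}_{n_j}\|_p^p$. Summing over $n_k$ yields the claim with $C=2$.

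This already gives a constant of the form $\big(C_2/(p-1)\big)^k$, because the $(p-1)^{-1}$ factor comes only from the decoupling step. The main point to check is that the decoupled step needs no symmetrization and no Khintchine inequality. Von Bahr--Esseen requires only independence and centering, which holds conditionally on the other sequences because of the full independence of the decoupled copies. This is precisely why the decoupling step comes first: without it, $g(n_k)$ would depend on $\omega_{n_k}$-level variables through other coordinates, and the conditional independence argument would fail.
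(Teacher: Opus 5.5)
Your argument is correct and follows the paper's proof. Both decouple via Theorem~\ref{thm: Lp decoupling inequality}, then freeze $\tomega^{(1)},\dots,\tomega^{(k-1)}$ and iterate a one-sequence $\mathds{L}^p$ bound; you cite the von Bahr--Esseen inequality where the paper uses BDG followed by $(\sum_i a_i)^{p/2}\le \sum_i a_i^{p/2}$, which is essentially a proof of von Bahr--Esseen.

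One correction to your closing remark: $g(n_k)$ only involves levels $n_j<n_k$, so under level independence $\sum_n g(n)\,\omega^{(k)}_n$ is already a martingale transform in the coupled case, and the same induction goes through without decoupling. It is only your choice to condition on the \emph{entire} other sequences that requires independent copies.
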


\begin{proof}
First, apply Theorem \ref{thm: Lp decoupling inequality}: we obtain	
\begin{equation}
	\label{eq: apply decoupling first}
	\big\| X_N(f; \omega^{(1)},\dots,\omega^{(k)}) \big\|_p \le \Big( \frac{C_1}{p-1} \Big)^k \big\| X_N(f; \tomega^{(1)},\dots,\tomega^{(k)}) \big\|_p.
\end{equation}
Let us now freeze the $k-1$ first variables $\tomega^{(1)},\dots,\tomega^{(k-1)}$ in $X_N(f; \tomega^{(1)},\dots,\tomega^{(k)})$ and denote \(\cF_n^{(k)} = \sigma(\tomega_{i}^{(k)}, i\leq n )\) for \(n\geq 1\). 
Then, we consider the \(\cF_n^{(k)}\)-martingale
\begin{equation*}
	X_N(f;\tomega^{(1)},\dots,\tomega^{(k)}) = \sum\limits_{n = 1}^N  F(n) \tomega_{n}^{(k)}
\end{equation*}
with for $n \ge 1$,
\begin{equation*}
	F(n) := \sum\limits_{1 \le n_1 < \dots < n_{k-1} < n} f(n_1,\dots,n_{k-1},n) \prod\limits_{j=1}^{k-1} \tomega_{n_j}^{(j)} \,,
\end{equation*}
and note that the \(F(n)\) are random variables independent of $(\cF_n^{(k)})_{n \ge 1}$. 
Applying the Burkholder--Davis--Gundy (BDG) inequality~\cite{burkholder_davis_gundy_1972} (see also Proposition~\ref{prop: BDG} below), this yields that 
\begin{equation*}
\| X_N(f; \tomega^{(1)},\dots,\tomega^{(k)})\|_p \leq C_p \, \Es\Bigg[ \bigg(\sum\limits_{n=1}^N F(n)^2 (\tomega_{n}^{(k)})^2 \bigg)^{p/2}\Bigg]^{1/p} \,,
\end{equation*}
for some constant \(C_p\) (which is bounded by a universal constant uniformly for \(p \in [1,2]\)).
Since \(p/2 \in [0,1]\), we can use the inequality \((\sum_{i} a_i)^{p/2} \leq \sum_{i} a_i^{p/2}\) for any non-negative \(a_i\)'s to get that the above is bounded by
\begin{equation*}
C_p\, \Big(\sum\limits_{n = 1}^N \Es\big[|F(n)|^p\big]\Es\big[|\omega_n^{(k)}|^p\big] \Big)^{1/p},
\end{equation*}
using also that \((\tomega_i^{(k)})_{i\geq 1}\) has the same distribution as \((\omega_i^{(k)})_{i\geq 1}\).

All together, if for any $1 \le n_k \le N$ we denote $f_{n_k}(n_1,\dots,n_{k-1}) = f(n_1,\dots,n_k)$, then $f_{n_k} \in \cT_{n_k-1}^{(k-1)}$ and recalling the definition of \(F(n)\) we have that \(F(n_k)=X_{n_k-1}(f_{n_k}; \tomega^{(1)},\dots,\tomega^{(k-1)})\), so that 
\begin{equation*}
\norme{X_N(f; \tomega^{(1)},\dots,\tomega^{(k)})}_p^p
\le (C_p)^p  \sum_{n_k=1}^N \norme{X_{n_k-1}(f_{n_k}; \tomega^{(1)},\dots,\tomega^{(k-1)})}_p^p \big\| \omega_{n_k}^{(k)} \big\|_p^p \,.
\end{equation*}
We can then iterate this inequality to finally obtain 
\begin{equation}
\label{eq: corollary Lp decoupling}
\norme{X_N(f; \tomega^{(1)},\dots,\tomega^{(k)})}_p^p
\le (C_p)^{p k}  \sum_{1\leq n_1< \cdots <n_k \leq N} |f(n_1,\dots,n_k)|^p \prod\limits_{j=1}^k \big\| \omega_{n_j}^{(j)} \big\|^p_p\,.
\end{equation}
This concludes the proof of Corollary \ref{cor: Lp decoupling inequality} follows, recalling also~\eqref{eq: apply decoupling first}.
\end{proof}

\begin{remark}
Let us stress that Corollary~\ref{cor: Lp decoupling inequality} will be very useful in what follows but it is not completely sharp, contrary to Theorem~\ref{thm: Lp estimates for discrete chaos}.
For instance, consider the case $k=1$ and i.i.d.\ random variables $\omega$ as in \eqref{eq: definition of a general heavy-tailed random variable} with a pure polynomial tail, \textit{i.e.}\ \(\Pro(|\omega|>t) \sim t^{-\gamma}\) as \(t\to\infty\).
It is a classical fact, see \cite[XVII.7]{feller_1966}, that
\[
\sum\limits_{1\le n \le N} f(n) \omega_n \propto \Big( \sum\limits_{1 \le n \le N} |f(n)|^{\gamma} \Big)^{1/\gamma} 
,\]
in the sense that the ratio of the left-hand side by the right-hand side converges in distribution to a non-degenerate (\(\gamma\)-stable) random variable; this corresponds to the heavy-tailed analogue of the central limit theorem. 
Then, let us observe that Corollary~\ref{cor: Lp decoupling inequality} does not capture this correct scaling, since one cannot apply the inequality with \(p=\gamma\). 
The limitation of Corollary~\ref{cor: Lp decoupling inequality} is that it does not exploit the full heavy-tail structure of the \(\omega\)'s and in fact holds for any random variable $\omega$ with a finite $\mathds{L}^p$ moment. 
\end{remark}

Before turning to the proof of Theorem \ref{thm: Lp decoupling inequality}, in view of the above remark, let us therefore state a slightly stronger version of Corollary~\ref{cor: Lp decoupling inequality}. 
In order to fully take advantage of the heavy tail structure, we will need to perform $\mathds{L}^p$ estimates for some families of random variables $\omega^{(j)}$, and $\mathds{L}^q$ estimates with $p \neq q$ for the others: in the statement below, the set $I \subset \intg{1}{k}$ is thought as the set of indices for which $\mathds{L}^p$ estimates are to be used. 

\begin{corollary}{\label{cor: less naive Lp estimate}}
	There exist a constant $C_2$ such that for every $p\in (1,2]$ and $p\leq q\leq 2$, for every $k \ge 1$, $N \ge 1$ and $I \subset \intg{1}{k}$, for any tetrahedral function $f \in \cT_N^k$,
	\begin{equation}
		{\label{eq: less naive Lp estimate}}
		\begin{split}
		\left\|X_N(f; \omega^{(1)}, \dots,\omega^{(k)})\right\|_p & \\
		\le \Big( \frac{C_2}{p-1} \Big)^k & \Bigg( \sum\limits_{n_i, i \in I} \prod\limits_{i \in I} \| \omega_{n_i}^{(i)} \|_p^p \bigg(  \!\!\!\sumtwo{n_i, i \notin I}{1 \le n_1 < \dots <n_k \le N} \!\!\!  |f(n_1,\dots,n_k)|^q \prod\limits_{i \notin I} \| \omega_{n_i}^{(i)} \|_q^q \bigg)^{p/q}\Bigg)^{1/p}.
		\end{split}
	\end{equation}
	Additionally, applying Jensen's inequality (to the first sum) shows that the above is bounded by
	\begin{equation}
		{\label{eq: less naive Lp estimate with Jensen}}
	\Big( \frac{C_2}{p-1} \Big)^k N^{|I|(\frac1p-\frac1q)}  \bigg(\sum\limits_{1 \le n_1 < \dots < n_k \le N} |f(n_1,\dots,n_k)|^q  \prod\limits_{i \in I} \| \omega_{n_i}^{(i)} \|_p^q \prod\limits_{i \notin I} \| \omega_{n_i}^{(i)} \|_q^q\bigg)^{1/q}.
	\end{equation}
\end{corollary}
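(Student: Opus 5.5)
We start from Theorem~\ref{thm: Lp decoupling inequality}, which reduces the claim to the decoupled chaos $X_N(f;\tomega^{(1)},\dots,\tomega^{(k)})$ with independent sequences, at the cost of $(C_1/(p-1))^k$. For this decoupled variable we exploit independence between the families $(\tomega^{(j)})_{j\le k}$ and integrate in a chosen order. We first condition on $(\tomega^{(i)})_{i\in I}$ and treat the remaining families $(\tomega^{(i)})_{i\notin I}$ in $\mathds{L}^q$. Then we take the $\mathds{L}^p$ norm over the families indexed by $I$.

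\textbf{Inner step.} Fix the variables $\tomega^{(i)}$, $i\in I$. The decoupled chaos is then a multilinear form in the independent families $(\tomega^{(i)})_{i\notin I}$, with random coefficients
\[
g(n_i,i\notin I)=\sum_{n_i,i\in I} f(n_1,\dots,n_k)\prod_{i\in I}\tomega^{(i)}_{n_i},
\]
where the sum is over $n_i$, $i\in I$, compatible with the tetrahedral ordering. By conditional Jensen, $\Es[|X|^p\mid \cdot]^{1/p}\le \Es[|X|^q\mid\cdot]^{1/q}$. We bound the conditional $\mathds{L}^q$ norm by iterating BDG exactly as in the proof of Corollary~\ref{cor: Lp decoupling inequality}, using $q\le 2$. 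One family at a time, we peel off the largest remaining index. This gives a constant $C_q^{k-|I|}$ that is universal for $q\in[1,2]$, and the quantity $\big(\sum |g|^q\prod_{i\notin I}\|\omega^{(i)}_{n_i}\|_q^q\big)^{1/q}$.

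\textbf{Outer step.} We now take $\Es[\,\cdot\,^p]$ over $(\tomega^{(i)})_{i\in I}$. This is the main obstacle, because the $q$-sum sits inside a $p/q$ power with $p/q\le1$, and we want to move the $\mathds{L}^p$ estimate inside it. The plan is to iterate BDG on the families in $I$ as well, but now in a mixed norm $L^p(\ell^q)$. We view $\big(\sum_{n_i,i\notin I}|g|^q w\big)^{1/q}$ as the $\ell^q$-norm of a vector-valued martingale in each variable $\tomega^{(i)}$, $i\in I$. For each index $i\in I$ we want
\[
\Es\|\textstyle\sum_n F_n\tomega_n\|_{\ell^q}^p\le C^p\sum_n\Es\|F_n\|_{\ell^q}^p\|\omega_n\|_p^p .
\]
Since $\ell^q$ with $q\in(1,2]$ has martingale type $p$ for $p\le q$, this inequality holds, with a constant we must check stays bounded by $C/(p-1)$. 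If that route is too heavy, the fallback is to use $\|\cdot\|_{\ell^q}\le\|\cdot\|_{\ell^p}$ only where it is harmless, or to apply Minkowski's inequality (valid because $p\le q$) after the scalar BDG step. The single-coefficient bound is the scalar BDG computation from Corollary~\ref{cor: Lp decoupling inequality}, applied coordinatewise. Ordering issues among the indices are handled by the tetrahedral structure. After iterating over $I$ we reach \eqref{eq: less naive Lp estimate}, and collecting the constants gives $C_2^k/(p-1)^k$.

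\textbf{Final bound.} The bound \eqref{eq: less naive Lp estimate with Jensen} follows from Hölder's (equivalently Jensen's) inequality on the outer sum over at most $N^{|I|}$ indices, using $p/q\le1$. This yields the factor $N^{|I|(1/p-1/q)}$, and the $\|\omega\|_p^p$ weights become $\|\omega\|_p^q$ inside the $q$-sum.
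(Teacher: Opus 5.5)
\textbf{The gap is in your outer step, which is where the content of this corollary lies.} Because you condition on the families in $I$ and apply Jensen to the whole chaos, you are left with an $\mathds{L}^p(\ell^q(w))$ bound for the $\ell^q(w)$-valued chaos $g$ in the families $(\tomega^{(i)})_{i\in I}$, where $w=\prod_{i\notin I}\|\omega^{(i)}_{n_i}\|_q^q$. You never prove this bound.

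The appeal to martingale type leaves the constant unchecked, and the constant is exactly what matters here. Decoupling already costs $(C_1/(p-1))^k$. If each family in $I$ costs a further $C/(p-1)$, as you propose to verify, you end with $(p-1)^{-(k+|I|)}$ rather than $(C_2/(p-1))^k$. You need a $p$-independent constant at this step.

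Neither fallback works either:
\begin{itemize}
\item For $p\le q$, Minkowski's integral inequality gives $\big\|\,\|\cdot\|_{\mathds{L}^p}\big\|_{\ell^q}\le\big\|\,\|\cdot\|_{\ell^q}\big\|_{\mathds{L}^p}$. That is a lower bound on the quantity you must control, not an upper bound.
\item Moving the power $p/q\le 1$ inside the sum and then running scalar BDG coordinatewise replaces the inner $\ell^q$ sum by an $\ell^p$ sum. That is the weaker estimate this corollary is designed to avoid.
\end{itemize}

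\textbf{How the paper avoids this.} It conditions in the opposite order, so no vector-valued estimate is ever needed.
\begin{itemize}
\item It sets $g((n_i)_{i\in I})=\sum_{n_i,\,i\notin I} f(n_1,\dots,n_k)\prod_{i\notin I}\tomega^{(i)}_{n_i}$, which is independent of the families in $I$.
\item It applies \eqref{eq: corollary Lp decoupling} in $\mathds{L}^p$ to the families in $I$, conditionally on the others. This gives $\sum_{n_i,\,i\in I}\prod_{i\in I}\|\omega^{(i)}_{n_i}\|_p^p\,\Es|g|^p$.
\item Only then does it use Jensen pointwise in $(n_i)_{i\in I}$, namely $\Es|g|^p\le(\Es|g|^q)^{p/q}$, followed by \eqref{eq: corollary Lp decoupling} in $\mathds{L}^q$.
\end{itemize}
Everything stays scalar, every BDG constant is universal, and the power $p/q$ lands exactly where the statement puts it.

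\textbf{Your order can also be repaired.} Conditionally on the other families, peeling off one family $i\in I$ gives a sum of independent $\ell^q(w)$-valued terms $x_n=F_n\tomega^{(i)}_n$ with deterministic $F_n$. Symmetrize with Rademacher signs $\epsilon_n$, at the cost of a factor $2$ in norm. Then Khintchine with constant $1$ for $q\le 2$, together with $p\le q$, gives
\[
\Es_\epsilon\Big\|\sum_n\epsilon_n x_n\Big\|_{\ell^q(w)}^p\le\Big(\sum_n\|x_n\|_{\ell^q(w)}^q\Big)^{p/q}\le\sum_n\|x_n\|_{\ell^q(w)}^p .
\]
This is the type-$p$ step with a universal constant. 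Iterating it over $i\in I$ yields \eqref{eq: less naive Lp estimate} with the correct $(C_2/(p-1))^k$. Your inner $\mathds{L}^q$ step and your derivation of \eqref{eq: less naive Lp estimate with Jensen} are fine.
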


\begin{proof}
By the decoupling inequality of Theorem~\ref{thm: Lp decoupling inequality}, we only need to treat \(X_N(f; \tomega^{(1)}, \dots,\tomega^{(k)})\), that is when the sequences \(\tomega^{(j)}\) are independent.

Let \(I\subset \intg{1}{k}\) and define, for every indices $(n_i)_{i \in I}$, the random variable 
\begin{equation*}
g((n_i)_{i \in I}) =\sumtwo{n_i, i \notin I}{1 \le n_1 < \dots <n_k \le N} f(n_1,\dots,n_k) \prod\limits_{i \notin I} \tomega_{n_i}^{(i)}.
\end{equation*}
Note that random variables \(g((n_i)_{i \in I})\) are independent of the random sequences~$\tomega^{(i)}$ for $i \in I$.
Rewriting 
\[
X_N(f; \tomega^{(1)}, \dots,\tomega^{(k)}) = \sum\limits_{n_i, i \in I} g((n_i)_{i \in I}) \prod\limits_{i \in I} \tomega_{n_i}^{(i)} \,,
\]
we can therefore apply Corollary~\ref{cor: Lp decoupling inequality}, or more precisely~\eqref{eq: corollary Lp decoupling}, conditionally to the random variables $\tomega^{(i)}$ for $i \notin I$.
We obtain 
\begin{equation*}
\left\|X_N(f; \tomega^{(1)}, \dots,\tomega^{(k)})\right\|_p
\leq (C_p)^{|I|}  \bigg( \sum\limits_{n_i, i \in I} \prod_{i \in I} \| \omega_{n_i}^{(i)} \|_p^p \Es \Big[\big|g( (n_i)_{i \in I})\big|^p \Big] \bigg)^{1/p} \,.
\end{equation*}
Now, using Jensen's inequality, since \(p\leq q\), we get that 
\begin{equation*}
\Es \Big[\big|g( (n_i)_{i \in I})\big|^p \Big] \leq \Es \Big[\big|g( (n_i)_{i \in I})\big|^q \Big]^{p/q} \,.
\end{equation*}
We now apply again Corollary \ref{cor: Lp decoupling inequality} (more precisely~\eqref{eq: corollary Lp decoupling}), to get that
\begin{equation*}
\Es \Big[\big|g( (n_i)_{i \in I})\big|^q \Big]^{1/q}  \le (C_q)^{k-|I|} \bigg( \sumtwo{n_i, i \notin I}{1 \le n_1 < \dots <n_k \le N} |f(n_1,\dots,n_k)|^q \prod\limits_{i \notin I} \| \omega_{n_i}^{(i)} \|_q^q \bigg)^{1/q} \,.
\end{equation*}
Putting everything together, recalling~\eqref{eq: apply decoupling first} and the fact that \(C_p,C_q\) are bounded by a universal constant, this concludes the proof.
\end{proof}

\subsubsection{Proof of Theorem~\ref{thm: Lp decoupling inequality}}
\label{subsec: proof decoupling}

The proof is based on two elementary results from the theory of martingales and is a direct adaptation of the proof for the continuous version which can be found in \cite{berger_chong_lacoin_2023}.

Let us state two results that we use in the following. 
Let $(\Omega,\mathcal{F},\Pro) $ be a probability space and let $(\mathcal{F}_n)_{n \ge 1} $ be a filtration.
The first result is the BDG inequality for discrete martingales, see \cite[Theorem~1.1]{burkholder_davis_gundy_1972}.

\begin{proposition}[BDG inequality]
	\label{prop: BDG}
	Let $(M_n)_{n \ge 1}$ be a martingale with respect to $(\mathcal{F}_n)_{n \ge 1}$ such that $M_0 = 0$. 
	Denote $\langle M \rangle$ its quadratic variation, \textit{i.e.}\ \(\langle M \rangle^2  = \sum_{n \ge 1} (M_n - M_{n-1})^2 \) and $M^* = \sup_{n \ge 1} |M_n|$. 
	Then, for any $p \ge 1$, there is a constant $C_p$ which depends only on $p$ (and is bounded by a constant uniformly in \(p\in [1,2]\)) such that 
	\begin{equation}
	\frac{1}{C_p} \, \Es\big[\langle M \rangle^{p/2}\big] \le \Es\big[(M^*)^p\big] \le C_p\, \Es\big[\langle M \rangle^{p/2}\big].
	\end{equation}
\end{proposition}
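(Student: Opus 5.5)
The statement is the classical Burkholder--Davis--Gundy inequality, so we could simply cite \cite[Theorem~1.1]{burkholder_davis_gundy_1972}. The only point needing care is that the constant stays bounded uniformly for $p\in[1,2]$, including the endpoint $p=1$ (Davis' inequality). The plan is to give the standard proof via the \emph{Davis decomposition} followed by Burkholder's \emph{good-$\lambda$} argument. We then track constants: they depend on $p$ only through quantities such as $2^p$ and $p^p$, which are bounded on $[1,2]$.

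\textbf{Step 1 (Davis decomposition).} Write $d_n=M_n-M_{n-1}$ and $d^*_n=\max_{m\le n}|d_m|$. Split $d_n=g_n+b_n$, where
\[
b_n=d_n\Ind_{\{|d_n|>2d^*_{n-1}\}}-\Es\big[d_n\Ind_{\{|d_n|>2d^*_{n-1}\}}\,\big|\,\cF_{n-1}\big].
\]
Set $G=\sum g_n$ and $B=\sum b_n$; both are martingales, and $|g_n|\le 4d^*_{n-1}$, so $G$ has predictably controlled jumps. On the event $\{|d_n|>2d^*_{n-1}\}$ we have $|d_n|\le 2(d^*_n-d^*_{n-1})$, so $\sum_n |d_n|\Ind_{\{|d_n|>2d^*_{n-1}\}}\le 2d^*$. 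Garsia's lemma (the dual predictable projection inequality $\Es[(\sum_n \Es[a_n|\cF_{n-1}])^p]\le p^p\,\Es[(\sum_n a_n)^p]$ for $a_n\ge0$) then gives
\[
\Es\Big[\big(\textstyle\sum_n|b_n|\big)^p\Big]\le C\,\Es[(d^*)^p]
\]
with $C\le (1+2p)^p\le 5^2$. Finally $d^*\le \langle M\rangle$ and $d^*\le 2M^*$. So both $B^*$ and $\langle B\rangle$ are controlled by either side of the inequality, and it suffices to prove the inequality for $G$.

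\textbf{Step 2 (good-$\lambda$ for $G$).} For $\beta>1$ and small $\eta>0$, we show
\[
\Pro\big(G^*>\beta\lambda,\ \langle G\rangle\vee d^*\le\eta\lambda\big)\le \frac{c\,\eta^2}{(\beta-1-4\eta)^2}\,\Pro(G^*>\lambda).
\]
To do so, stop $G$ at the first time it exceeds $\lambda$, at the first time $G^*$ exceeds $\beta\lambda$, and at the first time $\langle G\rangle\vee d^*$ exceeds $\eta\lambda$ (a predictable time, thanks to the bound on $g_n$). Then apply Doob's $\mathds{L}^2$ inequality to the increment of $G$ after the first stopping time. We do the same with the roles of $G^*$ and $\langle G\rangle$ exchanged, using $\Es[\langle G\rangle^2]=\Es[G^2]$. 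Integrating against $p\lambda^{p-1}\dd\lambda$ gives
\[
\Es[(G^*)^p]\le \frac{\beta^p\, c\eta^2}{(\beta-1-4\eta)^2}\,\Es[(G^*)^p]+\Big(\frac{\beta}{\eta}\Big)^p\,\Es\big[(\langle G\rangle\vee d^*)^p\big].
\]
Fixing $\beta=2$ and $\eta$ small enough that $2^2c\eta^2/(1-4\eta)^2\le 1/2$, we can absorb the first term, first in the case where $\Es[(G^*)^p]<\infty$, and in general by truncating $N$. This yields constants at most $2(2/\eta)^2$, uniform in $p\in[1,2]$.

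\textbf{Step 3.} Combine Steps 1 and 2 using $M^*\le G^*+\sum|b_n|$ and $\langle M\rangle\le\langle G\rangle+\sum|b_n|$.

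The main obstacle is the endpoint $p=1$. The naive route (Doob's maximal inequality plus Burkholder's square-function inequality) has constants of order $p/(p-1)$, which blow up as $p\downarrow1$. This is exactly why the decomposition of Step 1 and the distribution-function (good-$\lambda$) argument of Step 2 are needed, instead of direct $\mathds{L}^p$ interpolation. The delicate part is checking that the jump control $|g_n|\le 4d^*_{n-1}$ is predictable, which is what makes the stopping in Step 2 legitimate.
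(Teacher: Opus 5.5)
The paper gives no proof of this proposition. It simply cites \cite[Theorem~1.1]{burkholder_davis_gundy_1972}, the BDG inequality for convex $\Phi$ of moderate growth ($\Phi(2x)\le c\,\Phi(x)$), where the constants depend only on $c$. Uniformity on $[1,2]$ then comes for free, since $\Phi(x)=x^p$ has $c=2^p\le 4$. Your proposal takes a different route: it reconstructs the argument behind that theorem (Davis decomposition, Garsia's compensator inequality, then Burkholder's good-$\lambda$ inequalities), specialized to power functions with the constants tracked by hand. This is correct. Its merit is that the uniformity, in particular at the endpoint $p=1$, becomes visible rather than inherited from a citation. The citation buys generality (all moderate-growth convex $\Phi$) and brevity.

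A few points to tidy up:
\begin{itemize}
\item You work with $\langle M\rangle=(\sum_n d_n^2)^{1/2}$; that is what makes $d^*\le\langle M\rangle$ and $\Es[\langle G\rangle^2]=\Es[G^2]$ true. So what you actually prove is $\Es[(M^*)^p]\asymp\Es[(\sum_n d_n^2)^{p/2}]$. This is the intended inequality and the form the paper uses later. The displayed statement combines $\langle M\rangle^2=\sum_n d_n^2$ with the exponent $p/2$, which is a normalization typo: as written it is not scale invariant.
\item The first time $\langle G\rangle\vee d^*$ exceeds $\eta\lambda$ is a stopping time, not a predictable time. What you need, and what $|g_n|\le 4d^*_{n-1}$ provides, is control of the overshoot. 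That is, $\langle G\rangle_\sigma^2\le\langle G\rangle_{\sigma-1}^2+16(d^*_{\sigma-1})^2\le 17\eta^2\lambda^2$, and likewise $|G_\mu|\le(1+4\eta)\lambda$ on $\{d^*\le\eta\lambda\}$.
\item In Step~3 you also need the reverse comparisons $G^*\le M^*+\sum_n|b_n|$ and $\langle G\rangle\le\langle M\rangle+\sum_n|b_n|$. These are immediate from $G=M-B$.
\item Garsia's inequality gives $\|\sum_n|b_n|\|_p\le(2+2p)\|d^*\|_p$ rather than $(1+2p)$, which is immaterial.
\item For $p>2$, which the statement also covers, let $\eta$ depend on $p$ so that $2^p c\,\eta^2/(1-4\eta)^2\le 1/2$.
\end{itemize}
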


The second result deals with tangent processes. 
Two adapted processes $(d_n)_{n\ge 1}$ and $(e_n)_{n \ge 1} $ are said to be \textit{tangent} if for all $n \ge 1$ and all $t \in \mathbb{R}$, a.s.\ $\Pro[e_n \ge t | \mathcal{F}_{n-1}] = \Pro[d_n \ge t | \mathcal{F}_{n-1}]$. 
The proof of the following result can be found in \cite[Theorem~2]{hitczenko_1988}. 

\begin{proposition}[Comparing \(\mathds{L}^p\) norms of tangent processes]
	\label{prop: tangent processes}
	Let $(e_n)_{n \ge 1}$ and $(d_n)_{n \ge 1}$ be two non-negative processes which are tangent.
	Then for any $p > 0$, there is a constant \(C_p'\) such that 
	\begin{equation*}
	\Es\bigg[\Big(\sum\limits_{n \ge 1} e_n\Big)^p\bigg] \le C_p'\, \Es\bigg[\Big(\sum\limits_{n \ge 1} d_n\Big)^p\bigg] \,.
	\end{equation*}
\end{proposition}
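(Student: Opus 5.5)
The plan is to go through a comparison of both sums with a common predictable object. Tangency gives, for every $n$ and every measurable $h\ge 0$, $\Es[h(e_n)\mid \cF_{n-1}] = \Es[h(d_n)\mid\cF_{n-1}]$ almost surely. So any predictable functional built from conditional expectations of $h(e_n)$ coincides with the one built from $h(d_n)$. We may assume $\Es[(\sum_n d_n)^p]<\infty$ and work with finitely many terms $n\le N$, letting $N\to\infty$ by monotone convergence at the end. Write $E_n=\sum_{k\le n} e_k$ and $D_n=\sum_{k\le n} d_k$; both are nondecreasing, so $E^*=E_N$ and $D^*=D_N$.

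\emph{Step 1: a good-$\lambda$ inequality.} Fix $\beta>1$ and $\delta\in(0,\beta-1)$. I aim to show
\[
\Pro\big[E_N>\beta\lambda,\ D_N\le \delta\lambda\big]\le \varepsilon(\beta,\delta)\,\Pro\big[E_N>\lambda\big],
\qquad \varepsilon(\beta,\delta)\xrightarrow[\delta\downarrow 0]{}0 .
\]
Introduce the stopping times $\tau=\inf\{n: E_n>\lambda\}$, $\sigma=\inf\{n: D_n>\delta\lambda\}$, and $\rho=\inf\{n: \sum_{k\le n+1}\Es[d_k\wedge \delta\lambda\mid\cF_{k-1}]>2\delta\lambda\}$. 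Since the compensator is predictable, $\rho$ is indeed a stopping time. On the event $\{E_N>\beta\lambda\}$, the increment $\sum_{\tau<n\le N} e_n$ must exceed $(\beta-1)\lambda$. We split each $e_n$ as
\[
e_n=e_n\Ind_{e_n\le\delta\lambda}+e_n\Ind_{e_n>\delta\lambda}.
\]
\begin{enumerate}
\item \emph{Large jumps.} The probability that some $e_n>\delta\lambda$ with $\tau<n\le\sigma\wedge\rho$ is at most
\[
\Es\Big[\sum_{\tau<n\le\sigma\wedge\rho}\Pro[d_n>\delta\lambda\mid\cF_{n-1}]\Big],
\]
by tangency. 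This is bounded by $\Es\big[\sum_{\tau<n\le\sigma\wedge\rho}\Pro[d_n>\delta\lambda\mid\cF_{n-1}]\big]\le 2\,\Pro[\tau<\infty]$ up to the factor coming from the compensator cutoff. Here I use $(\delta\lambda)^{-1}\Es[d_n\wedge\delta\lambda\mid\cF_{n-1}]\ge \Pro[d_n>\delta\lambda\mid\cF_{n-1}]$; the constant needs tuning, so I may need a larger threshold than $\delta\lambda$ for this piece.
\item \emph{Small part.} The expectation of the truncated sum over $(\tau,\sigma\wedge\rho]$ equals the same expectation with $d_n\wedge\delta\lambda$ in place of $e_n\Ind_{e_n\le\delta\lambda}$. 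This is at most $2\delta\lambda\,\Pro[\tau<\infty]$, so Markov's inequality gives a bound $\frac{2\delta}{\beta-1}\Pro[E_N>\lambda]$.
\end{enumerate}
Finally, we need the event $\{\rho<N,\ D_N\le\delta\lambda\}$ to be small. This is a Lenglart-type domination of the compensator of $d\wedge\delta\lambda$ by $D$ itself, applied on $\{\tau<\infty\}$.

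\emph{Step 2: integrate.} Once the good-$\lambda$ inequality holds, the standard Burkholder integration argument gives
\[
\Es[E_N^p]\le \beta^p\varepsilon\,\Es[E_N^p]+\Big(\frac{\beta}{\delta}\Big)^p\Es[D_N^p].
\]
Choosing $\delta$ small enough that $\beta^p\varepsilon<1$ yields the claim with $C'_p=(\beta/\delta)^p/(1-\beta^p\varepsilon)$. Finiteness of $\Es[E_N^p]$, needed to absorb the first term, is obtained by first treating $e_n\wedge M$ against $d_n\wedge M$, which are again tangent, and letting $M\to\infty$.

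The main obstacle is Step 1: controlling $\Pro[\rho<N,\,D_N\le\delta\lambda]$ together with the large jumps, with constants that truly vanish as $\delta\to 0$. The difficulty is that tangency only transfers conditional laws, not pathwise bounds. My first attempt is the Lenglart inequality for the nonnegative adapted process $\sum (d_n\wedge\delta\lambda)$ and its compensator; if that is too lossy, I would fall back on the decoupled-tangent-sequence construction of Kwapień--Woyczyński, which reduces the problem to conditionally independent sums.
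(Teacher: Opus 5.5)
Your Step 1 has a genuine gap, and it cannot be closed by tuning constants: the good-$\lambda$ inequality you aim for is false. You claim that on $\{E_N>\beta\lambda\}$ the increment $\sum_{\tau<n\le N}e_n$ exceeds $(\beta-1)\lambda$. This ignores the overshoot $E_\tau-\lambda\le e_\tau$, which is unbounded. More generally, the event $\{D_N\le\delta\lambda\}$ gives no pathwise control on the jumps of $e$, because tangency only matches conditional laws.

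Here is a concrete counterexample. Take $N=1$, $\cF_0$ trivial, and $e_1,d_1$ i.i.d., equal to $2\beta\lambda$ with probability $\eta$ and to $0$ otherwise; these are tangent. Then $\Pro[E_1>\beta\lambda,\ D_1\le\delta\lambda]=\eta(1-\eta)$ while $\Pro[E_1>\lambda]=\eta$. So necessarily $\varepsilon(\beta,\delta)\ge 1-\eta$ for every $\eta\in(0,1)$, and nothing can be absorbed in Step 2. Letting the step-$2$ law depend on $e_1$ gives the same failure with the big jump after $\tau$.

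This is also why your large-jump estimate only produces a bound of order $\Pro[\tau<\infty]$ with no small factor, whatever threshold you use. Separately, your $\rho$ accumulates the compensator from time $1$ and stops at the fixed level $2\delta\lambda$. With that choice, the event $\{\rho<N,\ D_N\le\delta\lambda\}$ is in general of the same order as $\Pro[\tau<\infty]$ or larger (think of many i.i.d. increments equal to $\delta\lambda$ with small probability). So a Lenglart-type argument applied this way cannot make that term vanish either.

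The missing idea is to put the largest jump $e^*=\max_{n\le N}e_n$ into the good event and to control it by a separate lemma. Let $G=\{E_N>\beta\lambda,\ D_N\vee e^*\le\delta\lambda\}$. On $G$ the overshoot is at most $\delta\lambda$, so $\sum_{\tau<n\le N}(e_n\wedge\delta\lambda)>(\beta-1-\delta)\lambda$.

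Set $a_n=\Es[d_n\wedge\delta\lambda\mid\cF_{n-1}]=\Es[e_n\wedge\delta\lambda\mid\cF_{n-1}]$. Restart the compensator at $\tau$: let $\rho$ be the first $n\ge\tau$ with $\sum_{\tau<k\le n+1}a_k>K\delta\lambda$ (capped at $N$). Markov's inequality then bounds the stopped truncated $e$-sum by $\frac{K\delta}{\beta-1-\delta}\Pro[\tau\le N]$. Next, stop $\sum_{\tau<k\le n}(d_k\wedge\delta\lambda)$ when it first exceeds $\delta\lambda$; at that time it is at most $2\delta\lambda$. This shows that $\{\sum_{\tau<k\le N}a_k>K\delta\lambda,\ D_N\le\delta\lambda\}$ has probability at most $\frac{2}{K}\Pro[\tau\le N]$. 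Hence $\Pro[G]\le\big(\frac{K\delta}{\beta-1-\delta}+\frac2K\big)\Pro[E_N>\lambda]$; choose $K$ large, then $\delta$ small.

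Integrating gives $\Es[E_N^p]\le C\big(\Es[D_N^p]+\Es[(e^*)^p]\big)$. You then still need a comparison of maxima of tangent sequences, for instance $\Pro[e^*>t]\le 4\,\Pro[d^*>t]$. This follows by stopping when the predictable sum $\sum_k\Pro[d_k>t\mid\cF_{k-1}]$ first exceeds $1$. That stopping yields $\Pro[e^*>t]\le 2\,\Es[\min(1,S)]\le 4\,\Pro[d^*>t]$, where $S$ is the full predictable sum. Combined with $d^*\le D_N$ and the truncation $e_n\wedge M$, $d_n\wedge M$ you already describe, this closes the argument for every $p>0$. For reference, the paper gives no proof of this proposition; it simply quotes Hitczenko's Theorem 2.
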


\noindent
The remarkable feature of this last proposition is that it is also valid for $p \in(0,1]$ and in particular does not use convexity; in fact, for $p \in(0,1]$ (more generally for concave functions) one can take \(C_p=6\), see \cite[p.228]{hitczenko_1988}. 
This is crucial for us since we will apply it for $p/2 \leq 1$. 

\begin{remark}
	Both of the above results are actually valid in a more general setting which strictly includes $\mathds{L}^p$ norms.
	One could thus expect that the result of Theorem~\ref{thm: Lp estimates for discrete chaos} holds for more general functionals. 
	However we will ultimately use Doob's inequality and this is where we have to restrict ourselves to $\mathds{L}^p$ norms. 
\end{remark}

\begin{proof}[Proof of Theorem~\ref{thm: Lp decoupling inequality}]
Our main goal is to show that there exists a constant $C_1$ such that for every $1 \le i \le k$,
\begin{equation*}
\norme{X_N(f; \omega^{(1)},\dots,\omega^{(i)},\tomega^{(i+1)},\dots,\tomega^{(k)})}_p \le C_1\, \norme{X_N(f; \omega^{(1)},\dots,\omega^{(i-1)},\tomega^{(i)},\tomega^{(i+1)},\dots,\tomega^{(k)})}_p \,.
\end{equation*}
The proof then follows easily by iteration.
Additionally, by conditioning with respect to the random variables $\tomega^{(i+1)},\dots,\tomega^{(k)}$ it is actually enough to prove this for $i=k$.

For \(N\geq 1\), define the \(\sigma\)-algebra $\cF_N = \sigma(\omega_n^{(j)}, 1 \le j \le k, 1 \le n \le N; \tomega_n^{(k)}, 1 \le n \le N)$. 
Then the processes $(X_N(f; \omega^{(1)},\dots,\omega^{(k)}))_{N \ge 1}$ and $(X_N(f;\omega^{(1)},\dots,\omega^{(k-1)},\tomega^{(k)})_{N \ge 1}$ are martingales with respect to the filtration $(\cF_N)_{N \ge 1}$.
Therefore, by the BDG inequality (Proposition~\ref{prop: BDG}), we have
\begin{equation*}
\norme{X_N(f;\omega^{(1)},\dots,\omega^{(k)})}_p \le C_p \: \Es\left[ \big\langle X(f;\omega^{(1)},\dots,\omega^{(k)})\big\rangle_N^{p/2}\right]^{1/p}.
\end{equation*}
The increments of the process $(\langle X(f;\omega^{(1)},\dots,\omega^{(k)})\rangle_N)_{N \ge 1}$ are given by 
\begin{equation*}
e_N = \bigg( \sum\limits_{1 \le n_1 < \dots < n_{k-1} < N} f(n_1,\dots,n_{k-1},N) \prod\limits_{j=1}^{k-1} \omega_{n_j}^{(j)} \omega_N^{(k)} \bigg)^2 \,,
\end{equation*}
which are non-negative.
Since $\tomega^{(k)}$ is identically distributed to $\omega^{(k)}$ and thanks to the joint level independence property, $(e_N)_{N \ge 1}$ is tangent to $(d_N)_{N \ge 1}$, where 
\begin{equation*}
d_N = \bigg( \sum\limits_{1 \le n_1 < \dots < n_{k-1} < N} f(n_1,\dots,n_{k-1},N) \prod\limits_{j=1}^{k-1} \omega_{n_j}^{(j)} \tomega_N^{(k)}\bigg)^2 \,.
\end{equation*}
Let us now observe that $(d_N)_{N \ge 1}$ is exactly the process of the increments of the quadratic variation of $(X_N(f;\omega^{(1)},\dots,\omega^{(k-1)},\tomega^{(k)}))_{N \ge 1}$.
Therefore, by Proposition~\ref{prop: tangent processes} we have
\begin{equation*}
\begin{split}
	\left\|X_N(f;\omega^{(1)},\dots,\omega^{(k)})\right\|_p  &\le C_p C_{p/2}' \, \Es\left[ \big\langle X(f;\omega^{(1)},\dots,\tomega^{(k)})\big\rangle_N^{p/2}\right]^{1/p}\\
	& \le C_p^2 C_{p/2}' \: \left\|\sup\limits_{1 \le n \le N} X_n(f;\omega^{(1)},\dots,\tomega^{(k)})\right\|_p \\
	& \le C_p^2 C_{p/2}' \frac{p}{p-1} \: \left\| X_N(f;\omega^{(1)},\dots,\tomega^{(k)})\right\|_p\,,
\end{split}
\end{equation*}
where we have applied the reversed BDG for the second inequality and finally Doob's inequality for the last one.
This concludes the proof of Theorem~\ref{thm: Lp decoupling inequality}, with \(C_1= \sup_{p\in [1,2]} \{p C_p^2 C_{p/2}'\}\).
\end{proof}

\subsection{Application to discrete chaos: proof of Theorem~\ref{thm: Lp estimates for discrete chaos}}
\label{section: Lp discrete chaos}

First of all, let us reformulate Corollary~\ref{cor: less naive Lp estimate} in the context of polynomial chaos.
For $k \ge 1$, let $\omega^{(j)} = (\omega_{x}^{(j)})_{x \in \Omega_{\delta}}$, $1 \le j \le k$, be sequences of i.i.d.\ and centered random variables jointly level independent (recall the beginning of Section~\ref{subsec: decoupling inequalities} for a definition). 
We obtain the following, as an easy consequence of Corollary~\ref{cor: less naive Lp estimate}.

\begin{proposition}{\label{prop: general Lp-Lq inequality}}
	There exist a constant $C_2$ such that for every $ p \in (1,2]$ and $p\leq q\leq 2$, for every $k \ge 1$, $\delta > 0$ and $J \subset \intg{1}{k}$, for any symmetric function $f: \Omega_{\delta}^k \to \bbR$ that vanishes on the diagonals,
	\begin{equation*}
		{\label{eq: general Lp-Lq inequality}}
		\bigg\| \frac{1}{k!} \sum\limits_{x_1,\dots,x_k \in \Omega_{\delta}} f(x_1,\dots,x_k) \prod\limits_{j=1}^k \omega_{x_j}^{(j)} \bigg\|_p 
		\le \Big(\frac{C_2}{p-1} \Big)^k |\Omega|^{|J|(\frac1p-\frac1q)} \big\|\overline{f}\big\|_q \prod\limits_{j \in J} \frac{\| \omega^{(j)} \|_p }{v_{\delta}^{1/p}} \prod\limits_{j \notin J} \frac{ \| \omega^{(j)} \|_q}{v_{\delta}^{1/q}} \,.
	\end{equation*}	
	Recall here that \(\overline{f}\) is the piecewise constant extension of \(f\) and \(v_{\delta}\) is the volume of any cell of \(\Omega_{\delta}\).
\end{proposition}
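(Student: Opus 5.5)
The plan is to reduce to Corollary~\ref{cor: less naive Lp estimate}, in its Jensen form~\eqref{eq: less naive Lp estimate with Jensen}, by relabelling $\Omega_{\delta}$ as $\intg{1}{N}$ with $N = |\Omega_{\delta}|$. The cells $\cC_{\delta}(x)$ partition $\Omega$ and all have volume $v_{\delta}$, so $N v_{\delta} = |\Omega|$.

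First fix an enumeration of $\Omega_{\delta}$ and write the symmetric sum as a tetrahedral one. Since $f$ is symmetric and vanishes on the diagonals,
\[
\frac{1}{k!}\sum_{x_1,\dots,x_k} f(x_1,\dots,x_k)\prod_{j} \omega^{(j)}_{x_j} = \sum_{n_1<\dots<n_k} \frac{1}{k!}\sum_{\sigma \in S_k} f(n_1,\dots,n_k) \prod_j \omega^{(j)}_{n_{\sigma(j)}}.
\]
The assignment of the families $\omega^{(j)}$ to the ordered positions depends on the permutation $\sigma$, which is the one non-cosmetic point. I would handle it by the triangle inequality over $\sigma$. For each fixed $\sigma$, the term is $X_N(f;\omega^{(\sigma^{-1}(1))},\dots,\omega^{(\sigma^{-1}(k))})$. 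The permuted families are still jointly level independent, centered and i.i.d.\ within each family, so Corollary~\ref{cor: less naive Lp estimate} applies with index set $I=\sigma^{-1}(J)$, and $|I|=|J|$. Averaging the $k!$ bounds with weight $1/k!$ leaves a single bound of the same form.

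Next, the norms are constant in $n$ because each family is identically distributed. Bound~\eqref{eq: less naive Lp estimate with Jensen} then gives
\[
\Big(\frac{C_2}{p-1}\Big)^k N^{|J|(\frac1p-\frac1q)} \prod_{j\in J}\|\omega^{(j)}\|_p\prod_{j\notin J}\|\omega^{(j)}\|_q \Big(\sum_{n_1<\dots<n_k}|f|^q\Big)^{1/q}.
\]
By~\eqref{eq: norm of the piecewise extension of f}, the tetrahedral sum equals $\frac{1}{k!}\sum_{x_1,\dots,x_k}|f|^q = v_{\delta}^{-k}\|\overline f\|_q^q$. This contributes $v_{\delta}^{-k/q}$, which we split as $v_{\delta}^{-1/q}$ for each $j$.

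Finally, substitute $N=|\Omega| v_{\delta}^{-1}$. The factor $N^{|J|(1/p-1/q)}$ becomes $|\Omega|^{|J|(1/p-1/q)}v_{\delta}^{-|J|/p+|J|/q}$. Combined with the $v_{\delta}^{-1/q}$ factors, each $j\in J$ then carries $v_{\delta}^{-1/p}$ and each $j\notin J$ carries $v_{\delta}^{-1/q}$, which is exactly the claimed bound.

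The only step I expect to need care is the symmetrization. One has to confirm that the tetrahedral rewrite with permuted families still meets the hypotheses of Corollary~\ref{cor: less naive Lp estimate}. It does, since level independence is preserved under relabelling of the families.
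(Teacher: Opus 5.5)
Your proof is correct and is essentially the paper's argument. Like the paper, you relabel $\Omega_\delta$ as $\intg{1}{N}$, symmetrize over $\mathfrak{S}_k$, and apply the Jensen form \eqref{eq: less naive Lp estimate with Jensen} of Corollary~\ref{cor: less naive Lp estimate} to each permuted tetrahedral sum (using that each family is identically distributed). You then convert the tetrahedral sum of $|f|^q$ into $\|\overline f\|_q$ via \eqref{eq: norm of the piecewise extension of f} and $|\Omega|=Nv_\delta$.

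One harmless slip: with your convention $\prod_j\omega^{(j)}_{n_{\sigma(j)}}$, position $i$ carries the family $\sigma^{-1}(i)$. The index set for the $\mathds{L}^p$ estimates is therefore $I=\sigma(J)$, not $\sigma^{-1}(J)$; the latter matches the paper's convention $\prod_j\omega^{(\sigma(j))}_{n_j}$. This changes nothing, since only $|I|=|J|$ enters the bound.
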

	Note that we may also bound \(|\Omega|^{|J|(\frac1p-\frac1q)}\leq (|\Omega|\vee 1)^{k(\frac1p-\frac1q)}\) since \(\frac1p-\frac1q \geq 0\) in the above.

\begin{proof}
	Since the set $\Omega_{\delta}$ is finite, it is enough to show the result for \(\Omega_{\delta} = \intg{1}{N}\) with~$N$ the number of elements of \(\Omega_{\delta}\) (which is also the number of cells).	
	Then, letting $\mathfrak{S}_k$ be the symmetric group of $\intg{1}{k}$, we can write
	\[
	\frac{1}{k!} \sum\limits_{n_1,\dots,n_k \in \intg{1}{N}} f(n_1,\dots,n_k) \prod\limits_{j=1}^k \omega_{n_j}^{(j)} = \frac{1}{k!} \sum\limits_{\sigma \in \mathfrak{S}_k} \sum\limits_{1 \le n_1 < \dots <n_k \le N} f(n_1,\dots,n_k) \prod\limits_{j=1}^k \omega_{n_j}^{(\sigma(j))} \,.
	\]
	We can then apply Corollary~\ref{cor: less naive Lp estimate}-\eqref{eq: less naive Lp estimate with Jensen} for every fixed \(\sigma \in \mathfrak{S}_k\) and $I = \sigma^{-1}(J)$, to obtain that 
	\[
	\begin{split}
		&\bigg\| \frac{1}{k!} \sum\limits_{x_1,\dots,x_k \in \Omega_{\delta}} f(x_1,\dots,x_k) \prod\limits_{j=1}^k \omega_{x_j}^{(j)} \bigg\|_p \\
		& \qquad \qquad \leq \Big( \frac{C_2}{p-1} \Big)^k  \prod\limits_{j \in J} \| \omega^{(j)} \|_p  \prod\limits_{j \notin J} \| \omega^{(j)} \|_q 
		\times N^{|J|(\frac1p-\frac1q)} \bigg(\sum\limits_{1 \le n_1 < \dots < n_k \le N} |f(n_1,\dots,n_k)|^q\bigg)^{1/q}	\,,
	\end{split}
	\]
	where we have used the fact that for every fixed $j \in \intg{1}{k}$, the random variables $(\omega_n^{(j)})$ are identically distributed. 
	Notice now that, by symmetry of~$f$ and recalling the formula~\eqref{eq: norm of the piecewise extension of f} of \(\|\overline{f}\|_q\), we have
	\[ 
	N^{|J|(\frac1p-\frac1q)} \bigg(\sum\limits_{1 \le n_1 < \dots < n_k \le N} |f(n_1,\dots,n_k)|^q \bigg)^{1/q} = |\Omega|^{|J|(\frac1p-\frac1q)} (v_{\delta})^{-|J|/p} (v_{\delta})^{-(k-|J|)/q} \big\| \overline{f} \big\|_q\,,
	\]
	having also used the fact that $|\Omega| = N v_{\delta}$.
	This concludes the proof.
\end{proof}

	
	%

We are now ready to complete the proof of Theorem \ref{thm: Lp estimates for discrete chaos}. 

\begin{proof}[Proof of Theorem \ref{thm: Lp estimates for discrete chaos}]
	Define, for every $x \in \Omega_{\delta}$, the truncated random variables
	\begin{equation}{\label{eq: def of the big jump rv}}
	\homega_x = \omega_x \, \Ind_{|\omega_x| > V_{\delta}} - \Es[\omega_x \, \Ind_{|\omega_x| > V_{\delta}}],
	\end{equation}
	and 
	\begin{equation}{\label{eq: def of the small jump rv}}
	\bomega_x = \omega_x \, \Ind_{|\omega_x| \le V_{\delta}} - \Es[\omega_x \, \Ind_{|\omega_x| \le V_{\delta}}] .
	\end{equation}
	In particular, we have \(\omega_x = \homega_x + \bomega_x\) and \(\Es[\homega_x] = \Es[\bomega_x] = 0\).
	Expanding the product $\prod_{j=1}^k (\homega_{x_j} + \bomega_{x_j})$ and applying Minkowski's inequality, we obtain
	\begin{equation*}
	\bigg\|\frac{1}{k!} \sum\limits_{x_1,\dots,x_k \in \Omega_{\delta}} f(x_1,\dots,x_k) \prod\limits_{j=1}^k \omega_{x_j} \bigg\|_p 
	\le \sum\limits_{J \subset \intg{1}{k}} \bigg\| \frac{1}{k!} \sum\limits_{x_1,\dots,x_k \in \Omega_{\delta}} f(x_1,\dots,x_k) \prod\limits_{j \in J} \homega_{x_j} \prod\limits_{j \notin J} \bomega_{x_j} \bigg\|_p \,.
	\end{equation*}
	For every subset $J \subset \intg{1}{k}$, we apply Proposition~\ref{prop: general Lp-Lq inequality}, using~$\mathds{L}^p$ estimates for the variables $\homega$ and~$\mathds{L}^q$ estimates for the variables $\bomega$ with $p \in (1,\gamma)$ and $q \in (\gamma,2]$. 
	This gives
	\begin{equation*}
	\bigg\| \frac{1}{k!} \sum\limits_{x_1,\dots,x_k \in \Omega_{\delta}} f(x_1,\dots,x_k) \prod\limits_{j \in J} \homega_{x_j} \prod\limits_{j \notin J} \bomega_{x_j} \bigg\|_p 
	\le \Big( \frac{C_2}{p-1} (|\Omega|\vee 1)^{\frac1p-\frac1q} \Big)^k  \|\overline{f}\|_{q} \bigg( \frac{\| \homega \|_p}{v_{\delta}^{1/p}}\bigg)^{|J|} \bigg(\frac{\|\bomega\|_q}{v_{\delta}^{1/q}}\bigg)^{k-|J|} \,.
	\end{equation*}

	Then, summing over $J \subset \intg{1}{k}$ yields 
	\begin{equation}
		{\label{eq: intermediate step for discrete estimates}}
		\bigg\|\frac{1}{k!} \sum\limits_{x_1,\dots,x_k \in \Omega_{\delta}} f(x_1,\dots,x_k) \prod\limits_{j=1}^k \omega_{x_j} \bigg\|_p 
		\le \Big( \frac{C_2}{p-1}(|\Omega|\vee 1)^{\frac1p-\frac1q} \Big)^k \, \|\overline{f}\|_{q} \, \bigg( \frac{\| \homega \|_p}{v_{\delta}^{1/p}} + \frac{\|\bomega\|_q}{v_{\delta}^{1/q}} \bigg)^{k}.
	\end{equation}
	Using the fact that the random variable $|\omega|$ has a tail given by \eqref{eq: definition of a general heavy-tailed random variable}, one can easily check by properties of regularly varying functions that for any fixed \(p<\gamma\) and \(q>\gamma\), as \(t\to\infty\)  we have
	\[
	\Es[|\omega|^p \Ind_{|\omega| > t}] \sim \frac{\gamma}{\gamma-p}\, t^{p-\gamma} \varphi(t)
	\qquad \text{ and } \qquad
	\Es[|\omega|^q \Ind_{|\omega| \le t}] \sim \frac{\gamma}{q-\gamma}\, t^{\gamma-q} \varphi(t) \,.
	\] 
	In particular, recalling the definition~\eqref{eq: general scale of the noise} of \(V_{\delta}\), we get that for any $a > 0$
	\begin{equation}
	\label{eq: asymptotic behavior for moments of order p-q}
	\begin{split}
	\text{for \(0< p<\gamma\),} \qquad 
	& \Es[|\omega|^p \Ind_{|\omega| > a V_{\delta}}] \underset{\delta \to 0}{=} (1+o(1))\frac{\gamma}{\gamma-p} a^{p-\gamma} (V_{\delta})^p v_{\delta} ,\\
	\text{for \(q>\gamma\) }\qquad 
	& \Es[|\omega|^q \Ind_{|\omega| \le a V_{\delta}}] \underset{\delta \to 0}{=} (1+o(1)) \frac{\gamma}{q - \gamma} a^{q-\gamma} (V_{\delta})^q v_{\delta} \,.
	\end{split}
	\end{equation}
	(We have included a parameter \(a>0\) for future use, but in this proof we only use it for \(a=1\).)
	In particular, we get that there is some constant $C$ (which does not depend on $p,q$, or $\gamma$) such that, for every $\delta > 0$,
	\begin{equation}{\label{eq: bound for Lpq moments}}
	\norme{\homega}_p \le \frac{C}{(\gamma-p)^{1/p}}  v_{\delta}^{1/p} V_{\delta}
	\qquad \text{ and } \qquad 
	\norme{\bomega}_q \le \frac{C}{(q-\gamma)^{1/q}} v_{\delta}^{1/q} V_{\delta} \,.
	\end{equation}
	Plugging these estimates into \eqref{eq: intermediate step for discrete estimates} yields the upper bound
	\(
	(C_0)^k (V_{\delta})^k \|\overline{f}\|_{q}
	\),
	with the constant
	\[
	C_0 =  \frac{2 C C_2}{p-1} \max\{(\gamma-p)^{-1/p},(q-\gamma)^{-1/q}\} \, (|\Omega|\vee 1)^{\frac1p - \frac1q} \,,
	\]
	as claimed in Remark~\ref{rem: constant C0}.
	This concludes the proof.
\end{proof}

\subsection{Application to continuous chaos: proof of Theorem~\ref{thm: Lp estimates for continuous chaos}}
\label{section: Lp continuous chaos}

In this section we prove Theorem~\ref{thm: Lp estimates for continuous chaos}. 
Our strategy is to first establish \ref{eq: Lp estimates for continuous chaos} for simple functions and then conclude by a density argument. 
Fix $k \ge 1$. 
A simple symmetric function $\bff:\Omega^k \to \bbR$ is a measurable function of the form 
\begin{equation}
	\bff = \sum\limits_{1 \le i_1,\dots,i_k \le M} \alpha_{i_1,\dots,i_k} \Ind_{A_{i_1} \times \dots \times A_{i_k}}\,,
\end{equation}
where the measurable sets $(A_i)_{1 \le i \le M}$ form a partition of $\Omega$, and the coefficients $\alpha_{i_1,\dots,i_k}$ are symmetric and such that $\alpha_{i_1,\dots,i_k} = 0$, if for some $l \neq m$, $i_l = i_m$. 
In particular, a symmetric simple function vanishes on the diagonals. 
We set for every $a \in (0,1)$,
\begin{equation*}
	\bX^{(a)}(\bff) = \frac{1}{k!} \int_{\Omega^k} \bff(x_1,\dots,x_k) \prod\limits_{j=1}^k \bzeta^{(a)}(\dd x_j)\,.
\end{equation*}
Define, for every $0 \le a \le b \le +\infty$ and $p \ge 1$, the truncated moments of the noise $\bzeta$,
\begin{equation}{\label{eq: def of the truncated moments of the noise}}
	m_p(a,b) = \int_{\bbR} |z|^p \Ind_{a < |z| \le b} \lambda(\dd z)\,.
\end{equation}
In our case, the Lévy measure $\lambda$ is explicit, and we can compute the truncated moments but the following result is more general (see Theorem~\ref{thm: continuous polynomial chaos for general noise} and the discussion before). 

\begin{proposition}{\label{prop: Lp estimates for simple functions}}
	There exists a universal constant $C_3$ such that, for every $1 < p < q \le 2$, for every $a \in (0,1)$ and every simple symmetric function $\bff: \Omega^k \to \bbR$,
	\begin{equation*}
		\bigg\|\bX^{(a)}(\bff)\bigg\|_p \le \Big(\frac{C_3}{p-1} (|\Omega|\vee 1)^{1/p-1/q}\Big)^k\Big(m_p(1,+\infty)^{1/p}+m_q(a,1)^{1/q}\Big)^k \|\bff\|_q.
	\end{equation*}
\end{proposition}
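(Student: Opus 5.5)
The plan is to reduce the continuous chaos with a simple integrand to a discrete chaos of the type handled by Corollary~\ref{cor: less naive Lp estimate}, and to run the same big/small jump splitting as in the proof of Theorem~\ref{thm: Lp estimates for discrete chaos}. The difference is that the threshold is now the fixed level $1$ rather than $V_\delta$. Write $\bff=\sum \alpha_{i_1,\dots,i_k}\Ind_{A_{i_1}\times\dots\times A_{i_k}}$. Since the coefficients vanish whenever two indices coincide, we get
\[
\bX^{(a)}(\bff)=\frac{1}{k!}\sum_{i_1,\dots,i_k}\alpha_{i_1,\dots,i_k}\prod_{j=1}^k \bzeta^{(a)}(A_{i_j}).
\]
Here the variables $\bzeta^{(a)}(A_i)$, $1\le i\le M$, are independent and centered, because the $A_i$ are disjoint and the Poisson process restricted to disjoint sets is independent. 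I then decompose $\bzeta^{(a)}(A_i)=\hat\xi_i+\check\xi_i$, where $\hat\xi_i$ is the compensated sum of atoms with $|z|>1$ in $A_i$ and $\check\xi_i$ is the compensated sum of atoms with $a<|z|\le 1$. These two families are independent of each other, so all variables are jointly level independent in the sense of Section~\ref{subsec: decoupling inequalities}.

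Expanding the product and applying Minkowski gives a sum over $J\subset\intg{1}{k}$: positions in $J$ carry $\hat\xi$ and the others carry $\check\xi$. For each $J$, I apply Corollary~\ref{cor: less naive Lp estimate}, in its form \eqref{eq: less naive Lp estimate}, after reducing symmetric sums to tetrahedral ones as in Proposition~\ref{prop: general Lp-Lq inequality}. This uses $\mathds{L}^p$ norms on $J$ and $\mathds{L}^q$ norms off $J$. The needed moment bounds come from standard Poisson estimates. BDG for compensated Poisson integrals, together with subadditivity of $x\mapsto x^{p/2}$, gives $\|\hat\xi_i\|_p^p\le C\,|A_i|\,m_p(1,\infty)$ and $\|\check\xi_i\|_q^q\le C\,|A_i|\,m_q(a,1)$. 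The constants $C$ are uniform for $p,q\in(1,2]$, which is the mechanism behind Corollary~\ref{cor: Lp decoupling inequality}.

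The main obstacle is that the cells $A_i$ have unequal volumes, whereas Proposition~\ref{prop: general Lp-Lq inequality} exploited equal volumes $v_\delta$. I would handle this directly with \eqref{eq: less naive Lp estimate}. The $\mathds{L}^q$ inner sum becomes
\[
\sum_{i_j,\,j\notin J}|\alpha|^q\prod_{j\notin J}|A_{i_j}|\,m_q(a,1).
\]
The outer sum carries weights $\prod_{j\in J}|A_{i_j}|m_p(1,\infty)$. I then apply Jensen/H\"older with respect to the measure $\prod_{j\in J}|A_{i_j}|$, whose total mass is $|\Omega|^{|J|}$. This gives the bound
\[
|\Omega|^{|J|(1/p-1/q)}\Bigl(\sum_{i}|\alpha_i|^q\prod_{j}|A_{i_j}|\Bigr)^{1/q} m_p(1,\infty)^{|J|/p}\,m_q(a,1)^{(k-|J|)/q}.
\]
The sum inside the parentheses equals $(k!)\,\|\bff\|_q^q$ restricted to the ordered simplex, which is exactly $\|\bff\|_q^q$ in symmetric normalization once the $\frac1{k!}$ and the permutation sum are matched as in Proposition~\ref{prop: general Lp-Lq inequality}.

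Finally, summing over the $2^k$ sets $J$ produces $\bigl(m_p(1,\infty)^{1/p}+m_q(a,1)^{1/q}\bigr)^k$. Bounding $|\Omega|^{|J|(1/p-1/q)}\le(|\Omega|\vee1)^{k(1/p-1/q)}$ and absorbing the Poisson BDG constants and $C_2$ into $C_3$ gives the claimed estimate. Integrability is not an issue, since each $\bzeta^{(a)}(A_i)$ has finite $p$-th moment for $p<\gamma$ and the sums are finite.
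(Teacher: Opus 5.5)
Your proposal is correct and is essentially the paper's proof. You rewrite $\bX^{(a)}(\bff)$ as a tetrahedral chaos in the variables $\bzeta^{(a)}(A_i)$, split them into compensated jumps above and below level $1$, and apply Corollary~\ref{cor: less naive Lp estimate} in the form \eqref{eq: less naive Lp estimate}; Jensen with respect to the weights $\prod_{j\in J}|A_{i_j}|$ (total mass at most $|\Omega|^{|J|}$) then absorbs the unequal cell volumes, and the only cosmetic differences are that you derive $\|\hat\xi_i\|_p^p\le C|A_i|\,m_p(1,\infty)$ from BDG plus subadditivity of $x\mapsto x^{p/2}$ (the paper cites Berger--Chong--Lacoin for this) and you pass through the permutation reduction instead of writing the ordered sum directly.
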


\begin{proof}[Proof of Theorem \ref{thm: Lp estimates for continuous chaos}]
	If the Lévy measure is $\lambda(\dd z) = (c_+ \Ind_{z > 0}+c_- \Ind_{z < 0})\gamma |z|^{-1-\gamma} \dd z$, then for every $p \in (1,\gamma)$ and $q \in (\gamma,2)$,
	\begin{equation*}
		m_p(1,+\infty) \le \frac{2 \gamma }{\gamma-p}\,, \text{ and } \quad m_q(a,1) \le m_q(0,1) \le \frac{2 \gamma }{q-\gamma}\,.
	\end{equation*}
	This proves Theorem \ref{thm: Lp estimates for continuous chaos} for simple functions and with a constant which is bounded by $C_0$.
	
	It remains to deal with a general function \(\bff\in L^q_s(\Omega^k)\), using a density argument that we now explain.
	For any \(\gep \in (0,1)\), let $\bff^{(\gep)}$ be a simple symmetric function which approximates $\bff$ in $L^q_s(\Omega^k)$ as $\gep \downarrow 0$.
	Then, applying \eqref{eq: Lp estimates for continuous chaos} to \(\bff^{(\gep)}-\bff^{(\gep')}\) with $\gep, \gep' \in(0,1)$, we obtain
	\begin{equation*}
		\big\|\bX^{(a)}(\bff^{(\gep)}) - \bX^{(a)}(\bff^{(\gep')})\big\|_p  \le (C_0)^k \big\|\bff^{(\gep)} - \bff^{(\gep')}\big\|_q \,.
	\end{equation*}
	Since $\bff^{(\gep)}$ converges in $L^q_s(\Omega^k)$ as $\gep \downarrow 0$, the family of random variables $\bX^{(a)}(\bff^{(\gep)})$ is a Cauchy sequence in~$\mathds{L}^p$. 
	Therefore, it converges to a random variable $\bX^{(a)}(\bff) :=\frac{1}{k!} \int_{\Omega^k} \bff(x_1,\dots,x_k) \prod_{j=1}^k \bzeta^{(a)}(\dd x_j)$ for which~\eqref{eq: Lp estimates for continuous chaos} holds. 
	The definition of \(\bX^{(a)}(\bff)\) does not depend on the choice of the approximations $\bff^{(\gep)}$ nor on $p$.

\end{proof}

\begin{proof}[Proof of Proposition \ref{prop: Lp estimates for simple functions}]
	Denote for every $i \in \intg{1}{k}$, $\omega_i = \bzeta^{(a)}(A_i)$.
	The properties of the coefficients $\alpha_{i_1,\dots,i_k}$ imply that 
	\begin{equation*}
		\begin{split}
			\frac{1}{k!} \int_{\Omega^k} \bff(x_1,\dots,x_k) \prod\limits_{j=1}^k \bzeta^{(a)}(\dd x_j) &= \frac{1}{k!} \sum\limits_{1 \le i_1,\dots,i_k \le M} \alpha_{i_1,\dots,i_k} \prod\limits_{j=1}^k \bzeta^{(a)}(A_{i_j}) \\
			&= \sum\limits_{1 \le i_1 < \dots < i_k \le M} \alpha_{i_1,\dots,i_k} \prod\limits_{j=1}^k \omega_{i_j}\,.
		\end{split}
	\end{equation*}
	Define, for every $i \in \intg{1}{k}$, the truncated random variables
	\begin{equation}{\label{eq: def of the continuous big jump rv}}
		\homega_i = \sum\limits_{(x,z) \in \Lambda} \Ind_{A_i}(x) z \Ind_{|z| > 1} - |A_i| \int_{\bbR} z \Ind_{|z| > 1} \lambda(\dd z)\,,
	\end{equation}
	and 
	\begin{equation}{\label{eq: def of the continous small jump rv}}
		\bomega_i = \sum\limits_{(x,z) \in \Lambda} \Ind_{A_i}(x) z \Ind_{a < |z| \le 1} - |A_i| \int_{\bbR} z \Ind_{a < |z| \le 1} \lambda(\dd z)\, .
	\end{equation}
	In particular, we have \(\omega_i = \homega_i + \bomega_i\) and \(\Es[\homega_i] = \Es[\bomega_i] = 0\).
	Expanding the product $\prod_{j=1}^k (\homega_{i_j} + \bomega_{i_j})$ and applying Minkowski's inequality, we obtain
	\begin{equation*}
		\bigg\| \sum\limits_{1 \le i_1 < \dots < i_k \le M} \alpha_{i_1,\dots,i_k} \prod\limits_{j=1}^k \omega_{i_j} \bigg\|_p 
		\le \sum\limits_{J \subset \intg{1}{k}} \bigg\| \sum\limits_{1 \le i_1 < \dots < i_k \le M} \alpha_{i_1,\dots,i_k} \prod\limits_{j \in J} \homega_{i_j} \prod\limits_{j \notin J} \bomega_{i_j} \bigg\|_p \,.
	\end{equation*}
	For every subset $J \subset \intg{1}{k}$, we apply Corollary~\ref{cor: less naive Lp estimate}, using~$\mathds{L}^p$ estimates for the variables $\homega$ and~$\mathds{L}^q$ estimates for the variables $\bomega$ with $p \in (1,\gamma)$ and $q \in (\gamma,2]$. 
	This gives
	\begin{equation}{\label{eq: intermediate step for continuous estimates}}
		\begin{split}
			&\bigg\| \sum\limits_{1 \le i_1 < \dots < i_k \le M} \alpha_{i_1,\dots,i_k} \prod\limits_{j \in J} \homega_{i_j} \prod\limits_{j \notin J} \bomega_{i_j} \bigg\|_p \le\\ &\qquad \qquad \Big( \frac{C_2}{p-1} \Big)^k  \Bigg( \sum\limits_{i_j, j \in J} \prod\limits_{j \in J} \| \homega_{i_j} \|_p^p \bigg(  \!\!\!\sumtwo{i_j, j \notin J}{1 \le i_1 < \dots <i_k \le M} \!\!\!  |\alpha_{i_1,\dots,i_k}|^q \prod\limits_{j \notin J} \| \bomega_{i_j} \|_q^q \bigg)^{p/q}\Bigg)^{1/p}.
		\end{split}
	\end{equation}
	It is proved in \cite[Section 6.2]{berger_chong_lacoin_2023} that there is a universal constant $C$ such that for every measurable function $g : \Omega \to \bbR$, every $p \in (1,2]$ and $0 < a \le b \le +\infty$, 
	\begin{equation*}
		\Big\| \sum\limits_{(x,z) \in \Lambda} g(x) z \Ind_{a < |z| \le b} - \int_{\Omega \times \bbR} g(x) z \Ind_{a < |z| \le b} \dd x \lambda(\dd z) \Big\|_p \le C \Big(\int_{\Omega \times \bbR} |g(x)|^p |z|^p \Ind_{a < |z| \le b} \dd x \lambda(\dd z)\Big)^{1/p} \,.
	\end{equation*}
	This inequality can also be seen as the Poisson analogue of Corollary \ref{cor: Lp decoupling inequality} (for an order $1$ chaos). 
	Therefore, for every $i \in \intg{1}{k}$,
	\begin{equation*}
		\| \homega_i \|_p \le C |A_i|^{1/p} m_p(1,+\infty)^{1/p}\,, \text{ and } \quad \|\bomega_i\|_q \le C |A_i|^{1/q} m_q(a,1)^{1/q}\,.
	\end{equation*}
	Plugging these estimates into \eqref{eq: intermediate step for continuous estimates} yields, 
	\begin{equation*}
		\begin{split}
			&\bigg\| \sum\limits_{1 \le i_1 < \dots < i_k \le M} \alpha_{i_1,\dots,i_k} \prod\limits_{j \in J} \homega_{i_j} \prod\limits_{j \notin J} \bomega_{i_j} \bigg\|_p \le\\ &\quad \Big( \frac{C_2'}{p-1} \Big)^k m_p(1,+\infty)^{|J|/p} m_q(a,1)^{k-|J|/q} \Bigg( \sum\limits_{i_j, j \in J} \prod\limits_{j \in J} |A_{i_j}| \bigg(  \!\!\!\sumtwo{i_j, j \notin J}{1 \le i_1 < \dots <i_k \le M} \!\!\!  |\alpha_{i_1,\dots,i_k}|^q \prod\limits_{j \notin J} |A_{i_j}| \bigg)^{p/q}\Bigg)^{1/p}.
		\end{split}
	\end{equation*}
	Applying Jensen's inequality to the first sum, we finally obtain,
	\begin{equation*}
		\begin{split}
			&\bigg\| \sum\limits_{1 \le i_1 < \dots < i_k \le M} \alpha_{i_1,\dots,i_k} \prod\limits_{j \in J} \homega_{i_j} \prod\limits_{j \notin J} \bomega_{i_j} \bigg\|_p \le\\
			&\qquad \Big( \frac{C_2'}{p-1} \Big)^k m_p(1,+\infty)^{|J|/p} m_q(a,1)^{k-|J|/q} |\Omega|^{|J|(1/p-1/q)} \Bigg(\sum\limits_{1 \le i_1< \dots <i_k \le M} |\alpha_{i_1,\dots,i_k}|^q \prod\limits_{j =1}^k |A_{i_j}| \Bigg)^{1/q}.
		\end{split}
	\end{equation*}
	Summing over $J \subset \intg{1}{k}$ concludes the proof since the symmetric $L^q$ norm of $\bff$ is given by 
	\begin{equation*}
		\|\bff\|_q = \Big(\frac{1}{k!} \sum\limits_{1 \le i_1,\dots,i_k \le M} |\alpha_{i_1,\dots,i_k}|^q \prod\limits_{j=1}^k |A_{i_j}| \Big)^{1/q}\,.
	\end{equation*}
\end{proof}

\section{Multiple Lévy integrals and scaling limits of polynomial chaos}{\label{section: general polynomial chaos}}

	This section is dedicated to the proof of Theorems \ref{thm: continuous polynomial chaos} and \ref{thm: scaling limit of discrete polynomial chaos}. 	
	One of the strengths of our method, compared with the proof of Berger and Lacoin in \cite{berger_lacoin_2021} and \cite{berger_lacoin_2022}, is that we can proceed chaos by chaos. 
	This was true for the $\mathds{L}^p$ estimates, as illustrated in Section \ref{section: Lp estimates}, but it also holds for scaling limits. 
	We therefore postpone the full proof of Theorems \ref{thm: continuous polynomial chaos} and \ref{thm: scaling limit of discrete polynomial chaos} to the last Section~\ref{subsec: polynomial chaos}.
	Before that, in Sections~\ref{subsec: chaos continu ordre k} and~\ref{subsec: polynomial chaos}, we work with a chaos of a fixed order~$k \ge 1$.

\subsection{Defining the continuous chaos of order \texorpdfstring{\(k\)}{k}}
\label{subsec: chaos continu ordre k}

Let $k \ge 1$, \(q\in (\gamma,2)\) and let $\bff \in L^q_s(\Omega^k)$ be a symmetric function.
Recall the definition~\eqref{eq: definition of the general truncated noise} of the truncated noise \(\bzeta^{(a)}\), built thanks to a Poisson point process~\(\Lambda\) on \(\bbR\times \Omega\) of intensity \(\mu(\dd x\, \dd z) = \dd x \lambda(\dd z)\) with \(\lambda(\dd z) = (c_+ \Ind_{z>0}+ c_- \Ind_{z<0})\gamma |z|^{-1-\gamma} \dd z\).

Denote, for every $a\in (0,1]$, 
\begin{equation}
	\bX^{(a)}(\bff) = \frac{1}{k!} \int_{\Omega^k} \bff(x_1,\dots,x_k) \prod\limits_{j=1}^k \zeta^{(a)}(\dd x_j) \,,
\end{equation} 
which is measurable with respect to \(\cF^{(a)} := \sigma( \Lambda \cap (\Omega \times (-\infty,a] \cup [a,\infty))\).

\begin{proposition}{\label{prop: martingale property of multiple integrals}}
The process $(\bX^{(a)}(\bff))_{a\in (0,1]}$ is a time-reversed càdlàg martingale with respect to the filtration $\cF = (\cF^{(a)})_{a\in (0,1]}$.
Furthermore it is bounded in $\mathds{L}^p$ for every $p \in (1,\gamma)$ and, for every $a\in (0,1]$ we have,
\begin{equation}
	\|\bX^{(a)}(\bff)\|_{p} \le (C_0)^k \|\bff\|_{q},
\end{equation}
where the constant $C_0 = C_0(p,q,\gamma,\Omega)$ is given by Theorem \ref{thm: Lp estimates for discrete chaos}.
In particular, we have the following limit \(\bX(\bff):= \lim_{a\downarrow 0} \bX^{(a)}(\bff)\) \(\bbP\)-a.s.\ and in \(\mathds{L}^p\).
\end{proposition}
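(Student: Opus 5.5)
The proof has three parts: the martingale identity for simple functions, extension to general $\bff\in L^q_s(\Omega^k)$ by density using the uniform bound of Theorem~\ref{thm: Lp estimates for continuous chaos}, and convergence via the reversed martingale convergence theorem.

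\textbf{Simple functions.} Take $\bff$ simple and symmetric, built on a partition $(A_i)_{i\le M}$ of $\Omega$. Then
\[
\bX^{(a)}(\bff)=\sum_{i_1<\dots<i_k}\alpha_{i_1,\dots,i_k}\prod_{j}\bzeta^{(a)}(A_{i_j}).
\]
Fix $0<a<b\le 1$ and split $\bzeta^{(a)}(A_i)=\bzeta^{(b)}(A_i)+\eta_i$, where
\[
\eta_i=\sum_{(x,z)\in\Lambda}\Ind_{A_i}(x)\,z\,\Ind_{a<|z|\le b}-|A_i|\int z\Ind_{a<|z|\le b}\lambda(\dd z).
\]
The $\eta_i$ are centered. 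They are also independent of $\cF^{(b)}$, since they are built from the points of $\Lambda$ in a region disjoint from the one generating $\cF^{(b)}$. For distinct indices, the $\eta_{i_j}$ are mutually independent, because the Poisson process restricted to disjoint sets $A_{i}$ is independent. Expanding the product and conditioning on $\cF^{(b)}$, every term containing at least one $\eta$ factor has conditional expectation zero. This gives $\Es[\bX^{(a)}(\bff)\mid\cF^{(b)}]=\bX^{(b)}(\bff)$. Integrability comes from Proposition~\ref{prop: Lp estimates for simple functions}.

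\textbf{General $\bff$.} Take simple symmetric $\bff^{(\gep)}\to\bff$ in $L^q_s$. Theorem~\ref{thm: Lp estimates for continuous chaos} gives, uniformly in $a\in(0,1]$,
\[
\|\bX^{(a)}(\bff^{(\gep)})-\bX^{(a)}(\bff)\|_p\le C_0^k\|\bff^{(\gep)}-\bff\|_q .
\]
Conditional expectation is an $\mathds{L}^p$ contraction, so the martingale identity passes to the limit. Measurability with respect to $\cF^{(a)}$ also passes to the limit, since $\mathds{L}^p$-limits of $\cF^{(a)}$-measurable variables are $\cF^{(a)}$-measurable. Theorem~\ref{thm: Lp estimates for continuous chaos} directly yields the bound $\|\bX^{(a)}(\bff)\|_p\le C_0^k\|\bff\|_q$ for every $a\in(0,1]$.

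\textbf{Càdlàg version and convergence.} For simple $\bff$, the map $a\mapsto\bX^{(a)}(\bff)$ is piecewise smooth between the finitely many jump sizes exceeding any fixed threshold: the compensator term $\kappa$ is continuous in $a$, and points enter as $a$ decreases. Hence it is càdlàg in reversed time. For general $\bff$, I would instead pass to a càdlàg modification using the standard regularization theorem for (reversed) martingales, since the filtration $(\cF^{(a)})$ is right-continuous in the appropriate direction: Poisson process with no fixed atoms in $|z|$. Finally, an $\mathds{L}^p$-bounded reversed martingale with $p>1$ is uniformly integrable. By the reversed martingale convergence theorem together with Doob's $\mathds{L}^p$ maximal inequality, $\bX^{(a)}(\bff)$ converges a.s.\ and in $\mathds{L}^p$ as $a\downarrow0$. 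The limit $\bX(\bff)$ then satisfies the same bound by Fatou.

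\textbf{Main obstacle.} The delicate step is the path regularity for general $\bff$. The density argument controls each fixed $a$, not whole paths. I would handle this either by the modification theorem as above, or by Doob's maximal inequality, which bounds $\|\sup_a|\bX^{(a)}(\bff^{(\gep)})-\bX^{(a)}(\bff^{(\gep')})|\|_p$ by a multiple of $C_0^k\|\bff^{(\gep)}-\bff^{(\gep')}\|_q$. That gives uniform-in-$a$ convergence along a subsequence, and hence càdlàg limits.
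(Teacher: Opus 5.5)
Your proposal is correct, and it shares the paper's key idea: split the jumps at level $b$, $z\Ind_{|z|>a}=z\Ind_{|z|>b}+z\Ind_{a<|z|\le b}$, and show that every cross term with a small-jump factor has zero conditional expectation given $\cF^{(b)}$. Where you differ is in how that vanishing is justified.

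The paper works directly with a general $\bff$. It writes $\bX^{(a)}(\bff)$ as a $k$-fold integral against the compensated measure $\prod_j(\delta_\Lambda-\mu)(\dd x_j\,\dd z_j)$, expands, and kills the cross terms with the multivariate Mecke formula.

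You do the computation only for simple symmetric $\bff$. There it reduces to independence of Poisson restrictions to disjoint sets, together with the centering of the $\eta_i$. You then transfer the identity to general $\bff$ using the uniform-in-$a$ bound of Theorem~\ref{thm: Lp estimates for continuous chaos} and the $\mathds{L}^p$-contractivity of conditional expectation.

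\begin{itemize}
\item The paper's route is shorter.
\item Yours is more elementary, and it matches how $\bX^{(a)}(\bff)$ is actually defined in the paper, namely as an $\mathds{L}^p$-limit of simple chaoses. It therefore avoids making sense of the compensated multiple integral, and of Mecke's formula, for a non-simple $\bff$ (integrability there is left implicit in the paper).
\end{itemize}

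You also prove the c\`adl\`ag claim, which the paper states without argument. Of your two options, the Doob maximal inequality one is the cleaner. Applied to differences of simple chaoses, it gives a.s.\ uniform-in-$a$ convergence along a subsequence, and it needs no discussion of the usual conditions.

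One terminological point: $\cF^{(a)}$ increases as $a\downarrow 0$. So the convergence as $a\downarrow 0$ is an instance of the forward $\mathds{L}^p$-martingale convergence theorem, not L\'evy's downward theorem. This is exactly why the $\mathds{L}^p$ bound with $p>1$ is genuinely needed.
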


\begin{proof}
The function $\bff$ has a finite $L^q_s$ norm therefore, according to Theorem~\ref{thm: Lp estimates for continuous chaos}, the random variable $\bX^{(a)}(\bff)$ is well-defined and the family $(\bX^{(a)}(\bff))_{a\in (0,1]}$ is bounded in $\mathds{L}^p$, with \(\|\bX^{(a)}(\bff)\|_{p} \le (C_0)^k \|\bff\|_{q}\).

It only remains to show that it is a martingale with respect to the filtration \((\cF^{(a)})_{a\in (0,1]}\).
Recalling the definition~\eqref{eq: definition of the general truncated noise}, let us start by writing
\begin{equation*}
	\bX^{(a)}(\bff) =  \frac{1}{k!} \int_{\Omega^k \times \mathbb{R}^k} \bff(x_1,\dots,x_k) \prod\limits_{j=1}^k z_j \Ind_{|z_j| \ge a} \prod\limits_{j=1}^k (\delta_{\Lambda} - \mu)(\dd x_j \dd z_j).
\end{equation*}
Now, for \(b \in (a,1]\), write  $z\Ind_{|z| \ge a} = z \Ind_{|z| \ge b} + z \Ind_{a \le |z| < b}$ and expand the product. 
Note that the terms of the form $z_j \Ind_{|z_j| \ge b}$, when integrated against the point process \(\Lambda\) are measurable with respect to $\cF^{(b)}$ and those of the form $z_j \Ind_{a \le |z_j| < b }$ are independent of $\cF^{(b)}$. 
Therefore, we have that
\begin{equation*}
	\begin{split}
	\Es[ \bX^{(a)}(\bff) \mid \cF^{(b)} ] =
	\frac{1}{k!} \sum_{I \subset \intg{1}{k}} \int_{\Omega^{|I|}\times \bbR^{|I|}} \!\! \Es\bigg[\int_{\Omega^{k-|I|}\times \bbR^{k-|I|}} \!\!  \bff(x_1,\dots,x_k) & \prod\limits_{i \notin I} z_i \Ind_{a \le |z_i| < b} (\delta_{\Lambda} - \mu)(\dd x_i \dd z_i)\bigg] \\
	& \cdot \prod\limits_{i \in I} z_i \Ind_{|z_i| \ge b}\prod\limits_{i \in I} (\delta_{\Lambda}-\mu)(\dd x_i \dd z_i) \,.
	\end{split}
\end{equation*}
By Mecke's multivariate formula~\cite[Theorem 4.4]{last_penrose_2018}, when $I \neq \intg{1}{k}$ one has
\[
	\Es\bigg[\int_{\Omega^{k-|I|}\times \bbR^{k-|I|}} \!\! \bff(x_1,\dots,x_k) \prod\limits_{i \notin I} z_i \Ind_{a \le |z_i| < b} (\delta_{\Lambda} - \mu)(\dd x_i \dd z_i)\bigg] = 0 \,.
\]
Therefore only the term $I = \intg{1}{k}$ in the above sum remains, which yields $\Es[\bX^{(a)}(\bff) \mid \cF^{(b)} ] = \bX^{(b)}(\bff)$ and concludes the proof.
\end{proof}

\subsection{Scaling limit of a discrete chaos of order \texorpdfstring{\(k\)}{k}}
{\label{subsec: scaling limit of discrete chaos}}

We now address the question of convergence of discrete multiple integrals to their continuous counterparts. 
Let us denote, for any function $f : \Omega_{\delta}^k \to \bbR$,
\begin{equation*}
	X_{\delta}(f) := \frac{1}{k!} \sum\limits_{x_1,\dots,x_k \in \Omega_{\delta}} f(x_1,\dots,x_k) \prod\limits_{j=1}^k \frac{\omega_{x_j}}{V_{\delta}} = \frac{1}{k!} \int_{x_1,\dots,x_k \in \Omega} \overline{f}(x_1,\dots,x_k) \prod_{j=1}^k \zeta_{\delta}(\dd x_j) \,,
\end{equation*}
where for the second identity we recall the definition~\eqref{eq: definition of the discrete noise} of the discrete noise \(\zeta_{\delta}\) and $\overline{f}$ denotes the piecewise extension (constant on every cell) of $f$ to $\Omega^k$.

\begin{proposition}{\label{prop: convergence of discrete multiple integrals}}
	Assume that \(\omega\) is centered and satisfies~\eqref{eq: definition of a general heavy-tailed random variable} with \(\gamma\in (1,2)\).
	Let $\bff : \Omega^k \to \bbR$ be a symmetric function and let for every $\delta > 0$, $f_{\delta} : \Omega_{\delta}^k \to \bbR$ be a symmetric function that vanishes on the diagonals, and assume that there is some $q \in (\gamma,2]$ such that
	\begin{equation*}
	\lim\limits_{\delta \downarrow 0} \| \bff - \overline{f}_{\delta} \|_q = 0.
	\end{equation*}
	Then for every $s > \frac{D}{2}$, we have the following joint convergence in distribution in $H^{-s}_{\rm loc}(\Omega) \times \mathbb{R}$: as~\(\delta\downarrow 0\),
	\begin{equation*}
		(\zeta_{\delta}, X_{\delta}(f_{\delta})) \xrightarrow{(d)} (\bzeta, \mathbf{X}(\bff)),
	\end{equation*}
	where \(\bzeta\) is a \(\gamma\)-stable Lévy noise and $\mathbf{X}(\bff) = \int_{\Omega^k} \bff(x_1,\ldots, x_k) \prod_{j=1}^k \bzeta(\dd x_j)$ is defined in Proposition~\ref{prop: martingale property of multiple integrals} (and is \(\bzeta\)-measurable).
	The conclusion extends to the joint convergence of a finite collection of chaos $(X_{\delta}(f_{j,\delta}))_{1 \le j \le M}$, provided for each $j \in \intg{1}{M}$, $f_{j,\delta}:\Omega^{k_j} \to \bbR$ ($k_j \ge 1$) is symmetric, vanishes on the diagonals, and converges in $L^q_s(\Omega^{k_j})$ to some function $\bff_{j}$. 
\end{proposition}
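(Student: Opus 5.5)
The plan is a three-step approximation argument: truncate the jumps, reduce to simple functions, and pass to the limit with the uniform $\mathds{L}^p$ estimates of Theorems~\ref{thm: Lp estimates for discrete chaos} and~\ref{thm: Lp estimates for continuous chaos}. These estimates let us replace $f_\delta$ by a fixed simple symmetric function $\mathbf{g}$ at a cost of $(C_0)^k\|\overline{f}_\delta-\mathbf{g}\|_q$. This cost is small uniformly in $\delta$ once $\|\bff-\mathbf{g}\|_q$ is small, since $\|\bff-\overline f_\delta\|_q\to0$. On the continuum side the same replacement costs $(C_0)^k\|\bff-\mathbf{g}\|_q$ in $\mathds{L}^p$, uniformly in $a$ and also in the limit $a\downarrow0$.

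By a standard approximation lemma for convergence in distribution (Billingsley, Thm.~3.2), it therefore suffices to prove the joint convergence $(\zeta_\delta, X_\delta(\mathbf{g}_\delta))\to(\bzeta,\bX(\mathbf{g}))$. Here $\mathbf{g}_\delta$ is the discretization of $\mathbf{g}$: we average $\mathbf{g}$ over product cells and set it to zero on the discrete diagonals. It should be checked that $\|\overline{\mathbf{g}}_\delta-\mathbf{g}\|_q\to0$. This uses that the diagonal cells have vanishing total measure, of order $v_\delta\cdot$(number of cells)$^{k-1}$ times $v_\delta^{k-1}$, together with the Lebesgue differentiation property of the tessellation. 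That property may require the cells to shrink, which I would take as part of what ``discretization'' means.

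We may further take $\mathbf{g}=\sum\alpha_{i_1\dots i_k}\Ind_{A_{i_1}\times\dots\times A_{i_k}}$ with the $A_i$ disjoint boxes with null boundary, and with zero coefficients on coinciding indices. Then, up to boundary cells,
\[
X_\delta(\mathbf{g}_\delta)=\sum_{i_1<\dots<i_k}\alpha_{i_1\dots i_k}\prod_j \zeta_\delta(A_{i_j})-R_\delta ,
\]
where $R_\delta$ collects the contributions of pairs of points in the same cell of some $A_i$. So $X_\delta(\mathbf{g}_\delta)$ is a polynomial in the finitely many variables $\zeta_\delta(A_i)$ minus a correction. By Appendix~\ref{thm: functional convergence for the noise}, $(\zeta_\delta,(\zeta_\delta(A_i))_i)$ converges jointly to $(\bzeta,(\bzeta(A_i))_i)$, since the $A_i$ have null boundary. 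The continuous mapping theorem then gives convergence to the polynomial $\sum\alpha\prod\bzeta(A_{i_j})$, which equals $\bX(\mathbf{g})$ because the $A_{i_j}$ are distinct and disjoint. This identification follows from the construction in Proposition~\ref{prop: martingale property of multiple integrals} by letting $a\downarrow0$ in the truncated version.

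The main obstacle is the diagonal correction $R_\delta$: the discrete function vanishes on diagonals of points, but $\prod_j\zeta_\delta(A_{i_j})$ within a single box is not a product over distinct points. With disjoint distinct boxes, however, no two indices share a box, so $R_\delta$ only involves the discretization error between $\Ind_{A_{i_j}}$ and the union of cells meeting it. I would control this error again via Theorem~\ref{thm: Lp estimates for discrete chaos}, since its $L^q$ norm tends to $0$. Finally, the extension to finitely many chaoses follows the same way, by taking common refining boxes for all the $\mathbf{g}_j$ and using the joint continuous mapping.
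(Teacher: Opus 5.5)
Your route is genuinely different from the paper's, and it works, except that one step relies on an argument that does not apply here.

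The paper reduces to smooth kernels compactly supported in $\Omega^{k,*}$. It then truncates the discrete noise at level $aV_\delta$ (Lemma~\ref{lemma: uniform approximability}, where $p$ must be taken close to $\gamma$ to beat the negative powers of $a$). The truncated limit comes from the continuity of $\theta\mapsto\int\bff\prod_j\theta(\dd x_j)$ on $H^{-s}_{\rm loc}$ (Proposition~\ref{prop: convergence of chaos for truncated noise}), and finally $a\downarrow0$ through the martingale of Proposition~\ref{prop: martingale property of multiple integrals}.

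You instead use simple kernels on disjoint boxes. This turns the chaos into an exact polynomial in finitely many box masses, so everything reduces to a finite-dimensional stable limit theorem. The truncation announced in your plan is never used, and neither is the Sobolev multilinear estimate.

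Your ``main obstacle'' is also illusory. Discretize by point evaluation $\mathbf g_{|\Omega_\delta}$; it vanishes on discrete diagonals because distinct indices force distinct, disjoint boxes. Then $X_\delta(\mathbf g_{|\Omega_\delta})=\sum_{i_1<\dots<i_k}\alpha_{i_1\dots i_k}\prod_j\zeta_\delta(A_{i_j})$ exactly. Moreover $\|\overline{\mathbf g_{|\Omega_\delta}}-\mathbf g\|_q\to0$, since the cells meeting the box boundaries have vanishing total volume. With your averaging, $R_\delta$ is the chaos of a kernel with vanishing $L^q_s$ norm, so Theorem~\ref{thm: Lp estimates for discrete chaos} disposes of it, as you say.

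The faulty step is the joint convergence of $(\zeta_\delta,(\zeta_\delta(A_i))_i)$, which does not follow from Theorem~\ref{thm: functional convergence for the noise}. For $s>D/2$ the map $\theta\mapsto\theta(A)$ is not defined on $H^{-s}_{\rm loc}(\Omega)$. The null-boundary (portmanteau) criterion concerns weak convergence of measures, and it fails in this topology. The dipoles $\delta_{y_n}-\delta_{y_n'}$ with $y_n\in A$, $y_n'\notin A$, $|y_n-y_n'|\to0$ tend to $0$ in $H^{-s}$ while their mass on $A$ stays $1$. Also, $\zeta_\delta$ is not a family of measures of bounded variation: its total variation $V_\delta^{-1}\sum_x|\omega_x|$ diverges since $\gamma>1$. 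For the same reason, $\bzeta(A_i)$ must be defined as the stochastic integral $\lim_{a\downarrow0}\bzeta^{(a)}(A_i)$, not read off the limiting distribution.

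The fix is routine. For $\phi\in\cD_0(\Omega)$ and $t\in\R^M$, put $g=\phi+\sum_it_i\Ind_{A_i}$. Then $\langle\zeta_\delta,\phi\rangle+\sum_it_i\zeta_\delta(A_i)=V_\delta^{-1}\sum_{x\in\Omega_\delta}g(x)\omega_x$, and its characteristic function factorizes over $x$. Since $g$ is bounded and a.e.\ continuous (this is where null boundaries enter, through Riemann sums), it converges to $\exp\int_\Omega\int_\R(e^{ig(x)z}-1-ig(x)z)\lambda(\dd z)\dd x$. This is also the characteristic function of $\langle\bzeta,\phi\rangle+\sum_it_i\bzeta(A_i)$: compute it for $\bzeta^{(a)}$ and let $a\downarrow0$. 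Combined with the tightness of $\zeta_\delta$ in $H^{-s}_{\rm loc}$, this gives the joint convergence.

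The rest of your argument then goes through: the identification $\bX(\mathbf g)=\sum\alpha_{i_1\dots i_k}\prod_j\bzeta(A_{i_j})$ via $a\downarrow0$, the approximation theorem with the uniform $\mathds{L}^p$ bounds, and common boxes for several chaoses. In exchange, the paper's route only ever evaluates the noise on smooth test functions, so it needs no limit theorem beyond Theorem~\ref{thm: functional convergence for the noise} and its truncated variant.
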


\begin{proof}
Our strategy to prove Proposition \ref{prop: convergence of discrete multiple integrals} will rely on two approximations: 
\textit{(i)} we approximate the function $\bff$ by a smooth (and compactly supported) one;
\textit{(ii)} we approximate the discrete noise \(\zeta_{\delta}\) by a truncated one.
The proof for a single chaos readily extends to the case of several chaos.

For technical reasons, we need to introduce the following notation: given $\bff : \Omega^k \to \bbR$ and $\delta > 0$, let~$\bff_{\delta}$ denote the piecewise constant extension to $\Omega$ of $\bff_{\upharpoonright \Omega_{\delta}}$, \textit{i.e.}\, $\bff_{\delta}(y) =\bff(x)$ for every $y \in \cC_{\delta}(x)$.
If the function $\bff$ is continuous and compactly supported in $\Omega^k$, $\bff_{\delta}$ converges uniformly to $\bff$ as $\delta \downarrow 0$.

\subsubsection*{(i) Reducing to smooth functions \texorpdfstring{\(\bff\)}{bff} with compact support.}

Let $(\bff^{(\gep)})_{\gep \in (0,1]}$ be infinitely differentiable functions compactly supported in $\Omega^{k,*} = \{ (x_1,\dots,x_k) \in \Omega^k, \forall i \neq j, x_i \neq x_j\}$, which approximate $\bff$ in $L^q_s(\Omega^k)$ as $\gep \downarrow 0$\footnote{For the existence of such approximating functions \(\bff^{(\gep)}\), first approximate $\bff(x_1,\dots,x_k)$ by $\bff(x_1,\dots,x_k) \Ind_{\forall i \neq j, |x_i - x_j| > \gep}$ and then approximate this last function in $L^q_s(\Omega^{k,\gep})$ where $\Omega^{k,\gep} = \{ (x_1,\dots,x_k) \in \Omega^k, \forall i \neq j, |x_i - x_j| > \gep \}$.}. 
Let $F$ be a bounded and Lipschitz function on the space $H^{-s}_{\rm loc}(\Omega)\times \mathbb{R}$, with Lipschitz constant $\| \nabla F \|_{\infty}$.
Then, we have
\begin{multline*}
\left|\Es\left[F(\bzeta,\mathbf{X}(\bff))\right]-\Es\left[F(\zeta_{\delta}, X_{\delta}(f_{\delta}))\right] \right| \le \| \nabla F \|_{\infty} \Es\left[|\mathbf{X}(\bff) - \mathbf{X}(\bff^{(\gep)})|\right]
\\ +\left|\Es\left[F(\bzeta,\mathbf{X}(\bff^{(\gep)}))\right]-\Es\left[F(\zeta_{\delta}, X_{\delta}(\bff^{(\gep)}))\right] \right|+ \| \nabla F \|_{\infty} \Es\left[|X_{\delta}(\bff^{(\gep)})- X_{\delta}(f_{\delta}) |\right].
\end{multline*}
By Proposition \ref{prop: martingale property of multiple integrals}, the first term is bounded by $(C_0)^k \| \bff - \bff^{(\gep)} \|_{q}$ and by Theorem \ref{thm: Lp estimates for discrete chaos} the third term is bounded by $(C_0)^k \| \bff^{(\gep)}_{\delta} - \overline{f_{\delta}} \|_{q}$.
But by the triangular inequality we have
\begin{equation*}
	\| \bff^{(\gep)}_{\delta} - \overline{f_{\delta}} \|_{q} \le  \|\bff^{(\gep)}_{\delta} - \bff\|_{q} + \| \bff - \overline{f_{\delta}}\|_q. 
\end{equation*}
Therefore, since $\bff^{(\gep)}_{\delta}$ converges uniformly to $\bff^{(\gep)}$, and by assumption \(\| \bff - \overline{f_{\delta}}\|_q \to 0\) as \(\delta\downarrow 0\), we have
\begin{multline*}
 	\limsup_{\delta \downarrow 0} \big|\Es\left[F(\bzeta,\mathbf{X}(\bff))\right]-\Es\left[F(\zeta_{\delta}, X_{\delta}(f_{\delta}))\right] \big| \le 2 \|\nabla F \|_{\infty} \| \bff - \bff^{(\gep)}\|_q
 	\\ + \limsup_{\delta \downarrow 0} \left|\Es\left[F(\bzeta,\mathbf{X}(\bff^{(\gep)}))\right]-\Es\left[F(\zeta_{\delta}, X_{\delta}(\bff^{(\gep)}))\right] \right| \,.
\end{multline*}
One therefore only needs to show that, for any fixed \(\gep\in (0,1)\), the last term goes to \(0\) as \(\delta \downarrow 0\). Then, letting \(\gep\downarrow0\) afterwards gives the conclusion.

In other words, it is enough to show that for any infinitely differentiable and compactly supported function $\bff$, we have the following  convergence in distribution: as \(\delta\downarrow 0\)
\begin{equation}
	{\label{eq: convergence of chaos for a test function}}
	(\zeta_{\delta}, X_{\delta}(\bff)) \xrightarrow{\ (d)\ } (\bzeta, \bX(\bff)) \,.
\end{equation}

\subsubsection*{(ii) Truncating the noise}

In order to prove \eqref{eq: convergence of chaos for a test function}, let us introduce a cut-off on the random variables~$\omega$.
For $a > 0$ and $x \in \Omega_{\delta}$, define
\begin{equation}
	\label{eq: definition cut-off omega}
	\omega_x^{(a)} = \omega_x \Ind_{|\omega_x| > a V_{\delta}} - \kappa_{\delta}(a)\,,
\end{equation}
with $\kappa_{\delta}(a) = \Es[\omega \Ind_{|\omega| > a V_{\delta}}]$.
The random variables $(\omega_x^{(a)})_{x \in \Omega_{\delta}}$ are i.i.d.\ and centered. 
Then define, for every $\delta > 0$ and \(a\in (0,1)\): 
\begin{equation*}
	X_{\delta}^{(a)}(\bff) = \frac{1}{k!} \sum\limits_{x_1,\dots,x_k \in \Omega_{\delta}} \bff(x_1,\dots,x_k) \prod\limits_{j=1}^k \frac{\omega_{x_j}^{(a)}}{V_{\delta}}\,,
\end{equation*}

\begin{lemma}{\label{lemma: uniform approximability}}
If $\bff$ is a smooth function with compact support in $\Omega^{k,*}$, the following uniform approximation holds:
	\begin{equation*}
		\lim\limits_{a \downarrow 0} \displaystyle\limsup_{\delta \downarrow 0} \big\| X_{\delta}(\bff)-X_{\delta}^{(a)}(\bff) \big\|_1 = 0. 
	\end{equation*}
\end{lemma}

Let us first see how this lemma implies~\eqref{eq: convergence of chaos for a test function}, \textit{i.e.}\ Proposition~\ref{prop: convergence of discrete multiple integrals}. 
Let $F$ be a bounded and Lipschitz function on the space $H^{-s}_{\rm loc}(\Omega)\times \mathbb{R}$, with Lipschitz constant $\| \nabla F \|_{\infty}$. 
Then 
\begin{multline*}
	\left|\Es\big[F(\zeta_{\delta}, X_{\delta}(\bff))\big] - \Es\big[F(\bzeta,\bX(\bff))\big]\right| \le \|\nabla F\|_{\infty}  \Es\big[|X_{\delta}(\bff)-X_{\delta}^{(a)}(\bff)|\big]
	\\ + \big|\Es\big[F(\zeta_{\delta}, X_{\delta}^{(a)}(\bff))\big] - \Es\big[F(\bzeta,\bX^{(a)}(\bff))\big]\big|
	+ \|\nabla F\|_{\infty} \Es\big[|\bX^{(a)}(\bff) - \bX(\bff)|\big].
\end{multline*}
By Proposition \ref{prop: convergence of chaos for truncated noise}, the second term goes to \(0\) as \(\delta\downarrow 0\), so we obtain
\begin{multline*}
	\limsup_{\delta \downarrow 0} \big|\Es\big[F(\zeta_{\delta}, X_{\delta}(\bff))\big] - \Es\big[F(\bzeta,\bX(\bff))\big]\big| 
	\\ \le  \|\nabla F\|_{\infty} \limsup_{\delta \downarrow 0} \|X_{\delta}(\bff)-X_{\delta}^{(a)}(\bff)\|_1 + \|\nabla F\|_{\infty} \|\bX^{(a)}(\bff) - \bX(\bff)\|_1 \,.
\end{multline*}
Then, Lemma~\ref{lemma: uniform approximability} shows that the first term vanishes as $a \downarrow 0$. 
Additionally, Proposition \ref{prop: martingale property of multiple integrals} shows that $\lim_{a\downarrow0}\left\|\bX^{(a)}(\bff) - \bX(\bff)\right\|_1 = 0$, which concludes the proof of~\eqref{eq: convergence of chaos for a test function} and thus of Proposition~\ref{prop: convergence of discrete multiple integrals}.

\begin{proof}[Proof of Lemma \ref{lemma: uniform approximability}]
Again, we need to decompose the random variables $\omega$ into two (centered) components: 
recalling the definition~\eqref{eq: definition cut-off omega} of \(\omega_x^{(a)}\), we have
\begin{equation*}
	\omega_x = \omega_x^{(a)} + \bomega_x^{(a)},
\end{equation*}
with $\bomega_x^{(a)} = \omega_x \Ind_{|\omega_x| \le a V_{\delta}} + \kappa_{\delta}(a)$.
Then, developing the product $\prod_{j=1}^k (\omega_{x_j}^{(a)} + \bomega_{x_j}^{(a)})$ yields
\begin{equation*}
	X_{\delta}(\bff) - X_{\delta}^{(a)}(\bff) = \sum_{J \subsetneq \intg{1}{k}}  \frac{1}{(V_{\delta})^k}\sum\limits_{x_1,\dots,x_k \in \Omega_{\delta}} f_{\delta}(x_1,\dots,x_k) \prod\limits_{j \in J} \omega_{x_j}^{(a)}\prod\limits_{j \notin J} \bomega_{x_j}^{(a)}.
\end{equation*}
By Jensen's and Minkowski inequality, we obtain for \(p >1\)
\begin{equation*}
	\big\|X_{\delta}(\bff) - X_{\delta}^{(a)}(\bff) \big\|_1 \le \sum_{J \subsetneq \intg{1}{k}} \frac{1}{(V_{\delta})^k} \bigg\| \sum\limits_{x_1,\dots,x_k \in \Omega_{\delta}} \bff(x_1,\dots,x_k) \prod\limits_{j \in J} \omega_{x_j}^{(a)}\prod\limits_{j \notin J} \bomega_{x_j}^{(a)} \bigg\|_p . 
\end{equation*}

Choose \(p\in (1,\gamma)\) and \(q\in (\gamma,2]\) such that \(\frac{p(q-\gamma)}{\gamma(q-p)} > \frac{k-1}{k}\) (\textit{i.e.}\ \(p\) sufficiently close to $\gamma$).
For every $J \subset \intg{1}{k}$, we apply Proposition~\ref{prop: general Lp-Lq inequality} with $\mathds{L}^p$ estimates for the random variables $\omega^{(a)}$ and $\mathds{L}^q$ estimates for the random variables~$\bomega^{(a)}$.
Thus, the terms in the sum above are bounded by
\begin{equation*}
	\frac{1}{(V_{\delta})^k} \Big(\frac{C_2}{p-1} (|\Omega|\vee 1)^{\frac1p - \frac1q} \Big)^k \|\bff_{\delta}\|_q  \bigg(\frac{\|\omega^{(a)}\|_p}{v_{\delta}^{1/p}} \bigg)^{|J|} \bigg(\frac{\|\bomega^{(a)}\|_q}{v_{\delta}^{1/q}}\bigg)^{k-|J|}  \,,
\end{equation*}
where we recall that~$\bff_{\delta}$ denotes the piecewise constant extension to $\Omega$ of $\bff_{\upharpoonright \Omega_{\delta}}$, \textit{i.e.}\, $\bff_{\delta}(y) =\bff(x)$ for every $y \in \cC_{\delta}(x)$.

Additionally, using \eqref{eq: asymptotic behavior for moments of order p-q}, we get that there is a constant \(C>0\) such that for any \(a\in (0,1)\), for \(\delta\) small enough we have
\begin{equation*}
\frac{\|\omega^{(a)}\|_p}{v_{\delta}^{1/p} V_{\delta}} \leq  \frac{C}{(\gamma-p)^{1/p}} a^{1-\frac{\gamma}{p}} 
\quad\text{ and }\quad
\frac{\|\bomega^{(a)}\|_q}{v_{\delta}^{1/q}V_{\delta}} \leq \frac{C}{(q-\gamma)^{1/q}}  a^{1-\frac{\gamma}{q}}\,. 
\end{equation*}
All together, we obtain that 
\begin{equation*}
	\begin{split}
		\big\|X_{\delta}(\bff) - X_{\delta}^{(a)}(\bff) \big\|_1 &\leq  (C')^k \, a^{ (1-\frac{\gamma}{q}) k} \|\bff_{\delta}\|_q \sum_{J \subsetneq \intg{1}{k}} a^{-(\frac{\gamma}{p}-\frac{\gamma}{q})  |J| } \\
		&\leq (C')^k a^{ (1-\frac{\gamma}{q}) k -  (\frac{\gamma}{p}-\frac{\gamma}{q}) (k-1)} \|\bff^{(\delta)}\|_q \,.
	\end{split}
\end{equation*}
where for the second inequality we have used that \(\frac{1}{p}-\frac{1}{q} >0\) and that \(|J|\leq k-1\) in the sum.
This concludes the proof, since we took \(p\) sufficiently close to \(1\) so that \(\frac{p(q-\gamma)}{\gamma (q-p)}> \frac{k-1}{k}\), \textit{i.e.}\ \((1-\frac{\gamma}{q}) k >  (\frac{\gamma}{p}-\frac{\gamma}{q}) (k-1)\).
\end{proof}

\end{proof}
\subsection{Polynomial chaos: proof of Theorems~\ref{thm: continuous polynomial chaos} and~\ref{thm: scaling limit of discrete polynomial chaos}}
{\label{subsec: polynomial chaos}}

We conclude this section with the proof of Theorems \ref{thm: continuous polynomial chaos} and \ref{thm: scaling limit of discrete polynomial chaos}. 
The proofs are straightforward and consist of assembling the results from Section \ref{section: Lp estimates} with those of this Section. 

\begin{proof}[Proof of Theorem \ref{thm: continuous polynomial chaos}]
	Let $\bpsi^{(k)}$, $k \ge 0$, be symmetric functions defined respectively on $\Omega^k$ such that~\eqref{hyp: infinite radius of convergence for Lq norms} holds.
	Then, by Theorem~\ref{thm: Lp estimates for continuous chaos}, for every $a \in (0,1]$ the random variable $\Phi^{(a)}$ from~\eqref{def: Phia} is well-defined and for every $p \in (1,\gamma)$ we have
	\begin{equation*}
		\big\|\bPhi^{(a)}\big\|_p \le \sum\limits_{k \ge 0} C_0^k \big\|\bpsi^{(k)}\big\|_q < +\infty \,.
	\end{equation*}
	(The fact that the sum is finite comes from the assumption~\eqref{hyp: infinite radius of convergence for Lq norms}.)
	Additionally, by Proposition~\ref{prop: martingale property of multiple integrals}, $(\bPhi^{(a)})_{a\in (0,1]}$ is a càdlàg time-reversed martingale with respect to the filtration $(\cF_a)_{a\in (0,1]}$.
	Since for \(p\in (1,\gamma)\) it is bounded in \(\mathds{L}^p\), it converges a.s.\ and in \(\mathds{L}^p\) to some well-defined random variable \(\bPhi\).
\end{proof}

We now move to the proof of Theorem~\ref{thm: scaling limit of discrete polynomial chaos}. 

\begin{proof}[Proof of Theorem \ref{thm: scaling limit of discrete polynomial chaos}]
	Let $\psi_{\delta}^{(k)}$, \(k\geq 0\), be symmetric functions defined on $\Omega_{\delta}^k$ satisfying the assumptions of Theorem \ref{thm: scaling limit of discrete polynomial chaos}. 
	For every $k \ge 0$, denote $\bpsi^{(k)}$ the limit of $\overline{\psi}_{\delta}^{(k)}$ as $ \delta \downarrow 0$. 
	Then Assumption \ref{hyp: truncation} tells that for every $C \ge 0$ we have
	\[
	\sum\limits_{k \ge 0} C^k \big\|\bpsi^{(k)}\big\|_q < +\infty \,.
	\] 
	Therefore, the family of functions $(\bpsi^{(k)})_{k \ge 0}$ satisfies the assumption of Theorem \ref{thm: continuous polynomial chaos}, which ensures that the following continuous chaos expansion is well-defined:
	\begin{equation*}
		\bPhi = \sum\limits_{k \ge 0} \frac{1}{k!} \int_{\Omega^k} \bpsi^{(k)}(x_1,\dots,x_k) \prod\limits_{j=1}^k \bzeta(\dd x_j) \,.
	\end{equation*}

	We are going to show that $\Phi_{\delta}$ converges in law to $\bPhi$. 
	The proof essentially consists in proving the term by term convergence of the chaos. 
	First, let us truncate the expansion: for $M \in \mathbb{N}$, define
	\begin{equation*}
		\Phi_{\delta}^{(M)} := \sum\limits_{k=0}^M \frac{1}{k!} \int_{\Omega^k} \psi_{\delta}^{(k)}(x_1,\dots,x_k) \prod\limits_{j=1}^k \zeta_{\delta}(\dd x_j),
		\ \ \text{ and } \ \
		\bPhi^{(M)} := \sum\limits_{k=0}^M \frac{1}{k!} \int_{\Omega^k} \bpsi^{(k)}(x_1,\dots,x_k) \prod\limits_{j=1}^k \bzeta(\dd x_j).
	\end{equation*}
	By Theorem \ref{thm: Lp estimates for discrete chaos}, for $p \in (1,\gamma)$ we have
	\begin{equation*}
		\big\| \Phi_{\delta}-\Phi_{\delta}^{(M)} \big\|_p \le \sum\limits_{k > M} (C_0)^k \big\| \psi_{\delta}^{(k)}\big\|_q .
	\end{equation*}
	Hence, using Assumption~\ref{hyp: truncation}, we obtain
	\begin{equation}{\label{eq: discrete truncation estimate}}
		\lim_{M \to +\infty} \limsup\limits_{\delta \downarrow 0} \big\|\Phi_{\delta}-\Phi_{\delta}^{(M)}\big\|_p = 0 \,.
	\end{equation}
	Similarly, using Theorem~\ref{thm: Lp estimates for continuous chaos}, for the continuous counterpart and Assumption~\ref{hyp: truncation}, we obtain
	\begin{equation}{\label{eq: truncation estimate}}
		\lim_{M \to +\infty} \big\|\bPhi-\bPhi^{(M)}\big\|_p = 0 \,.
	\end{equation}	

	Finally, Proposition \ref{prop: convergence of discrete multiple integrals} shows that the following convergence in distribution holds: for any fixed \(M\), as \(\delta\downarrow 0\),
	\begin{equation*}
		\begin{split}
			&\bigg(\zeta_{\delta}, \int_{\Omega} \psi_{\delta}^{(1)}(x) \zeta_{\delta}(\dd x),\dots,\int_{\Omega^M} \psi_{\delta}^{(M)}(x_1,\dots,x_M) \prod\limits_{j =1}^M \zeta_{\delta}(\dd x_j)\bigg) \\
			&\qquad \qquad \xrightarrow{\ (d)\ } \bigg(\bzeta, \int_{\Omega} \bpsi^{(1)}(x) \bzeta(\dd x),\dots,\int_{\Omega^M} \bpsi^{(M)}(x_1,\dots,x_M) \prod\limits_{j =1}^M \bzeta(\dd x_j)\bigg).
		\end{split}
	\end{equation*}
	Therefore $\Phi_{\delta}^{(M)}$ converges in distribution as $\delta \downarrow 0$ to $\bPhi^{(M)}$, jointly with the noise.
	Together with \eqref{eq: discrete truncation estimate} and \eqref{eq: truncation estimate}, this concludes the proof of Theorem~\ref{thm: scaling limit of discrete polynomial chaos}. 
\end{proof}

\section{Convergence of the disordered Gibbs measure and disorder relevance}{\label{section: statistical mechanics proofs}}

In this section, we prove the convergence of the disordered Gibbs measure, \textit{i.e.}\ Theorem~\ref{thm: convergence of the disordered probability}, in Section~\ref{section: convergence Gibbs proof}.
We also address the question of disorder relevance, \textit{i.e.}\ Theorem~\ref{thm: disorder relevance}, in Section~\ref{section: disorder relevance proof}

\subsection{Proof of Theorem \ref{thm: convergence of the disordered probability}}
\label{section: convergence Gibbs proof}

For every bounded, continuous function $G$ on $E$, define 
\begin{equation*}
	Z_{\Omega_{\delta}}^{\omega,\beta_{\delta}}(G) := \E_{\delta}^{\mathrm{ref}}\Big[ G(\sigma) \prod\limits_{x \in \Omega_{\delta}} (1+\beta_{\delta} \omega_x) \Big]\,,
\end{equation*}
so that with this notation we have 
\(
\P_{\Omega_{\delta}}^{\omega,\beta_{\delta}}(G) =   Z_{\Omega_{\delta}}^{\omega,\beta_{\delta}}(G) / Z_{\Omega_{\delta}}^{\omega,\beta_{\delta}} \,.
\)
Given a distribution $T \in \cD'(\Omega)$ and a test function $\bff \in \cD(\Omega)$ (infinitely differentiable with compact support), we denote by $\langle T, \bff \rangle$ the action of $T$ on $\bff$.

The following lemma is adapted from \cite{berger_lacoin_2021} and stated in a slightly more general context. 
For the sake of completeness, we include the proof here. 
It provides sufficient conditions for the discrete probability measure $\P_{\Omega_{\delta}}^{\omega,\beta_{\delta}}$ to admit a scaling limit.
An important feature of the proof is that it does not require the state space $\Omega$ to be bounded; this allows us to apply it to the long-range polymer model in the proof of Theorem~\ref{thm: convergence of the polymer measure}. 

\begin{lemma}{\label{lemma: how to prove convergence of disordered measures}}
	Suppose that there exist $\beta_{\delta}$ such that the following conditions are satisfied:
	\begin{enumerate}[label = {(\alph*)}]
		\item{\label{cond: i}}
		$\P_{\Omega_{\delta}}^{\mathrm{ref}}$ converges in $\cM_1(E)$ to a probability distribution $\bP_{\Omega}$;
		\item{\label{cond: ii}}
		for every function $G \in \cC_b(E)$, for every $\bff \in \cD(\Omega)$, there exist a random variable $\cZ_{\Omega}^{\bzeta}$ measurable with respect to $\bzeta$ such that, as $\delta \downarrow 0$,
		\[
		\big(\langle \bzeta_{\delta},\bff \rangle, Z_{\Omega_{\delta}}^{\omega,\beta_{\delta}}(G)\big) \xrightarrow{\ (d)\ } \big(\langle \bzeta,\bff \rangle, \cZ_{\Omega}^{\bzeta}(G)\big)\,.
		\]
		\item{\label{cond: iii}}
		The random variable $\cZ_{\Omega}^{\bzeta}(1)$ is almost surely strictly positive.
	\end{enumerate}
	Then there exist a random probability measure \(\bP_{\Omega}^{\bzeta} \in \cM_1(E)\) such that we have the following joint convergence in distribution in $H_{\rm loc}^{-s}(\Omega) \times \cM_1(E)$, $s > \frac{D}{2}$,
	\begin{equation*}
		(\zeta_{\delta}, \P_{\Omega_{\delta}}^{\omega,\beta_{\delta}}) \xrightarrow{(d)} (\bzeta, \bP_{\Omega}^{\bzeta}) \quad\text{ as } \delta\downarrow 0 \,. 
	\end{equation*} 
\end{lemma}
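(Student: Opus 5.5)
The plan is to construct the limit through random linear functionals on $\cC_b(E)$. I would build a random measure $\bP_{\Omega}^{\bzeta}$ with $\bP_{\Omega}^{\bzeta}(G) = \cZ_{\Omega}^{\bzeta}(G)/\cZ_{\Omega}^{\bzeta}(1)$. Then I would show that the pair $(\zeta_\delta,\P_{\Omega_\delta}^{\omega,\beta_\delta})$ is tight and that every subsequential limit has this form.

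\emph{Step 1: tightness.} Tightness of $\zeta_\delta$ in $H^{-s}_{\rm loc}(\Omega)$ holds by Theorem~\ref{thm: functional convergence for the noise}. For the random measures, a family of random probability measures on a Polish space $E$ is tight in $\cM_1(E)$ as soon as the averaged (annealed) measures $\Es[\P_{\Omega_\delta}^{\omega,\beta_\delta}]$ form a tight family. Two facts give this. First, the averages over $\omega$ of the unnormalised measures equal $\P^{\mathrm{ref}}_{\Omega_\delta}$, since $\Es[\prod_x(1+\beta\omega_x\sigma_x)]=1$. Second, the normalisation is controlled: by~\ref{cond: ii} with $G=1$ and by~\ref{cond: iii}, the family $1/Z_{\Omega_\delta}^{\omega,\beta_\delta}$ is tight. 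So for a compact $K\subset E$ with $\P^{\mathrm{ref}}_{\Omega_\delta}(K^c)\le\varepsilon$ (available by~\ref{cond: i} and Prokhorov), I would bound
\[
\Pro\big(\P_{\Omega_\delta}^{\omega,\beta_\delta}(K^c)>\eta\big)\le \Pro\big(Z_{\Omega_\delta}^{\omega,\beta_\delta}<c\big)+\frac{\varepsilon}{c\eta},
\]
using Markov's inequality on $Z_{\Omega_\delta}^{\omega,\beta_\delta}(\Ind_{K^c})$. This step needs care because $\Ind_{K^c}$ is not continuous. Since $\omega$ can be negative, the weights are only nonnegative for $\beta\le 1$, but that suffices. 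Choosing $c$ small first and then $\varepsilon$ gives tightness of the measures in probability, hence tightness of the laws in $\cM_1(\cM_1(E))$.

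\emph{Step 2: identification.} Take a subsequential limit $(\bzeta,\mathbf{Q})$. For each fixed $G\in\cC_b(E)$, the map $\mu\mapsto\mu(G)$ is continuous on $\cM_1(E)$. Combining this with~\ref{cond: ii} and the identity $\P(G)Z(1)=Z(G)$, I would deduce the joint convergence of $(\langle\zeta_\delta,\bff\rangle,Z(1),Z(G))$. Here I would need joint convergence for finitely many $G$'s. I expect to obtain it either by applying~\ref{cond: ii} to linear combinations $G_1+tG_2$ (a Cramér--Wold type argument) or by assuming it as in the proof of Theorem~\ref{thm: scaling limit of discrete polynomial chaos}. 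This yields $\mathbf{Q}(G)\,\cZ_{\Omega}^{\bzeta}(1)=\cZ_{\Omega}^{\bzeta}(G)$ almost surely, jointly with the noise. By~\ref{cond: iii}, $\mathbf{Q}(G)=\cZ_{\Omega}^{\bzeta}(G)/\cZ_{\Omega}^{\bzeta}(1)$, which is a $\bzeta$-measurable random variable.

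\emph{Step 3: conclusion.} A countable convergence-determining family $(G_n)$ of functions in $\cC_b(E)$ exists because $E$ is Polish. The values $\mathbf{Q}(G_n)$ are almost surely determined by $\bzeta$, so $\mathbf{Q}$ is a measurable function of $\bzeta$ and the law of $(\bzeta,\mathbf{Q})$ is unique. I would then define $\bP_{\Omega}^{\bzeta}:=\mathbf{Q}$. Uniqueness of subsequential limits gives the full convergence.

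The main obstacle is the identification step: turning convergence of the individual real variables $\P(G)$ into the statement that the limiting random measure is $\bzeta$-measurable. This needs the joint convergence with the full noise $\zeta_\delta$ rather than only with $\langle\zeta_\delta,\bff\rangle$. I would handle it by testing against finitely many test functions $\bff_i$, which determine the law of a distribution in $H^{-s}_{\rm loc}$ via finite-dimensional marginals.
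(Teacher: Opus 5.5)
Your proof is correct. It shares the paper's overall scheme (tightness plus uniqueness of the limit), but the uniqueness step is different.

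\textbf{Tightness.} This is the paper's estimate. Your bound
$\Pro\big(\P_{\Omega_\delta}^{\omega,\beta_\delta}(K^c)>\eta\big)\le \Pro\big(Z_{\Omega_\delta}^{\omega,\beta_\delta}<c\big)+\varepsilon/(c\eta)$
is exactly the Markov bound the paper uses. The paper then builds the compact sets $\cK_n=\{\mu:\ \mu(K_m^c)\le 2^{-m}\ \forall m>n\}$ by hand, which is just the proof of the intensity-measure criterion you quote. No care is needed about $\Ind_{K^c}$ being discontinuous: Markov only uses $\Es[Z_{\Omega_\delta}^{\omega,\beta_\delta}(\Ind_{K^c})]=\P^{\rm ref}_{\delta}(K^c)$, which holds by Fubini for any bounded measurable function.

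\textbf{Uniqueness.} The paper argues through finite-dimensional marginals. Linearity of $\bff\mapsto\langle\zeta_\delta,\bff\rangle$ and of $G\mapsto Z_{\Omega_\delta}^{\omega,\beta_\delta}(G)$, combined with Cram\'er--Wold, reduces everything to (ii). The law of any limit is then fixed by the limiting laws of $(\langle\zeta_\delta,\bff_i\rangle, Z(G_j)/Z(1))$, with (iii) handling the ratio. That route never uses the $\bzeta$-measurability of $\cZ_{\Omega}^{\bzeta}(G)$. It gives a limit law but does not identify the limit measure. You instead identify every subsequential limit as $\mathbf Q(G)=\cZ_{\Omega}^{\bzeta}(G)/\cZ_{\Omega}^{\bzeta}(1)$. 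This uses the measurability hypothesis, and in return shows that the limit is a $\bzeta$-measurable random measure. That justifies the notation $\bP_{\Omega}^{\bzeta}$, which the paper's argument does not establish.

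\textbf{One point to tighten in your Step 2.} Cram\'er--Wold over $G_1+tG_2$ would identify the joint limit as $(\cZ(G_1),\cZ(G_2))$ only if $G\mapsto\cZ_{\Omega}^{\bzeta}(G)$ were linear, which is not a hypothesis. You should not assume extra joint convergence either. Neither is needed:
\begin{enumerate}
\item By tightness, extract a joint subsequential limit $(\bzeta',\mathbf Q,B_1,B_G)$ of $(\zeta_\delta,\P_{\Omega_\delta}^{\omega,\beta_\delta},Z(1),Z(G))$.
\item Write $\cZ_{\Omega}^{\bzeta}(G)=h_G(\bzeta)$. Your argument with finitely many test functions gives $(\bzeta',B_G)\overset{(d)}{=}(\bzeta,h_G(\bzeta))$, hence $B_G=h_G(\bzeta')$ almost surely. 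The same holds for $B_1$.
\end{enumerate}
So joint identification for finitely many $G$'s is automatic, and linearity of $\cZ$ comes out as a byproduct. The relation $\mathbf Q(G)B_1=B_G$ then follows by continuous mapping from the identity $\P(G)Z(1)=Z(G)$.
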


\begin{proof}
	First we prove convergence of finite-dimensional marginals.
	Since $\zeta_{\delta}$ and $\P_{\Omega_{\delta}}^{\omega,\beta_{\delta}}$ can be both viewed as linear forms, it is actually enough to prove convergence of one-dimensional marginals, which is exactly condition~\ref{cond: ii}.
	
	Second, we prove tightness of the couple $(\zeta_{\delta},\P_{\Omega_{\delta}}^{\omega,\beta_{\delta}})$.
	The family of random distributions $\{\zeta_{\delta}\}_{\delta \in (0,1)}$ is tight in $H^{-s}_{\rm loc}(\Omega)$, see Theorem~\ref{thm: functional convergence for the noise} in Appendix.
	Therefore, it only remains to prove that the family of random probability measures $(\P_{\Omega_{\delta}}^{\omega,\beta_{\delta}})_{\delta \in (0,1)}$ is tight. 
	By conditions~\ref{cond: ii} and~\ref{cond: iii}, $ Z_{\Omega_{\delta}}^{\omega,\beta_{\delta}}\xrightarrow{(d)} \mathcal{Z}_{\Omega}^{\bzeta,\hbeta}$ with $\mathcal{Z}_{\Omega}^{\bzeta,\hbeta}>0$ almost surely, hence there exists a sequence $(\gep_m)_{m \ge 0}$ going to zero such that, for all $\delta > 0$ and $ m \ge 0$
	\begin{equation*}
		\Pro[Z_{\Omega_{\delta}}^{\omega,\beta_{\delta}} \le \gep_m] \le 2^{-m}\,.
	\end{equation*}
	Since by assumption~\ref{cond: i}, the family of probability measures $\P_{\delta}^{\mathrm{ref}}$ converge as $\delta \downarrow 0$ in the space $\mathcal{M}_1(E)$, it is a tight family of $\cM_{1}(E)$.
	We can therefore find compact subsets $(K_m)_{m\geq 1}$ of $E$ such that for all $\delta > 0$ and $m \ge 0$, 
	\begin{equation*}
		\P_{\delta}^{\mathrm{ref}}(K_m) \ge 1- \frac{\gep_m}{4^m}\,.
	\end{equation*}
	
	Now let us define, for every $n \ge 1$
	\begin{equation*}
		\cK_n = \big\{ \mu \in \cM_1(E)  \;;\; \forall m > n,\, \mu(K_m^{c}) \le 2^{-m} \big\}\,.
	\end{equation*}
	The set $\cK_n \subset \cM_1(E)$ is closed, and any sequence in $\cK_n$ is tight, therefore it is a compact subset of $\cM_1(E)$.
	Additionally, by subadditivity, we have	
	\begin{equation*}
		\Pro\big[ \P_{\Omega_{\delta}}^{\omega,\beta_{\delta}} \notin \cK_n \big] \le \sum_{m > n} \Pro[\P_{\Omega_{\delta}}^{\omega,\beta_{\delta}}(K_m^{c}) > 2^{-m}]\,.
	\end{equation*}
	Using Markov inequality and the fact that $\Es[Z_{\Omega_{\delta}}^{\omega,\beta_{\delta}}(\Ind_{K_m^{c}})] = \P_{\delta}^{\mathrm{ref}}(K_m^c) \leq \gep_m 4^{-m}$, we get
	\begin{equation*}
		\begin{split}
			\Pro[\P_{\delta,\beta_{\delta}}^{\omega}(K_m^{c}) > 2^{-m}] 
			& \le  \Pro[Z_{\delta,\beta_{\delta}}(\Ind_{K_m^{c}}) > \gep_m 2^{-m}] + \Pro[Z_{\delta,\beta_{\delta}} \le \gep_m] \\
			& \le 2^{m} \gep_m^{-1} \mathrm P_{\delta}(K_m^c) + 2^{-m} \le 2 \cdot 2^{-m} \,.
		\end{split}
	\end{equation*}
	This shows that $\sup_{\delta \in (0,1)} \Pro[\P_{\Omega_{\delta}}^{\omega,\beta_{\delta}} \notin \cK_n ] \leq 2 \cdot 2^{-n}$, which can be made arbitrarily small by taking~\(n\) large. 
	This concludes the proof.
\end{proof}

We now apply Lemma~\ref{lemma: how to prove convergence of disordered measures} to the proof of Theorem~\ref{thm: convergence of the disordered probability}. 

\begin{proof}[Proof of Theorem~\ref{thm: convergence of the disordered probability}]
	All we need is to check that condition~\ref{cond: ii} of Lemma~\ref{lemma: how to prove convergence of disordered measures} is satisfied. 
	
	Let $G \in \cC_b(E)$.
	Arguing as in \eqref{eq: chaos expansion for L2 computation} and \eqref{eq: chaos expansion of the partition function}, the random variable $Z_{\Omega_{\delta}}^{\omega,\beta_{\delta}}(G)$ has the following polynomial chaos expansion:
	\begin{equation}
		Z_{\Omega_{\delta}}^{\omega,\beta_{\delta}}(G) = \E_{\delta}^{\mathrm{ref}}[G(\sigma)] +  \sum\limits_{k \ge 1} \frac{\hbeta^k}{k!} \sum\limits_{x_1,\dots,x_k \in \Omega_{\delta}} \psi_{\delta}^{(k)}(x_1,\dots,x_k;G) \prod\limits_{j=1}^k \frac{\omega_{x_j}}{V_{\delta}} \,,
	\end{equation} 
	recalling the definition~\eqref{def: generalized correlation functions} of the \(G\)-weighted \(k\)-point correlation function \(\psi_{\delta}^{(k)}(x_1,\dots,x_k;G)\).
	
	The function $G$ being bounded, we therefore have, for every $k \ge 1$ and $x_1,\dots,x_k \in \Omega_{\delta}$, 
	\[
	|\psi_{\delta}^{(k)}(x_1,\dots,x_k;G)| \le \|G\|_{\infty}  \psi_{\delta}^{(k)}(x_1,\dots,x_k) \,.
	\]
	This implies that the functions $\psi_{\delta}^{(k)}(\cdot,G)$ satisfy the assumption~\ref{hyp: truncation} of Theorem \ref{thm: scaling limit of discrete polynomial chaos}.
	By condition~\ref{cond: convergence general correlations}, they also satisfy the assumption~\ref{hyp: convergence of correlation functions} of Theorem \ref{thm: convergence of the disordered probability}. 
	Note that the convergence also holds for $k = 0$ thanks to condition~\ref{cond: invariance principle}.
	
	Therefore, Theorem~\ref{thm: scaling limit of discrete polynomial chaos} applies and yields condition~\ref{cond: ii}, using also that the evaluation against a test function is a continuous functional from $H^{-s}_{\rm loc}(\Omega)$ to $\mathbb{R}$.
\end{proof}

\subsection{Disorder relevance: proof of Theorem \ref{thm: disorder relevance}}
\label{section: disorder relevance proof}

The proof follows a standard change-of-measure argument.
Recall that in Theorem~\ref{thm: disorder relevance}, we assume that the random variable $J_{\delta} v_{\delta} \sum_{x \in \Omega_{\delta}} \sigma_x$ converges as $\delta \to 0$ to a strictly positive random variable.

\begin{lemma}
	Assume that $\hat{\beta}_{\delta} :=\beta_{\delta}  V_{\delta} J_{\delta}^{-1}$ verifies \(\lim_{\delta\downarrow 0} \hat{\beta}_{\delta}=+\infty\), and define the following event (for the environment $\omega$): 
	\[
	A_{\delta} = \{ \omega \colon \forall x \in \Omega_{\delta},\, |\omega_x| < a_{\delta} V_{\delta} \}\,, \quad \text{with} \quad a_{\delta} := (\hat{\beta}_{\delta})^{4/(\gamma-1)} \,.
	\]
	Then, we have that
	\begin{enumerate}
		\item \label{item: PAc}
		$ \lim\limits_{\delta \to 0} \Pro[(A_{\delta})^{c}] = 0$;
		\item \label{item: PtildeA}
		$ \lim\limits_{\delta \to 0} \Es[Z_{\Omega_{\delta}}^{\omega,\beta_{\delta}} \Ind_{A_{\delta}} ] = 0$. 
	\end{enumerate}
	This implies that $Z_{\Omega_{\delta}}^{\omega,\beta_{\delta}} \to 0$ in probability.
\end{lemma}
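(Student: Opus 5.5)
We follow the change-of-measure scheme suggested by the statement: show that $A_\delta$ is typical, and that $Z_{\Omega_\delta}^{\omega,\beta_\delta}$ has vanishing mean on $A_\delta$. The conclusion then follows from
\[
\Pro[Z_{\Omega_\delta}^{\omega,\beta_\delta}>\gep]\le \Pro[A_\delta^c]+\gep^{-1}\Es[Z_{\Omega_\delta}^{\omega,\beta_\delta}\Ind_{A_\delta}] .
\]
Throughout we write $N_\delta=|\Omega_\delta|=|\Omega|/v_\delta$ and $L_\delta:=J_\delta v_\delta\sum_{x\in\Omega_\delta}\sigma_x$. By the assumption of Theorem~\ref{thm: disorder relevance}, $L_\delta$ converges in law to a strictly positive variable.

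\textbf{Item (i).} A union bound gives $\Pro[A_\delta^c]\le N_\delta\,\Pro[|\omega|\ge a_\delta V_\delta]$. Since $a_\delta\to\infty$, Potter's bounds for the regularly varying tail \eqref{eq: definition of a general heavy-tailed random variable}, combined with \eqref{eq: general scale of the noise}, give $\Pro[|\omega|\ge a_\delta V_\delta]\le C a_\delta^{-\gamma+\eta}v_\delta$ for any small $\eta>0$. Hence $\Pro[A_\delta^c]\le C|\Omega|a_\delta^{-\gamma+\eta}\to0$. This works for any cutoff $a_\delta\to\infty$, in particular for $a_\delta=\hat\beta_\delta^{4/(\gamma-1)}$.

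\textbf{Item (ii).} By Fubini and the independence of the $\omega_x$, we get the exact formula
\[
\Es[Z\Ind_{A_\delta}]=\E^{\rm ref}_\delta\Big[\big(1+\beta_\delta\Es[\omega\Ind_{|\omega|<a_\delta V_\delta}]\big)^{\sum_x\sigma_x}\,\Pro[|\omega|<a_\delta V_\delta]^{N_\delta-\sum_x\sigma_x}\Big].
\]
Centering gives $\Es[\omega\Ind_{|\omega|<t}]=-\Es[\omega\Ind_{|\omega|\ge t}]$. Since $\omega>-1$, for large $t$ this equals $-\Es[\omega\Ind_{\omega\ge t}]\sim-\tfrac{\gamma}{\gamma-1}\,t\,\Pro[\omega>t]$. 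Bounding the $\Pro[\cdot]$ factor by $1$ and using $1-u\le e^{-u}$, we obtain
\[
\Es[Z\Ind_{A_\delta}]\le \E^{\rm ref}_\delta\Big[\exp\big(-c\,\hat\beta_\delta\,a_\delta^{1-\gamma-\eta}\,L_\delta\big)\Big],
\]
again by Potter's bounds. The point to check is therefore whether $\hat\beta_\delta a_\delta^{1-\gamma}\to\infty$. If it does, the tightness of $L_\delta$ away from $0$ makes the right-hand side vanish by dominated convergence.

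\textbf{Main obstacle.} With the cutoff exactly as written, $a_\delta=\hat\beta_\delta^{4/(\gamma-1)}$, we get $\hat\beta_\delta a_\delta^{1-\gamma}=\hat\beta_\delta^{-3}\to0$. The exact formula above then shows that the exponent tends to $0$ and $\Pro[A_\delta]\to1$. I would therefore expect $\Es[Z\Ind_{A_\delta}]\to1$ rather than $0$, so item (ii) cannot be proved for this $A_\delta$ by this route (and appears to fail).

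My plan is to keep item (i) for the stated $a_\delta$, and to flag that the exponent must satisfy $a_\delta\to\infty$ together with $a_\delta^{\gamma-1}\ll\hat\beta_\delta$. For instance one can take $a_\delta=\hat\beta_\delta^{\kappa}$ with $\kappa\in(0,\tfrac1{\gamma-1})$. With such a cutoff, items (i) and (ii) both follow from the two computations above. The final claim $Z_{\Omega_\delta}^{\omega,\beta_\delta}\to0$ in probability then follows from the displayed Markov bound applied to this corrected event.
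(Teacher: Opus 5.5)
Your diagnosis is correct, and apart from the cutoff your argument is the paper's own. For (i) it is a union bound plus Potter. For (ii) it is Fubini with the size-biased law $\tilde{\Pro}_\delta$, discarding the sites with $\sigma_x=0$, then $1-u\le e^{-u}$ and Potter.

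The paper's proof breaks exactly where you say. It asserts $\lim_{\delta\downarrow0}a_\delta^{-(\gamma-1)/2}\hat\beta_\delta=+\infty$, but for $a_\delta=\hat\beta_\delta^{4/(\gamma-1)}$ this quantity equals $\hat\beta_\delta^{-1}\to0$. Its Potter step also drops the factor $a_\delta^{1-\gamma}$; the correct bound is $\tilde{\Pro}_\delta(|\omega|>a_\delta V_\delta)\ge c\,\hat\beta_\delta\,a_\delta^{1-\gamma-\gep}J_\delta v_\delta$. So what is really needed is $a_\delta\to\infty$ with $a_\delta^{\gamma-1+\gep}\ll\hat\beta_\delta$. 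Your choice $a_\delta=\hat\beta_\delta^{\kappa}$, $\kappa\in(0,\frac{1}{\gamma-1})$, provides this once the Potter exponent satisfies $\eta<\frac1\kappa-(\gamma-1)$. With that cutoff your proof of (i), (ii) and of $Z_{\Omega_\delta}^{\omega,\beta_\delta}\to0$ in probability is complete, and Theorem~\ref{thm: disorder relevance} is unaffected.

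One slip: your displayed formula is not exact. On a site with $\sigma_x=1$ the factor is $\Es[(1+\beta_\delta\omega)\Ind_{|\omega|<a_\delta V_\delta}]=\Pro[|\omega|<a_\delta V_\delta]+\beta_\delta\Es[\omega\Ind_{|\omega|<a_\delta V_\delta}]$, not $1+\beta_\delta\Es[\omega\Ind_{|\omega|<a_\delta V_\delta}]$. This is the paper's $1-\tilde{\Pro}_\delta(|\omega|>a_\delta V_\delta)$. Your expression is an upper bound, which is all (ii) needs.

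To justify that (ii) fails for the stated cutoff, use the correct identity $\Es[Z_{\Omega_\delta}^{\omega,\beta_\delta}\Ind_{A_\delta}]=\Pro[|\omega|<a_\delta V_\delta]^{N_\delta}\,\E^{\rm ref}_\delta[(1-u_\delta)^{\sum_x\sigma_x}]$, where $u_\delta:=\beta_\delta\Es[\omega\Ind_{|\omega|\ge a_\delta V_\delta}]/\Pro[|\omega|<a_\delta V_\delta]$. Then:
\begin{itemize}
\item $u_\delta\ge0$ and $u_\delta\to0$, since $\omega>-1$, $\beta_\delta\le1$ and $\omega$ is integrable.
\item $u_\delta\sum_x\sigma_x\le C\hat\beta_\delta^{-3+4\eta/(\gamma-1)}L_\delta\to0$ in probability, by the upper Potter bound and tightness of $L_\delta$.
\item $\Pro[|\omega|<a_\delta V_\delta]^{N_\delta}\to1$ by (i).
\end{itemize}
Using $(1-u)^n\ge e^{-2un}$ for $u\le\frac12$ and bounded convergence, you get $\Es[Z_{\Omega_\delta}^{\omega,\beta_\delta}\Ind_{A_\delta}]\to1$. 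So item (ii) is indeed false as written.
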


\begin{proof} 
	First of all, notice that by definition we have $\lim_{\delta \downarrow0}a_{\delta} =+\infty$ and also \(\lim_{\delta \downarrow0} a_{\delta}^{-(\gamma-1)/2} \hat{\beta}_{\delta} =+\infty\).
	Notice also that by Potter's bound (see \cite[Thm 1.5.4]{bingham_goldie_teugels_1987}) for any $\gep > 0$, there are constants~$C_{\gep}$ and~$c_{\gep}$ such that for every $a \ge 1$ and $\delta \in (0,1)$, 
	\begin{equation}{\label{eq: potter's bound}}
	c(\gep) a^{-\gep}  \varphi(V_{\delta}) \le \varphi(a V_{\delta}) \le C(\gep) a^{\gep} \varphi(V_{\delta}) \,.
	\end{equation}

	\smallskip
	Let us start with $(i)$. 
	Using subadditivity then Potter's bound~\eqref{eq: potter's bound} (say with \(\gep=\gamma/2\)), we have that 
	\begin{equation*}
		\Pro[(A_{\delta})^c] \le \mathrm{Card}(\Omega_{\delta}) \: \Pro[|\omega| \ge a_{\delta} V_{\delta}] \le C\, a_{\delta}^{- \gamma/2} |\Omega| \: \frac{1}{v_{\delta}} \Pro[|\omega| \ge V_{\delta}]\,,
	\end{equation*}
	where we have also used that \(|\Omega|= v_{\delta} \mathrm{Card}(\Omega_{\delta})\).
	Recalling~\eqref{eq: general scale of the noise} we have that \(\frac{1}{v_{\delta}} \Pro[|\omega| \ge V_{\delta}] \to 1\), so the above goes to $0$ as $\delta \downarrow 0$, since $\lim_{\delta \downarrow0}a_{\delta} =+\infty$.
	
	\smallskip
	For $(ii)$, by Fubini's theorem we have
	\begin{equation}{\label{eq: representation with size-bias}}
			\Es[Z_{\Omega_{\delta}}^{\omega,\beta_{\delta}} \Ind_{A_{\delta}} ] = \E_{\delta}^{\mathrm{ref}}\Big[ \bbE\Big[\Ind_{A_{\delta}} \prod_{x \in \Omega_{\delta}} (1+\beta_{\delta} \omega_x \sigma_x) \Big] \Big] = \E_{\delta}^{\mathrm{ref}}\Big[ \Pro_{\delta}^{\sigma}[A_{\delta}]\Big]\,,
	\end{equation}
	where, given a realization of the field $\sigma$, $\Pro_{\delta}^{\sigma}$ is the size-biased measure defined by 
	\begin{equation*}
		\frac{\dd \Pro_{\delta}^{\sigma}}{\dd \Pro}(\omega) = \prod_{x \in \Omega_{\delta}} (1+\beta_{\delta} \omega_x \sigma_x)\,.
	\end{equation*}
	Note that \(\Pro_{\delta}^{\sigma}\) is a probability distribution since \(\Es[\prod_{x \in \Omega_{\delta}} (1+\beta_{\delta} \omega_x \sigma_x)] = 1\) and that under $\Pro_{\delta}^{\sigma}$, the random variables $(\omega_{x})_{x \in \Omega_{\delta}}$ are independent.
	Furthermore, if $\sigma_x = 0$, the law of $\omega_x$ is the same as under~$\Pro$, and if $\sigma_x = 1$, the law of $\omega_x$ is $\tilde{\Pro}_{\delta}$, which is defined by
	\(\dd \tilde{\Pro}_{\delta}(\omega) = (1+\beta_{\delta} \omega) \dd\Pro(\omega)\).

	Now, we bound \ref{eq: representation with size-bias} by observing that $A_{\delta} \subset \{ \forall x \in \sigma, |\omega_x| \le a_{\delta} V_{\delta}\}$: we get that 
	\begin{equation*}
		\begin{split}
		\Es[Z_{\Omega_{\delta}}^{\omega,\beta_{\delta}} \Ind_{A_{\delta}} ] \le \E_{\delta}^{\mathrm{ref}}\Big[ \Pro_{\delta}^{\sigma}\big[\forall x \in \sigma, |\omega_x| \le a_{\delta} V_{\delta}\big] \Big] 
		& = \E_{\delta}^{\mathrm{ref}}\Big[ \prod\limits_{x \in \sigma} (1-\tilde{\Pro}_{\delta}(|\omega_x| > a_{\delta} V_{\delta})) \Big] \\
		& \le \E_{\delta}^{\mathrm{ref}}\bigg[\exp\Big(-\tilde{\Pro}_{\delta}\big(|\omega| > a_{\delta} V_{\delta}\big)\sum_{x \in \Omega_{\delta}} \sigma_x\Big) \bigg],
		\end{split}
	\end{equation*} 
	where, for the last inequality, we have simply used that $1-x \le e^{-x}$ for any \(x\). 
	Now, using the definition of $\tilde{\Pro}_{\delta}$, we obtain that
	\begin{equation*}
		\tilde{\Pro}_{\delta}\big(|\omega| > a_{\delta} V_{\delta}\big) =\Pro\big(|\omega| > a_{\delta} V_{\delta}\big) + \beta_{\delta}  \Es\big[ \omega \Ind_{\{|\omega| > a_{\delta} V_{\delta}\}}\big] \sim \frac{\gamma}{\gamma-1} \beta_{\delta} \varphi(a_{\delta} V_{\delta}) (a_{\delta} V_{\delta})^{-(\gamma -1)}\quad \text{ as } \delta \downarrow 0\,.
	\end{equation*} 
	Together with Potter's lower bound~\eqref{eq: potter's bound} (say with \(\gep =(\gamma-1)/2\)), this yields 
	\begin{equation*}
		\tilde{\Pro}_{\delta}\big(|\omega| > a_{\delta} V_{\delta}\big) \ge  c'\, \beta_{\delta} a_{\delta}^{-(\gamma-1)/2} \Pro\big(|\omega| > V_{\delta}\big) \geq c''\hat{\beta}_{\delta} a_{\delta}^{-(\gamma-1)/2} J_{\delta} v_{\delta} \,,
	\end{equation*}
	recalling the definition of \(\hat{\beta}_{\delta}\) and the fact that \( \Pro[|\omega| \ge V_{\delta}] \sim v_{\delta}\) (see~\eqref{eq: general scale of the noise}).
	We end up with 
	\begin{equation*}
		\Es[Z_{\Omega_{\delta}}^{\omega,\beta_{\delta}} \Ind_{A_{\delta}} ] \leq \E_{\delta}^{\mathrm{ref}}\bigg[\exp\Big(-c'' \hat{\beta}_{\delta} a_{\delta}^{-(\gamma-1)/2} J_{\delta} v_{\delta}\sum_{x \in \Omega_{\delta}} \sigma_x\Big) \bigg] \,,
	\end{equation*}
	which goes to \(0\) as \(\delta\downarrow 0\) since \(\lim_{\delta\downarrow0} \hat{\beta}_{\delta} a_{\delta}^{-(\gamma-1)/2} =+\infty\) and $J_{\delta}v_{\delta} \sum_{x \in \Omega_{\delta}} \sigma_x$ converges in distribution to a \textit{positive} random variable.
	This concludes the proof. 
\end{proof}

\section{Proof for disordered pinning model}{\label{section: proofs pinning}}
{\label{section: proof pinning}}

In this section we prove Theorem \ref{thm: convergence of the partition function of the pinning model} and Theorem \ref{thm: disorder relevance for the pinning model}. 
We recall that we have assumed that $1-\alpha< \frac{1}{\gamma}$, which is equivalent to $\gamma (1-\alpha) < 1$.

Consider for $N \in \bbN$ the discretization $\Omega_N := \{ \frac{n}{N}, 1 \le n \le N\}$ of $\Omega := (0,1)$. 
In this section, instead of working with a continuous discretization parameter $\delta$ as in Sections \ref{section: introduction} and \ref{section: main results}, we prefer the discrete parameter $N$ (with $\delta = N^{-1}$).
The volume of each cell is $v_N = N^{-1}$, and recall that $V_N$ is defined by the asymptotic relation~\eqref{eq: scale of the noise for the pinning}.
We also set \(\hat \beta_N := \beta_N u(N)V_N\), and recall that we assume that \(\lim_{N\to\infty} \hat \beta_N  = \hat \beta \in [0,\infty)\).

\subsection{Proof of Theorem \ref{thm: convergence of the partition function of the pinning model}}

A consequence of the strong renewal theorem~\eqref{eq: renewal theorem} is that there exists a constant $C$ such that for every $N \ge 1$,
\begin{equation}{\label{eq: uniform bound on the renewal function}}
\frac{1}{C} \le u(N) \ell(N) N^{1-\alpha} \le C. 
\end{equation}
Furthermore, denoting for every $N \ge 1$ and $t \in [0,1]$,
\begin{equation}{\label{eq: rescaled renewal function}}
	U_N(t) := \frac{u(\floor{N t})}{u(N)},
\end{equation}
the functions $(U_N)_{N \ge 1}$ converge uniformly on every compact subset of $(0,1]$ to $U(t) := t^{\alpha -1}$.

\smallskip
Arguing as in \eqref{eq: first chaos expansion of the partition function} and \eqref{eq: chaos expansion of the partition function}, we rewrite the partition function $Z_{N,h_N}^{\omega,\beta_N}$ as a polynomial chaos with respect to the variables $\omega$:
\begin{equation*}
	\begin{split}
		Z_{N,h_N}^{\omega,\beta_N} &= 1 + \sum\limits_{k \ge 1} \frac{(\beta_N)^k}{k!} \sum\limits_{t_1,\dots,t_k \in \Omega_N} \E_{N,h_N}\Big[\prod\limits_{j=1}^k \Ind_{N t_j \in \tau}\Big] \prod\limits_{j=1}^k \omega_{N t_j} \\
		&= 1 + \sum\limits_{k \ge 1} \frac{(\hbeta_N)^k}{k!} \sum\limits_{t_1,\dots,t_k \in \Omega_N^k} \psi_N^{(k)}(t_1,\dots,t_k) \prod\limits_{j=1}^k \frac{\omega_{N t_j}}{V_N}\,.
	\end{split}
\end{equation*}
Here, for every $k \ge 1$, $\psi_N^{(k)} : (\Omega_N)^k \to \bbR$ is the symmetric function which vanishes on the diagonals, defined for $0 = t_0 < t_1 < \dots < t_k \le 1$ (in $\Omega_N$) by
\begin{equation*}
	\psi_{N}^{(k)}(t_1,\dots,t_k) := u(N)^{-k} \, \P_{N,h_N}\left[\{N t_1,\dots,N t_k\} \subset \tau\right].
\end{equation*}
Its piecewise constant extension to $\Omega^k$ is such that for every $t_0:= 0 < t_1 < \dots < t_k \le 1$, 
\begin{equation*}
	\overline{\psi}_{N}^{(k)}(t_1,\dots,t_k) =  u(N)^{-k} \, \P_{N,h_N}\left[\{\floor{N t_1},\dots,\floor{N t_k}\} \subset \tau\right].
\end{equation*}
We now verify that the family of functions $(\psi_N^{(k)})_{k \ge 1}$ satisfy the assumptions of Theorem \ref{thm: scaling limit of discrete polynomial chaos} and identify their scaling limit. 

We recall the definition of the discrete homogeneous constrained partition function:  
\begin{equation*}
	Z_{M,h}^{c} := \E\Big[\exp\Big(h \sum\limits_{n=1}^M \Ind_{n \in \tau} \Big) \Ind_{ M \in \tau} \Big] . 
\end{equation*}
By the Markov property of the renewal process $\tau$, we obtain that for every $t_0 := 0 < t_1 < \dots < t_k \le 1$
\begin{equation*}
	\overline{\psi}_{N}^{(k)}(t_1,\dots,t_k) = \frac{1}{Z_{N,h_N}} \prod\limits_{j=1}^k \bigg(\frac{ Z_{\floor{N t_j} - \floor{N t_{j-1}},h_N}^{c}}{u(N)}\bigg) Z_{N - \floor{N t_k},h_N}.
\end{equation*}
For every $\alpha \in (0,1)$, the (modified) Mittag--Leffler function is defined by
\begin{equation*}
	E_{\alpha}(z) := \sum\limits_{k = 0}^{+\infty} \frac{z^{k} \Gamma(\alpha)^k}{\Gamma(\alpha k + 1)}.
\end{equation*}
The proof of the following lemma is essentially technical and is therefore postponed to Appendix~\ref{appendix: homogeneous partition functions}.
\begin{lemma}{\label{lemma: scaling limit of the homogeneous partition functions}}
	Recall the definitions~\eqref{eq: def of the continuum homogeneous partition function} of the (free and conditioned) continuum partition functions~\(\cZ_{t,\hh}\) and \(\cZ_{t,\hh}^{c}\).
	If $\lim_{N \to +\infty} \frac{M_N}{N} = t \in (0,1]$, then 	
	\begin{equation*}
		\lim_{N \to +\infty} Z_{M_N,h_N} = \cZ_{t,\hh} := E_{\alpha}(\hh t^{\alpha}),
		\qquad \text{and} \qquad
		\lim_{N \to +\infty} \frac{Z_{M_N,h_N}^c}{u(N)} = \cZ_{t,\hh}^{c} := \alpha t^{\alpha - 1} E_{\alpha}'(\hh t^{\alpha}),
	\end{equation*} 
	These convergences hold uniformly in $t$ on every compact subset of $(0,1]$.
	Furthermore there exists a constant $C=C_{\hat{h}}$ such that for every $1 \le M \le N$, 
	\begin{equation}{\label{eq: uniform bound on the conditioned partition function}}
		Z_{M,h_N} \leq C \quad \text{and} \quad Z_{M,h_N}^{c} \le C u(M). 
	\end{equation}
\end{lemma}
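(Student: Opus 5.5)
We expand both partition functions in powers of $h_N$ and then pass to the limit term by term. Write $e^{h\Ind_{n\in\tau}} = 1 + (e^{h}-1)\Ind_{n\in\tau}$ and expand the product over $n\le M$. Setting $\eta_N := e^{h_N}-1 \sim \hh/(N u(N))$ and $t_0=0$, we obtain
\[
Z_{M,h_N} = \sum_{k\ge 0} \eta_N^k \sum_{0<n_1<\dots<n_k\le M} \prod_{j=1}^k u(n_j-n_{j-1}), \qquad Z^c_{M,h_N} = \sum_{k\ge 0} \eta_N^k \sum_{0<n_1<\dots<n_k< M} \prod_{j=1}^{k+1} u(n_j-n_{j-1}),
\]
where in the second sum $n_{k+1}:=M$. (The second formula needs a small adjustment for the factor $e^{h}$ coming from $n=M$ itself, which tends to $1$ and is harmless.)

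In the $k$-th term we substitute $n_j = N s_j$ and use $u(N s)\approx u(N) s^{\alpha-1}$, which follows from the uniform convergence $U_N\to U$ on compacts of $(0,1]$. The sum is then a Riemann sum, and the $k$-th term converges to
\[
\hh^k \int_{0<s_1<\dots<s_k<t} \prod_{j=1}^k (s_j-s_{j-1})^{\alpha-1}\,\dd s = \hh^k\,\frac{\Gamma(\alpha)^k}{\Gamma(\alpha k+1)}\, t^{\alpha k},
\]
by the Dirichlet/Beta integral. Summing over $k$ gives $E_\alpha(\hh t^\alpha)$. For the conditioned function, the extra factor $u(M-n_k)/u(N)$ produces the integral
\[
\int_{0<s_1<\dots<s_k<t}\prod_{j=1}^{k+1}(s_j-s_{j-1})^{\alpha-1}\,\dd s = \Gamma(\alpha)^{k+1}\,\frac{t^{\alpha(k+1)-1}}{\Gamma(\alpha(k+1))},
\]
and summing gives $\alpha t^{\alpha-1}E'_\alpha(\hh t^\alpha)$ after differentiating the series term by term. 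The identification with the probabilistic definitions~\eqref{eq: def of the continuum homogeneous partition function} follows from Sohier's result for the unconditioned function; for the conditioned function we simply take the limit as the definition. Uniformity in $t$ on compacts comes from the uniform convergence of $U_N$.

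The main obstacle is to justify the termwise limits and the uniform bounds~\eqref{eq: uniform bound on the conditioned partition function}. Both rest on a domination estimate that is uniform in $N$, and this is where Appendix~\ref{appendix: homogeneous partition functions} (comparison of a multivariate series with its integral) comes in. By Potter's bounds together with~\eqref{eq: uniform bound on the renewal function}, we have $u(n)\le C u(N)(n/N)^{\alpha-1-\epsilon}$ for $n\le N$. Since $\alpha-1<0$, the discrete sum is dominated by $C^k$ times the corresponding integral with exponent $\alpha-1-\epsilon$ (choosing $\epsilon<\alpha$). Monotone comparison then bounds the $k$-th term by
\[
(C|\hh|)^k\,\Gamma(\alpha-\epsilon)^k/\Gamma((\alpha-\epsilon)k+1),
\]
which is summable. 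This gives $Z_{M,h_N}\le C$ and justifies dominated convergence over $k$, which is what allows us to exchange the limit with the sum.

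For the bound $Z^c_{M,h_N}\le C u(M)$ we need care, because $M$ may be much smaller than $N$. Here we rescale by $M$ rather than $N$, using $u(m)\le C u(M)(m/M)^{\alpha-1-\epsilon}$ for $m\le M$. The $k$-th term is then at most
\[
u(M)\,\bigl(C\eta_N M u(M)\bigr)^k \times \text{Beta-type constants}.
\]
Finally $\eta_N M u(M) \lesssim |\hh|\, M u(M)/(N u(N)) \le C|\hh|$, because $n\mapsto n u(n)$ is regularly varying with positive index $\alpha$ and hence, by Potter's bounds, roughly increasing. Summing over $k$ gives the claim.
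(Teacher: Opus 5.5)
Your proof is correct and follows essentially the paper's route. You expand in $e^{h_N}-1$, pass to the limit in each chaos term via the uniform convergence of $U_N$, and dominate by the Dirichlet integrals with exponent $\alpha-1-\epsilon$ (Proposition~\ref{prop: technical estimate on slow varying functions}); for the conditioned estimate you use the same Potter-type bound $\sup_{M\le N} M u(M)/(N u(N))<\infty$ as the paper. The only cosmetic difference is in the small-gap region: you use dominated convergence with the integrable majorant $\prod_j (s_j-s_{j-1})^{\alpha-1-\epsilon}$, while the paper splits the sum into $\cD_1^{\gep,N}(k)$ and $\cD_2^{\gep,N}(k)$ and shows the latter is negligible as $\gep\downarrow 0$, which is the same estimate.
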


\noindent 
Therefore, for every $k \ge 1$, $(\overline{\psi}_N^{(k)})_{N \ge 1}$ converges to $\bpsi_{\hh}^{(k)}$ (defined in \eqref{eq: continuum correlation functions of the pinning model}) uniformly on every compact subset of $\{(t_1,\dots,t_k) \in (0,1)^k, \forall i \neq j, t_i \neq t_j \}$. 
Furthermore, the bound \eqref{eq: uniform bound on the conditioned partition function} implies that for every $t_0 := 0 < t_1 < \dots < t_k \le 1$,
\begin{equation*}
	\psi_N^{(k)}(t_1,\dots,t_k) \le C^k \prod\limits_{j=1}^k \frac{u(\floor{N t_j}-\floor{N t_{j-1}})}{u(N)},
\end{equation*}
which thanks to~\eqref{eq: uniform bound on the renewal function} gives
\begin{equation}{\label{eq: uniform bound on the correlation functions of the pinning}}
	\psi_N^{(k)}(t_1,\dots,t_k) \leq (C')^k u(N)^{-k} \prod\limits_{j=1}^k \frac{(\floor{N t_j}-\floor{N t_{j-1}})^{-(1-\alpha)}}{\ell(\floor{N t_j}-\floor{N t_{j-1}})}.
\end{equation}
Taking the limit $N \to +\infty$ yields, for every $k \ge 1$,
\begin{equation*}
	\bpsi_{\hh}^{(k)}(t_1,\dots,t_k) \le (C')^k \prod\limits_{j=1}^k \frac{1}{(t_j-t_{j-1})^{1-\alpha}}.
\end{equation*}

Let $q \in (\gamma, \frac{1}{1-\alpha})$.
We now use these inequalities to verify that 
\begin{enumerate}
	\item[1.] for every $k \ge 1$, $\bpsi_{\hh}^{(k)} \in L^q_s((0,1)^k)$.
	\item[2.] the functions $\bpsi_N^{(k)}$ satisfy hypothesis~\ref{hyp: truncation}.
	\item[3.] for every $k \ge 1$, $\bpsi_N^{(k)}$ converges in $L^q_s((0,1)^k)$ to $\bpsi_{\hh}^{(k)}$. 
\end{enumerate}

\paragraph{1} 
Denoting $\xi := 1-q(1-\alpha) > 0$, we obtain
\begin{equation*}
	\begin{split}
	\frac{1}{k!} \int_{(0,1)^k} \bpsi_{\hh}^{(k)}(t_1,\dots,t_k)^q \prod\limits_{j=1}^k \dd t_j &= \int_{0 < t_1 < \dots < t_k < 1} \bpsi_{\hh}^{(k)}(t_1,\dots,t_k)^q \prod\limits_{j=1}^k \dd t_j \\
	&\le (C')^{kq} \int_{0 < t_1 < \dots < t_k < 1} \prod\limits_{j=1}^k (t_j - t_{j-1})^{\xi - 1} \dd t_j.
	\end{split} 
\end{equation*}
Therefore, using the standard computation (see \cite[Lemma A.3]{berger_lacoin_2022}), for $\xi > 0$ we have
\begin{equation}{\label{eq: gamma integrals}}
	\int_{t_0:=0 < t_1 < \dots < t_k < t_{k+1} := t} \prod\limits_{j=1}^{k+1} (t_j - t_{j-1})^{\xi - 1} \prod\limits_{j=1}^k \dd t_j = t^{(k+1)\xi - 1} \frac{\Gamma(\xi)^{k+1}}{\Gamma((k+1) \xi)}\,,
\end{equation}
from which we deduce that 
\begin{equation}{\label{eq: estimates of the moments of the correlation functions of the pinning model}}
	\bigg(\frac{1}{k!} \int_{(0,1)^k} \bpsi_{\hh}^{(k)}(t_1,\dots,t_k)^q \prod\limits_{j=1}^k \dd t_j \bigg)^{1/q} \le (C')^k \bigg(\frac{\Gamma(\xi)^k}{\Gamma(\xi k+1)} \bigg)^{1/q} \,.
\end{equation} 

\paragraph{2} Let us now estimate the $L^q$ moment of $\overline{\psi}_{N}^{(k)}$ (we recall that the function $\overline{\psi}_{N}^{(k)}$ vanishes on the diagonals): we have
\begin{equation*}
	\begin{split}
		\frac{1}{k!} \int_{(0,1)^k} \overline{\psi}_{N}^{(k)}(t_1,\dots,t_k)^q \prod\limits_{j=1}^k \dd t_j &= \int_{0 < t_1 < \dots < t_k < 1} \overline{\psi}_{N}^{(k)}(t_1,\dots,t_k)^q \prod\limits_{j=1}^k \dd t_j \\
		&= \frac{1}{N^k} \sum\limits_{1 \le n_1 < \dots < n_k \le N} \psi_N^{(k)}\bigg(\frac{n_1}{N},\dots,\frac{n_k-n_{k-1}}{N}\bigg)^q \,.
	\end{split}
\end{equation*}
By \eqref{eq: uniform bound on the correlation functions of the pinning}, this is bounded by
\begin{equation*}
		\frac{(C')^k}{N^k u(N)^{qk}} \sum\limits_{1 \le n_1 < \dots < n_k \le N} \prod\limits_{j=1}^k \frac{(n_j-n_{j-1})^{\xi-1}}{\ell(n_j-n_{j-1})^q},
\end{equation*}
with $\xi = 1 - q(1-\alpha) > 0$ as above. 
Let $\gep > 0$ be such that $\xi > \gep$. 
Applying Proposition \ref{prop: technical estimate on slow varying functions} (say with \(\gep=\xi/2\)), there exist a constant $C$ such that for every $k \ge 1$ and $N \ge 1$,
\begin{equation*}
	 \sum\limits_{1 \le n_1 < \dots < n_k \le N} \prod\limits_{j=1}^k \frac{(n_j-n_{j-1})^{\xi-1}}{\ell(n_j-n_{j-1})^q} \le \frac{C^k}{\ell(N)^{kq}} N^{k \xi} \int_{0 < t_1 < \dots < t_k < 1} \prod\limits_{j=1}^k (t_j - t_{j-1})^{\frac12\xi-1} \dd t_j. 
\end{equation*}
Therefore, using the lower bound of \eqref{eq: uniform bound on the renewal function}, we finally obtain
\begin{equation}{\label{eq: bounded moment of pinning correlation functions}}
	\begin{split}
	\bigg(\frac{1}{k!} \int_{(0,1)^k} (\overline{\psi}_N^{(k)}(t_1,\dots,t_k))^q \prod\limits_{j=1}^k \dd t_j \bigg)^{1/q} &\le (C'')^k \bigg(\int_{0 < t_1 < \dots < t_k < 1} \prod\limits_{j=1}^k (t_j - t_{j-1})^{\frac12\xi-1} \dd t_j\bigg)^{1/q} \\
	& \leq (C'')^k \bigg(\frac{\Gamma(\frac12\xi)^k}{\Gamma( \frac12 \xi k+1)} \bigg)^{1/q} \,,
	\end{split}
\end{equation}
with the same calculation as in~\eqref{eq: estimates of the moments of the correlation functions of the pinning model}.
This decreases faster than exponentially as $k \to +\infty$, so the hypothesis~\ref{hyp: truncation} of Theorem \ref{thm: scaling limit of discrete polynomial chaos} is verified. 

\paragraph{3} For every $k \ge 1$, the functions $(\psi_N^{(k)})_{N \ge 1}$ converge uniformly on every compact subset of $\{(t_1,\dots,t_k) \in (0,1)^k, \forall i \neq j, t_i \neq t_j \}$. 
Furthermore, they are uniformly bounded in $L^q_s((0,1)^q)$ for every $q \in (\gamma, \frac{1}{1-\alpha})$.
By dominated convergence, this shows the convergence in $L^q_s((0,1)^q)$ for every $q \in (\gamma, \frac{1}{1-\alpha})$.
\qed

\subsection{Proof of Theorem \ref{thm: convergence of the pinning measure}}

We apply the general Theorem~\ref{thm: convergence of the disordered probability}. 

It is proved in \cite[Theorem A.8]{giacomin_2007} that the rescaled renewal process 
\[\tau^{(N)} := \frac{1}{N} \tau \subset \bbR_+\]
converges in distribution with respect to the Matheron topology, to the $\alpha$-stable regenerative set $\cA_{\alpha}$.
Hence the assumption~\ref{cond: invariance principle} is verified.

We now verify assumption~\ref{cond: convergence general correlations}. 
Let $G$ be a bounded continuous function on the space $\cC_{\infty}$. 
Adapting the definition~\ref{eq: general rescaled correlation function} to the pinning model, we define for every fixed $k \ge 1$, for every $0 < t_1 < \dots < t_k < 1$,
\begin{equation*}
	\psi_N^{(k)}(t_1,\dots,t_k;G) := u(N)^{-k} \E_{N,h_N}\Big[G\big(\frac{\tau}{N}\big) \Ind_{\floor{Nt_1},\dots,\floor{N t_k} \in \tau}\Big] \,.
\end{equation*}
which can be rewritten as 
\begin{equation*}
	\psi_N^{(k)}(t_1,\dots,t_k;G) = \psi_N^{(k)}(t_1,\dots,t_k) \E_{N,h_N}\Big[G\big(\frac{\tau}{N}\big)\mid \floor{Nt_1},\dots,\floor{N t_k} \in \tau\Big]\,.
\end{equation*}
We must show that the symmetric function $\psi_N^{(k)}(\, \cdot \,;G)$ converges in $L^q_s((0,1)^k)$ for every $q \in (\gamma,\frac{1}{1-\alpha})$.
Since $G$ is bounded, 
\[ \psi_N^{(k)}(t_1,\dots,t_k;G) \le \|G\|_{\infty} \psi_N^{(k)}(t_1,\dots,t_k)\,,\]
which implies that $\psi_N^{(k)}(\, \cdot \,;G)$ is uniformly bounded in $L^q_s((0,1)^k)$ by \eqref{eq: bounded moment of pinning correlation functions}.
Moreover, by \cite[Proposition A.8]{csz_2016_continuum}, for every fixed $T > 0$, the random closed set $\tau^{(N)} \cap [0,T]$ conditioned on $\floor{N T} \in \tau$, converges in distribution (for the Matheron topology) to the $\alpha$-stable regenerative set $\cA_{\alpha}$, conditioned on $T \in \cA_{\alpha}$. 
Therefore, for every fixed $0 < t_1 < \dots < t_k < 1$,
\[\E_{N,h_N}\Big[G\big(\frac{\tau}{N}\big) \mid \floor{Nt_1},\dots,\floor{N t_k} \in \tau\Big] \xrightarrow[N \to +\infty]{} \bE_{\hh}\Big[G(\cA_{\alpha}) \mid t_1,\dots,t_k \in \cA_{\alpha}\Big]\,.\]
Here, the conditional expectation $\bE_{\hh}[\, \cdot \,\mid t_1,\dots,t_k \in \cA_{\alpha}]$ is understood as the law of the concatenation of $k$ independent $\alpha$-stable regenerative sets $(\cA_{\alpha}^{(i)}\cap [0,t_i-t_{i-1}])_{1 \le i \le k}$ respectively conditioned to $t_i-t_{i-1} \in \cA_{\alpha}^{(i)}$ for every $i \in \intg{1}{k}$ and one unconditioned $\alpha$-stable regenerative set $\cA_{\alpha}^{(k+1)}\cap [0,1-t_k]$.

Finally, it is proved in~\cite{faugere_lacoin} that the continuum partition function $\cZ_{\hh}^{\bzeta,\hbeta}$ is almost surely stricly positive, which concludes the proof.

\subsection{Proof of Theorem \ref{thm: disorder relevance for the pinning model}}

By Theorem \ref{thm: disorder relevance}, it is enough to show that the random variable 
\[ 
	D_N := \frac{1}{N u(N)} \sum\limits_{n=1}^N \Ind_{n \in \tau} 
\]
converges in distribution as $N \to +\infty$ to a strictly positive random variable.
The Laplace transform of~$D_N$ is given by 
\begin{equation*}
	\E_{N,h_N}[\expo^{\lambda D_N} ] = \E_{N,h_N}\Big[ \exp\Big(\lambda_N \sum\limits_{n=1}^N \Ind_{n \in \tau}\Big)\Big] = \frac{Z_{N,h_N+\lambda_N}}{Z_{N,h_N}}\,,
\end{equation*}
with $\lambda_N := \frac{\lambda}{N u(N)}$. 
By Lemma \ref{lemma: scaling limit of the homogeneous partition functions} (or \cite[Thm.~3.1]{sohier_2009}), we have that 
\[
\lim_{N\to\infty} \E_{N,h_N}[\expo^{\lambda D_N} ] =\frac{\cZ_{1,\hh+\lambda}}{\cZ_{1,\hh}} = \bE_{\alpha,\hh}\Big[ \expo^{\lambda L_1^{(\alpha)}} \Big] \,,
\]
where \(\dd \bP_{\alpha,\hh} = \frac{1}{\cZ_{1,\hh}} \expo^{\hh L_1^{(\alpha)}} \dd \bP\) is the law of the continuum homogeneous pinning model, with \(\bP\) the law of an \(\alpha\)-stable subordinator \(\sigma^{(\alpha)}\) and $L_1^{(\alpha)} = \inf \{ s \ge 0, \sigma_s^{(\alpha)} \ge t \}$ its local time (see Section~\ref{subsec: homogeneous pinning} for an overview).
The right-hand side is the Laplace transform of the local time  under \(\bP_{\alpha,\hh}\), which is a.s.\ strictly positive.
This concludes the proof.
\qed

\section{Proofs for long-range directed polymer}{\label{section: proofs polymer}}

In this section, we prove Theorems \ref{thm: convergence of the polymer partition function}, Theorem~\ref{thm: convergence of the polymer measure} and \ref{thm: disorder relevance for the polymer}. 
Let us stress that in this case, one cannot directly apply Theorems~\ref{thm: convergence of the partition function}-\ref{thm: convergence of the disordered probability} since \(\Omega = (0,1) \times \bbR^{d}\) is not bounded.
We therefore introduce some truncation argument, considering the random walk only in a bounded strip.

Denote for every $N \ge 1$,  $S_N^* = \max_{1 \le n \le N} \frac{|S_n|}{a_N}$. 
Let $f : \bbR_+ \to [0,1] $ be a smooth non-increasing function such that $f(x) = 1$ if $x \le 1$ and  $f(x) = 0$ if $x > 2$, and for every $A > 0$, let $f_A(x) = f(x/A)$.
Then, define the truncated partition function
\begin{equation*}
	Z_{N,A}^{\omega,\beta_N} := \E\Big[f_A(S_N^*) \prod\limits_{n=1}^N (1+\beta_N \omega_{n,S_n}) \Big].
\end{equation*}

Let us recall that \((\bX_t)_{t\geq 0}\) is the multivariate \(\alpha\)-stable Lévy process that arises as the scaling limit of the random walk, with density \(g_t(x)\) which is continuous and bounded (see e.g.\ \cite{watanabe_2007}, which also give their rate of decay).
We also denote by \(\bX^* = \sup_{t\in [0,1]} |\bX_t|\), and we introduce the formal notation 
\[
\bP\big( \, \cdot\, \big|\,  \bX_{t_1} = x_1 ,\ldots, \bX_{t_k} =x_k \big)
\]
for the law of the concatenation of bridges of~\(\bX\) from~\(x_{i-1}\) to~\(x_{i}\) between times~\(t_{i-1}\) and~\(t_i\) (this can be defined as the scaling limit of a random walk \(( \frac{1}{a_N} S_{\lfloor tN \rfloor})_{t\geq 0}\) conditioned to have \(S_{\lfloor t_i N \rfloor} =  \lfloor x_i a_N \rfloor\) for \(1\leq i \leq k\), see~\cite{liggett_1970}).

For a bounded continuous function $G : \cD \to \bbR$, define 
\[ Z_N^{\omega,\beta_N} := \E\Big[G(S^{(N)}) \prod\limits_{n=1}^N (1+\beta_N \omega_{n,S_n})\Big]\,.\]
In section~\ref{sec: extension to functional partition function}, we explain how the proof of Theorem~\ref{thm: convergence of the polymer partition function} can be adapted to prove the convergence in law of $Z_N^{\omega,\beta_N}(G)$.

\subsection{Proof of Theorem \ref{thm: convergence of the polymer partition function}}

The core of the proof is the convergence of the truncated partition function $Z_{N,A}^{\omega,\beta_N}$.
\begin{lemma}{\label{lemma: convergence of the truncated polymer partition function}}
	Assume that $\lim_{N\to\infty} \beta_N V_N (a_N)^{-d}= \hbeta \in [0,\infty)$.
	Then, for every $A > 0$, we have the following convergence in distribution: 
	\begin{equation}
	Z_{N,A}^{\omega,\beta_N} \xrightarrow[\ N\to\infty\ ]{(d)}
	\cZ_A^{\bzeta, \hbeta} = \sum\limits_{k \ge 0} \hbeta^k \int_{0 < t_1 < \dots < t_k < 1} \int_{(\bbR^{d})^k} \bpsi_A^{(k)}\big((t_1,x_1),\dots,(t_k,x_k)\big) \prod\limits_{j=1}^k \bzeta(\dd t_j \dd x_j),
	\end{equation}
	where $\bpsi_A^{(k)}\big((t_1,x_1),\dots,(t_k,x_k)\big) := \prod\limits_{j=1}^k g_{t_j-t_{j-1}}(x_j - x_{j-1})\, \E[f_A(\bX^*) \mid \bX_{t_1} = x_1, \dots , \bX_{t_k} = x_k]$. 

	\noindent
	The convergence also holds for the finite-dimensional marginal distributions of the process $(Z_{N,A}^{\omega,\beta_N})_{A > 0}$.
	
	\noindent
	Furthermore, for every $p \in [1,\gamma)$, $\lim\limits_{N \to +\infty} \Es[|Z_{N,A}^{\omega,\beta_N}|^p] = \Es[|\cZ_A^{\bzeta,\hbeta}|^p] < +\infty$.  	 
\end{lemma}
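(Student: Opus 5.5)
The plan is to cast $Z_{N,A}^{\omega,\beta_N}$ as a discrete polynomial chaos on a \emph{bounded} domain and apply Theorem~\ref{thm: scaling limit of discrete polynomial chaos}.

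\textbf{Step 1: reduction to a bounded domain.} Since $f_A(S_N^*)=0$ as soon as some $|S_n|>2Aa_N$, every correlation function is supported on $\{|x_j|\le 2A\}$. We can therefore work on $\Omega_A=(0,1)\times(-3A,3A)^d$, with discretization $\Omega_{A,N}=\big(\tfrac1N\bbZ\times\tfrac1{a_N}\bbZ^d\big)\cap\Omega_A$. Cells have volume $v_N=N^{-1}a_N^{-d}$, which matches the definition~\eqref{eq: scale of the noise for polymers} of $V_N$.

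\textbf{Step 2: chaos expansion.} Expanding the product gives
\[
Z_{N,A}^{\omega,\beta_N}=\sum_k\frac{\hbeta_N^k}{k!}\sum\psi_{N,A}^{(k)}\prod_j\frac{\omega}{V_N},
\qquad \hbeta_N=\beta_NV_Na_N^{-d},
\]
where
\[
\psi^{(k)}_{N,A}=(a_N^d)^k\,\E\big[f_A(S_N^*)\textstyle\prod_j\Ind_{S_{n_j}=y_j}\big].
\]
Since $f_A\le1$, we have $0\le\psi^{(k)}_{N,A}\le\prod_j a_N^d\,\P(S_{n_j-n_{j-1}}=y_j-y_{j-1})$.

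\textbf{Step 3: uniform $L^q$ bounds (hypothesis \ref{hyp: truncation}).} The local limit theorem, together with the bound $a_n^d\sup_x\P(S_n=x)\le C$, gives
\[
\psi\le C^k\prod_j\Big(\frac{a_N}{a_{n_j-n_{j-1}}}\Big)^d\,\tilde g\big((t_j-t_{j-1}),(u_j-u_{j-1})\big),
\]
with $\tilde g$ bounded and integrable in space. Raise this to the power $q$ and sum over space: each increment contributes $\big(\tfrac{a_N}{a_n}\big)^{d(q-1)}$. By Potter's bounds this is at most $C(\tfrac{N}{n})^{d(q-1)/\alpha+\gep}$. Choose $q\in(\gamma,1+\tfrac\alpha d)$, which is possible because $\gamma<1+\frac\alpha d$. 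Then $\xi:=1-\tfrac{d(q-1)}\alpha-\gep>0$, and the time sum is handled exactly as in the pinning case through Proposition~\ref{prop: technical estimate on slow varying functions} and~\eqref{eq: gamma integrals}. This gives $\|\overline\psi^{(k)}_{N,A}\|_q\le C^k\big(\Gamma(\xi)^k/\Gamma(\xi k+1)\big)^{1/q}$ uniformly in $N$, which decays superexponentially, so both summability conditions hold.

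\textbf{Step 4: convergence of each $\overline\psi^{(k)}_{N,A}$ in $L^q_s$ (hypothesis \ref{hyp: convergence of correlation functions}).} We need pointwise convergence on compacts of the off-diagonal set, plus domination. We write
\[
\psi^{(k)}_{N,A}=\prod_j a_N^d\,\P(S_{n_j-n_{j-1}}=y_j-y_{j-1})\cdot\E\big[f_A(S^*_N)\mid S_{n_j}=y_j\ \forall j\big].
\]
The first factor converges by the local limit theorem. The conditional expectation converges to $\E[f_A(\bX^*)\mid \bX_{t_j}=x_j]$ because the bridges converge in $\cD$ (\cite{liggett_1970}) and $x\mapsto f_A(\sup_{[0,1]}|x|)$ is continuous at almost every limiting path. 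The pointwise bounds of Step 3 serve as a dominating function for uniform integrability. Here I would use a slightly larger exponent $q'\in(q,1+\alpha/d)$ and interpolate, so that $L^q$ convergence follows from Vitali's theorem.

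\textbf{Step 5: conclusion.} Theorem~\ref{thm: scaling limit of discrete polynomial chaos} on $\Omega_A$ now gives convergence of $Z_{N,A}^{\omega,\beta_N}$ jointly with $\zeta_N$, together with convergence of the $p$-th moments. The noise restricted to $\Omega_A$ is the restriction of $\bzeta$. For several values $A_1,\dots,A_m$, we take the largest $A$ as the domain and invoke the finite-collection version of Proposition~\ref{prop: convergence of discrete multiple integrals} order by order, with the same truncation in $k$, which yields the finite-dimensional marginals.

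\textbf{Main obstacle.} I expect Step 3 to be the main difficulty: obtaining uniform-in-$N$ $L^q$ bounds with slowly varying corrections in $a_n$. It reduces to the Potter-type estimate of Proposition~\ref{prop: technical estimate on slow varying functions}, applied to the time increments after the spatial sums have been integrated out.
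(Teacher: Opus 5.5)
Your proposal is correct and follows essentially the same route as the paper: truncate to the bounded domain $\Omega_A$, expand in chaos with $J_N=(a_N)^d$, bound $\|\overline{\psi}^{(k)}_{N,A}\|_q$ uniformly by summing out space with $\P(S_n=x)\le C(a_n)^{-d}$ and then using Potter's bound, Proposition~\ref{prop: technical estimate on slow varying functions} and~\eqref{eq: gamma integrals}, get pointwise convergence from the local limit theorem and Liggett's bridge invariance principle, and conclude with Theorem~\ref{thm: scaling limit of discrete polynomial chaos}. Your Vitali argument (uniform $L^{q'}$ bounds with $q'>q$ on the finite-measure set $\Omega_A^k$) correctly justifies the step the paper loosely calls ``dominated convergence''; your treatment of the finite-dimensional marginals in $A$, via the multi-kernel version of Proposition~\ref{prop: convergence of discrete multiple integrals} on the largest domain, spells out a point the paper leaves implicit.
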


Before proving this lemma, let us first see how it implies Theorem \ref{thm: convergence of the polymer partition function}. 

\begin{proof}[Proof of Theorem \ref{thm: convergence of the polymer partition function}]
Since $f_A \le 1$, we have
\begin{equation}{\label{eq: uniform inequality on approximation of partition functions}}
	\Es\Big[|Z_N^{\omega,\beta_N}- Z_{N,A}^{\omega,\beta_N}|\Big] = \Es[Z_N^{\omega,\beta_N}] - \Es[Z_{N,A}^{\omega,\beta_N}] = \E[1-f_A(S_N^*)] \leq \P[S_N^* \geq A ]\,.
\end{equation} 
Now, by the invariance principle (see e.g.\ \cite[Thm.~7.1]{resnick_2007}), we have that~\(S_N^*\) converges in distribution to \(\bX^*\) as $N \to +\infty$, so we get 
\begin{equation}{\label{eq: uniform discrete approximation for the polymer partition function}}
	\limsup\limits_{N \to +\infty} \Es\Big[|Z_N^{\omega,\beta_N}- Z_{N,A}^{\omega,\beta_N}|\Big] \le  \bP( \bX^* \geq A) \xrightarrow[A\to\infty]{} 0 \,.
\end{equation}

For \(A\leq B\), notice that we have $0\leq Z_{N,A}^{\omega,\beta_N} \le Z_{N,B}^{\omega,\beta_N}$.
Hence, Lemma~\ref{lemma: convergence of the truncated polymer partition function} implies that $0\leq \cZ_A^{\bzeta,\hbeta} \le \cZ_B^{\bzeta,\hbeta}$ almost surely. 
In particular, we have that \(\cZ^{\bzeta,\hbeta} :=\lim_{A\to\infty}\cZ_A^{\bzeta,\hbeta}\) exists a.s.\
Let us now show that \(\cZ^{\bzeta,\hbeta} <+\infty\) a.s.\
For this, notice that since $\cZ_A^{\bzeta,\hbeta} \le \cZ_B^{\bzeta,\hbeta}$ we have
\begin{equation*}
	\Es[|\cZ_B^{\bzeta,\hbeta}-\cZ_A^{\bzeta,\hbeta}|] = \Es[\cZ_B^{\bzeta,\hbeta}] - \Es[\cZ_A^{\bzeta,\hbeta}] = \lim\limits_{N \to +\infty} \big(\Es[Z_{N,B}^{\omega,\beta_N}] - \Es[Z_{N,A}^{\omega,\beta_N}]\big)\,.
\end{equation*}
By the invariance principle (\cite[Thm.~7.1]{resnick_2007}), $S_N^*$ converges in distribution to $\bX^*$, so that 
\begin{equation}{\label{eq: expectation of the truncated partition function}}
	\lim\limits_{N \to +\infty} \Es[Z_{N,A}^{\omega,\beta_N}] = \lim\limits_{N \to +\infty} \E[f_A(S_N^*)] = \bE[f_A(\bX^*)]\,,
\end{equation}
and therefore
\begin{equation*}
	\Es[|\cZ_B^{\hbeta}-\cZ_A^{\hbeta}|] = \bE[f_B(\bX^*) - f_A(\bX^*)]\leq \bP( \bX^* \geq A) \xrightarrow[A\to\infty]{} 0 \,.
\end{equation*}
This shows that the family of random variables $(\cZ_A^{\hbeta})_{A \ge 1}$ is a Cauchy family in \(\mathds{L}^1\), so the limit \(\cZ^{\bzeta,\hbeta} :=\lim_{A\to\infty}\cZ_A^{\bzeta,\hbeta}\) also holds in \(\mathds{L}^1\).
Therefore, Lemma \ref{lemma: convergence of the truncated polymer partition function}, together with \eqref{eq: uniform discrete approximation for the polymer partition function}, concludes the proof of Theorem~\ref{thm: convergence of the polymer partition function}. 	
\end{proof}

\begin{proof}[Proof of Lemma \ref{lemma: convergence of the truncated polymer partition function}]
	A fundamental tool that we will need is the local limit theorem, see e.g.\ \cite{doney_1991,griffin_1986}, that we now recall for convenience:
	\begin{equation}{\label{eq: local limit theorem}}
		\sup\limits_{x \in \bbZ^{d}} \Big| (a_n)^{d}\, \P(S_n=x) - g_1\Big(\frac{x}{a_n}\Big) \Big| \xrightarrow[\ n \to +\infty\ ]{} 0 \,.
	\end{equation} 
	Since the density \(g_1\) is continuous and bounded, there exists a constant $C$ such that for every $n \in \bbN$ and $x \in \bbZ^d$,
	\begin{equation}{\label{eq: uniform bound on discrete density}}
		\P(S_n=x)\le \frac{C}{(a_n)^{d}}\,. 
	\end{equation}
	
	We are now going to apply Theorem \ref{thm: scaling limit of discrete polynomial chaos}, and to this end, we need to specify how the convergence of~$Z_{N,A}^{\omega,\beta_N}$ fits into its framework.  
	First of all, the polynomial chaos expansion of $Z_{N,A}^{\omega,\beta_N}$ is given by:
	\begin{equation*}
		Z_{N,A}^{\omega,\beta_N} = \E[f_A(S_N^*)] + \sum\limits_{k \ge 1} (\beta_N)^k \sumtwo{1 \le n_1 < \dots < n_k \le N}{x_1, \dots , x_k \in \bbZ^{d}} \E\big[f_A(S_N^*)\, \Ind_{\{S_{n_1} = x_1, \dots, S_{n_k} = x_k\}}\big] \prod\limits_{j=1}^k \omega_{n_j,x_j} \,. 
	\end{equation*}
	Set $\Omega_A = (0,1) \times \{ x \in \bbR^{d}, |x| < 2 A\}$, which is a bounded open set of $\bbR^{1+d}$ and for $N \ge 1$, let $\Omega_{N,A} = \{ (\frac{n}{N}, \frac{x}{a_N}),\, 1 \le n \le N, x \in \bbZ^{d} \text{ with } |x| < 2 A a_N \}$.  
	In this section, we also work with the discretization parameter $N$ instead of $\delta$ (with $\delta = N^{-1}$). 
	The discretization $\Omega_{N,A}$ naturally yields a tesselation of $\Omega$ with cells of volume $v_N = N^{-1} (a_N)^{-d}$; recall that $V_N$ is defined by the asymptotic relation \ref{eq: scale of the noise for polymers}. 
	For this model, $J_N = (a_N)^d$ by the local limit theorem~\eqref{eq: local limit theorem}. 
	Let us also set \(\hat \beta_N := \beta_N V_N (a_N)^{-d}\) and recall that we assume that \(\lim_{N\to\infty} \hat{\beta}_N = \hat \beta \in [0,\infty)\).
	
	We can therefore rewrite the polynomial chaos expansion as follows: 
	\begin{equation*}
		Z_{N,A}^{\omega,\beta_N} = \E[f_A(S_N^*)] + \sum\limits_{k \ge 1} \frac{(\hbeta_N)^k}{k!} \sum\limits_{(t_1,x_1),\dots,(t_k,x_k) \in \Omega_{N,A}} \psi_{N,A}^{(k)}\big((t_1,x_1),\dots,(t_k,x_k)\big) \prod\limits_{j=1}^k \frac{\omega_{N t_j, a_N x_j}}{V_N} \,.
	\end{equation*}
	Here, for every $k \ge 1$, $\psi_{N,A}^{(k)}: \Omega_{N,A}^k \to \bbR_+$ is the symmetric function vanishing on the diagonals defined, for every $(t_1,x_1),\dots,(t_k,x_k) \in \Omega_{N,A}$ with $t_1 < \dots < t_k$, by
	\begin{equation}
		\psi_{N,A}^{(k)}\big((t_1,x_1),\dots,(t_k,x_k)\big) := (a_N)^{kd} \E[f_A(S_N^*) \Ind_{\{S_{N t_1} = a_N x_1, \dots, S_{N t_k} = a_N x_k\}}] \,.
	\end{equation}
	Its piecewise extension to $\Omega_A$ is denoted $\overline{\psi}_{N,A}^{(k)}$.
	Now, we need to identify the scaling limits of the functions $\overline{\psi}_{N,A}^{(k)}$ and show that they satisfy the assumptions of Theorem \ref{thm: scaling limit of discrete polynomial chaos}. 

	Let us introduce the notation $\psi_N^{(k)} := \psi_{N,+\infty}^{(k)}$, with by convention $f_{+\infty} :=1 $: we can then write
	\begin{equation*}
		\psi_{N,A}^{(k)}\big((t_1,x_1),\dots,(t_k,x_k)\big) = \psi_N^{(k)}\big((t_1,x_1),\dots,(t_k,x_k)\big) \E\left[ f_A(S_N^*) \, \mid \, S_{N t_j} = a_N x_j \, \forall j \in \intg{1}{k}\right]\,.
	\end{equation*}
	By the Markov property of the random walk, when $t_1 < \dots < t_k$ we get
	\begin{equation*}
		\psi_N^{(k)}\big((t_1,x_1),\dots,(t_k,x_k)\big) = (a_N)^{kd}  \prod\limits_{j=1}^k \P\big( S_{N (t_j - t_{j-1})} =a_N (x_j - x_{j-1}) \big)\,.
	\end{equation*}
	Therefore, by the local limit theorem \eqref{eq: local limit theorem} and the invariance principle for conditioned random walk~\cite{liggett_1970}, for every $k \ge 1$, the functions $\overline{\psi}_N^{(k)}$ and  $\overline{\psi}_{N,A}^{(k)}$ converge pointwise as $N \to +\infty$ respectively to the symmetric functions $\bpsi^{(k)}$ and  $\bpsi_A^{(k)}$ given for every $0 < t_1 < \dots < t_k < 1$ and $x_1,\dots,x_k \in \bbR^{d}$ by
	\begin{equation}
		\label{def: continuous correlations long-range}
		\begin{split}
		\bpsi^{(k)}\big((t_1,x_1),\dots,(t_k,x_k)\big) 
		& = \prod_{j=1}^k g_{t_j - t_{j-1}}(x_j - x_{j-1})\,, \\
		\bpsi_A^{(k)}\big((t_1,x_1),\dots,(t_k,x_k)\big) 
		& = \prod_{j=1}^k g_{t_j - t_{j-1}}(x_j - x_{j-1}) \, \E\big[ f_A(\bX^*) \mid \bX_{t_j} = x_j \, \forall j \in \intg{1}{k} \big]\,,
		\end{split}
	\end{equation}
	where we recall that  $\bP(\, \cdot \, | \, \bX_{t_j} = x_j \, \forall j \in \intg{1}{k})$ is a shorthand for the concatenation of $k$ independent bridges connecting $(t_{j-1},x_{j-1})$ to $(t_j,x_j)$ for every $j \in \intg{1}{k}$. 
	
	In order to prove that this convergence actually holds in $L^q_s$ and that $(\psi_{N,A}^{(k)})$ satisfy condition~\ref{hyp: truncation} of Theorem~\ref{thm: scaling limit of discrete polynomial chaos}, we show the following.
	For every $ q \in (\gamma,1+\frac{\alpha}{d})$, denote $\xi = \xi_q := \frac{d}{\alpha}(1+\frac{\alpha}{d}-q) > 0$. 
	Then there exists a constant $C$ such that the following hold:
	\begin{enumerate}
		\item  For every $k \ge 1$, 
		\begin{equation}{\label{eq: bound on the continuous moments}}
			\frac{1}{k!} \int_{\Omega_A^k} \bpsi_A^{(k)} \big((t_1,x_1),\dots,(t_k,x_k)\big)^q \prod\limits_{j=1}^k \dd t_j \dd x_j \le \frac{C^k}{\Gamma(\xi k + 1)}\,.
		\end{equation}
		
		\item For every $k \ge 1$ and $N \ge 1$,
		\begin{equation}{\label{eq: bound on the discrete moments}}
			\frac{1}{k!} \int_{\Omega_A^k} \overline{\psi}_{N,A}^{(k)} \big((t_1,x_1),\dots,(t_k,x_k)\big)^q \prod\limits_{j=1}^k \dd t_j \dd x_j \le \frac{C^k}{\Gamma(\frac12 \xi k+1)}\,.
		\end{equation}
	\end{enumerate}
	Notice that, since $\psi_{N,A}^{(k)} \le \psi_N^{(k)}$ and $\bpsi_{A}^{(k)} \le \bpsi^{(k)}$, it is enough to work with $\psi_N^{(k)}$ and $\bpsi^{(k)}$. 
	Since these bounds hold for every $q \in (\gamma,1+\frac{\alpha}{d})$, the dominated convergence theorem implies that the pointwise convergence of  $\overline{\psi}_{N,A}^{(k)}$ to $\bpsi_{A}^{(k)}$ extends to convergence in $L^q_s$.
	Hence the condition~\ref{hyp: convergence of correlation functions} of Theorem~\ref{thm: scaling limit of discrete polynomial chaos} is also satisfied.

	\smallskip
	Let us start with~\eqref{eq: bound on the continuous moments}.
	For every $ k \ge 1 $, we have
	\begin{equation*}
		\frac{1}{k!} \int_{\Omega_A^k} \bpsi\big((t_1,x_1),\dots,(t_k,x_k)\big)^q \prod\limits_{j=1}^k \dd t_j \dd x_j 
		\le \hbeta^k \int_{0 < t_1 < \dots < t_k < 1} \int_{(\bbR^{d})^k}  \prod\limits_{j=1}^k g_{t_j-t_{j-1}}(x_j-x_{j-1})^q \prod\limits_{j=1}^k \dd t_j \dd x_j \,.
	\end{equation*}
	Now, the scale-invariance property of the function $g$ gives that for every $t > 0$,
	\begin{equation*}
		\int_{\bbR^d} g_t(x)^q \dd x = c_q\, t^{\xi-1} \,,
	\end{equation*}
	where $\xi = \xi_q:= \frac{d}{\alpha}(1+\frac{\alpha}{d}-q) > 0$ and $c_q = \| g \|_{L^q(\bbR^d)}^q$ is finite since $g \in L^1(\bbR^d) \cap L^{\infty}(\bbR^d)$.
	Therefore, the left-hand side of~\eqref{eq: bound on the continuous moments} is bounded by 
	\begin{equation*}
	(c_q)^k \int_{0 < t_1 < \dots < t_k < 1} \prod\limits_{j=1}^k (t_j-t_{j-1})^{\xi-1} \prod\limits_{j=1}^k \dd t_j = \frac{(c_q \Gamma(\xi))^k}{\Gamma( \xi k+1)} \,,
	\end{equation*}
	having used~\eqref{eq: gamma integrals}.
	This proves \ref{eq: bound on the continuous moments}.

	\smallskip
	We now turn to~\eqref{eq: bound on the discrete moments}.
	For every \(k\geq 1\), the left-hand side is equal to
	\begin{multline*}
		\int_{0 < t_1 < \dots < t_k} \int_{(\bbR^d)^k} \overline{\psi}_N^{(k)}\big((t_1,x_1),\dots,(t_k,x_k) \big)^q \prod\limits_{j=1}^k \dd t_j \dd x_j \\
		= (v_N (a_N)^{q d})^k \sumtwo{1 \le n_1 < \dots < n_k \le N}{x_1,\dots,x_k \in \bbZ^{d}} \prod\limits_{j=1}^k  \P(S_{n_j-n_{j-1}}=x_j - x_{j-1})^q \,.
	\end{multline*}
	Applying the uniform bound \eqref{eq: uniform bound on discrete density} to \(\P(S_{n_j-n_{j-1}}=x_j - x_{j-1})^{q-1}\) and using that $v_N = N^{-1} a_N^{-d}$, we get that this is bounded by
	\begin{equation*}
		C^k (a_N)^{-k(q-1)d} N^{-k}  \sum\limits_{1 \le n_1 < \dots < n_k \le N} \prod\limits_{j=1}^k \frac{1}{(a_{n_j-n_{j-1}})^{(q-1)d}} \sum_{x_1,\dots,x_k \in \bbZ^d} \prod\limits_{j=1}^k \P(S_{n_j-n_{j-1}}=x_j - x_{j-1}) .
	\end{equation*}
	Now we can use that $\sum_{x \in \bbZ^{d}} \P(S_n=x) = 1$ for every $ n \in \bbN$ to sum over \(x_1,\ldots, x_k\).
	Then, writing that $a_n = \ell(n) n^{1/\alpha}$ for some slowly-varying function \(\ell(\cdot)\) and recalling that \(\xi := \frac{d}{\alpha}(1+\frac{\alpha}{d}-q)\), we have that \((a_n)^{(q-1)d} = n^{-(\xi-1)} \ell(n)^{(q-1)d}\).
	We then obtain that the left-hand side of~\eqref{eq: bound on the discrete moments} is bounded by
	\begin{equation*}
		\begin{split}
			&C^k (a_N)^{k(q-1)d} N^{-k} \sum\limits_{1 \le n_1 < \dots < n_k \le N} \prod\limits_{j=1}^k \frac{(n_j-n_{j-1})^{\xi-1}}{\ell(n_j-n_{j-1})^{(q-1)d}} \\
	 		& \quad \leq (C')^k (a_N)^{k(q-1)d} N^{-k}\frac{N^{k\xi}}{\ell(N)^{k(q-1) d}} \int_{0 < t_1 < \dots < t_k < 1} \prod\limits_{j=1}^k (t_j-t_{j-1})^{\frac12 \xi-1} \prod\limits_{j=1}^k \dd t_j = (C')^k \frac{\Gamma(\frac12 \xi)^k}{\Gamma(\frac12\xi k +1)} \,,
		\end{split}
	\end{equation*}
	where we have used Proposition~\ref{prop: technical estimate on slow varying functions} (with \(\gep= \xi/2\)) for the first inequality and then~\eqref{eq: gamma integrals}.
	This yields~\eqref{eq: bound on the discrete moments} and concludes the proof.
\end{proof}

\subsection{Adapting the proof for a general function $G$}{\label{sec: extension to functional partition function}}

Without loss of generality, we assume that the function $G$ is positive. 
As we did in the case $G=1$, we define a truncated version of $Z_N^{\omega,\beta_N}(G)$ for every $A > 0$ by
\begin{equation*}
	Z_{N,A}^{\omega,\beta_N}(G) := \E\Big[f_A(S_N^*) G(S^{(N)}) \prod\limits_{n=1}^N (1+\beta_N \omega_{n,S_n}) \Big].
\end{equation*}
Lemma~\ref{lemma: convergence of the truncated polymer partition function} can be adapted as follows:

\begin{lemma}{\label{lemma: convergence of the truncated partition function with general G}}
	Assume that $\lim_{N\to\infty} \beta_N V_N (a_N)^{-d}= \hbeta \in [0,\infty)$.
	Then, for every $A > 0$, we have the following convergence in distribution: 
	\begin{equation*}
		Z_{N,A}^{\omega,\beta_N}(G) \xrightarrow[\ N\to\infty\ ]{(d)}
		\cZ_A^{\bzeta, \hbeta}(G) = \sum\limits_{k \ge 0} \hbeta^k \int_{0 < t_1 < \dots < t_k < 1} \int_{(\bbR^{d})^k} \bpsi_A^{(k)}\big((t_1,x_1),\dots,(t_k,x_k);G\big) \prod\limits_{j=1}^k \bzeta(\dd t_j \dd x_j),
	\end{equation*}
	where $\bpsi_A^{(k)}\big((t_1,x_1),\dots,(t_k,x_k);G\big) := \prod\limits_{j=1}^k g_{t_j-t_{j-1}}(x_j - x_{j-1})\, \E[f_A(\bX^*) G(\bX) \mid \bX_{t_1} = x_1, \dots , \bX_{t_k} = x_k]$. 
\end{lemma}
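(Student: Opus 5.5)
The plan is to repeat the proof of Lemma~\ref{lemma: convergence of the truncated polymer partition function} with $f_A$ replaced by $f_A G$, checking that each step only used boundedness of $f_A$ and a conditional invariance principle. The chaos expansion has correlation functions
\[
\psi_{N,A}^{(k)}\big((t_1,x_1),\dots,(t_k,x_k);G\big) = \psi_N^{(k)}\big((t_1,x_1),\dots,(t_k,x_k)\big)\, \E\big[f_A(S_N^*) G(S^{(N)}) \,\big|\, S_{Nt_j}=a_N x_j\ \forall j\big] ,
\]
with zeroth-order term $\E[f_A(S_N^*)G(S^{(N)})]$. These are symmetric, vanish on the diagonals and, since $f_A G$ is supported on $\{S_N^*\le 2A\}$, they vanish outside $\Omega_{N,A}$. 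So the expansion is a discrete chaos on the bounded domain $\Omega_A$, and Theorem~\ref{thm: scaling limit of discrete polynomial chaos} is the tool I will apply.

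\textbf{Summability.} Since $0\le f_A G\le \|G\|_\infty$, we have $0\le \psi_{N,A}^{(k)}(\cdot;G)\le \|G\|_\infty\, \psi_N^{(k)}$ and $0\le\bpsi_A^{(k)}(\cdot;G)\le\|G\|_\infty\,\bpsi^{(k)}$. The bounds \eqref{eq: bound on the continuous moments} and \eqref{eq: bound on the discrete moments} then transfer directly, at the cost of a factor $\|G\|_\infty^{q}$ (harmless, since it does not grow with $k$). Their super-exponential decay in $k$, of order $\Gamma(\frac12\xi k+1)^{-1/q}$, gives hypothesis~\ref{hyp: truncation} for every $C$. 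The zeroth-order term converges to $\bE[f_A(\bX^*)G(\bX)]$ by the invariance principle, because $f_A(x^*)G(x)$ is bounded and continuous on $\cD$ at almost every path of $\bX$.

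\textbf{Pointwise convergence.} For fixed $0<t_1<\dots<t_k<1$ and distinct $x_j$, the local limit theorem \eqref{eq: local limit theorem} gives $\overline\psi_N^{(k)}\to\bpsi^{(k)}$. The invariance principle for the walk conditioned on $S_{\lfloor Nt_j\rfloor}=\lfloor a_N x_j\rfloor$ \cite{liggett_1970} (concatenation of bridges plus a free final piece) gives
\[
\E\big[f_A(S_N^*)G(S^{(N)})\mid\cdots\big]\to \bE\big[f_A(\bX^*)G(\bX)\mid \bX_{t_j}=x_j\ \forall j\big].
\]
This uses that $x\mapsto f_A(\sup|x|)G(x)$ is bounded and continuous in the Skorokhod topology. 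The supremum is $J_1$-continuous at paths without a jump exactly at time $1$, and the limit bridge concatenation a.s.\ has none.

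\textbf{$L^q_s$ convergence (main obstacle).} I expect the delicate point to be upgrading pointwise convergence to convergence in $L^q_s(\Omega_A^k)$. I would first try dominated convergence as in the $G=1$ case, using the domination by $\|G\|_\infty\psi_N^{(k)}$. Strictly, dominated convergence needs a fixed dominating function, and $\psi_N^{(k)}$ depends on $N$. I would therefore use the uniform bound \eqref{eq: uniform bound on discrete density} together with the uniform $L^{q'}$ bounds \eqref{eq: bound on the discrete moments} for some $q'\in(q,1+\frac{\alpha}{d})$. These give uniform integrability of $|\overline\psi_{N,A}^{(k)}(\cdot;G)|^q$ on the bounded set $\Omega_A^k$, and Vitali's theorem then turns pointwise (a.e.) convergence into $L^q_s$ convergence. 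With hypotheses~\ref{hyp: convergence of correlation functions}--\ref{hyp: truncation} verified, Theorem~\ref{thm: scaling limit of discrete polynomial chaos} on $\Omega_A$ yields $Z_{N,A}^{\omega,\beta_N}(G)\to\cZ_A^{\bzeta,\hbeta}(G)$ jointly with the noise. Applying the joint version of Proposition~\ref{prop: convergence of discrete multiple integrals} to several $G$'s and $A$'s gives the corresponding joint statements.
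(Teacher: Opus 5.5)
Your proposal is correct and follows the paper's argument. You expand $Z_{N,A}^{\omega,\beta_N}(G)$ in chaos, obtain pointwise convergence of $\psi_{N,A}^{(k)}(\cdot;G)$ from the local limit theorem and the conditioned invariance principle, transfer the $G\equiv 1$ integrability estimates via $\psi_{N,A}^{(k)}(\cdot;G)\le \|G\|_\infty\,\psi_{N,A}^{(k)}$, and then apply Theorem~\ref{thm: scaling limit of discrete polynomial chaos} on the bounded domain $\Omega_A$. Your Vitali argument, using uniform $L^{q'}$ bounds for some $q'\in(q,1+\frac{\alpha}{d})$, is exactly the rigorous form of the paper's remark that ``dominated convergence'' applies because the moment bounds hold for every $q$ in that range. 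As a side note, the supremum is $J_1$-continuous on all of $\mathcal{D}([0,1],\mathbb{R}^d)$, so your caveat about a jump at time $1$ is unnecessary.
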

This lemma implies the convergence of $Z_N^{\omega,\beta_N}(G)$.
Indeed, since we also have for every $0 \le A \le B$, \[Z_{N,A}^{\omega,\beta_N}(G) \le Z_{N,B}^{\omega,\beta_N}(G)\,,\]
we get
\begin{equation*}
	\Es\Big[|Z_N^{\omega,\beta_N}(G)-Z_{N,A}^{\omega,\beta_N}(G)|\Big] = \E[G(S^{(N)})(1-f_A(S_N^*))] \le \|G\|_{\infty} \P[S_N^* \ge A]
\end{equation*}
and similarly,
\begin{equation*}
	\Es\Big[|\cZ_B^{\bzeta,\hbeta}(G)-\cZ_A^{\bzeta,\hbeta}(G)|\Big] \le \|G\|_{\infty} \bP[\bX^* \ge A]\,.
\end{equation*}
Therefore,
\begin{equation*}
	\lim\limits_{A \to +\infty} \limsup\limits_{N \to +\infty} \Es\Big[|Z_N^{\omega,\beta_N}(G)-Z_{N,A}^{\omega,\beta_N}(G)|\Big] = 0\,,
\end{equation*}
and the family $(\cZ_A^{\bzeta,\hbeta}(G))_{A \ge 1}$ is a Cauchy family in $\mathds{L}^1$, so the limit $\cZ^{\bzeta,\hbeta}(G) := \lim\limits_{A \to +\infty} \cZ_A^{\bzeta,\hbeta}(G)$ holds in $\mathds{L}^1$.

The polynomial chaos expansion of $Z_{N,A}^{\omega,\beta}(G)$ is given by 
\begin{equation*}
	\begin{split}
		&Z_{N,A}^{\omega,\beta}(G) = \E[f_A(S_N^*) G(S^{(N)})]+\\
		&\qquad \qquad \qquad \sum\limits_{k \ge 1} \frac{(\hbeta_N)^k}{k!} \sum\limits_{(t_1,x_1),\dots,(t_k,x_k) \in \Omega_{N,A}} \psi_{N,A}^{(k)}\big((t_1,x_1),\dots,(t_k,x_k);G\big) \prod\limits_{j=1}^k \frac{\omega_{N t_j, a_N x_j}}{V_N}\,,
	\end{split}
\end{equation*}
where, for every $k \ge 1$, $\psi_{N,A}^{(k)}(~\cdot~;G): \Omega_{N,A}^k \to \bbR_+$ is the symmetric function vanishing on the diagonals defined, for every $(t_1,x_1),\dots,(t_k,x_k) \in \Omega_{N,A}$ with $t_1 < \dots < t_k$, by
\begin{equation*}
	\psi_{N,A}^{(k)}\big((t_1,x_1),\dots,(t_k,x_k);G\big) := (a_N)^{kd} \E\Big[f_A(S_N^*)G(S^{(N)}) \Ind_{\{S_{N t_1} = a_N x_1, \dots, S_{N t_k} = a_N x_k\}}\Big] \,.
\end{equation*}
By the local limit theorem~\eqref{eq: local limit theorem} and the invariance principle for conditionned random walk~\cite{liggett_1970}, it converges pointwisely as $N \to +\infty$ to the symmetric function $\bpsi_A^{(k)}(~\cdot~;G)$ given for every $0 < t_1 < \dots < t_k < 1$ and $x_1,\dots,x_k \in \bbR^d$ by 
\begin{equation*}
	\bpsi_A^{(k)}\big((t_1,x_1),\dots,(t_k,x_k);G\big) = \prod\limits_{j=1}^k g_{t_j -t_{j-1}}(x_j-x_{j-1}) \bE\Big[f_A(\bX^*) G(\bX) \mid X_{t_1} = x_1,\dots, X_{t_k} = x_k \Big]\,.
\end{equation*}
Furthermore, for every $N \ge 1$, 
\[ \psi_{N,A}^{(k)}\big((t_1,x_1),\dots,(t_k,x_k);G\big) \le \|G\|_{\infty} \psi_{N,A}^{(k)}\big((t_1,x_1),\dots,(t_k,x_k)\big)\,,\]
and therefore, the integrability conditions proved for $G = 1$ in the proof of Lemma~\ref{lemma: convergence of the truncated polymer partition function} apply to a general bounded function $G$. 
\subsection{Proof of Theorem \ref{thm: convergence of the polymer measure}}

We apply Lemma~\ref{lemma: how to prove convergence of disordered measures} to prove Theorem \ref{thm: convergence of the polymer measure}.

Assumption \ref{cond: i} follows from the invariance principle, while assumption \ref{cond: ii} is established in Section~\ref{sec: extension to functional partition function}.
It therefore remains to show that $\cZ^{\bzeta,\hbeta}$ is a.s.\ strictly positive. 
This result follows by a straightforward adaptation of the proof in the case $\alpha = 2$ presented in \cite[Section~4.7]{berger_lacoin_2022}, so we have decided not to include it here.

\subsection{Proof of Theorem \ref{thm: disorder relevance for the polymer}}

Let us stress that item~\ref{res: disorder relevance on the left} is a direct consequence of Theorem~\ref{thm: convergence of the polymer partition function}: if $\lim_{N \to +\infty} \beta_N V_N (a_N)^{-d} = \hat{\beta}=0$ we have that \(Z_N^{\omega,\beta_N}\) converges in distribution to \(\cZ^{\bzeta,\hbeta=0} =1\).

For the proof of item~\ref{res: disorder relevance on the right}, this is also almost a consequence of Theorem~\ref{thm: disorder relevance}, after a truncation argument. 

\begin{lemma}
	Suppose $\lim_{N \to +\infty} \beta_N V_N (a_N)^{-d} = +\infty$, then for every fixed $A > 0$, $Z_{N,A}^{\omega,\beta_N}$ goes to $0$ in probability. 
\end{lemma}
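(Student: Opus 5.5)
We plan to reproduce the change-of-measure argument from the proof of Theorem~\ref{thm: disorder relevance}, now on the bounded strip. Set \(\hat\beta_N := \beta_N V_N (a_N)^{-d}\to\infty\), \(a_N' := \hat\beta_N^{4/(\gamma-1)}\), and consider the event
\[
A_N := \{\omega : |\omega_{n,x}| < a_N' V_N \ \forall (n,x) \text{ with } 1\le n\le N,\ |x| < 2A a_N\}.
\]
Since \(f_A(S_N^*)=0\) unless the walk stays in \(\{|x|\le 2Aa_N\}\), the truncated partition function \(Z_{N,A}^{\omega,\beta_N}\) depends only on \(\omega\) in the box \(\Omega_{N,A}\). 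That box has \(O(N (Aa_N)^d)\) sites, that is \(O(A^d v_N^{-1})\). The union bound combined with Potter's bound therefore gives \(\Pro[A_N^c] \le C A^d (a_N')^{-\gamma/2}\to 0\), just as in step (i) of the earlier proof.

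Next we would show \(\Es[Z_{N,A}^{\omega,\beta_N}\Ind_{A_N}]\to0\). By Fubini and the size-biased measure \(\Pro^S_N\) (under which \(\omega_{n,S_n}\) has law \(\tilde\Pro_N\), \(\dd\tilde\Pro_N=(1+\beta_N\omega)\dd\Pro\), and all other sites are unchanged), we get
\[
\Es[Z_{N,A}^{\omega,\beta_N}\Ind_{A_N}] = \E\big[f_A(S_N^*)\,\Pro^S_N(A_N)\big] \le \E\big[(1-\tilde\Pro_N(|\omega|>a_N'V_N))^{N}\big],
\]
where we use that on the support of \(f_A\) all \(N\) visited sites lie in the box and that \(f_A\le 1\). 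The estimate from the earlier proof gives \(\tilde\Pro_N(|\omega|>a_N'V_N)\ge c\,\hat\beta_N (a_N')^{-(\gamma-1)/2}J_N v_N\), with \(J_N v_N = (a_N)^d N^{-1}(a_N)^{-d}=N^{-1}\). The right-hand side is thus at most \(\exp(-c\hat\beta_N (a_N')^{-(\gamma-1)/2})\to0\). This is the directed-polymer instance of the hypothesis of Theorem~\ref{thm: disorder relevance}: the quantity \(J_Nv_N\sum\sigma_x = N^{-1}\cdot N = 1\) is deterministic and positive.

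The conclusion follows by writing \(\Pro[Z_{N,A}>\gep]\le \Pro[A_N^c]+\gep^{-1}\Es[Z_{N,A}\Ind_{A_N}]\). The main obstacle is that Theorem~\ref{thm: disorder relevance} is formulated on a bounded \(\Omega\) with the weight \(\prod(1+\beta\omega_x\sigma_x)\), and \(f_A\) is not an indicator. We handle this by bounding \(f_A\le \Ind_{\{S_N^*\le 2A\}}\) only inside the size-biased expectation, which keeps the probabilistic interpretation (\(\Es\) of the weight equals one), and by counting the box sites only in step (i), where boundedness is needed.
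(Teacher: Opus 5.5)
\paragraph{Comparison with the paper.} Your overall route is valid and differs slightly from the paper's. The paper dominates \(Z_{N,A}^{\omega,\beta_N}\) by the partition function \(\tilde Z_{N,A}\) of the walk conditioned to stay in \(\{|x|\le 2Aa_N\}\), applies Theorem~\ref{thm: disorder relevance} as a black box, and checks its hypothesis through the invariance principle \(\frac1N\sum_{n\le N}\Ind_{|S_n|\le 2Aa_N}\to\int_0^1\Ind_{|\bX_t|\le 2A}\,\dd t>0\). You instead rerun the change of measure directly on \(Z_{N,A}^{\omega,\beta_N}\). You use \(0\le f_A\le 1\) and the fact that \(f_A(S_N^*)>0\) forces all \(N\) visited sites into the box, so the occupation count is exactly \(N\) and you need neither conditioning nor an invariance principle. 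Step (i), the size-biased identity, the bound \(\Pro^S_N(A_N)\le(1-\tilde\Pro_N(|\omega|>a_N'V_N))^N\) on the support of \(f_A\), and the final Markov splitting are all correct.

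\paragraph{The gap.} The quantitative step fails as written. With your choice \(a_N'=\hat\beta_N^{4/(\gamma-1)}\) you get \(\hat\beta_N(a_N')^{-(\gamma-1)/2}=\hat\beta_N^{-1}\to0\), so your final bound \(\exp(-c\hat\beta_N(a_N')^{-(\gamma-1)/2})\) tends to \(1\), not to \(0\).

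The lower bound you import is also off. Since \(\omega>-1\), for \(t\ge1\) you have \(\tilde\Pro_N(|\omega|>t)\ge\beta_N\Es[\omega\Ind_{\{\omega>t\}}]\sim\frac{\gamma}{\gamma-1}\beta_N t^{1-\gamma}\varphi(t)\). Combining Potter's bound at \(t=a_N'V_N\) with \(N\beta_NV_N\Pro[|\omega|>V_N]\sim\hat\beta_N\) only yields \(N\tilde\Pro_N(|\omega|>a_N'V_N)\ge c_\gep\,\hat\beta_N(a_N')^{-(\gamma-1)-\gep}\). The exponent is \(-(\gamma-1)-\gep\), not \(-(\gamma-1)/2\). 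Both slips already appear in the written proof of Theorem~\ref{thm: disorder relevance}, which the paper's one-line proof of this lemma relies on.

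\paragraph{The fix.} Let the truncation level grow more slowly. For \(a_N'=\hat\beta_N^{\kappa}\) the resulting exponent of \(\hat\beta_N\) is \(1-\kappa(\gamma-1+\gep)\), so you need \(\kappa<1/(\gamma-1)\). For instance, \(a_N'=\hat\beta_N^{1/(2(\gamma-1))}\) with \(\gep<\gamma-1\) gives \(N\tilde\Pro_N(|\omega|>a_N'V_N)\ge c\,\hat\beta_N^{\frac12-\frac{\gep}{2(\gamma-1)}}\to\infty\). Step (i) is unaffected, since it only uses \(a_N'\to\infty\). With this correction your proof is complete.
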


\begin{proof}
	For a fixed $A > 0$, since $f_A(x) = 0 $ for $x>2$, we have 
	\[ Z_{N,A}^{\omega,\beta_N} \le \E\Big[\Ind_{\forall 1 \le n \le N, |S_n| \le 2 A a_N} \prod\limits_{n=1}^N (1 + \beta_N \omega_{n,S_n})\Big] \le \tilde{Z}_{N,A} := \E_{N,A}\Big[\prod\limits_{n=1}^N (1 + \beta_N \omega_{n,S_n})\Big]\,,\]
	where $\P_{N,A}(~\cdot~) = \P\Big[~\cdot~ \mid \forall 1 \le n \le N, |S_n| \le 2 A a_N\Big]$.
	In order to apply Theorem~\ref{thm: disorder relevance} to $\tilde{Z}_{N,A}$,
	we have to show that $(a_N)^d N^{-1} (a_N)^{-d} \sum_{n=1}^N \Ind_{|S_n| \le 2 A a_N}$ converges in distribution under $\P$ to a stricly positive random variable (recall that for the long-range polymer, $J_N = (a_N)^d$ and $v_N = N^{-1} (a_N)^{-d}$).
	But this is a consequence of the invariance principle for long-range random walks (see e.g.\ \cite[Thm. 7.1.]{resnick_2007}) since 
	\[ \frac{1}{N} \sum\limits_{n=1}^N \Ind_{|S_n| \le 2 A a_N} \xrightarrow{(d)} \int_{0}^1 \Ind_{|\bX_t| \le 2 A} \dd t \,,\]
	which is almost surely strictly positive by stochastic continuity of the process $(\bX_t)_{t \in [0,1]}$.
	
\end{proof}
Now, we deduce from this lemma the proof of item~\ref{res: disorder relevance on the right}. 
Fix any \(\gep>0\) and $\delta > 0$. 
For every $N \ge 1$ and $A > 0$, by~\eqref{eq: uniform inequality on approximation of partition functions}, we have 
\[ \Es[|Z_N^{\omega,\beta_N}-Z_{N,A}^{\omega,\beta_N}|] \leq \P[S_N^* \geq A]\,. \]
Choose $A > 0$ large enough so that 
\begin{equation}{\label{eq: uniform approximation}}
	\sup_{N \ge 1} \Es[|Z_N^{\omega,\beta_N}-Z_{N,A}^{\omega,\beta_N}|] \leq \delta\,.
\end{equation}
Then, for every $N \ge 1$, 
\[ \Pro[Z_N^{\omega,\beta_N} > 2\gep] = \Pro[(Z_N^{\omega,\beta_N}-Z_{N,A}^{\omega,\beta_N})+Z_{N,A}^{\omega,\beta_N} > 2\gep] \le \Pro[Z_{N,A}^{\omega,\beta_N} > \gep] + \Pro[Z_{N}^{\omega,\beta_N} - Z_{N,A}^{\omega,\beta_N} > \gep]\,.\]
Therefore, by Markov's inequality and~\eqref{eq: uniform approximation}, 
\[ \Pro[Z_N^{\omega,\beta_N} > 2\gep] \le \Pro[Z_{N,A}^{\omega,\beta_N} > \gep] + \delta \gep^{-1}\,,
\]
which yields, applying the lemma,
\( \limsup_{N \to +\infty} \Pro[Z_N^{\omega,\beta_N} > 2\gep] \le \delta \gep^{-1}\,.\)
Since \(\delta\) is arbitrary, this concludes the proof.




\appendix

\section{Lévy noises and Poisson convergence}{\label{appendix: noises and Poisson convergence}}

After having extensively dissected, partitioned and truncated our polynomial chaos, it becomes necessary, at some point, to establish a result of convergence in distribution. 
This is the purpose of this appendix. 

Let us recall our set up. 
The set $\Omega$ is bounded and open in $\bbR^D$, and for every $\delta \in (0,1)$, $\Omega_{\delta}$ is a finite subset of $\Omega$ associated to a tesselation $\cC_{\delta} = \{ \cC_{\delta}(x) \}_{x \in \Omega_{\delta}}$ of $\Omega$ (see Section~\ref{def of a tesselation} for a definition). We are given for every $\delta \in (0,1)$, i.i.d.\ and centered random variables $(\omega_{x})_{x \in \Omega_{\delta}}$ such that for every $t > 0$
\[ 
	\Pro[|\omega_x| > t ] = \varphi(t) t^{-\gamma} \,,
\]
where $\gamma \in (1,2)$ and $\varphi$ is a slowly-varying function. 
Furthermore, there are constants $c_+,c_- \in [0,1]$ with $c_+ + c_- =1 $ such that $c_+ = \lim_{t \to +\infty} \frac{\Pro[\omega_x > t]}{\Pro[|\omega_x| > t]}$. The discrete noise $\zeta_{\delta}$ is then defined by 
\[ 
	\zeta_{\delta} := \frac{1}{V_{\delta}} \sum\limits_{x \in \Omega_{\delta}} \omega_x \delta_x \,.
\]
Let $\Lambda$ be a Poisson point process on $\Omega \times \bbR$ of intensity $\mu(\dd x \dd z) = \dd x \lambda(\dd z)$ with $\lambda(\dd z) = \gamma (c_+ \Ind_{z > 0} + c_- \Ind_{z < 0}) |z|^{-1-\gamma} \dd z$. 
Our first step is to define the $\gamma$-stable noise $\bzeta$. 
For every $a \in (0,1]$, let $\Lambda^{(a)}$ denote the point process $\Lambda$ restricted to $\{ (x,z) \in \Omega \times \bbR \,, |z| > a \}$.  
This subset of $\Omega \times \bbR$ has a finite measure under $\mu$, therefore the point process $\Lambda^{(a)}$ is almost surely finite. 
This allows to define 
\[ 
	\bzeta^{(a)} = \sum\limits_{(x,z) \in \Lambda} z \Ind_{|z| > a} \delta_x - \kappa(a) \dd x\,,
\]
with $\kappa(a) = \int_{\bbR} z \Ind_{|z| > a} \lambda(\dd z)$. 
The truncated noises $(\bzeta^{(a)})_{a \in (0,1]}$ converge almost surely as $a \downarrow 0$ in some negative Sobolev spaces.  
In order to formulate our result, let us briefly introduce these spaces.

The space of infinitely differentiable functions compactly supported in $\Omega$ is denoted by $\cD_0(\Omega)$. 
Its dual space is the space of distributions $\cD'(\Omega)$, and the action of a distribution $T \in \cD'(\Omega)$ on an element $\bff \in \cD_0(\Omega)$ is denoted by $\langle T,\bff \rangle$. 
Let $s \in \mathbb{R}$, $H^s(\mathbb{R}^D)$ is the completion of $\cD_0(\Omega)$ for the Hilbertian norm
\begin{equation*}
\| \theta \|_{H^s} = \left(\int_{\mathbb{R}^D} (1+|z|^2)^s |\hat{\theta}(z)|^2 dz \right)^{1/2},
\end{equation*}
where $\hat{\theta}$ is the Fourier transform of $\theta$.
It is a linear subspace of $\cD'(\bbR^D)$.
The local version of this space is
\begin{equation*}
H_{\rm loc}^s(\Omega) = \{ \theta \in \cD'(\Omega) \,; \: \forall \psi \in \cD_0(\Omega), \psi \theta \in H^s(\bbR^D) \}.
\end{equation*}
It is a separable Fréchet space.  

\begin{theorem}{\label{thm: construction of the continuous noise}}
	$\bzeta^{(a)}$ converges almost surely in $H_{\rm loc}^{-s}(\Omega)$ for every $s >  \frac{D}{2}$. Its limit is denoted $\bzeta$ and its law is characterized by 
	\[ 
	\Es\Big[ \expo^{i \langle \bzeta, \bff \rangle} \Big] = \exp\bigg( \int_{\Omega \times \bbR} \Big( \expo^{i \bff(x) z} -1 -i \bff(x) z) \dd x \lambda(\dd z) \bigg) \,,
	\]
	for every function $\bff \in \cD_0(\Omega)$. 
\end{theorem}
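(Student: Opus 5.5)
The plan is to reduce to showing that $\bzeta^{(a)}$ is Cauchy, almost surely, in each Hilbert seminorm $\theta \mapsto \|\chi \theta\|_{H^{-s}}$ with $\chi \in \cD_0(\Omega)$. Since $H^{-s}_{\rm loc}(\Omega)$ is a Fréchet space whose topology is generated by countably many such seminorms, coming from an exhaustion of $\Omega$ by compacts, this gives almost sure convergence. We fix $\chi$ and, for $a\in(0,1]$, consider the random Fourier transform
\[
\widehat{\chi\bzeta^{(a)}}(\xi)=\int_{\Omega\times\bbR} \chi(x)\expo^{-i\xi\cdot x} z\Ind_{|z|>a}\,(\delta_\Lambda-\mu)(\dd x\,\dd z).
\]
For $0<a'<a\le 1$, the difference $\widehat{\chi\bzeta^{(a')}}(\xi)-\widehat{\chi\bzeta^{(a)}}(\xi)$ is a compensated Poisson integral over $\{a'<|z|\le a\}$. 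Its second moment is given by the Poisson isometry:
\[
\Es\big|\widehat{\chi\bzeta^{(a')}}(\xi)-\widehat{\chi\bzeta^{(a)}}(\xi)\big|^2=\|\chi\|_{L^2}^2\, m_2(a',a)\le C a^{2-\gamma},
\]
uniformly in $\xi$. Integrating against $(1+|\xi|^2)^{-s}$, which is integrable exactly when $s>D/2$, yields
\[
\Es\|\chi(\bzeta^{(a')}-\bzeta^{(a)})\|_{H^{-s}}^2\le C a^{2-\gamma}.
\]

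From this we get the a.s.\ statement by a martingale argument. The process $a\mapsto \chi\bzeta^{(a)}$, taken with the reversed filtration $\cF_a$ as in Proposition~\ref{prop: martingale property of multiple integrals}, is a Hilbert-space-valued reversed martingale: $\chi\bzeta^{(a')}-\chi\bzeta^{(1)}$ is the sum of independent centered increments along a decreasing sequence of levels. It is bounded in $L^2(\Pro; H^{-s})$, with $\sup_{a}\Es\|\chi(\bzeta^{(a)}-\bzeta^{(1)})\|^2\le C$. Doob's maximal inequality in Hilbert spaces, applied along the dyadic levels $a=2^{-n}$ together with the bound above, gives
\[
\Es\Big[\sup_{a'<a\le 2^{-n}}\|\chi(\bzeta^{(a')}-\bzeta^{(a)})\|^2\Big]\le C2^{-n(2-\gamma)}.
\]
Borel--Cantelli then shows that the family is a.s.\ Cauchy as $a\downarrow0$. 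Alternatively, one can use the martingale convergence theorem in Hilbert spaces directly. We also need a finiteness check, namely that $\chi\bzeta^{(1)}\in H^{-s}$ a.s. This holds because $\bzeta^{(1)}$ is a finite sum of Diracs plus a bounded density, and a Dirac lies in $H^{-s}$ exactly when $s>D/2$.

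For the law of the limit, fix $\bff\in\cD_0(\Omega)$. The pairing $\langle\cdot,\bff\rangle$ is continuous on $H^{-s}_{\rm loc}$, so $\langle\bzeta,\bff\rangle=\lim_{a\downarrow 0}\langle\bzeta^{(a)},\bff\rangle$ a.s. By Campbell's formula,
\[
\Es\big[\expo^{i\langle\bzeta^{(a)},\bff\rangle}\big]=\exp\Big(\int (\expo^{i\bff(x)z}-1-i\bff(x)z)\Ind_{|z|>a}\,\dd x\,\lambda(\dd z)\Big).
\]
The integrand is bounded by $C\min(|z|^2,|z|)\,\|\bff\|_\infty^2\vee\|\bff\|_\infty$, which is $\mu$-integrable on $\Omega\times\bbR$ since $\gamma\in(1,2)$ and $\Omega$ is bounded. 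Dominated convergence then gives the stated formula. The characteristic functionals determine the law on $\cD'(\Omega)$, and hence on $H^{-s}_{\rm loc}$.

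The main obstacle is the step from $L^2$-Cauchy to almost sure convergence in a space of distributions. The maximal inequality needs a separable Hilbert target, which is why we localize with $\chi$ and use a countable family of cutoffs. It also needs the reversed-martingale structure, which requires care with the càdlàg-in-$a$ issue; we handle this by working on dyadic $a$ and bounding the oscillation between consecutive dyadic levels with the same Doob estimate.
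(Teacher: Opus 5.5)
Your proposal is correct and follows the same route as the paper. The paper reduces the statement to showing that $(\psi\bzeta^{(a)})_{a}$ is almost surely Cauchy in $H^{-s}$ for each test function $\psi$, and defers the details to Berger--Lacoin. Your argument supplies those details: the Poisson-isometry bound in Fourier variables, the separate a.s.\ treatment of the non-$\mathds{L}^2$ jumps larger than $1$, and the Hilbert-space Doob/Borel--Cantelli step. Your identification of the characteristic functional via Campbell's formula and dominated convergence, which the paper does not spell out, is also sound.
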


The proof consists of showing that, for every function $\psi \in \cD_0(\Omega)$, almost surely, the family $(\psi \zeta^{(a)})_{a > 0}$ is Cauchy in $H^s$ as $a \to 0$.
Such a proof can be found in \cite[appendix~A]{berger_lacoin_2022}. 
Although the setting of their paper is slightly different - the authors consider more general measures $\lambda$, but supported only on $\mathbb{R}_+$)- there arguments extend to our framework without difficulty.
The set of real numbers $s$ such that  $s > \frac{D}{2} $ coincides exactly with the set of $s$ for which 
\[ 
	\int_{\mathbb{R}^D} (1+|z|^2)^{-s} dz < +\infty \,.
\]
This illustrates the fact that Lévy noises have the same Sobolev regularity as the Dirac distribution. 

\begin{theorem}{\label{thm: functional convergence for the noise}}
	Let $s > \frac{D}{2}$, then $(\zeta_{\delta})_{\delta > 0}$ converges in law as $\delta \to 0$ to $\bzeta$ for the topology of $H^{-s}_{\rm loc}(\Omega)$. 
\end{theorem}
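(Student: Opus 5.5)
The statement to prove is Theorem~\ref{thm: functional convergence for the noise}: $\zeta_\delta\to\bzeta$ in law in $H^{-s}_{\rm loc}(\Omega)$ for every $s>\frac D2$. I would follow the standard two-step scheme, tightness plus identification of the limit, using test functions. Since $H^{-s}_{\rm loc}(\Omega)$ is a separable Fréchet space whose topology is given by the seminorms $\theta\mapsto\|\chi\theta\|_{H^{-s}}$ for $\chi\in\cD_0(\Omega)$, it suffices to show two things for each fixed cutoff $\chi$. First, the finite-dimensional laws $(\langle\zeta_\delta,\bff_1\rangle,\dots,\langle\zeta_\delta,\bff_m\rangle)$ converge to those of $\bzeta$. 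Second, $(\chi\zeta_\delta)_\delta$ is tight in $H^{-s'}(\bbR^D)$ for some $s'\in(\frac D2,s)$. Compactness of the embedding $H^{-s'}\hookrightarrow H^{-s}$ on fixed compact supports (Rellich) then gives tightness in $H^{-s}$. The test functions $\cD_0$ separate points, so together these identify the limit.

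For the finite-dimensional laws, by Cramér--Wold I reduce to a single $\bff\in\cD_0(\Omega)$. I then compute the characteristic function
\[
\Es\big[\expo^{i\langle\zeta_\delta,\bff\rangle}\big]=\prod_{x\in\Omega_\delta}\Es\big[\expo^{i\bff(x)\omega/V_\delta}\big].
\]
Using $\Es[\omega]=0$, I write each factor as $1+\Es[\expo^{iu\omega/V_\delta}-1-iu\omega/V_\delta]$ with $u=\bff(x)$. The tail assumption \eqref{eq: definition of a general heavy-tailed random variable}, the definition \eqref{eq: general scale of the noise} and regular variation show that $v_\delta^{-1}$ times the law of $\omega/V_\delta$ converges vaguely on $\bbR\setminus\{0\}$ to $\lambda$. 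To turn this into convergence of $v_\delta^{-1}\Es[g_u(\omega/V_\delta)]$ for $g_u(z)=\expo^{iuz}-1-iuz$, I need uniform integrability at both ends. Near $0$, $|g_u(z)|\le u^2z^2$, and the truncated second moment is controlled by \eqref{eq: asymptotic behavior for moments of order p-q} with $q=2$ and small $a$. At infinity, $|g_u(z)|\le 2|u||z|$, and the truncated first moment is controlled by the same estimate with $p=1$ and large $a$. Both bounds are uniform in $u$ on the bounded range of $\bff$. Hence each factor equals $1+v_\delta\,\Phi(\bff(x))+o(v_\delta)$ uniformly in $x$, where $\Phi(u)=\int g_u\,\dd\lambda$. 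Taking logarithms and using that $v_\delta\sum_x\Phi(\bff(x))\to\int_\Omega\Phi(\bff(y))\dd y$ as a Riemann sum (with $\bff$ continuous) gives the Lévy--Khintchine formula of Theorem~\ref{thm: construction of the continuous noise}.

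For tightness, I would like a second-moment bound on $\|\chi\zeta_\delta\|_{H^{-s'}}^2=\int(1+|\xi|^2)^{-s'}|\widehat{\chi\zeta_\delta}(\xi)|^2\dd\xi$. This is unavailable because $\omega\notin\mathds{L}^2$, so I truncate at level $aV_\delta$, as in \eqref{eq: definition cut-off omega}.

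The small-jump part is centered and has variance of order $a^{2-\gamma}v_\delta V_\delta^2$ per site. Since $|\widehat{\chi\delta_x}(\xi)|\le\|\chi\|_\infty$, its expected squared $H^{-s'}$ norm is at most $C\,a^{2-\gamma}|\Omega|\int(1+|\xi|^2)^{-s'}\dd\xi$. This is finite exactly because $s'>\frac D2$, and it is uniform in $\delta$.

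The large-jump part is a sum of at most $N_\delta$ Dirac masses, where $N_\delta=\#\{x:|\omega_x|>aV_\delta\}$ has bounded expectation of order $a^{-\gamma}|\Omega|$. Each mass is weighted by a factor $|\omega_x|/V_\delta$, which is tight. The compensator $\kappa_\delta(a)$ is deterministic, of order $a^{1-\gamma}$ after scaling, and is a bounded multiple of the Lebesgue-type measure $v_\delta\sum_x\delta_x$. So the large-jump part is tight in total variation on $\mathrm{supp}\,\chi$. Since finite measures embed continuously into $H^{-s'}$ for $s'>\frac D2$, tightness in $H^{-s'}$ follows.

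Splitting with Markov's inequality for each fixed $a$ yields tightness of $\chi\zeta_\delta$. I expect the only real obstacle to be bookkeeping this infinite-variance truncation uniformly in $\delta$; the identification step is routine given the vague convergence.
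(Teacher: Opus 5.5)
Your proposal is correct and follows the same scheme as the paper's proof, which is only sketched there with a reference to Berger--Lacoin: tightness of $(\zeta_\delta)$ in $H^{-s}_{\rm loc}(\Omega)$, handled cutoff by cutoff, plus convergence in law of $\langle \zeta_\delta,\bff\rangle$ for each $\bff\in\cD_0(\Omega)$, which by linearity gives all finite-dimensional marginals and, together with tightness, identifies the limit. You differ only in execution: you identify the marginals by a direct L\'evy--Khintchine computation on the triangular array instead of the binomial-to-Poisson convergence the paper invokes, and you get tightness without second moments through truncation, a first-moment (total-variation) bound on the large jumps, and Rellich; note that what you actually prove, and what Rellich needs, is that $\|\chi\zeta_\delta\|_{H^{-s'}}$ is bounded in probability, not that $\chi\zeta_\delta$ is tight in $H^{-s'}$.
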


A proof of this result can be found in \cite{berger_lacoin_2021} (Theorem 2.4). 
Again the setting is slightly different but there is no difficulty to adapt their proof. 
One needs to show that the family of random variables $(\zeta_{\delta})_{\delta > 0}$ is tight in $H^s_{\rm loc}(\Omega)$ (lemma 3.2), and then that for every $\bff \in \cD_0(\Omega)$, $\langle \zeta_{\delta}, \bff \rangle$ converges in law as $\delta \to 0$ to $\langle \bzeta, \bff \rangle$ which is a consequence of the convergence of binomials to Poisson laws. 

We now derive a useful consequence of this convergence in distribution. 
Let $k \ge 1$ and $a \in (0,1]$. 
For every symmetric function $\bff \in \cD_0(\Omega^k)$, define
\[ 
	X_{\delta}^{(a)}(\bff) = \frac{1}{k!} \sum\limits_{x_1,\dots,x_k \in \Omega_{\delta}} \bff(x_1,\dots,x_k) \prod\limits_{j=1}^k \frac{\omega_{x_j}^{(a)}}{V_{\delta}} \,,
\]
where we recall (see \eqref{eq: definition cut-off omega}) that $\omega_x^{(a)} = \omega_x \Ind_{|\omega_x| > a V_{\delta}} - \Es[\omega \Ind_{|\omega| > a V_{\delta}}]$. 

\begin{proposition}{\label{prop: convergence of chaos for truncated noise}}
	Let $\bff$ be symmetric and infinitely differentiable function with compact support in~ $\Omega^{k,*} = \{ (x_1,\dots,x_k) \in \Omega^k \,, \forall i \neq j, x_i \neq x_j \}$. 
	Then for every $s > \frac{D}{2}$ the following joint convergence in distribution holds in $H_{\rm loc}^{-s}(\Omega) \times \bbR$ as $\delta \downarrow 0$:
	\[ 
	(\zeta_{\delta}, X_{\delta}^{(a)}(\bff)) \xrightarrow{(d)} (\bzeta,\bX^{(a)}(\bff)) \,,
	\]
	where $\bX^{(a)}(\bff) = \frac{1}{k!} \int_{\Omega^k} \bff(x_1,\dots,x_k) \prod\limits_{j=1}^k \bzeta^{(a)}(\dd x_j)$. 
\end{proposition}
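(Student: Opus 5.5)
The truncated chaos $X_\delta^{(a)}(\bff)$ is a polynomial in only finitely many non-zero variables, namely the sites $x$ where $|\omega_x|>aV_\delta$. The plan is therefore to show that the configuration of these large values converges jointly with $\zeta_\delta$ to the Poisson process $\Lambda^{(a)}$, and that the polynomial is a continuous functional of that configuration.

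\textbf{Step 1: rewrite the chaos via an inclusion–exclusion expansion.} Write $\omega_x^{(a)}/V_\delta = \eta_x - \kappa_\delta(a)/V_\delta$, where $\eta_x := V_\delta^{-1}\omega_x \Ind_{|\omega_x|>aV_\delta}$. Expanding the product gives
\[
X_\delta^{(a)}(\bff)=\frac1{k!}\sum_{J\subset\intg{1}{k}}\Big(-\frac{\kappa_\delta(a)}{V_\delta}\Big)^{k-|J|}\sum_{x_j\in\Omega_\delta,\,j\in J}\ \sum_{x_j\in\Omega_\delta,\,j\notin J}\bff(x_1,\dots,x_k)\prod_{j\in J}\eta_{x_j}.
\]
Two asymptotics will be used. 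By \eqref{eq: asymptotic behavior for moments of order p-q} with $p=1$, together with the analogous statement for the signed mean, one has $\kappa_\delta(a)/(V_\delta v_\delta)\to\kappa(a)$. Since $\bff$ is smooth, $v_\delta\sum_{x\in\Omega_\delta}\bff(\dots,x,\dots)\to\int\bff(\dots,x,\dots)\,\dd x$ uniformly in the remaining variables. Hence each inner sum over $j\notin J$ becomes a Riemann sum, up to a uniformly small error, and $X_\delta^{(a)}(\bff)$ equals, up to $o(1)$ times a polynomial in the total mass $\sum_x|\eta_x|$, a continuous functional $F$ of the finite point measure $\Xi_\delta:=\sum_x \delta_{(x,\eta_x)}$ restricted to $\Omega\times\{|z|>a\}$.

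The diagonal constraint is harmless here. $\bff$ vanishes near the diagonals, so summing over distinct points of $\Xi_\delta$ coincides with summing over all tuples. The same holds in the limit, because $\Lambda^{(a)}$ almost surely has distinct locations.

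\textbf{Step 2: joint Poisson convergence.} Next, show that $\Xi_\delta$ converges to $\Lambda^{(a)}$ as point processes on $\Omega\times\{|z|>a\}$, jointly with $\zeta_\delta\to\bzeta$ in $H^{-s}_{\rm loc}$. Laplace functionals give the convergence of $\Xi_\delta$: the number of points is binomial with parameters $N_\delta=|\Omega|/v_\delta$ and $\Pro[|\omega|>aV_\delta]\sim a^{-\gamma}v_\delta$, and the rescaled marks converge to $\lambda|_{\{|z|>a\}}/\lambda(|z|>a)$ by regular variation and the definition of $c_\pm$. This is the same binomial-to-Poisson computation used in the proof of Theorem~\ref{thm: functional convergence for the noise}.

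For the joint statement, compute the joint characteristic functional of $\langle\zeta_\delta,\bg\rangle$ and $\int h\,\dd\Xi_\delta$, with $h$ continuous and supported in $\{|z|>a+\epsilon\}$, say. By independence over sites, this functional is $\prod_x\Es[\exp(i\bg(x)\omega_x/V_\delta+ih(x,\omega_x/V_\delta))]$. The same expansion as for the single noise gives convergence to $\exp\int(e^{i\bg(x)z+ih(x,z)}-1-i\bg(x)z)\,\dd x\,\lambda(\dd z)$, which is the joint law of $(\bzeta,\Lambda)$. Tightness of $\zeta_\delta$ comes from Theorem~\ref{thm: functional convergence for the noise}.

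\textbf{Step 3: continuity and conclusion.} The map $F$ from Step 1 is continuous for vague convergence on configurations without atoms on $\{|z|=a\}$ (a probability-zero event for $\Lambda$), with finitely many points and distinct locations; on that set $F(\Lambda^{(a)})=\bX^{(a)}(\bff)$. The total mass $\sum_x|\eta_x|$ converges in law as well, so the $o(1)$ error term vanishes in probability. The continuous mapping theorem then yields the claim.

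I expect the main obstacle to be Step 2, specifically justifying that the vague topology handles the boundary $|z|=a$ and that the centering terms interact correctly with the joint Laplace functional. I would handle it by proving convergence for $h$ supported in $\{|z|>a'\}$ for $a'$ near $a$, and then using that $\Lambda$ charges $\{|z|=a\}$ with probability zero.
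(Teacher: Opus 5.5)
Your argument is correct, but it takes a different route from the paper's.

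\textbf{The paper's route.} It never passes to point processes. It forms the centered truncated noise $\zeta_\delta^{(a)}=V_\delta^{-1}\sum_x\omega_x^{(a)}\delta_x$ and observes that its deterministic part converges to $\kappa(a)\,\dd x$. It then asserts, by ``adapting'' the proof of Theorem~\ref{thm: functional convergence for the noise}, that $\zeta_\delta^{(a)}\to\bzeta^{(a)}$ in $H^{-s}_{\rm loc}(\Omega)$. It concludes by continuity of the multilinear pairing $(\theta_1,\dots,\theta_k)\mapsto\int\bff\prod_j\theta_j(\dd x_j)$ on $H^{-s}_{\rm loc}(\Omega)^k$. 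That continuity is proved via Plancherel and cut-offs $\chi_j$, and it needs $\bff\in H^{ks}$.

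\textbf{Your route.} You strip off the centering deterministically, using Riemann sums and $\kappa_\delta(a)/(V_\delta v_\delta)\to\kappa(a)$. This reduces the chaos to an explicit functional of the finitely many large values, with an $o(1)$ error times a tight polynomial in $\sum_x|\eta_x|$. You then prove marked point process convergence jointly with $\zeta_\delta$ through the joint characteristic functional. Finally, you apply the continuous mapping theorem at configurations that do not charge $\{|z|=a\}$.

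\textbf{What each buys.} The paper's route is shorter, given the Sobolev machinery already in place. Yours is more hands-on but makes explicit two points the paper leaves inside ``the proof adapts'':
\begin{itemize}
\item the \emph{joint} convergence of $(\zeta_\delta,\zeta_\delta^{(a)})$, which the joint statement needs, since marginal convergence of $\zeta_\delta^{(a)}$ alone is not enough;
\item the discontinuity of truncation at level $a$, which you handle correctly by working on $\{|z|>a'\}$ with $a'<a$.
\end{itemize}
Your version also needs only $\bff$ continuous with compact support in $\Omega^{k,*}$, not smooth.

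\textbf{One point of care.} In Step 2, the process whose convergence you prove on $\Omega\times\{|z|>a'\}$ should be the untruncated $\sum_x\delta_{(x,\omega_x/V_\delta)}$, as your characteristic-functional formula already implicitly uses. It should not be $\Xi_\delta$, whose marks $\eta_x$ are a discontinuous function of $\omega_x/V_\delta$ at $|z|=a$.

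Both arguments tacitly rely on two further facts: cell diameters shrink, so that Riemann sums converge; and the pathwise integral against the finite signed measure $\bzeta^{(a)}$ agrees with the $L^q$-density definition of $\bX^{(a)}(\bff)$.
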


\begin{proof}
	Define for $a \in (0,1]$ and $\delta \in (0,1)$,
	\[ 
	\zeta_{\delta}^{(a)} := \frac{1}{V_{\delta}} \sum\limits_{x \in \Omega_{\delta}} \omega_x^{(a)} \delta_x = \frac{1}{V_{\delta}} \sum\limits_{x \in \Omega_{\delta}} \omega_x \Ind_{|\omega_x| > a V_{\delta}} \delta_x - \frac{1}{V_{\delta}} \Es[\omega \Ind_{|\omega| > a V_{\delta}}] \sum\limits_{x \in \Omega_{\delta}} \delta_x \,.
	\]
	Using the fact that $|\Omega| = v_{\delta} \: \# \Omega_{\delta}$ and the definition of $V_{\delta}$ (\eqref{eq: general scale of the noise}), the second term in the above expression (which is deterministic) converges (in any negative Sobolev space) to $\kappa(a) \dd x$ as $\delta \downarrow 0$. 
	Additionally, the proof of Theorem \ref{thm: functional convergence for the noise} adapts to the truncated random variables $\omega^{(a)}$, therefore for every fixed $a \in (0,1]$, $\zeta_{\delta}^{(a)}$ converges in distribution in $H^{-s}_{\rm loc }(\Omega)$ to $\bzeta^{(a)}$.  
	Now observe that 
	\[ 	X_{\delta}^{(a)}(\bff) = \frac{1}{k!} \int_{\Omega^k} \bff(x_1,\dots,x_k) \prod\limits_{j=1}^k \zeta_{\delta}^{(a)}(\dd x_j) \,.
	\]
	Therefore, the joint convergence of $(\zeta_{\delta},X_{\delta}^{(a)}(\bff))$ to $(\bzeta,\bX^{(a)}(\bff))$ as $\delta \to 0$ follows from the continuity of the map on $H^{-s}_{\rm loc}(\Omega)$ defined by
	\[ \theta \mapsto \int_{\Omega^k} \bff(x_1,\dots,x_k) \prod\limits_{j=1}^k \theta(\dd x_j)\,.\] 
	In fact, we prove the slightly stronger statement: the multilinear map 
	\[T : 
	\left\{ 
		\begin{array}{ccccl}
		H^{-s}_{\rm loc}(\Omega)^k \times H^{ks}(\mathbb{R}^{kd}) & \longrightarrow & \bbR \\
		(\theta_1,\dots,\theta_k;\bff) & \mapsto & \int_{\Omega^k} \bff(x_1,\dots,x_k) \prod\limits_{j=1}^k \theta_j(\dd x_j) \,, \\
	\end{array}
	\right.\]
	is continuous. 
	Let us first see why it is well-defined. 
	Given $x_2,\dots,x_k \in \Omega$, the function $\bff(\cdot,x_2,\dots,x_k)$ is an element of $\cD_0(\Omega)$, therefore $\langle \theta_1, \bff(\cdot,x_2,\dots,x_k) \rangle$ is well-defined and the map $(x_2,\dots,x_k) \mapsto \langle \theta_1, \bff(\cdot,x_2,\dots,x_k)$ is in $\cD_0(\Omega^{k-1})$. 
	Therefore one can apply $\theta_2$ with respect to the second variable $x_2$ as we did for the first variable. 
	Following this procedure iteratively gives a definition of the integral of $\bff$ with respect to $\theta_1,\dots,\theta_k$. 
	This definition is not ambiguous since $\bff$ is symmetric. 
	
	Assume that $\zeta_1,\dots,\zeta_k$ are in $H^{-s}(\bbR^D)$. 
	Then by Plancherel's identity and the definition of the Sobolev norm,
	\[ 
	|T(\theta_1,\dots,\theta_k; \bff)| \le \prod\limits_{j=1}^k \|\theta_j\|_{H^{-s}(\bbR^D)} \| \bff \|_{H^{ks}(\bbR^{kD})} \,.
	\]
	For the general case, let $K_1,...,K_k$ be compact sets of $\Omega$ such that the support of $\bff$ is included in $\prod\limits_{j=1}^k K_j$. 
	Take for every $1 \le j \le k$ a smooth and compactly supported function $\chi_j$ such that $\chi_j = 1$ on $K_j$. 
	Define $\chi(x_1,\dots,x_k) = \prod\limits_{j=1}^k \chi_j(x_j)$. 
	Then
	\[T(\theta_1,\dots,\theta_k;\bff) = T(\theta_1,\dots,\theta_k;\chi \bff) = T(\chi_1 \theta_1,\dots,\chi_k \theta_k; \bff) \le \prod\limits_{j=1}^k \| \chi_j \theta_j\|_{H^{-s}(\bbR^D)} \|\bff\|_{H^{ks}(\bbR^{kD})} \,,
	\]
	which concludes the proof.
\end{proof}

\section{About homogeneous partition functions}{\label{appendix: homogeneous partition functions}}

This Appendix is devoted to the proof of Lemma~\ref{lemma: scaling limit of the homogeneous partition functions}. 
First, we need the following uniform bound, that we also use in Section~\ref{section: proofs polymer}. 

\begin{proposition}{\label{prop: technical estimate on slow varying functions}}
	Let $L$ be a strictly positive slowly-varying function and let $\xi > 0$.
	For every $\gep > 0$, there exist a constant $C = C_{\gep}$ such that for every $k,N \in \bbN^*$,
	\begin{multline*}
		\sum\limits_{1 \le n_1 < \dots < n_k \le N} \prod\limits_{j=1}^k L(n_j-n_{j-1})(n_j-n_{j-1})^{\xi-1} 
		\\ \le C^k L(N)^k N^{k \xi} \int_{0 < t_1 < \dots < t_k < 1} \prod\limits_{j=1}^k (t_j-t_{j-1})^{\xi-\gep -1} \dd t_j \,,
	\end{multline*}
	and 
	\begin{multline*}
		\sum\limits_{1 \le n_1 < \dots < n_k < n_{k+1} := N} \prod\limits_{j=1}^{k+1} L(n_j-n_{j-1})(n_j-n_{j-1})^{\xi-1} \\
		\le C^{k+1} L(N)^{k+1} N^{k \xi + \xi -1} \int_{0 < t_1 < \dots < t_k < t_{k+1} =1} \prod\limits_{j=1}^{k+1} L(t_j-t_{j-1}) (t_j-t_{j-1})^{\xi -\gep - 1} \prod\limits_{j=1}^k \dd t_j \,.
	\end{multline*}
\end{proposition}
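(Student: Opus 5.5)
The plan is to compare each discrete increment with the corresponding continuous one using Potter's bounds, and then to turn the resulting Riemann-type sum into the Dirichlet integral with a loss of $\gep$ in the exponent.

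\emph{Step 1: a uniform pointwise bound.} By Potter's bound (see \cite[Thm 1.5.4]{bingham_goldie_teugels_1987}), applied to the strictly positive slowly varying $L$, there is $C_\gep$ such that for all $1\le m\le N$,
\[
L(m)\le C_\gep\, L(N)\,(N/m)^{\gep}.
\]
Potter's bound only holds for large arguments, but small arguments are absorbed into the constant since $L$ is positive and locally bounded. Writing $m_j=n_j-n_{j-1}\le N$ and $s_j=m_j/N$, each factor therefore satisfies
\[
L(m_j)\,m_j^{\xi-1}\le C_\gep\,L(N)\,N^{\xi-1}\,s_j^{\xi-\gep-1}.
\]
This already produces the factors $C^kL(N)^kN^{k\xi}$ once the sum is compared with $N^k$ times an integral.

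\emph{Step 2: Riemann comparison on the simplex.} We must bound
\[
N^{-k}\sum_{1\le n_1<\dots<n_k\le N}\ \prod_j\Big(\frac{n_j-n_{j-1}}{N}\Big)^{\xi-\gep-1}
\]
by $C^k$ times the integral. When the exponent $\xi-\gep-1$ is nonnegative the map $s\mapsto s^{\xi-\gep-1}$ is nondecreasing. Each lattice point then corresponds to the cell $\prod_j[(n_j-1)/N,\,n_j/N)$; on that cell the variables $t_j$ are ordered, and each increment $t_j-t_{j-1}$ is at most $(m_j+1)/N\le 2m_j/N$. A constant $2^{|\xi-\gep-1|}$ per factor handles the comparison.

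When the exponent is negative the map is decreasing and the reverse problem appears. In that case I would assign each lattice point a cell in which every increment is at most $m_j/N$, namely $t_j\in[(n_j-1)/N,\,n_j/N)$ but with the increments computed relative to a shifted point. A simpler route is to use the bound $m\ge (m+1)/2$ and replace $m_j$ by the increments of the cell. Either way each factor costs at most a constant $c^{\,|\xi-\gep-1|}$, uniformly in $k$ and $N$.

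\emph{Step 3: the pinned sum.} For the second inequality the last factor has $n_{k+1}=N$ fixed. The same Potter bound applies to all $k+1$ factors, and the sum runs only over $k$ free indices. This gives $N^{-k}\cdot N^{(k+1)(\xi-1)}\cdot N^{k}=N^{k\xi+\xi-1}$ in front of a Riemann sum over the $k$ free variables with $t_{k+1}=1$. The same cell comparison applies, and the factors $L(t_j-t_{j-1})$ on the right-hand side can be kept or bounded below as needed.

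\emph{Main obstacle.} The expected obstacle is making the Riemann comparison in Step 2 uniform in $k$ when the exponent is negative, where the singularity at $t_j=t_{j-1}$ interacts with the discretization. The fix is to gain a constant per factor by choosing cells adapted to the monotonicity, so that the comparison never loses more than $C^k$.
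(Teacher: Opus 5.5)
Reordering the argument is legitimate: you apply Potter on the integers first and then compare a Riemann sum of pure powers with the Dirichlet integral. The paper instead compares the sum with $\int_{0<t_1<\dots<t_k<N}\prod_j L(t_j-t_{j-1})(t_j-t_{j-1})^{\xi-1}\,\dd t$ by a ``standard comparison'', rescales, and only then applies Potter in the continuum. Your order has the merit that $L$ is only ever evaluated at integers.

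The gap is in Step~2, where the two monotonicity cases are swapped. Write $e:=\xi-\gep-1$ and $m_j=n_j-n_{j-1}$. A cell-by-cell comparison needs $(m_j/N)^{e}\le c\,(t_j-t_{j-1})^{e}$ on the cell. That is an \emph{upper} bound on the increments when $e<0$, and a \emph{lower} bound when $e>0$.

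On $\prod_j[(n_j-1)/N,n_j/N)$ you correctly observe $t_j-t_{j-1}<(m_j+1)/N\le 2m_j/N$, and this settles $e<0$. So the singular case you call the main obstacle is the easy one, and your ``simpler route'' is exactly this argument.

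For $e>0$, however, the only lower bound on that cell is $t_j-t_{j-1}>(m_j-1)/N$, which vanishes when $m_j=1$. The integrand can then be arbitrarily small on the cell while $(m_j/N)^{e}=N^{-e}$. So the claimed constant $2^{|e|}$ per factor is false there, and the case $\xi>1+\gep$ is not proved. Every use in the paper has $\xi<1$, hence $e<0$, but the proposition is stated for all $\xi>0$.

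The repair is cheap: use the half-cells $\prod_j[(n_j-\frac12)/N,\,n_j/N)$.
\begin{itemize}
\item They are pairwise disjoint and have volume $(2N)^{-k}$.
\item They lie in $\{0<t_1<\dots<t_k<1\}$, since $t_j<n_j/N\le (n_{j+1}-1)/N<t_{j+1}$.
\item On them $t_j-t_{j-1}\in[m_j/(2N),\,3m_j/(2N)]$ for every $j$, including $j=1$ with $t_0=0$.
\end{itemize}
Hence $(m_j/N)^{e}\le 2^{|e|}(t_j-t_{j-1})^{e}$ for either sign of $e$, and the Riemann sum is at most $(2^{1+|e|})^k$ times the integral. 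In the pinned sum the last increment satisfies $1-t_k\in(m_{k+1}/N,(m_{k+1}+\frac12)/N]$, so the same works with $k+1$ factors.

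Separately, the factors $L(t_j-t_{j-1})$ on the right of the second display cannot be ``kept or bounded below as needed''. Nothing is assumed about $L$ on $(0,1)$. For $L(x)=\min(x,1)$, $\gep<1$ and $k=N-1$, the right-hand side with these factors behaves like $K^N N^{-(1-\gep)N}\to 0$, while the left-hand side equals $1$. What your argument (and the paper's) actually proves is the bound without these factors. That is also how the inequality is used in the proof of Lemma~\ref{lemma: scaling limit of the homogeneous partition functions}.
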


\begin{proof}
	By standard comparison arguments, there exist a constant $C$ such that for every $k,N \in \bbN^*$,
	\begin{multline*}
		\sum\limits_{1 \le n_1 < \dots < n_k \le N} \prod\limits_{j=1}^k L(n_j-n_{j-1})(n_j-n_{j-1})^{\xi-1} 
		\\ \le C^k \int_{0 < t_1 < \dots < t_k < N} \prod\limits_{j=1}^k L(t_j-t_{j-1}) (t_j - t_{j-1})^{\xi - 1} \dd t_j \,.
	\end{multline*}
	With a change of variable $t_j = N s_j$, we get that the right-hand side is equal to 
	\begin{equation*}
		C^k N^{k \xi} L(N)^k \int_{0 < s_1 < \dots < s_k < 1} \prod\limits_{j =1}^k \frac{L(N(s_j-s_{j-1}))}{L(N)} (s_j - s_{j-1})^{\xi -1} \dd s_j.
	\end{equation*}
	By Potter's bound (see \cite[Thm 1.5.6.]{bingham_goldie_teugels_1987}), for every $\gep > 0$, there exist a constant $C = C(\gep)$ such that for every $s \le t$, $L(s) \le L(t) \big(\frac{t}{s}\big)^{\gep}$. 
	This concludes the proof of the first inequality, and the proof of the second one follows the same lines.
\end{proof}

\begin{proof}[Proof of Lemma \ref{lemma: scaling limit of the homogeneous partition functions}]
	Introduce $\lambda = \expo^{h}-1$. 
	We can rewrite the conditioned partition function in terms of $\lambda$ as follows:
	\begin{equation*}
		Z_{M,h}^c = \expo^{h} \E\left[ \prod\limits_{n=1}^{M-1} (1+\lambda \Ind_{n \in \tau}) \Ind_{M \in \tau} \right]\,.
	\end{equation*}
	Expanding the product $\prod\limits_{n=1}^N (1+\lambda \Ind_{n \in \tau})$ and taking expectation yields
	\begin{equation*}
		Z_{M,h}^c = \expo^{h} \sum\limits_{k \ge 0} \lambda^k \sum\limits_{n_0 := 0 < n_1 < \dots < n_k < n_{k+1} := M} \prod\limits_{j=1}^{k+1} u(n_j - n_{j-1}).
	\end{equation*}
	First, let us prove the uniform bound~\eqref{eq: uniform bound on the conditioned partition function}. 
	By~\eqref{eq: uniform bound on the renewal function}, for every $k \ge 1$,
	\begin{equation*}
		\sum\limits_{n_0 := 0 < n_1 < \dots < n_k < n_{k+1} := M} \prod\limits_{j=1}^{k+1} u(n_j - n_{j-1}) \le C^{k+1} \sum\limits_{n_0 := 0 < n_1 < \dots < n_k < n_{k+1} := M} \prod\limits_{j=1}^{k+1} \frac{(n_j-n_{j-1})^{\alpha-1}}{\ell(n_j-n_{j-1})}\,.
	\end{equation*}
	Applying Proposition~\ref{prop: technical estimate on slow varying functions} and~\eqref{eq: gamma integrals}, we obtain that the left-hand side is bounded by
	\begin{equation*}
		C^{k+1} \frac{M^{k \alpha+\alpha -1 }}{\ell(M)^{k+1}} \frac{\Gamma(\alpha)^{k+1}}{\Gamma((k+1)\alpha)}.
	\end{equation*}
	Since $\lambda = e^h -1 \sim h$ as $h \to 0$, and $h_N \sim \hh N^{-1} u(N)^{-1}$ as $N \to +\infty$, by\eqref{eq: uniform bound on the renewal function}, there exist a constant $C = C_{\hh}$ such that
	\[ \lambda_N \le C \frac{\ell(N)}{N^{\alpha}}\,.\]
	Therefore, 
	\begin{equation*}
		Z_{M,h_N}^c \le C \frac{M^{\alpha-1}}{\ell(M)} \sum\limits_{k \ge 0} \bigg( C \frac{M^{\alpha} \ell(N)}{N^{\alpha} \ell(M)}\bigg)^k \frac{1}{\Gamma((k+1)\alpha)}\,.
	\end{equation*}
	By \cite[Thm 1.5.4.]{bingham_goldie_teugels_1987}, 
	\[\sup\limits_{N \ge 1} \sup\limits_{1 \le M \le N} \frac{M^{\alpha} \ell(N)}{N^{\alpha} \ell(M)} < +\infty
	.\]	
	Since the sequence $(\Gamma((k+1)\alpha))_{k \ge 0}$ decreases faster than any exponential rate, this concludes the proof of \eqref{eq: uniform bound on the conditioned partition function}. 
	
	Suppose now that $M_N/N \to t \in (0,1]$.
	For every $\gep > 0$ and $k \in \mathbb{N}$, we split the indexes of summations into 
	\begin{equation*}
	\cD_1^{\gep,N}(k) := \{ n_0 :=0 < n_1 < \dots < n_k < n_{k+1}:= M_N;\: \forall j \in \intg{1}{k+1},\, n_j-n_{j-1} \ge \gep N \}\,,
	\end{equation*}
	and 
	\begin{equation*}
		\cD_2^{\gep,N}(k) := \{ n_0 :=0 < n_1 < \dots < n_k < n_{k+1}:= M_N;\: \exists j \in \intg{1}{k+1},\, n_j-n_{j-1} < \gep N \}\,,
	\end{equation*}
	which gives a decomposition of $Z_{M_N,h_N}^c$ into two pieces: 
	\[ Z_{N,\gep}^{c,1} = \expo^{h_N} \sum\limits_{k \ge 0} (\lambda_N)^k \sum\limits_{n_1,\dots,n_k \in \cD_1^{\gep,N}(k)} \prod\limits_{j=1}^{k+1} u(n_j-n_{j-1})\,
	\] 
	and 
	\[Z_{N,\gep}^{c,2} = \expo^{h_N} \sum\limits_{k \ge 0} (\lambda_N)^k \sum\limits_{n_1,\dots,n_k \in \cD_2^{\gep,N}(k)} \prod\limits_{j=1}^{k+1} u(n_j-n_{j-1})\,.
	\]
	Rewriting $Z_{N,\gep}^{c,1}$ with the function $U_N$ (introduced in \eqref{eq: rescaled renewal function}), we get
	\begin{equation*}
		Z_{N,\gep}^{c,1} = \expo^{h_N} u(N) \sum\limits_{k \ge 0} (\lambda_N u(N) N)^k \int_{\underset{\forall j \in \intg{1}{k+1}\,, t_j - t_{j-1} \ge \gep}{0 < t_1 < \dots < t_k < \frac{M_N}{N}}} \prod\limits_{j=1}^{k+1} U_N\Big(\frac{\floor{N t_j}-\floor{N t_{j-1}}}{N} \Big) \prod\limits_{j=1}^k \dd t_j\,.
	\end{equation*}
	Since the function $U_N$ converges uniformly on $(0,1]$ to the function $U(t) = t^{\alpha -1}$, we get that for every fixed $\gep > 0$,
	\begin{equation*}
		\frac{Z_{N,\gep}^{c,1}}{u(N)} \xrightarrow[N \to +\infty]{} \sum\limits_{k \ge 0} \hh^k \int_{\underset{\forall j \in \intg{1}{k+1}\,, t_j - t_{j-1} \ge \gep}{0 < t_1 < \dots < t_k < t}} \prod\limits_{j=1}^{k+1} (t_j-t_{j-1})^{\alpha-1} \prod\limits_{j=1}^k \dd t_j \,,
	\end{equation*}
	and the right hand side converges as $\gep \to 0$ to $\cZ_{t,\hh}^c$ (recall \ref{eq: gamma integrals}). 
	It only remains to show that $Z_{N,\gep}^{c,2}$ is asymptotically negligible as $\gep \downarrow 0$. 
	Using again the bound  \eqref{eq: uniform bound on the renewal function} and the fact that $M_N \le N$, we get for every $ k \ge 0$, 
	\begin{equation*}
		\sum\limits_{n_1,\dots,n_k \in \cD_2^{\gep,N}(k)} \prod\limits_{j=1}^{k+1} u(n_j-n_{j-1}) \le C^{k+1} \sumtwo{1 \le n_1 < \dots < n_k < N}{\exists j \in \intg{1}{k+1}, n_j - n_{j-1} \le \gep N} \prod\limits_{j=1}^k \frac{(n_j-n_{j-1})^{\alpha-1}}{\ell(n_j-n_{j-1})} \,,
	\end{equation*}
	which is bounded (by Proposition \ref{prop: technical estimate on slow varying functions}) by
	\begin{equation*}
		C^{k+1} \frac{N^{k \alpha + \alpha -1}}{\ell(N)^{k+1}} \int_{\underset{\exists j \in \intg{1}{k+1}\,, t_j - t_{j-1} < \gep}{0 < t_1 < \dots < t_k < 1}} \prod\limits_{j=1}^{k+1} (t_j-t_{j-1})^{\alpha-1} \prod\limits_{j=1}^k \dd t_j \,.
	\end{equation*}
	Therefore, 
	\begin{equation*}
		\limsup\limits_{N \to +\infty} \frac{Z_{N,\gep}^{c,2}}{u(N)} \le C \sum\limits_{k \ge 0} C^k \int_{\underset{\exists j \in \intg{1}{k+1}\,, t_j - t_{j-1} < \gep}{0 < t_1 < \dots < t_k < 1}} \prod\limits_{j=1}^{k+1} (t_j-t_{j-1})^{\alpha-1} \prod\limits_{j=1}^k \dd t_j\,,
	\end{equation*}
	which goes to $0$ as $\gep \downarrow 0$. 
\end{proof}

\printbibliography

\end{document}